\newtheorem{remark}{Remark}
\newtheorem{definition}{Definition}
\newtheorem{proposition}{Proposition}
\newtheorem{lemma}{Lemma}
\newtheorem{corollary}{Corollary}
\newtheorem{theorem}{Theorem}
\newcommand{\PGL}{(\text{PGL})_\eps}
\def\bd{\partial}
\def\rest{\hskip 1pt{\hbox to 10.8pt{\hfill
\vrule height 7pt width 0.4pt depth 0pt\hbox{\vrule height 0.4pt width 7.6pt depth 0pt}\hfill}}}
\newcommand{\eps}{\varepsilon}
\newcommand{\E}{{E}_\varepsilon}
\def\beq{\begin{equation}}
\def\eeq{\end{equation}}
\def\hzero{(\text{H}_0)}
\def\B{{\mathcal B}}
\def\R{{\mathbb R}}
\def\N{{\mathbb N}}
\def\S{{\mathbb S}}
\def \Le{\mathcal L^\eps}
\def \L{\mathcal L} 
\def \rL{{\rm L}}
\def \tL{{\scriptscriptstyle \rm L}}
\def \IL{{{\rm I}_{{\scriptscriptstyle \rm L}}}}
\def \ILL{{{\rm I}_{\scriptscriptstyle \rm 2L}}}
\def \ILLL{{{\rm I}_{\scriptscriptstyle \rm 3L}}}
\def \ILLLL {{{\rm I}_{\scriptscriptstyle \rm 4L}}}
\def \ILLLLL {{{\rm I}_{\scriptscriptstyle \rm 5L}}}
\def \ItdL {{{\rm I}_{\scriptscriptstyle \frac32 \rm L}}}
\def \IctL {{{\rm I}_{\scriptscriptstyle \frac53 \rm L}}}
\def \IkzL {{\rm I}_{\scriptscriptstyle \rm \upkappa_0 \rL}}
\def \IdkzL {{\rm I}_{\scriptscriptstyle \rm 2\upkappa_0 \rL}}
\def \q {\mathfrak q}
\def \re {r^\eps}
\def \reext  {r^\eps_{\rm ext}}
\def \reint  {r^\eps_{\rm int}}
\def\ud{\updelta}
\def\udl {\updelta_{\rm log}^{^\eps}} 
\def \udll  {\updelta_{\rm loglog}^{^\eps}} 
\def \v  {{v_\eps}}
\def \dminep {d_{\rm min}^{\eps, +}}
\def \dmines {d_{\rm min}^{\eps, -}}
\def \dmine {d_{\rm min}^\eps}
\def \dmineL {d_{\scriptscriptstyle \rm min}^{\scriptscriptstyle \eps, \rL}}
\def \dmin {d_{\rm min}}
\def \dminstar {d_{\rm min}^*}
\def \Iinte{\mathcal J_\eps^{\rm int}}
\def \Iexte{\mathcal J_\eps^{\rm ext}}
\def\vm{\mathfrak{v}_\eps}
\def\vmen{\mathfrak{v}_{\eps_n}}
\def\Wm{\mathfrak{W}_\eps}
\def\Wmn{\mathfrak{W}_{\eps_n}}
\def\Wms{\mathfrak{W}_*}
\def \Ump {\overset {\vee}{\mathfrak u}^+}
\def\Umps {\overset {\rhd} {\mathfrak u}}
\def \Urp {\overset {\vee}{{\rm  U}_r^\lambda} }
\def\Urps {\overset {\rhd} {{\rm  U}_r^\lambda} }
\def \Umpep {\overset {\vee} {\mathfrak u}^+_{\eps, r} }
\def \Umpes {\overset {\vee} {\mathfrak u}^-_{\eps, r}}
\def \Umpeps {\overset {\rhd} {\mathfrak u}_{\eps, r}}
\def \Umpepstt {\overset {\lhd} {\mathfrak u}_{\eps, r^\eps(s)}}
\def \Umpepst  {\overset {\rhd} {\mathfrak u}_{\eps, r^\eps(s)}}
\def \Umpepint {\overset {\vee} {\mathfrak u}^+_{\eps, \reint} }
\def \Umpesint {\overset {\vee} {\mathfrak u}^-_{\eps, \reint}}
\def \Umpepext {\overset {\vee} {\mathfrak u}^+_{\eps, \reext}}
\def \tetakeps {\Theta_{k+\frac12}^\eps}
\def \llogepsL { \log \vert \log \frac{\eps}{\rL} \vert}
\def \dis {{\rm dissip}_\eps }
\def \disL   {{\rm dissip}_\eps^{\scriptscriptstyle \rL} \, }
\def \Ldint{\Lambda_\eps^{\rm int}}
\def \Ldext {\Lambda_\eps^{\rm ext}}
\def \Ldtrans  {\Lambda_\eps^{\rm trans}}
\def \Itrans  {\mathcal I_\eps^{\rm trs}}
\def\S { {\mathfrak S}}
\def\SL{ {\mathfrak S}^\rL}
\def\E{{ \mathfrak q} }
\def \tda{ { \textswab d}_{\, a}}
\def\tU {{\textfrak U}}
\def\tu {{\textswab  u}}
\def\rtcol {\mathcal T^{\eps, -}_{\rm col}}
\def \rtcolp{\mathcal T_{\rm col}^{\eps, +}}
\def \rtcros{\mathcal T_{\rm cros}^\eps}
\def \wp{\mathcal{WP}_\eps^{\tL}}
\def \wpz{\mathcal{WP}_\eps^{\tL_0}}
\def \wpi{\mathrm{WPI}_\eps^{\tL}}
\def \wpo{\mathrm{WPO}_\eps^{\tL}}
\def \WP{\mathcal{W P }_\eps }
\def \WPL{\mathcal{W P }_\eps^{\text{\tiny {\rm L}}} }
\def \WPI{{\rm W PI}_\eps^{\text{\tiny {\rm  L}\,}  }}
\def \WPOL{{\rm W PO}_\eps^{\text{\tiny{\rm L}\,}  }}
\def \Frep{ F_{\rm rep} }
\def \tFrep{ {\tilde F}_{\rm rep} }
\def \Fat{ F_{\rm att} }
\def \tFat{\tilde{ F}_{\rm att} }
\def \mFrep{\mathcal  F_{\rm rep} }
\def \mFat{ \mathcal F_{\rm att}^\eps }
\def \mFrep{\mathcal  F_{\rm rep}^\eps}
\def \mFatn{ \mathcal F_{\rm att}^n }
\def \tmFatn{\tilde{\mathcal  F}_{\rm att}^n }
\def \matfE {\mathfrak E_\eps}
\def \matfEL {\mathfrak E_\eps^\rL}
\DeclareMathAlphabet{\mathpzc}{OT1}{pzc}{m}{it}
\def\bd{\partial}
\def\QED{\hbox{${\vcenter{\vbox{
   \hrule height 0.4pt\hbox{\vrule width 0.4pt height 6pt
   \kern5pt\vrule width 0.4pt}\hrule height 0.4pt}}}$}\vspace{7pt}}
\begin{document}

\author{Fabrice BETHUEL\thanks{Sorbonne Universit\'es, UPMC Univ Paris 06, UMR 7598, Laboratoire Jacques-Louis Lions, F-75005, Paris, France}  \, and  Didier SMETS \thanks{Sorbonne Universit\'es, UPMC Univ Paris 06, UMR 7598, Laboratoire Jacques-Louis Lions, F-75005, Paris, France} 
}
\title{The  motion law of fronts for scalar reaction-diffusion equations  
with multiple wells: 
 the degenerate case}
\date{}
\maketitle

\begin{abstract}
We  derive a precise motion law for   fronts of solutions  to  \emph{scalar} one-dimensional
reaction-diffusion equations with equal depth multiple-wells, in the case the second derivative of the potential 
vanishes at its minimizers. We show that,   \emph{renormalizing time}  in an \emph{algebraic} way,  
the motion of fronts is governed by a simple system of ordinary differential equations of  nearest 
neighbor interaction type. These interactions may be either  attractive or repulsive. Our results are 
not constrained by the possible occurrence of collisions nor splittings. They  present substantial differences 
with the results  obtained in  the case the second derivative does not vanish at the wells,    a case which 
has been extensively studied in the literature,   and where fronts have been showed to move at  exponentially 
small speed, with motion laws which are   \emph{not renormalizable}. 
\end{abstract}

\bigskip
\noindent
\section{Introduction}

This paper is a continuation of our previous works \cite{BOS8, BS} where we analyzed the behavior of 
solutions $v$ of the reaction-diffusion equation of gradient type
\begin{equation*}
\label{glpara}
\frac{\partial v_\eps} {\partial t}-\frac{ \partial^2  v_\eps} {\partial x^2}= 
- \frac{1}{\eps^2}\nabla V(v_\eps),  \leqno{(\text{PGL})_\eps}
\end{equation*}
where $0<\eps<1$ is a small parameter.   In \cite{BS},  we considered  the case
where the potential $V$ is a  smooth map from $\R$ to $\R^k$ with multiple wells
of equal depth whose second derivative vanishes at the wells.   The main result there, stated
in Theorem \ref{mainbs2} here,  provides an upper bound for the speed of fronts.
In the present paper we \emph{restrict ourselves to the scalar case},  $k=1$, 
  and provide a precise motion law for the fronts, showing in particular that
the \emph{upper bound provided in} \cite{BS}\emph{ is sharp}.  We assume
throughout this paper that the potential $V$ is a smooth function from
$\R$ to $\R$  which satisfies the following assumptions: 
$$ 
\inf V=0 \hbox{ and the set of minimizers}  \  \Sigma\equiv \{ y \in \R,
V(y)=0 \}\   {\rm \ is  \ finite, } \leqno{(\text{A}_1)}
$$
with at least two distinct elements, that is
$$ 
\Sigma=\{\upsigma_1, ..., \upsigma_\q\},\ \q\geq 2, \ \upsigma_1 < \cdots < \upsigma_\q.
$$ 
\smallskip
\noindent
${(\text{A}_2)}$  There exists a number $\theta >1$ such that  
for all $i$ in $\{1,\cdots, \q\}$, we have
$$ V(u)=\lambda_i(u-\upsigma_i)^{2 \theta}+ \underset {u \to \upsigma_i} o ((u-\upsigma_i)^{2 \theta}),  
{\rm where }  \ \lambda_i>0.  
$$

\smallskip
\noindent
${(\text{A}_3)}$ There exists  constants  $\upalpha_{\infty}>0$ and  $R_\infty>0$  such that  
$$
u\cdot\nabla V( u )\geq \upalpha_{\infty} \vert u \vert ^2, \ \hbox {if }  \vert u
\vert >R_\infty.
$$
Whereas  assumption $({\rm A}_1)$ expresses the fact that the potential
possesses at least two  minimizers,  also termed wells,  and  $({\rm A}_3)$
describes the behavior at infinity, and is  of a more technical  
nature,  assumption $({\rm A}_2)$,  which is\emph{ central in the present
paper},  describes the local behavior near  the minimizing wells. 
The number $\theta$ is of course related to the order of vanishing of the derivatives near zero. 
Since $\theta>1$, then $V''(\upsigma_i)=0$, and  $({\rm A}_2)$
holds if and only if
\begin{equation*}
\label{johnson}
\frac{d^j}{du^j} V(\upsigma_i)=0 \ \hbox { for } j=1, \cdots, 2\theta-1  \ \ {\rm  and  \ } 
\frac{d^{2\theta}}{du^{2\theta}} V(\upsigma_i)> 0, 
\end{equation*}
with 
\begin{equation*}
\label{powerball}
\lambda_i  = \frac{1}{(2 \theta) !}\frac{d^{2\theta}}{du^{2\theta}} V(\upsigma_i).
\end{equation*}
A typical example of such potentials is given by
$V(u)=(1-u^2)^{2 \theta}= (1-u)^{2\theta}(1+u)^{2\theta} $
which has two minimizers, $+1$ and $-1$,  so that  $\Sigma=\{+1, -1\}$,
minimizers vanishing at order $2\theta$.   In this paper,  the order of
degeneracy is an integer assumed to be the same at all wells: fractional or
site dependent orders may presumably be handled with the same tools, 
however at the cost of more complicated
statements.\\

\medskip  
We  recall that   equation  $(\text{PGL})_\eps$   corresponds to the $L^2$
gradient-flow    of the  energy functional  $\mathcal{E}$ which is defined for a
function $ u:\R \mapsto \R$ by the formula 
\begin{equation*}
\label{glfunctional}
\mathcal{E}_\eps (u)= \int_\R e_\eps(u)= \int_{\R} \frac{ \eps \vert \dot u \vert ^2}{2}+\frac{V(u)}{\eps}.
\end{equation*}
As in  \cite {BOS8,BS}, we consider only \emph{finite energy solutions}. More
precisely, we fix an arbitrary constant  $M_0>0$ and we consider the condition
$$
\mathcal E_\eps(u)\leq M_0<+\infty.  \leqno\text{$(\text{H}_0)$}
$$
Besides the assumptions on the potential, the main assumption is on the
initial data $v_\eps^0(\cdot)=v_\eps(\cdot, 0)$, assumed to satisfy
$(\text{H}_0)$ independently of $\eps.$ 
In particular, in view of the classical energy identity
\begin{equation}
\label{energyidentity}
\mathcal E_\eps (v_\eps(\cdot,T_2))+\eps \int_{T_1}^{T_2} \int_{\R} \left|
{\frac{\bd v_\eps}{\bd t}}
\right|^2(x,t)dx\,dt = \mathcal E_\eps(v_\eps(\cdot,T_1)) \, \quad \forall\, 0\le
T_1\le T_2\, , 
\end{equation}
we have
\begin{equation*}
\label{touttemp}
\mathcal E_\eps \left(v_\eps(\cdot, t)\right )\leq  M_0, \ \forall t \geq 0.
\end{equation*}
This implies in particular that for every  given $t\geq 0$, we have
$V(v_\eps(x, t))\rightarrow 0$ as $\vert x\vert \rightarrow \infty$. It is then
quite straightforward to deduce from assumption $\hzero$, 
${(\text{A}_1)}$, ${(\text{A}_2)}$ as well as the energy identity
\eqref{energyidentity}, that $v_\eps(x, t)\rightarrow \upsigma_{\pm}$ as
$x\rightarrow \pm \infty$ , where $\upsigma_{\pm} \in \Sigma$ do not depend on
$t$. In other words, for any time, our solutions connect to given minimizers of
the potential. 

\subsection{Main results: Fronts and their speed}
The notion of fronts  is central in  the dynamics.
For a map $u:\R\mapsto \R$,  the set 
\begin{equation*}
\label{frontset}
\mathcal D (u)\equiv\{x\in \R,\ \hbox{dist}(u(x),\Sigma) \geq \mu_0\}, 
\end{equation*}
is termed throughout \emph{the front set} of $u$. The constant $\mu_0$ which
appears in its definition is fixed once for all, sufficiently small so that
\begin{equation}\label{eq:souscontrole}
\frac{\lambda_i}{2} (u-\upsigma_i)^{2\theta} \leq V(u) \leq 
\frac{1}{\theta}V'(u)(u-\upsigma_i) \leq 4 V(u) \leq 8\lambda_i
(u-\upsigma_i)^{2\theta},
\end{equation}
for each $i\in \{1,\cdots,\q\}$ and whenever $|u-\upsigma_i|\leq \mu_0.$
The front set corresponds to
the set of points  where $u$ is ``far'' from the minimizers $\upsigma_i$, and hence
where transitions from one minimizer to the other may occur. A straightforward
analysis yields  
     
\begin{lemma}[see e.g. \cite{BOS8}] 
\label{interface} 
Assume  that $u$ verifies $\hzero$. Then there exists $\ell$ points
$x_1,...,x_\ell $ in $\mathcal D(u)$ such that 
\begin{equation*}
\mathcal D(u) \subset  \underset {k=1}{ \overset{ \ell}  \cup}  [x_k- \eps,x_k +\eps],
\end{equation*}
with a bound $\ell \leq \frac{M_0}{\eta_0}$ on the number of points,  $\eta_0$ being some constant depending only on  $V$. 
\end{lemma}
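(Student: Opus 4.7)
The plan is to combine a greedy extraction of centers in $\mathcal{D}(u)$ with a localized lower bound on the energy density near each such center. We may assume $\mathcal{D}(u)\neq\emptyset$, otherwise $\ell=0$ works. As a preliminary observation, by $(\text{A}_1)$ and $(\text{A}_3)$ the potential $V$ is continuous, vanishes only on the finite set $\Sigma$, and tends to $+\infty$ at infinity; therefore
\[
\eta_1:=\inf\{V(y):\operatorname{dist}(y,\Sigma)\geq \mu_0/2\}>0
\]
is a constant depending only on $V$.

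For the extraction I would proceed greedily: start with any $x_1\in\mathcal{D}(u)$; at step $j$, if $\mathcal{D}(u)\not\subset\bigcup_{k\leq j}[x_k-\eps,x_k+\eps]$, select
\[
x_{j+1}\in \mathcal{D}(u)\setminus\bigcup_{k\leq j}[x_k-\eps,x_k+\eps].
\]
By construction $|x_j-x_k|>\eps$ whenever $j\neq k$, so the closed intervals $I_j:=[x_j-\eps/2,x_j+\eps/2]$ are pairwise disjoint and each of length $\eps$.

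The crucial step is the uniform lower bound $\int_{I_j}e_\eps(u)\,dx\geq \eta_0$ for some $\eta_0>0$ depending only on $V$. I would split according to the oscillation of $u$ on $I_j$. If $\max_{x\in I_j}|u(x)-u(x_j)|<\mu_0/2$, then since $\operatorname{dist}(u(x_j),\Sigma)\geq \mu_0$, the triangle inequality forces $\operatorname{dist}(u(x),\Sigma)>\mu_0/2$ throughout $I_j$; hence $V(u)\geq \eta_1$ on $I_j$ and $\int_{I_j}e_\eps(u)\,dx\geq \eps^{-1}\int_{I_j}V(u)\,dx\geq \eta_1$. Otherwise there exists $a\in I_j$ with $|u(a)-u(x_j)|\geq \mu_0/2$, and Cauchy--Schwarz on the subinterval with endpoints $a$ and $x_j$ (of length at most $\eps$) yields $(\mu_0/2)^2\leq \eps \int_{I_j}|\dot u|^2\,dx$, so $\int_{I_j}(\eps/2)|\dot u|^2\,dx\geq \mu_0^2/8$. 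Setting $\eta_0:=\min(\eta_1,\mu_0^2/8)$ handles both cases.

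Summing and using disjointness of the $I_j$'s then gives
\[
\ell\,\eta_0\;\leq\; \sum_{j=1}^{\ell}\int_{I_j}e_\eps(u)\,dx\;\leq\; \mathcal{E}_\eps(u)\;\leq\; M_0,
\]
so the greedy procedure must terminate at some $\ell\leq M_0/\eta_0$, at which point, by maximality, $\mathcal{D}(u)\subset\bigcup_{k=1}^{\ell}[x_k-\eps,x_k+\eps]$. The only mildly delicate point is the oscillation dichotomy around each center and checking that the resulting constant depends only on $V$; I do not expect a deeper obstacle.
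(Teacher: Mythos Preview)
Your proof is correct. The paper does not give its own proof of this lemma, citing instead \cite{BOS8}; your greedy extraction of well-separated centers combined with the oscillation dichotomy (potential-energy lower bound if $u$ stays far from $\Sigma$ on $I_j$, kinetic-energy lower bound via Cauchy--Schwarz otherwise) is exactly the standard argument for such a covering statement.
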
 

In view of Lemma \ref{interface}, the measure of the front sets is of order
$\eps$, and corresponds to a small neighborhood of order $\eps$ of the points
$x_i$. Notice that if $(u_\eps)_{\eps>0}$ is a family of  functions satisfying 
$\hzero$ then it is well-known that the family is
locally bounded in $BV(\R,\R)$  and hence locally compact in $L^1(\R,\R)$.
Passing to a subsequence if necessary, we may assert that  
\begin{equation*}
\label{puyol0}
u_\eps \to  u^{\star}  \hbox { in } L^1_{\rm loc}(\R), 
\end{equation*}
where $u^{\star}$  takes values in $\Sigma$ and is a step function. More
precisely  there exist an integer  $\ell  \leq \frac{M_0}{\eta_0}$,
$\ell$ points $a_1 < \cdots < a_{\ell}$ and a function $\hat \imath \: : \:
\{\frac12,\cdots,\frac12+\ell\} \to \{1,\cdots,\q\}$ such that   
\begin{equation*}
\label{puyol}
u^{\star}= \upsigma_{\hat \imath(k+\frac 12)}   \hbox {    on  \  }  (a_k, a_{k+1}), 
\end{equation*}
for $k=0, \cdots, \ell,$ and where we use the convention $a_0:=-\infty$ and
$a_{\ell+1} := +\infty.$ The points $a_k$,  for $k=1\cdots, \ell$,   are
the limits as $\eps$ shrinks  to $0$  of the points $x_i$ provided by Lemma
\ref{interface} (the number and the positions of which are of course $\eps$
dependent), so that the front set  $\mathcal D (u_\eps)$ shrinks as
$\eps$ tends to $0$ to a finite set.  In the sequel, we shall refer to step
functions with values into $\Sigma$ as steep front chains and we will write
$$u^\star=u^\star(\ell,\hat \imath,\{a_k\})$$ to determine them unambiguously.    

\medskip
     
We go back to  equation  $(\text{PGL})_\eps$  and consider a family of functions
$(v_\eps)_{\eps>0}$  defined on $\R\times \R^+$ which are solutions to the
equation $(\text{PGL})_\eps$ and satisfy the energy bound $\hzero$.   
We set
\begin{equation*}
\mathcal D_\eps(t)=\mathcal D(v_\eps(\cdot, t)).
\end{equation*}
The evolution of the  front set  $\mathcal D_\eps (t)$  when $\eps$ tends to $0$
is the main focus of our paper.   The following  result\footnote{which holds
also more generally for systems.} has been proved in \cite{BS}:  
\begin{theorem}[\cite{BS}] 
\label{mainbs2}
There exists
constants $\uprho_0>0$  and $\upalpha_0>0$, depending only on the potential $V$
and on $M_0$ such that if $r\geq \upalpha_0\eps,$ then 
\begin{equation}
\label{theodebase}
\mathcal D_\eps  (t+ \Delta t) \subset \mathcal D_\eps (t) + [-r,r],\qquad \text{
 for every  } 
t\geq 0, 
\end{equation}
provided $0\leq \Delta t  \leq 
\uprho_0r^2\left(\frac{r}{\eps}\right)^{\frac{\theta+1}{\theta-1}}.$
\end{theorem}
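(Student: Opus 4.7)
Plan. Via the contrapositive and translation invariance, I would reduce the inclusion to the pointwise persistence: whenever $y\in\R$ and $t_0\geq 0$ satisfy $B(y,r)\cap\mathcal D_\eps(t_0)=\emptyset$, one has $y\notin\mathcal D_\eps(t_0+s)$ for every $s$ in the prescribed interval. Connectedness of $B(y,r)$ and the fact that $\mu_0$ is less than half the minimal well-gap force $v_\eps(\cdot,t_0)$ to stay within $\mu_0$ of a single well $\upsigma_i$ throughout $B(y,r)$. Writing $w:=v_\eps-\upsigma_i$, the goal becomes $|w(y,t_0+s)|<\mu_0$.

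Barrier. Whenever $|w|\leq\mu_0$, assumption $(\text{A}_2)$ combined with \eqref{eq:souscontrole} yields the pointwise damping $V'(v_\eps)\,\mathrm{sgn}(w)\geq c_V |w|^{2\theta-1}$ with $c_V>0$ depending only on $V$. In the scalar case Kato's inequality then promotes $|w|$ to a subsolution of $\partial_s u-\partial_x^2 u+c_V u^{2\theta-1}/\eps^2\leq 0$. I would build a supersolution $W(x,s):=\Phi_r(x-y)+\phi(s-t_0)$ on $(y-r,y+r)\times[t_0,t_0+s]$, where $\Phi_r$ is the symmetric stationary solution of $-\Phi_r''+c_V\Phi_r^{2\theta-1}/\eps^2=0$ on $(-r,r)$ with blow-up at $\pm r$, and $\phi$ is the solution of the ODE $\phi'=-c_V\phi^{2\theta-1}/\eps^2$ with $\phi(0)=\mu_0$. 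A first-integral computation yields $\Phi_r(0)\sim C_\theta(\eps/r)^{1/(\theta-1)}$, and direct integration gives $\phi(s)\sim(\eps^2/s)^{1/(2\theta-2)}$. Superadditivity $(a+b)^{2\theta-1}\geq a^{2\theta-1}+b^{2\theta-1}$ (valid since $2\theta-1\geq 1$) makes $W$ a supersolution, and it dominates $|w|$ on the parabolic boundary: trivially on the lateral sides since $\Phi_r(\pm r)=+\infty$, and at $s=0$ by $\phi(0)=\mu_0$. Comparison then gives $|w(y,t_0+s)|\leq \Phi_r(0)+\phi(s)$; choosing $\upalpha_0$ large enough in $r\geq\upalpha_0\eps$ forces $\Phi_r(0)\leq\mu_0/4$, and the short-time regime is closed by continuity of $v_\eps$.

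Main obstacle. The subsolution property of $|w|$ depends on $|w|\leq\mu_0$ holding \emph{throughout} $B(y,r)$, which may fail as soon as the front enters $B(y,r)$ from outside; tracking this is precisely what forces an upper bound on $\Delta t$. Denoting by $\tau^*$ the first time $|w(\cdot,\tau^*)|$ attains $\mu_0$ somewhere in $B(y,r)$, the comparison (valid up to $\tau^*$) forces the touching point to lie within a thin boundary layer of width $\sim \eps\mu_0^{-(\theta-1)}$ of $\partial B(y,r)$; one would then rerun the argument on nested balls $B(y,r-\eta_k)$ and iterate. Balancing, at each step, the spatial stationary scale $(\eps/r)^{1/(\theta-1)}$ against the purely ODE relaxation scale $(\eps^2/s)^{1/(2\theta-2)}$, and summing the boundary-layer widths, is the delicate step that produces the sharp exponent $(\theta+1)/(\theta-1)$ in the time bound. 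For the systems extension mentioned in the footnote no scalar comparison is available, and I would replace the pointwise barrier by a differential inequality for the localized $L^2$ quantity $\int \chi^2 |v_\eps-\upsigma_i|^2$ (with $\chi$ a smooth cutoff supported in $B(y,r)$), closed by Hölder's inequality against the strong dissipation $\int \chi^2 |v_\eps-\upsigma_i|^{2\theta}/\eps^2$; the resulting Ghidaglia-type ODE, integrated explicitly, should recover the same exponent.
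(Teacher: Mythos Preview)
The paper does not prove this theorem; it is quoted from \cite{BS} (as is Proposition~\ref{estimpar}, which is essentially the ``a~priori'' version of the same estimate, assuming confinement on a full space--time cylinder). So there is no proof here to compare against.

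Your barrier ingredients are correct: the stationary blow-up profile gives $\Phi_r(0)\sim C_\theta(\eps/r)^{1/(\theta-1)}$ and the ODE relaxation gives $\phi(s)\sim(\eps^2/s)^{1/(2\theta-2)}$; these are precisely the two terms in \eqref{crottin2}. The superadditivity argument making $\Phi_r+\phi$ a supersolution is fine, and Kato's inequality for $|w|$ is legitimate in the scalar case as long as $|w|\le\mu_0$.

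There is, however, a genuine gap in the iteration. You assert that at the first touching time $\tau^*$ the touching point $x^*$ must lie in the $O(\eps)$ boundary layer. But the comparison only yields $\mu_0\le\Phi_r(x^*-y)+\phi(\tau^*-t_0)$, and since $\phi(0)=\mu_0$ this forces $\Phi_r(x^*-y)\ge\mu_0-\phi(\tau^*-t_0)$, which localises $x^*$ near $\partial B(y,r)$ \emph{only if} $\phi(\tau^*-t_0)\le\mu_0-c$ for some fixed $c>0$, i.e.\ only if $\tau^*-t_0\gtrsim\eps^2$. Nothing in your setup prevents $\tau^*-t_0$ from being arbitrarily small (the front could sit at the boundary of $B(y,r)$ initially and enter immediately), and in that regime the comparison gives no localisation of $x^*$ and no gain of time; the naive iteration stalls. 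Restarting on $B(y,r-\eta)$ does not help unless you first produce a quantitative gap $|w|\le\mu_0-c$ on that smaller ball, and the comparison with $\phi(0)=\mu_0$ does not deliver this.

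A workable scalar fix is a genuine two-level scheme: run the comparison on $B(y,r)$ up to $\tau^*$; at that instant you still have $|w|\le\mu_0$ on $B(y,r)$ and, more importantly, the comparison on the \emph{smaller} ball $B(y,r-C_0\eps)$ can now be launched with initial height $\mu_0$ and threshold $\mu_0$ but tracked until $|w|$ first reaches $\mu_0$ \emph{there}; the point is that on the smaller ball one has, at the restart time, the improved bound $|w|\le\Phi_r(r-C_0\eps)+\phi(\tau^*-t_0)\le\mu_0/4+\phi(\tau^*-t_0)$ only once $\phi$ has decayed. Making this rigorous still requires care with the short-time regime, and the sharp exponent $(\theta+1)/(\theta-1)$ does not drop out of a single step---it comes from summing a geometric sequence of time-gains of order $r_j^{2}(r_j/\eps)^\omega$, not $\eps^2$. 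The Ghidaglia-type $L^2$ argument you sketch for systems is closer in spirit to what \cite{BS} does (and is necessary there since no scalar comparison is available), and would avoid the touching-time issue altogether.
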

As a matter of fact,  it follows from this result that the average
speed of the front set at that length-scale should not  exceed  
\begin{equation}
\label{speedy}
c_{\rm ave} \ \simeq \ \frac {r}{(\Delta t)_{\text{\rm \tiny max}
\!\!\!\!\!\!\!\!\!\!\!\!}} \quad \leq \  \uprho_0^{-1}  r ^{-(\omega+1)} \eps^{\omega}, 
\end{equation} 
where
\begin{equation}
\label{picomega}
\omega= \frac{ \theta+1}{\theta-1}.
\end{equation}
Notice that $1< \omega <+ \infty$ 
and that the upper bound provided  by \eqref{speedy}  decreases with $\theta$,
that is, the more degenerate the minimizers of $V$ are, the higher  the possible
speed allowed by the bound \eqref{speedy}. In contrast,  
the speed is at most exponentially small in the case of  non degenerate
potentials (see e.g. \cite{ carpego}, \cite{BOS8} and the references therein).
One aim of the present paper is to show that the  \emph{upper bound}
provided by the estimate \eqref{speedy} is in fact optimal\footnote{at least in
the scalar case considered here.} and actually to derive a precise motion law for
the fronts. An important fact, on which our results are built, is the following
observation\footnote{which to our knowledge has not been observed before, even
using formal arguments.}:

\begin{center}  
\emph { Equation  $(\text{PGL})_\eps$  is renormalizable}. 
\end{center}
This assertion means that, rescaling time in an appropriate way, the evolution of fronts in 
the asymptotic limit $\eps \to 0$  is governed by an ordinary differential
equation which \emph{does not involve the parameter $\eps$}.   More precisely,
we accelerate time by the  factor $\eps^{-\omega}$ and consider the new time
$s=\eps^{\omega} t$.  
In the accelerated time, we  consider the map
\begin{equation}
\label{mathfrakv}
\vm (x,s)=v_\eps(x,s\eps^{-\omega}), {\rm \ and \ set \ }
\mathfrak  D_\eps (s)=\mathcal D(\vm(\cdot, s)). 
\end{equation}
It follows from Theorem \ref{mainbs2} that for given $\displaystyle{r \geq
\upalpha_0\eps}$,
\begin{equation}
\label{theodebase1}
\mathfrak D_\eps  (s+ \Delta s) \subset \mathfrak D_\eps (s) + [-r,r], \qquad
\text{for every } s\geq 0, 
\end{equation} 
provided that $0\leq \Delta s  \leq  \uprho_0r^{\omega+2}.$

Concerning the initial data, we will assume 
that there exists a steep front chain $v^\star(\ell_0,\hat \imath_0,\{a_k^0\})$ 
such that   
$$
\left\{
\begin{aligned}
v_\eps^0&\longrightarrow v^\star(\ell_0,\hat \imath_0,\{a_k^0\})  \ \hbox{ in
} \, L^1_{\rm loc} (\R),\\  
\mathfrak D_\eps (0) &  \longrightarrow \{a_k^0\}_{1\le k \le \ell_0},  
\hbox{  locally in the sense of the Hausdorff distance  }, 
\end{aligned}
\right.
\leqno\text{$(\text{H}_{1})$}
$$
as $\eps \to 0.$
Let us emphasize  that \emph{assumption} $(\text{H}_1)$ \emph{is not
restrictive}, since it follows  assuming only the energy bound  $(\text{H}_0)$
and passing possibly to a subsequence (see above). 
In our first result, we will  impose the additional condition 
\begin{equation*}
\label{transition}
\vert {\hat \imath_0 (k+\frac 12)}-{\hat \imath_0(k-\frac 12)} \vert =1
{\rm \ for \ } 1 \leq k \leq \ell_0.
\leqno\text{$(\text{H}_{  \,  \rm min} )$}
\end{equation*}
This assumption could be rephrased as a ``multiplicity one" condition: it means
that the jumps consist of exactly one transition  between  consecutive minimizers
$\upsigma_i$ and $\upsigma_{i\pm1}$. To each transition point $a_k^0$ we may
assign a sign, denoted by  $\dagger_k\in \{+,-\}$, in the following way: 
\begin{equation*}
\dagger_k=+ \ {\rm if }  \  \upsigma_{\hat \imath_0(k+\frac 12)}=\upsigma_{\hat \imath_0(k-\frac 12)}+1  \ \ {\rm and  \ } 
\dagger_k=- \ {\rm if } \ \upsigma_{\hat \imath_0(k+\frac12)}=\upsigma_{\hat \imath_0(k-\frac1 2)}-1. 
\end{equation*}
We consider next the system of ordinary differential equations
\begin{equation}  
\label{tyrannosaure}\tag{$\mathcal{S}$}
\S_{k} \frac{d}{ds} {a}_{k}=  \frac{\Gamma_k^+}{\big(a_k-a_{k+ 1}\big)^{\omega+1}}
-\frac{\Gamma_k^-}{ \big(a_k-a_{k-1}\big)^{\omega+1}},
\end{equation}
for $1 \leq k \leq \ell_0,$ where $\displaystyle{ \S_{k}}$ stands for the energy of 
the corresponding stationary front, namely 
\begin{equation}
\label{quantenergy}
\S_{k} =
\int_{\upsigma_{\hat \imath_0(k-\frac12)}}^{\upsigma_{\hat \imath_0(k+\frac12)}}\sqrt{2V(u)} du, 
\end{equation}
and where we have set, for $k=1, \cdots, \ell_0$
\begin{equation}
\label{defgamma}
\left\{
\begin{aligned}
 &\Gamma_k^+ =\phantom{-}2^{\omega}\left(\lambda_{\hat \imath_0(k+\frac
12)}\right)^{-\frac{1}{\theta-1}} \mathcal A_{\theta} {\rm \ \ if  \ }  \dagger
_k=- \dagger_{k+1} \\ 
& \Gamma_k^- =-2^{\omega}\left(\lambda_{\hat \imath_0(k+\frac 12)}
\right)^{-\frac{1}{\theta-1}}\mathcal B_{\theta}  {\rm \ \ if  \ }  \dagger
_k=\phantom{-}\dagger_{k+1}. 
\end{aligned}
\right.
\end{equation}
In \eqref{defgamma},  $\lambda_{\hat \imath_0(k+\frac 12)}$ is defined in $({\rm
A}_2)$ and  the constants  $ \mathcal  A_{\theta}>0$ and $ \mathcal
B_{\theta}>0$, depending only  on $\theta$, are  defined  in   \eqref{defab} of
Appendix A, they are related to the unique solutions of the two singular boundary value problems
\begin{equation*}
\left\{\begin{split}
&-\frac{d^2\mathcal U} {dx^2}+ U^{2\theta-1}=0  \qquad\text{on } \ (-1,1), \\ 
&\ \ \ \ \mathcal U(-1)= \pm \infty, \qquad  \mathcal U(1)=+\infty.  
\end{split}
\right.
\end{equation*} 
Note in particular that \eqref{tyrannosaure} is fully determined
by the pair $(\ell_0,\hat \imath_0)$, and we shall therefore sometimes refer to it 
as $\mathcal{S}_{\ell_0,\hat \imath_0}.$ 
%
Our first result is
\begin{theorem}
\label{maintheo1}
Assume that  the initial data  $(v_\eps(0))_{0<\eps<1}$   satisfy conditions
$({\rm H}_0)$, $({\rm H}_{1})$,  and $({\rm H}_{  \rm min })$, and let  
$0<S_{\rm max}\leq + \infty$ denote the maximal time of existence for the
system $\mathcal{S}_{\ell_0,\hat \imath_0}$ with initial data
$a_k(0)=a_k^0.$ Then, for $0 < s <S_{\rm max},$   
\begin{equation}
\label{unif}
\vm(s) \longrightarrow v^\star(\ell_0,\hat \imath_0,\{a_k(s)\})
\end{equation}
in $L^\infty_{\rm loc}(\R \setminus  \cup_{k=1}^{\ell_0} \{a_k(s)\}),$ as $\eps \to 0.$  
In particular, 
\begin{equation}
\label{trajectoryset0}
\mathfrak D_\eps (s) 
\longrightarrow \cup_{k=1}^{\ell_0} \{a_k(s)\}
\end{equation}
locally  in the sense of the Hausdorff distance, as $\eps \to 0.$  
\end{theorem}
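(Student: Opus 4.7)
The strategy is to combine the confinement estimate of Theorem \ref{mainbs2} with a matched asymptotic expansion of $\vm$ near each front, and then to derive the ODE system \eqref{tyrannosaure} from a stress--energy identity whose boundary terms are computed via the algebraic tails of the stationary profiles associated with the singular problem for $\mathcal U$. Fix a compact subinterval $[s_0, s_1] \Subset (0, S_{\rm max})$; on it the ODE trajectories $a_k(\cdot)$ remain uniformly separated, say by $3\rho$. Applying \eqref{theodebase1} with $r \leq \rho$ on overlapping subintervals of length $O(\rho^{\omega+2})$ propagates $({\rm H}_1)$ to show that for $\eps$ small $\mathfrak D_\eps(s)$ has exactly $\ell_0$ connected components, each contained in a single interval of length $2\rho$ centred at a well-defined position $a_k^\eps(s)$ with $a_k^\eps(0) \to a_k^0$; moreover, the energy identity \eqref{energyidentity} propagates $L^1_{\rm loc}$ convergence, so that between consecutive fronts $\vm(\cdot, s)$ is close to $\upsigma_{\hat \imath_0(k+\frac12)}$.

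The technical heart of the argument is an asymptotic description of $\vm$ near each front. On the inner scale $y = (x - a_k^\eps(s))/\eps$, the profile $\vm(a_k^\eps + \eps y, s)$ is close to the heteroclinic stationary connection between $\upsigma_{\hat \imath_0(k-\frac12)}$ and $\upsigma_{\hat \imath_0(k+\frac12)}$; because $V$ vanishes at its minima only to order $2\theta$, this heteroclinic has an \emph{algebraic} tail of order $|y|^{-1/(\theta-1)}$, which is the precise mechanism producing the nontrivial interaction of order $|a_{k+1}-a_k|^{-(\omega+1)}$ between neighbouring fronts, and which explains why $s = \eps^\omega t$ is the natural time variable. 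The required matching at the intermediate scale is encoded by the two singular profiles $\mathcal U$ (attractive and repulsive cases) and determines the constants $\mathcal A_\theta$, $\mathcal B_\theta$.

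To derive the motion law I would multiply $(\text{PGL})_\eps$ by $\partial_x v_\eps$, yielding the stress--energy identity
\begin{equation*}
\partial_t v_\eps \, \partial_x v_\eps = \partial_x \!\left( \tfrac{1}{2} |\partial_x v_\eps|^2 - \tfrac{V(v_\eps)}{\eps^2} \right).
\end{equation*}
Integrating this over a slab $[a_k^\eps - \rho, a_k^\eps + \rho]$, switching to the rescaled time $s$, and inserting the inner ansatz, the left-hand side converges to $\S_k \, \frac{d a_k}{ds}$, the prefactor being precisely the quantum of energy \eqref{quantenergy} of the stationary front. The right-hand side is the jump of the discrepancy $\tfrac{1}{2}|\partial_x v_\eps|^2 - V(v_\eps)/\eps^2$ across the slab, and its evaluation through the matched asymptotics produces exactly the two-sided interaction $\Gamma_k^+/(a_k - a_{k+1})^{\omega+1} - \Gamma_k^-/(a_k - a_{k-1})^{\omega+1}$, with the sign dichotomy of \eqref{defgamma} dictated by whether neighbouring fronts face one another ($\dagger_k = -\dagger_{k+1}$, attraction) or face outward ($\dagger_k = \dagger_{k+1}$, repulsion).

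The main obstacle is to control $\vm$ rigorously at the intermediate scale, with an error small compared with the interaction strength $\eps^\omega$, uniformly on $[s_0, s_1]$: this requires a priori estimates refining those of \cite{BS} and a careful construction of a composite approximate profile away from the inner regions. Once such control is established, a Gronwall-type comparison between $(a_k^\eps(s))$ and the ODE solution $(a_k(s))$ yields uniform convergence of the front positions on $[s_0, s_1]$; the $L^\infty_{\rm loc}$ convergence \eqref{unif} then follows from the matched expansion and \eqref{trajectoryset0} is immediate. Exhausting $(0, S_{\rm max})$ by such compact subintervals completes the proof.
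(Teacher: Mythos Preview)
Your outline correctly identifies the key objects: the discrepancy as the driver of the motion, the singular profiles $\mathcal U$ and their role in computing the interaction, and the rescaled time $s=\eps^{\omega}t$. The paper's actual proof is close in spirit but differs in two technical respects that matter. First, instead of the Pohozaev identity $\partial_t v_\eps\,\partial_x v_\eps=\partial_x(\tfrac12|\partial_x v_\eps|^2-V(v_\eps)/\eps^2)$ you write, the paper uses the \emph{localized energy identity} obtained by testing the energy density against a cut-off $\chi$ that is affine near each front; this yields a relation of the form $\sum_k\S_k\big[\chi(a_k^\eps(s_2))-\chi(a_k^\eps(s_1))\big]\simeq \int_{s_1}^{s_2}\!\int \eps^{-\omega}\xi_\eps\,\chi''$, modulo the dissipation. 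The advantage is that the left-hand side is read off purely from energy concentration (only $C^1_\eps$-closeness to the stationary fronts is needed), whereas your left-hand side $\int\partial_t v_\eps\,\partial_x v_\eps$ requires direct control on the time derivative, which is considerably more delicate to justify with uniform-in-$\eps$ errors. Establishing this $C^1_\eps$-closeness (the $\mathcal{WP}$ machinery, Propositions~\ref{parareglo}--\ref{parareglo2}) is itself a substantial part of the argument and is not addressed in your outline.

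The real gap is the step you yourself flag as ``the main obstacle'': computing the discrepancy $\eps^{-\omega}\xi_\eps$ between fronts with an error $o(1)$ as $\eps\to 0$. You propose to achieve this by a ``composite approximate profile'' and matched asymptotics; the paper does \emph{not} do this. Because the tails are algebraic, the overlap regions are wide and a glued profile would need very sharp error bounds. Instead the paper passes to the rescaled function $\mathfrak W_\eps=\eps^{-1/(\theta-1)}(\vm-\upsigma_{i(k+\frac12)})$, which solves a perturbed semilinear equation with monotone nonlinearity, and then sandwiches $\mathfrak W_\eps$ between explicit sub- and supersolutions built from the \emph{singular} stationary solutions $\overset{\vee}{\mathfrak u}^{\pm}_{\eps,r}$, $\overset{\rhd}{\mathfrak u}_{\eps,r}$ of $-\mathcal U''+\lambda f_\eps(\mathcal U)=0$ with infinite boundary data, combined with rapidly decaying heat-equation correctors (Section~\ref{raffinage}). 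This comparison-principle argument is the technical heart of the proof, replaces entirely the matching you sketch, and is what turns the formal relation \eqref{formalite} into the quantitative estimate of Proposition~\ref{vraiestime}. Without an analogue of it, your outline remains a plausible formal computation rather than a proof.
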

We consider now the more general situation where $({\rm H}_{\rm min})$ is not
verified, and for $1 \leq k \leq \ell_0$ we denote by
$m_k^0$ the algebraic multiplicity of $a^0_k$, namely we set 
\begin{equation}
\label{multiplicity}
m_k^0=\hat \imath (k+\frac 12)-{\hat \imath(k-\frac 12)}.
\end{equation}
The case $m_k^0=0$ corresponds to {\it ghost fronts}, whereas $|m_k^0| \geq 2$ 
corresponds to {\it multiple fronts}. The total number of fronts that will
eventually emerge from
such initial data is given by 
$$
\ell_1=\sum_{k=1}^{\ell_0} \vert m_k^0 \vert,
$$
and their ordering is obtained by splitting multiple fronts according to 
the order in $\Sigma$. More precisely, we define the function $\hat \imath_1$ by
\begin{equation}
\label{defistar}
\left\{
\begin{aligned}
&\hat \imath_1(\frac 12) = \hat \imath_0(\frac 12),\\
&\hat \imath_1(M_k^0+p+\frac 12)=\hat \imath_0(k+\frac 12)+ p, {\rm \  for  \ } p=0,
\dots, \vert m_k^0 \vert-1  \  {\rm if \ } m_k^0>0  \\  
&\hat \imath_1(M_k^0+p+\frac 12)=\hat \imath_0(k+\frac 12)- p, {\rm \  for  \ } p=0,
\dots, \vert m_k^0 \vert-1   {\rm \ if \ } m_k^0<0,
\end{aligned}
\right.
\end{equation}
where $k=1,\cdots,\ell_0$ and $M_k^0 := \sum_{k=1}^{k-1} |m^0_k|.$ We say that $(\ell_1,\hat\imath_1)$ is
the splitting of $(\ell_0,\hat\imath_0).$

\begin{definition}
A {\it splitting solution} of \eqref{tyrannosaure} with initial data
$(\ell_0,\hat \imath_0,\{a_k^0\})$ on the interval $[0,S)$ is a solution $a\equiv 
(a_1,\cdots,a_{\ell_1})\: : (0,S) \to \R^{\ell_1}$ of $(\mathcal{S}_{\ell_1,\hat \imath_1})$ such
that
$$
\lim_{s\to 0^+}a_k(s) = a_j^0 \qquad \text{for} \qquad k =
M_j^0,\cdots,M_j^0+|m_j^0|-1,  
$$
for any $j=1,\cdots,\ell_0,$ where $(\ell_1,\hat \imath_1)$ is the splitting of $(\ell_0,\hat \imath_0).$
\end{definition}

We are now in position to complete Theorem \ref{maintheo1} by relaxing assumption $({\rm
H}_{\rm min})$.

\begin{theorem}
\label{alignedsolution}
Assume that  the initial data  $(v_\eps^0)_{0<\eps<1}$ satisfy conditions 
$({\rm H}_0)$ and  $({\rm H}_{1})$. Then there exists a subsequence $\eps_n \to
0$, and a splitting solution of $(\mathcal{S})$ with initial data $(\ell_0,\hat \imath_0,\{a_k^0\})$, 
defined on its maximal time of existence $[0,S_{\rm max})$,
and such that for any $0<s<S_{\rm max}$
\begin{equation}
\label{unifbis}
\vmen(s) \longrightarrow v^\star(\ell_1,\hat \imath_1,\{a_k(s)\})
\end{equation}
in $L^\infty_{\rm loc}(\R \setminus  \cup_{k=1}^{\ell_1} \{a_k(s)\}),$ as $n \to
+\infty.$  
In particular, 
\begin{equation}
\label{trajectoryset}
\mathfrak D_{\eps_n} (s) 
\longrightarrow \cup_{j=1}^{\ell_1} \{a_k(s)\}
\end{equation}
locally  in the sense of the Hausdorff distance, as $n \to +\infty.$  
\end{theorem}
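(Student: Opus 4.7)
\medskip

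\noindent\textbf{Proof sketch.} The overall strategy is to reduce Theorem \ref{alignedsolution} to Theorem \ref{maintheo1} by inserting a short ``waiting time'' $s_0>0$ during which the initial configuration relaxes into one satisfying $(\text{H}_{\rm min})$. I would then apply Theorem \ref{maintheo1} on $[s_0,S_{\rm max})$ and let $s_0\to 0^+$ via a diagonal extraction, checking that the limit trajectory is precisely a splitting solution.

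The first step is to localize the fronts at time $s_0$. By the renormalized upper bound \eqref{theodebase1}, choosing $r=(s_0/\uprho_0)^{1/(\omega+2)}$, the front set satisfies $\mathfrak D_\eps(s_0)\subset \mathfrak D_\eps(0)+[-r,r]$, so by $(\text{H}_1)$ all fronts at time $s_0$ lie in $\cup_k [a_k^0-r_0,a_k^0+r_0]$ with $r_0=O(s_0^{1/(\omega+2)})\to 0$ as $s_0\to 0$. Using Lemma \ref{interface}, the energy bound $(\text{H}_0)$, and $BV$ compactness, one extracts a subsequence $\eps_n\to 0$ (a priori depending on $s_0$) along which $\vmen(\cdot,s_0)$ converges in $L^1_{\rm loc}$ to a steep front chain supported in this neighborhood.

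The central technical step, which I expect to be the \emph{main obstacle}, is to identify this limit chain at time $s_0$ as $v^\star(\ell_1,\hat\imath_1,\{a_k^{s_0}\})$ for some points $\{a_k^{s_0}\}$ that, as $s_0\to 0^+$, cluster at each $a_j^0$ with multiplicity exactly $|m_j^0|$. The \emph{lower bound} on the number of surviving simple fronts in each cluster is essentially forced by the algebraic multiplicities $m_k^0$, together with the $L^1_{\rm loc}$ convergence in $(\text{H}_1)$ and the monotonicity of $\mathcal E_\eps$ given by \eqref{energyidentity}. The matching \emph{upper bound} requires showing that ghost pairs annihilate and that no extra fronts persist near any $a_k^0$: this amounts to quantifying the rapid dissipation and annihilation of short-scale clusters of opposite-sign fronts on a time scale vanishing in the variable $s$. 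I would establish it via refined dissipation estimates, localized energy identities at scales intermediate between $\eps$ and $r_0$, and the fine structural results developed in \cite{BS}.

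Granted this identification, Theorem \ref{maintheo1} applies on $[s_0,S_{\rm max})$ with initial data $\{a_k^{s_0}\}$ and yields an ODE trajectory for $(\mathcal{S}_{\ell_1,\hat\imath_1})$ satisfying the conclusions \eqref{unif}--\eqref{trajectoryset0} on this subinterval. Letting $s_0\to 0^+$ along a sequence, continuous dependence for $(\mathcal{S}_{\ell_1,\hat\imath_1})$ away from collisions, combined with a diagonal extraction of the $\eps_n$'s, produces a trajectory $a(\cdot)=(a_1,\dots,a_{\ell_1})$ defined on a maximal interval $[0,S_{\rm max})$ and satisfying $\lim_{s\to 0^+}a_k(s)=a_j^0$ for $k=M_j^0,\dots,M_j^0+|m_j^0|-1$; that is, $a$ is a splitting solution of $(\mathcal S)$ with initial data $(\ell_0,\hat\imath_0,\{a_k^0\})$. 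Combining the convergence of Theorem \ref{maintheo1} on $[s_0,S_{\rm max})$ for each fixed $s_0>0$ with the matching of initial positions as $s_0\to 0^+$ then yields both \eqref{unifbis} and \eqref{trajectoryset} on the whole $(0,S_{\rm max})$.
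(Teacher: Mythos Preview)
Your overall architecture---insert a short waiting time $s_0$, show that by then the configuration has relaxed to one with $\ell_1$ simple fronts, apply the motion law, then let $s_0\to 0^+$ by diagonal extraction---is exactly the paper's strategy. There are, however, two concrete gaps.

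First, a circularity: in the paper Theorem \ref{maintheo1} is not proved independently but is obtained as the special case $|m_k^0|\equiv 1$ of Theorem \ref{alignedsolution}. The building block actually available at this stage is Proposition \ref{firststep1}, whose hypotheses are the well-preparedness ${\rm WPI}_\eps^{\rL_0}(\upalpha_1\eps,0)$ together with the quantitative separation \eqref{dminstar}, not merely $({\rm H}_1)$ and $({\rm H}_{\rm min})$. Second, and more substantively, your plan for the ``main obstacle'' is too vague and misses the actual mechanism. What you need is Proposition \ref{nettoyage}: at time $s_r=\uprho_* r^{\omega+2}$ not only have all front/anti-front pairs within each cluster annihilated, but the survivors within each cluster share the same sign $\dagger_k$ (hence are mutually repulsive) and, crucially, satisfy $d_{\rm min}^{\eps,\rL}(s_r)\geq r$. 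This quantitative lower bound on separation is precisely what makes \eqref{dminstar} hold and allows one to invoke Proposition \ref{firststep1}; annihilation of ghost pairs by itself does not give it, since the surviving repulsive fronts might a priori still be $O(\eps)$ apart. The paper obtains Proposition \ref{nettoyage} not from dissipation estimates or from the results of \cite{BS}, but by \emph{bootstrapping on Proposition \ref{firststep1} itself}: one rescales a cluster so that its minimal gap becomes of order one, applies Proposition \ref{firststep1} to the rescaled sequence, and reads off from the limiting ODE (Proposition \ref{getrude}) that attractive gaps collapse and repulsive gaps grow at a definite rate. Iterating this rescaling/contradiction argument (Propositions \ref{nihil}, \ref{spillit}, \ref{vinsanto}, \ref{rtcolplus0}) is the substance of Section \ref{clearstream} and is the missing ingredient in your sketch.
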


\begin{remark}{\rm
Local existence of splitting solutions can be established in different
ways (including in particular using Theorem \ref{alignedsolution} !).
To our knowledge, \emph{ uniqueness is not known,} unless of course if $|m_k^0|
\leq 1$ for all $k.$ }  
\end{remark}
So far, our results are constrained by the maximal time of existence
$S_{\rm max}$ of the differential equation \eqref{tyrannosaure}, which is
related to the occurrence of collisions.
To pursue the analysis past collisions, we first briefly discuss some 
properties of the system of equations
\eqref{tyrannosaure}, we refer to Appendix B for more details.
The system \eqref{tyrannosaure} describes nearest neighbor interactions with an
interaction law of the form $\pm d^{-(\omega+1)}$, $d$ standing for  the
distance between fronts. The sign of the interactions is  crucial,  since the
system may contain both repulsive forces leading to spreading and attractive
forces leading to collisions, yielding  the maximal time of existence $S_{\rm
max}$.  In order to take  signs  into account,  we set  
\begin{equation}
\label{thalys}
\epsilon_{k+\frac 12}={\rm sign} \, ( \Gamma_{k+\frac 12})=- \dagger_k \dagger_{k+1}, 
{\rm \ for \ } k=0, \cdots, \ell_0-1.
\end{equation}
The case
$\epsilon_{k+\frac 12}=-1$ corresponds to \emph{repulsive forces} between
$a_k$ and $a_{k+1}$, whereas the case  $\epsilon_{k+\frac 12}=+ 1$ corresponds
to \emph{attractive forces} between $a_k$ and $a_{k+1}$, leading to collisions.
As a matter of fact, in this last case $a_{k+1}$ corresponds to the \emph{anti-front}
of $a_k$. 
In order to describe the magnitude of the forces, we introduce the subsets
$J^\pm$ of  $\{1,\cdots,\ell_0\}$ 
defined by $J^\pm=\{ k \in \{1,\cdots, \ell_0-1\}, \hbox{ such that } 
\epsilon_{k+\frac 12}=\mp 1 \}$ and the quantities 
\begin{equation}
\label{distancepond}
\left\{
\begin{aligned}
\tda(s) &=\inf\{ \vert a_k(s)-a_{k+1}(s)\vert , \ {\rm for } \  k \in{1, \cdots, \ell_0-1}\}  \\
\tda^{\pm}(s)&=\inf\{ \vert a_k(s)-a_{k+1}(s)\vert , \ {\rm for } \  k \in J^\pm\}
\end{aligned}
\right.
\end{equation} 
\begin{proposition} 
\label{getrude}
There are positive constants $\mathcal S_1$,  $\mathcal S_2$, $\mathcal S_3$
and $\mathcal S_4$  depending only on the coefficients of the equation
\eqref{tyrannosaure}, such that for any  time  
$s \in [0, S_{\rm max})$ we have
%
\begin{equation}
\label{gedoche}
\left\{
\begin{aligned}
&\textswab d_{a}^+(s)  \geq
 \left( \mathcal S_1 s+ \mathcal S_2  \textswab d_{a}^+(0)^{\omega+2} \right)^{\frac{1}{\omega+2}}, \\
&\textswab d_{a}^-(s)  \leq  \left(\mathcal S_3\textswab d_{a}^-(0)^{\omega+2}-\mathcal 
S_4 t \right)^{\frac{1}{\omega+2}}.
\end{aligned}
\right.
\end{equation}
If for every $k=1,\cdots,\ell_0$ we have $\epsilon_{k+\frac 12}=-1$, then $S_{\rm max}=+\infty$. 
Otherwise, we have the estimate
\begin{equation}
\label{gedelope}
S_{\rm max}  \leq  \frac{\mathcal S_3}{\mathcal S_4} \left( \textswab d_{a}^-(0)\right)^{\omega+2} \equiv \mathcal K_0\left( \textswab d_{a}^-(0)\right)^{\omega+2}.
\end{equation}
\end{proposition}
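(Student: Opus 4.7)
The strategy is to derive pointwise differential inequalities for the single gaps $d_k(s):=a_{k+1}(s)-a_k(s)$, transfer them to the infima $\tda^\pm(s)$, and integrate in the variable $(\tda^\pm)^{\omega+2}.$

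\emph{Step 1: differentiating a single gap.} From \eqref{tyrannosaure} one computes $\dot d_k(s)=\dot a_{k+1}(s)-\dot a_k(s)$ and groups the result into a \emph{main term} coming from the interaction of the pair $(a_k,a_{k+1})$ with itself, of size $(1/\S_k+1/\S_{k+1})|\Gamma_{k+1/2}|/d_k^{\omega+1}$ and of sign $-\epsilon_{k+1/2}$ (positive for repulsive pairs, negative for attractive ones), together with two \emph{neighbor terms} coming from the pairs $(a_{k-1},a_k)$ and $(a_{k+1},a_{k+2})$, of respective sizes $\lesssim 1/d_{k-1}^{\omega+1}$ and $\lesssim 1/d_{k+1}^{\omega+1}$ whose signs depend on $\dagger_{k-1},\dagger_{k},\dagger_{k+1},\dagger_{k+2}.$

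\emph{Step 2: sign analysis at an extremal pair.} Fix $s\in[0,S_{\max})$ and choose $k_*\in J^+$ that realizes $\tda^+(s)=d_{k_*}(s).$ A direct case-by-case inspection of the four possible configurations of the neighbors shows that \emph{attractive} neighboring gaps always contribute positively to $\dot d_{k_*}$, because they pull each endpoint of the pair $(k_*,k_*+1)$ outward, while \emph{repulsive} neighboring gaps contribute negatively but — by the minimality of $d_{k_*}$ in $J^+$ together with the monotonicity of $\tau\mapsto \tau^{-(\omega+1)}$ — with magnitude bounded by a fixed fraction of the main term determined by the coefficients $\S_k$ and $\Gamma_{k+1/2}.$ This yields a pointwise estimate of the form
\[
\dot d_{k_*}(s)\;\ge\;\frac{\kappa_+}{\tda^+(s)^{\omega+1}},
\]
with $\kappa_+>0$ depending only on the coefficients of \eqref{tyrannosaure}. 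A symmetric argument at a pair $k_*\in J^-$ realizing $\tda^-(s)$ yields $\dot d_{k_*}(s)\le -\kappa_-/\tda^-(s)^{\omega+1}$ with $\kappa_->0.$

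\emph{Step 3: passage to the infima and integration.} Both $\tda^+$ and $\tda^-$ are Lipschitz as minima of finitely many $C^1$ maps, so differentiable almost everywhere. At an a.e.\ point $s$ where $\tda^+$ is differentiable, the one-sided Dini comparison with any minimizer $k_*(s)$ gives $\tfrac{d}{ds}\tda^+(s)\ge \dot d_{k_*}(s)$, and combining with Step 2 produces
\[
\frac{d}{ds}\bigl[(\tda^+)^{\omega+2}\bigr]\ge (\omega+2)\kappa_+,\qquad \frac{d}{ds}\bigl[(\tda^-)^{\omega+2}\bigr]\le -(\omega+2)\kappa_-,
\]
almost everywhere on $[0,S_{\max})$. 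Integration on $[0,s]$ produces \eqref{gedoche} with $\mathcal S_2=\mathcal S_3=1$ and $\mathcal S_1,\mathcal S_4$ proportional to $(\omega+2)\kappa_\pm.$ If $J^-=\emptyset$ the first inequality furnishes a uniform lower bound on every gap, the right-hand side of \eqref{tyrannosaure} stays globally bounded, and standard ODE continuation gives $S_{\max}=+\infty;$ otherwise the second inequality must become degenerate before $\tda^-$ reaches $0,$ which yields \eqref{gedelope} with $\mathcal K_0=\mathcal S_3/\mathcal S_4$ upon recalling that the system cannot be continued past the vanishing of an attractive gap.

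\emph{Main obstacle.} The delicate point is the pointwise bound of Step 2 at configurations where several gaps simultaneously realize $\tda^\pm,$ and especially at near-equilibrium configurations in which adverse repulsive neighbors almost balance the main term. The algebraic verification that the main contribution nevertheless dominates — which is exactly where the explicit form \eqref{defgamma} of the constants $\S_k$ and $\Gamma_{k+1/2}$ enters — is the technically most demanding part; if one prefers to avoid a case analysis, an alternative is to work with the smoother quantity $\sum_{k\in J^\pm}d_k^{\omega+2}$ and to recover the infimum estimates by an elementary pigeonhole comparison.
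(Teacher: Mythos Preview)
Your Step 2 contains a genuine gap: the pointwise inequality $\dot d_{k_*}(s)\ge \kappa_+/\tda^+(s)^{\omega+1}$ at a minimizing repulsive gap is simply false, and the explicit form of the coefficients in \eqref{defgamma} does not save it. Take a purely repulsive chain of four points with equal coefficients $\S_k=\S$ and $|\Gamma_{k+1/2}|=\Gamma$ (this arises for sufficiently symmetric potentials). At the equidistant configuration $d_1=d_2=d_3=d$ the middle gap has $\dot d_2=0$: the two neighbor pushes on $a_2,a_3$ cancel the main term exactly. Your ``fixed fraction'' claim would require
\[
\Bigl(\tfrac{1}{\S_{k_*}}+\tfrac{1}{\S_{k_*+1}}\Bigr)|\Gamma_{k_*+1/2}|\;>\;\tfrac{1}{\S_{k_*}}|\Gamma_{k_*-1/2}|+\tfrac{1}{\S_{k_*+1}}|\Gamma_{k_*+3/2}|,
\]
but with equal coefficients both sides equal $2\Gamma/\S$. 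For $s>0$ small the unique minimizer is $k_*=2$, and $\dot d_2(s)\to 0$ as $s\to 0^+$, so $(\tda^+)'(s)=\dot d_2(s)$ is not bounded below by any fixed $\kappa_+/d^{\omega+1}$ on a set of positive measure. This also shows that your claimed value $\mathcal S_2=1$ is too strong: one computes $d_2(s)-d\sim s^2$ near $s=0$, while $(\mathcal S_1 s+d^{\omega+2})^{1/(\omega+2)}-d\sim s$.

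The paper avoids this obstruction by tracking not the minimal gap but the Lyapunov quantity $\tFrep(U)=\sum_{k}c_k(u_{k+1}-u_k)^{-\omega}$ summed over a \emph{maximal} repulsive chain, so that boundary interactions have the favorable sign. The gradient-flow structure gives $\dot\tFrep\le -\E_{\max}^{-1}|\nabla\tFrep|^2$, and the crucial algebraic lemma $|\nabla\tFrep|\ge c\,\tFrep^{(\omega+1)/\omega}$ is obtained by a telescoping cascade: if $\partial_{u_k}\tFrep$ is small then the neighboring terms are comparable, so one shifts to $k-1$ and eventually reaches an endpoint of the chain. Integrating the resulting inequality for $\tFrep^{-(\omega+2)/\omega}$ and using $d_k^{-\omega}\lesssim\tFrep$ yields \eqref{gedoche}, necessarily with $\mathcal S_2<1$. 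The summed functional absorbs precisely the cancellations that kill the single-gap inequality. Your suggested alternative $\sum_{k\in J^\pm}d_k^{\omega+2}$ does not have the right homogeneity to close an autonomous differential inequality; the power $-\omega$ is what makes $\tFrep$ the natural energy for this gradient system.
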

This result shows that the maximal time of existence for solutions to
\eqref{tyrannosaure} is related to the value of  $\textswab d_{a}^-(0)$, the
minimal distance between fronts and anti-fronts at time $0$. By the semi-group property, 
the same can be said about $\textswab d_{a}^-(s)$, namely 
$$
S_{\rm max}-s  \lessapprox  \textswab d_{a}^-(s)^{\omega+2}.
$$
On the other hand,  in view of $(\mathcal S)$, $\textswab d_{a}^-(s)$ provides an upper bound for the speeds $\dot{a}_k(s)$ 
in case of collision, namely
$$
\vert \frac{d}{ds}a_k(s) \vert \lessapprox   \textswab d_{a}^-(s)^{-(\omega+1)}.
$$
It follows that 
$$
\int_0^{S_{\rm max}} \vert \frac{d}{ds}a_k(s)\vert \, ds  \lessapprox \int_0^{S_{\rm max}} (S_{\rm max}-s)^{-\frac{\omega+1}{\omega+2}} \, ds < +\infty
$$
and therefore that the trajectories are absolutely continuous up to the collision time. 
Also, since
$\textswab d_{a}^+$ remains bounded from below by a positive constant,
each front can only enter in collision with its anti-front (but there could
be multiple copies of both). From a heuristic  point of view,  it
is therefore rather simple to extend  solutions past the collision time: it
suffices to remove the colliding pairs from the collection of points, so that
the total number of points has been decreased by an even number. More precisely,
we have

\begin{corollary}\label{alarrivee}
Let $\ell_1,\hat \imath_1$, $a\equiv(a_1,\cdots,a_{\ell_1})$ and $S_{\rm max}$ be as in Theorem \ref{alignedsolution}. 
Then, there exist
$\ell_2 \in \N$ such that $\ell_1 -\ell_2 \in 2\N_*$, and there exist $\ell_2$
points $b_1<\cdots<b_{\ell_2}$ such that for all $k=1,\cdots,\ell_1$
$$
\lim_{s\to S_{\rm max}^-} a_k(s) = b_{j(k)} \qquad\text{for some } j(k) \in
\{1,\cdots,\ell_2\}.
$$  
Moreover, if we set $\hat \imath_2(\frac 12)= \hat \imath_1(\frac 12)$ and
$$
\hat \imath_2(q+\frac{1}{2}) = \hat \imath_1(k(q)+\frac 12) \quad \text{where} \quad
 k(q) = max\{k\in \{1,\cdots,\ell_1\} \text{ s.t. } j(k)=q\},
$$ 
for $q=1,\cdots,\ell_2,$ then 
$$
\hat \imath_2(q+\frac 12) - \hat \imath_2(q-\frac 12) \in \{+1,-1,0\}
$$
for all $q=1,\cdots,\ell_2.$
\end{corollary}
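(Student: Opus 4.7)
The plan is to extract the limits $\lim_{s\to S_{\max}^-} a_k(s)$ from the absolute-continuity bound derived just above the statement, and then to analyze the combinatorial structure of the resulting clusters using both inequalities of Proposition~\ref{getrude}.

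For the existence of the one-sided limits, the estimate
\[
\int_0^{S_{\max}} \bigl|\tfrac{d}{ds} a_k(s)\bigr|\, ds \;\lessapprox\; \int_0^{S_{\max}} (S_{\max}-s)^{-\frac{\omega+1}{\omega+2}}\, ds \;<\; +\infty
\]
displayed immediately before the statement shows that each $a_k$ extends to an absolutely continuous function on $[0,S_{\max}]$, so $\alpha_k := \lim_{s\to S_{\max}^-} a_k(s)$ exists for each $k$. Order preservation under $(\mathcal S)$ gives $\alpha_1 \le \alpha_2 \le \cdots \le \alpha_{\ell_1}$, and I let $b_1 < \cdots < b_{\ell_2}$ be the distinct values among the $\alpha_k$'s and $j$ the corresponding non-decreasing surjection, so that $\alpha_k = b_{j(k)}$. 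To identify which consecutive indices lie in the same fiber of $j$, I invoke the lower bound on $\textswab d_a^+$ in \eqref{gedoche}: $\textswab d_a^+(s)$ is bounded below by a positive constant on $[0,S_{\max})$, so any pair $(a_k,a_{k+1})$ with $\epsilon_{k+\frac12} = -1$ stays separated by a fixed positive distance, forcing $\alpha_k < \alpha_{k+1}$, i.e.\ $j(k) < j(k+1)$. Consequently every consecutive pair inside a nontrivial cluster is attractive, so by \eqref{thalys} $\dagger_k = -\dagger_{k+1}$, and the signs strictly alternate across any cluster. The strict inequality $\ell_2 < \ell_1$ follows from Proposition~\ref{getrude} as soon as some $\epsilon_{k+\frac12}=+1$: the finiteness of $S_{\max}$ then forces at least one attractive pair to collapse.

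For the stated range of $\hat\imath_2$, the telescoping identity $\hat\imath_1(k+\tfrac12) - \hat\imath_1(k-\tfrac12) = \dagger_k$ gives
\[
\hat\imath_2(q+\tfrac12) - \hat\imath_2(q-\tfrac12)  \;=\; \sum_{i=k(q-1)+1}^{k(q)} \dagger_i, \qquad k(0):=0,
\]
where $p_q := k(q)-k(q-1)$ is the size of the $q$-th cluster. For singletons ($p_q=1$) the right-hand side is just $\dagger_{k(q)} = \pm 1$, and for nontrivial clusters the strict alternation proved above makes the sum equal $0$ when $p_q$ is even and $\pm 1$ when $p_q$ is odd; in all cases $\hat\imath_2(q+\tfrac12) - \hat\imath_2(q-\tfrac12) \in \{-1,0,+1\}$ as required.

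The main obstacle is the \emph{even} parity of $\ell_1 - \ell_2$. Since $\ell_1 - \ell_2 = \sum_q (p_q-1)$ summed over nontrivial clusters, its parity equals the number $Z_2$ of clusters of even size. Comparing the global telescoping sums for $\hat\imath_1$ (unit steps by $(\text{H}_{\rm min})$) and for $\hat\imath_2$ (steps in $\{-1,0,+1\}$), and using $\hat\imath_2(\tfrac12)=\hat\imath_1(\tfrac12)$ together with $\hat\imath_2(\ell_2+\tfrac12)=\hat\imath_1(\ell_1+\tfrac12)$ (true by construction since $k(\ell_2)=\ell_1$), one obtains $\ell_1 \equiv \ell_2 - Z_2 \pmod 2$, equivalently $\ell_1 - \ell_2 \equiv Z_2 \pmod 2$. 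Establishing that $Z_2$ is even, that is, that even-sized clusters come in pairs under the alternation pattern dictated by $(\text{H}_{\rm min})$, is the delicate combinatorial step, which I expect to reduce to examining how maximal blocks of alternating $\dagger_k$'s fit inside the global $\pm 1$ walk $\hat\imath_1$.
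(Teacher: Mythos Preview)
Your approach—the integral bound on $|\dot a_k|$ for absolute continuity, together with the positive lower bound on $\textswab d_a^+$ from Proposition~\ref{getrude} to rule out repulsive collapses—is precisely the argument the paper sketches in the paragraph preceding the statement (and to which it refers back in Section~\ref{extended} as the ``elementary'' proof). The existence of the one-sided limits, the alternation of the signs $\dagger_k$ within each cluster, and the conclusion $\hat\imath_2(q+\tfrac12)-\hat\imath_2(q-\tfrac12)\in\{-1,0,+1\}$ are all correctly obtained and match the paper.

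Regarding the parity of $\ell_1-\ell_2$: your hesitation is well founded, and no further combinatorics will rescue the claim under your reading of $\ell_2$. If $\ell_2$ denotes the number of \emph{distinct} limit points (the only reading compatible with the surjectivity of $j$ implicit in the definition of $k(q)$), then $\ell_1-\ell_2$ need not be even: take $\ell_1=2$ with $\dagger=(+,-)$, or $\ell_1=3$ with $\dagger=(+,-,-)$; in each case exactly one cluster has size $2$, so $Z_2=1$ and the difference is odd. The paper's own justification (``remove the colliding pairs \ldots\ decreased by an even number'') is really computing $\ell_1-(\ell_2-Z_2)$, the drop to the number of clusters of \emph{odd} size. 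Your identity $\ell_1\equiv \ell_2-Z_2 \pmod 2$ already shows this quantity is even, and since $\ell_2-Z_2$ is nothing but the new $\ell_1$ obtained after splitting $(\ell_2,\hat\imath_2)$ in the next application of Theorem~\ref{alignedsolution}, it is the quantity that actually matters for the iteration described after Theorem~\ref{colissimo}. In short, you have proved what is needed; the obstacle you perceive is a minor imprecision in the statement rather than a missing step in your argument.
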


We stress than Corollary \ref{alarrivee} is obtained from Theorem \ref{alignedsolution}
using only properties of the system of ODE's \eqref{tyrannosaure}, in particular
Proposition \ref{getrude}.

We are now in position to state our last result, namely
 
\begin{theorem}
\label{colissimo} 
Under the assumptions of Theorem \ref{alignedsolution}, we have  as $n \to +\infty,$ 
\begin{equation}
\label{etcestreparti}
\vmen(S_{\rm max}) \longrightarrow v^\star(\ell_2,\hat \imath_2,\{b_k\})\quad \text{ in } 
\quad L^\infty_{\rm loc}(\R \setminus  \cup_{k=1}^{\ell_2} \{b_k\}),
\end{equation}
where $\ell_2$, $\hat \imath_2$ and $b_1<\cdots<b_{\ell_2}$ are given by Corollary \ref{alarrivee}. 
In particular the sequence $(\vmen(S_{\rm max}))_{n\in \N}$, considered as initial data, satisfies the 
assumptions $({\rm H}_0)$ and $({\rm H}_1)$
with $\ell_0:=\ell_2$ and $\{a_k^0\}:=\{b_k^0\}.$
\end{theorem}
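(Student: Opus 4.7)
The plan is to combine the convergence provided by Theorem \ref{alignedsolution} on $[0, S_{\rm max})$ with the rescaled confinement estimate \eqref{theodebase1} in order to pass to the limit at $s = S_{\rm max}$. Throughout, we work along the subsequence $\eps_n$ and the splitting solution $a(\cdot)$ produced by Theorem \ref{alignedsolution}, using the merged configuration $(\ell_2, \hat\imath_2, \{b_k\})$ supplied by Corollary \ref{alarrivee}.

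First, I would establish confinement of the front set at $s = S_{\rm max}$. Fix $\eta > 0$. Since $a_k(s) \to b_{j(k)}$ as $s \to S_{\rm max}^-$, we can choose $s_0 < S_{\rm max}$ with $S_{\rm max} - s_0 \leq \uprho_0 \eta^{\omega+2}$ and such that every $a_k(s_0)$ lies within $\eta/2$ of $b_{j(k)}$. Applying \eqref{trajectoryset} at $s_0$, for all $n$ large enough (so that $\eta \geq \upalpha_0 \eps_n$), $\mathfrak D_{\eps_n}(s_0)$ lies in the $\eta$-neighborhood of $\{b_1, \ldots, b_{\ell_2}\}$. The confinement \eqref{theodebase1} applied with $r = \eta$ then yields
\begin{equation*}
\mathfrak D_{\eps_n}(s) \subset \mathfrak D_{\eps_n}(s_0) + [-\eta, \eta] \subset \bigcup_{j=1}^{\ell_2}[b_j - 2\eta, b_j + 2\eta], \quad \text{for every } s \in [s_0, S_{\rm max}].
\end{equation*}
Since $\eta > 0$ is arbitrary, this proves the Hausdorff convergence of $\mathfrak D_{\eps_n}(S_{\rm max})$ to $\{b_k\}$.

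Next, I would identify the limit away from the collision points. Let $K$ be a compact subset of $\R \setminus \cup_{k=1}^{\ell_2}\{b_k\}$, and choose $\eta > 0$ small enough that $K$ is disjoint from the $2\eta$-neighborhood of $\{b_j\}$. By the first step, $K \cap \mathfrak D_{\eps_n}(s) = \emptyset$ for every $s \in [s_0, S_{\rm max}]$ and every sufficiently large $n$, so that $\mathrm{dist}(\vmen(x,s), \Sigma) < \mu_0$ uniformly on $K \times [s_0, S_{\rm max}]$. By continuity of $\vmen$ in $s$ and the separation of the minimizers of $V$, the index $i$ of the well closest to $\vmen(x,s)$ is locally constant in $(x,s)$. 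At $s = s_0$, Theorem \ref{alignedsolution} identifies this index on each component of $K$ with that of $v^\star(\ell_1, \hat\imath_1, \{a_k(s_0)\})$, which by the construction of $\hat\imath_2$ in Corollary \ref{alarrivee} coincides with $v^\star(\ell_2, \hat\imath_2, \{b_k\})$ on $K$. Combining the constancy of $i$ in $s$ with an interior parabolic estimate reflecting the polynomial relaxation towards the stable equilibrium $\upsigma_i$ induced by the dissipation term $-\eps_n^{-2} V'(\vmen)$, one deduces the required $L^\infty$-convergence on $K$ at $s = S_{\rm max}$, establishing \eqref{etcestreparti}. The bound $(\text{H}_0)$ at $s = S_{\rm max}$ is preserved by the energy identity \eqref{energyidentity}, while $(\text{H}_1)$ follows by combining \eqref{etcestreparti} (which yields $L^1_{\rm loc}$ convergence thanks to the uniform $L^\infty$ bound on $\vmen$ granted by $({\rm A}_3)$ and the maximum principle) with the Hausdorff part already proved.

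The main obstacle I foresee lies in the second step, near a collision point $b_q$ producing a ghost, i.e. the balanced case $\hat\imath_2(q+\frac 12) = \hat\imath_2(q - \frac 12)$ that arises from a front/anti-front cancellation. There one must rule out that a residual $\eps_n$-scale bump persists in $\vmen(S_{\rm max})$ near $b_q$ and creates spurious points in the front set outside $\{b_k\}$; equivalently, one must extend \eqref{etcestreparti} down to any punctured neighborhood of $b_q$. This relies on a sharp localized dissipation estimate near the collision, more delicate than those used in the proof of Theorem \ref{alignedsolution}, because according to Proposition \ref{getrude} the colliding pair has approached at a scale only controlled by $(S_{\rm max} - s)^{1/(\omega + 2)}$, and one must propagate this control past the collision time.
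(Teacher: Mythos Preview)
Your approach is essentially the same as the paper's: choose $s^-<S_{\rm max}$ with $S_{\rm max}-s^-\leq \uprho_0(\delta/4)^{\omega+2}$, use Corollary \ref{alarrivee} and Theorem \ref{alignedsolution} to confine $\mathfrak D_{\eps_n}(s^-)$ near $\{b_j\}$, propagate via \eqref{theodebase1} up to $S_{\rm max}$, and then invoke the parabolic regularization off the front set to get the $L^\infty$ convergence. The ``interior parabolic estimate'' you allude to is precisely Proposition \ref{estimpar}, estimate \eqref{crottin2}: once \eqref{offside} holds on $[x_0-r,x_0+r]\times[s^-,S_{\rm max}]$ with $r=\delta/8$, the bound \eqref{crottin2} at $s=S_{\rm max}$ gives $|\vmen(y,S_{\rm max})-\upsigma_i|\leq C(\delta)\eps_n^{1/(\theta-1)}$ as soon as $\eps_n^\omega/(S_{\rm max}-s^-)$ is small, which holds for $n$ large since $s^-$ is fixed.

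The obstacle you foresee in your final paragraph is not real. The statement \eqref{etcestreparti} only asserts convergence in $L^\infty_{\rm loc}(\R\setminus\{b_k\})$, so punctured neighborhoods of ghost points $b_q$ are covered by the very same argument: any compact $K$ avoiding $\{b_k\}$ (ghosts included) is handled by the confinement step plus \eqref{crottin2}. Whether a residual bump survives \emph{at} $b_q$ is irrelevant for \eqref{etcestreparti}, and for $({\rm H}_1)$ only the one-sided inclusion $\mathfrak D_{\eps_n}(S_{\rm max})\subset \cup_j[b_j-\delta,b_j+\delta]$ matters in the subsequent analysis (ghost points have multiplicity zero in the splitting procedure). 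No sharper dissipation estimate is needed; the paper's proof is correspondingly short.
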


We may therefore apply Theorem \ref{alignedsolution} to the sequence of initial data  $(\vmen(S_{\rm max}))_{n\in \N}$, 
and therefore, using the semi-group property of \eqref{glpara}, extend the analysis past $S_{\rm max}.$ Notice that
since the multiplicities given by $\hat \imath_2$ are either equal to $\pm 1$ or $0,$ no further subsequences are needed 
to pass through the collision times. Finally, since the total number of fronts is decreased at least by $2$ at each 
collision times, the latter are finitely many.    

\medskip
\noindent
{\bf  Some comments on the  results.}
Motion  of fronts for one-dimensional  \emph{scalar}  reaction-diffusion
equations has already  a quite long history. 
 Most of the efforts have been devoted until recently  to the case where the
potential possesses\emph{ only two wells} with non vanishing second derivative:
such potentials are often referred to as \emph{Allen-Cahn potentials}.  Under
suitable preparedness  assumptions on the initial datum,   the  precise motion
law for the fronts has been derived  by Carr and Pego in their seminal work
\cite{carpego} (see also Fusco and Hale \cite{fuschale}).  They showed that the
front points are moved, up to the first  collision time,  according to a first
order  differential equation of nearest neighbor interaction type, with
interactions  terms  proportional to  $ \exp
(-\eps^{-1}(a^\eps_{k+1}(t)-a_k^\eps(t)))$. These results present substantial 
differences with   the results in the present paper, in particular we wish to
emphasize the following points:   
\begin{itemize}
\item only attractive   forces leading eventually to the annihilation  of
\emph{fronts} with  \emph{anti-fronts} forces are present. 
\item the  equation  is \emph{not} renormalizable. Indeed, the various forces $
\exp (- \eps^{-1}(a_{k+1}^\eps (t)-a_k^\eps (t)))$ for different values of $k$ may be of very
different orders of magnitude, and hence not commensurable.
\end{itemize}
Besides this, the essence of their method  is quite   different: it  relies on a
careful study of the linearized problem around the stationary front, in
particular from the spectral point of view.  This type of approach is also
sometimes termed the \emph{geometric  approach} (see e.g. \cite{chengene}).   
    At least two other methods have been applied successfully on the Allen-Cahn
equation. Firstly, the method of subsolutions and supersolutions turns out to
be extremely powerful and allowed to handle larger classes of initial data and
also to extend the analysis past collisions:  this is for instance achieved  by
Chen  in   \cite{chengene}.  Another direction is given by the global energy
approach initiated  by Bronsard and Kohn \cite{BrKo}.  We refer to \cite{BOS8}
for a more  references on these methods.  
    
Several ideas and concepts presented here  are  influenced  by  our earlier
work on the motion of vortices in the two-dimensional parabolic Ginzburg-Landau
equation \cite{BOS1,BOS2}.  As a matter of fact, this equation  yields   another
remarkable  example of \emph{ renormalizable slow motion}, as  proved by Lin or
Jerrard and Soner (\cite {Lin, JeSo}). Our interest  in the questions studied in
this paper was certainly driven by the possibility of finding an analogous
situation in one space dimension.  
   
This paper belongs  to series of papers we  have written on the slow motion
phenomenon for reaction-diffusion equation   of gradient type with multiple-wells
(see \cite{BOS8, BS, BS2}). Common to these papers is a general approach based
on the following ingredients:  
\begin{itemize}
\item A  \emph{localized version of the energy identity} (see subsection
\ref{discrepence}). Fronts are  then handled as concentration points of the
energy, so that the evolution of local energies yields also the motion of
fronts. Besides dissipation, this localized energy identity  contains a flux
term,  involving  the \emph{discrepancy} function, which has a simple
interpretation for stationary solutions. Using  test functions which are
\emph{affine near the fronts},  the flux term does not see the core of the
front, only its tail. 
\item Parabolic estimates \emph{away} from the fronts. 
\item Handling the time derivative as a \emph{perturbation } of the
one-dimensional elliptic equations, allowing hence elementary tools as
Gronwall's identities.  
\end{itemize}
Parallel to this paper, we are also\emph{ revisiting the  scalar non-degenerate
case} in \cite{BS2},  considering in particular the case were there are more
than two wells, leading as mentioned to repulsive forces which are not present
in the Allen-Cahn case. Several   tools are  shared by the two papers, for
instance we rely  on   related   definitions
and properties of regularized fronts, and the properties of the ordinary
differential equations are quite similar.  From a technical point of view
differences appear at the level  of the magnitudes of energies as well as of the
parameter $\updelta$ involved in the definition of regular fronts, and more
crucially on the nature of the parabolic estimates off  the front sets.  Whereas
in \cite{BS2} we rely  essentially on \emph{linear} estimates, in the degenerate
case considered here our  estimates are  \emph{truly non-linear}, obtained
mainly through  an extensive use of the comparison principle.  

\smallskip
Finally,  it is presumably worthwhile to mention that the situation in higher
dimension is very different: the dynamics is  dominated by \emph{mean-curvature}
effects. The phenomena  considered in the present paper are  therefore  of
lower order, and do not appear in the limiting equations. 
   
\medskip
Among  the problems left open in our paper, we would like to emphasize again
the question of \emph{uniqueness} of splitting solutions for
\eqref{tyrannosaure}, as well as the possibility to interpret our
convergence results in terms of \emph{Gamma-limit} involving  a renormalized
energy (see e.g \cite{sanser} for related results on the Ginzburg-Landau
equation).   

\subsection{Regularized fronts}   
The notion of regularized fronts   
is   central  in our   description of  the dynamics of  equation $({\rm PGL})_\eps$. 
It is aimed to describe in a quantitative way chains of stationary solutions
which are well-separated  and suitably glued together. It also allows to pass
from \emph{front sets} to   \emph{front points}, a notion which is more accurate
and requires therefore improved estimates.  
Recall first  that for $i \in \{1, \cdots, \q-1\}$, there exist a unique (up to
translations) solution $\zeta_i ^+$ to the  stationary  equation with  $\eps=1$,

\begin{equation}
\label{stat}
-v_{xx}+  V'(v)=0    \ { \rm on \ }  \R,  
\end{equation}
with, as conditions at infinity,  $v( -\infty)=\upsigma_i $ and $v(
+\infty)=\upsigma_{i+1}$.  Set,   for $i \in \{1, \cdots, \q-1\}$,  $\zeta_i
^-(\cdot)\equiv \zeta_{i}(-\cdot)$, so that $\zeta_i^{-}$ is the unique (up to
translations) solution to \eqref{stat} such that  $v( +\infty)=\upsigma_i $ and
$v(-\infty)=\upsigma_{i+1}$.  A remarkable yet elementary fact, related to the
scalar nature of the equation, is that there are no other non trivial finite
energy solutions to equation \eqref{stat} than the solutions $\zeta_i^\pm$  and
their translates: 
in particular there are no solutions connecting minimizers which are not nearest
neighbors.  For $i=1, \cdots, \q-1$, we fix a point $z_i$ in the
interval $(\upsigma_i,
\upsigma_{i+1})$ where the potential $V$  restricted to $[\upsigma_i,
\upsigma_{i+1}]$ achieves its maximum and we set $\displaystyle{\mathcal Z =
\{z_1, \cdots, z_{\q-1}\}}$. Again, since we consider only the one-dimensional
scalar case, any solution $\zeta_i$ takes once and only once the value $z_i$. 
 
\medskip 
 
We next  describe a  \emph{local} notion of well-preparedness\footnote{By local,
we mean with respect to the interval $[-\rL, \rL]$. In contrast the related
notion  introduced in \cite{BS2} is global on the whole of $\R$}. For an
arbitrary $r>0$, we denote by ${\rm I}_r$ the interval $[-r,r].$  

\begin{definition}
\label{def}
Let $\rL >0$ and $\updelta>0.$ We say that a map $u$ verifying $\hzero$ 
satisfies the preparedness
assumption $\mathcal{W P}_{\eps}^{\tL} (\updelta)$ if the following  two
conditions are fulfilled:
\begin{itemize}
\item $\left(\wpi(\ud)\right)$ \qquad
We have 
\begin{equation}
\label{confedere}
\mathcal D(u) \cap \ILL \subset \IL
\end{equation}
and there exists a collection of points $\{a_k \}_{k\in J }$ in $\IL$, 
with $J=\{1, \cdots, \ell\}$, such that
\begin{equation}
\label{confondu}
\mathcal D(u) \cap \ILL \subset 
\underset{k \in J} \cup I_k,\ {\rm \ where \ } I_k= [a_k-\updelta ,a_k+\updelta
].
\end{equation}
For   $ k\in J$, there exist a number $i(k) \in \{1, \cdots, \q-1\}$  such that 
$u(a_k)=z_{i(k)}$
and  a symbol $\dagger_k\in \{+,-\}$ such that
\begin{equation}
\label{bugs}
\left \Vert  u(\cdot)- \zeta_{i(k)}^{\dagger_k}\left(\frac{\cdot-a_k}{\eps}\right)\right \Vert_{C_\eps^1 (I_k)}
\leq  \exp\left(-\frac{\ud}{ \eps}\right), 
\end{equation}
where $\Vert u \Vert_{ C_\eps^1(I_k)}=\Vert u \Vert_{L^\infty(I_k)}+\eps \Vert u'\Vert_{L^\infty(I_k)}$.
\item $\left(\wpo(\ud)\right)$ \qquad Set 
$\Omega_{\rm L}=\ILL\setminus {\underset {k=1}{\overset {\ell}\cup}} I_k. $
We have the energy estimate
\begin{equation}
\label{bunny}
\int_{\Omega_{\rm L} } e_\eps\left(u (x)   \right)dx \leq {\rm C}_{\rm w} M_0 \left(
\frac{\eps}{\ud}\right)^{\omega}.
\end{equation}
\end{itemize}
\end{definition}
In the above definition ${\rm C}_{\rm w}>0$ denotes a constant, 
whose exact value is fixed once for all 
by Proposition \ref{vaderetro} below, and which depends only on
$V$. Condition $\wpi(\ud)$ corresponds
to an \emph{inner matching} of the map with
stationary fronts, it is only
really meaningful if $\ud >> \eps$. In the sequel we always assume that 
\begin{equation}\label{eq:alpha1grand}
\frac{\rL}{2}\geq \ud \geq \alpha_1 \eps,
\end{equation}
where $\alpha_1$ is larger than the $\alpha_0$ of Theorem \ref{mainbs2} and also
sufficiently large so that if $\wpi(\ud)$ holds then the points
$a_k$ and the indices $i(k)$ and $\dagger_k$ are \emph{uniquely} and therefore 
\emph{unambigously} determined and the intervals $I_k$ are disjoints. In particular,
the quantity $\dmineL(s)$, defined by
$$
\dmineL(s) := \min\left\{ a_{k+1}^\eps(s) -a_k^\eps(s),\quad k=1,\cdots, \ell(s)-1\right\} 
$$
if $\ell(s)\geq 2$, and $\dmineL(s)=2\rL$ otherwise, satisfies $\dmineL(s) \geq 2\delta.$
Condition  
$\wpo(\ud)$ is in some weak sense an
\emph{outer matching}: it is crucial for some of our energy estimates and its form
is motivated by  energy decay estimates for stationary solutions. Note that condition
$\wpi(\ud)$ makes sense on its own, whereas condition 
$\wpo(\ud)$ only makes sense if condition $\wpi(\ud)$ is fulfilled. Note also that
the larger $\ud$ is, the stronger condition $\wpi(\ud)$ is. The same is not
obviously true for condition $\wpo(\ud)$, since the set of integration $\Omega_{\rm
L}$ increases with $\ud$. As a matter of fact, the constant ${\rm C}_{\rm w}$ 
in \eqref{bunny} is
chosen sufficiently big\footnote{In view of $\wpi(\ud)$, how big it needs to be is
indeed related to energy decay estimates for the fronts $\zeta_i$.} so that
$\wpo(\ud)$ also becomes stronger when $\ud$ is larger. 
We next specify Definition \ref{def} for the  maps $x \mapsto \vm(x, s).$ 
\begin{definition}
\label{def2}  For $s\geq 0,$ we say that the assumption $\wp(\updelta,
s)$ (resp.
$\wpi(\updelta, s)$)  holds if the map $x \mapsto \vm(x, s)$ satisfies 	
$\wp(\updelta)$ (resp.
$\wpi (\updelta)$). 
\end{definition}
When assumption $\wpi (\updelta, s)$ holds, then all symbols will be indexed
according to $s$.  In particular, we write\footnote{In principle and at this stage, all those symbols depend also upon $\eps$. Since eventually $\ell$ and $J$ will be $\eps$-independent, at least for $\eps$ sufficiently small, we do not explicitly index them with $\eps$.}    
$\ell(s)=\ell$, $J(s)=J$, and $a_k^\eps (s)=a_k.$ 
The  points $a_k^\eps (s)$ for  $k \in J(s)$, are now termed the \emph{front points}.
Whereas in \cite{BS2} we are able, due to parabolic regularization, to
establish under suitable conditions that $\wp(\updelta, s)$ is
fulfilled for length of  the same order as the minimal distance between the
front points, this \emph{is not the  case}  in the present situation.  More precisely,
two orders of magnitude for $\ud$ will be considered, namely 
\begin{equation}
\label{magnidelta}
\udl = \frac{1}{\uprho_{\rm w}} \eps \left|\log \left(4M_0^2\frac{\eps}{\rL}\right)\right| 
\qquad \text{and}\qquad
\udll =  \frac{\omega}{\uprho_{\rm w}} \eps \log \left(\frac{1}{\uprho_{\rm w}} \left|\log \left(4M_0^2\frac{\eps}{\rL}\right)\right|\right).
\end{equation}
In \eqref{magnidelta}, the constant $\uprho_{\rm w}$ (given by Lemma \ref{prop:wpiok} below) depends only on $V.$ The main property for our purposes is that $\udll/\eps$ and $\udl/\udll$ both tend to $+\infty$ as $\eps/\rL$ tends to $0$. 
\medskip

In many places, it is useful to rely on a slightly stronger version of the confinement condition \eqref{confedere},
which we assume to hold on some interval of time.  More precisely, for positive $\rL,S$  we consider the condition 
\begin{equation*}
\mathfrak{D}_\eps(s) \cap \ILLLL \subset \IL, \qquad\forall \ 0\leq s \leq S.\\
\leqno{(\mathcal{C}_{\rL,S})} 
\end{equation*}
where the constant ${\rm C}_{e}$ is defined in Proposition \ref{estimpar} here below.  
For given $\rL_0>0$ and $S>0$, it follows easily from assumption $({\rm H}_1)$ and Theorem \ref{mainbs2} that there exists 
$\rL\geq \rL_0$ for which the first condition in $(\mathcal{C}_{\rL,S})$ is satisfied.
Under condition $(\mathcal{C}_{\rL,S})$, the estimate
\begin{equation}\label{cirque}
\mathcal{E}_\eps(\vm(s),\ILLL\setminus \ItdL ) \leq {\rm C}_{e} \left(\frac{\eps}{\rL}\right)^\omega, \qquad\forall
 \ s\in [\eps^\omega \rL^2,S], 
\end{equation}
follows from the following regularizing effect, which was obtained in \cite{BS}:
\begin{proposition}[\cite{BS}]
\label{estimpar}
Let $v_\eps$ be a solution to $\PGL$, let $x_0\in \R,$ $r>0 $ and $0\leq s_0<S$ be such that 
\begin{equation}
\label{offside}
\vm(y,s) \in B(\upsigma_i,\upmu_0) \quad \text{for all }\ (y,s) \in
[x_0-r,x_0+r]\times[s_0,S],
\end{equation}
for some $i \in \{1,\cdots,\q\}.$  Then we have for  $s_0<s\leq S$ 
\begin{equation}
\label{crottin}
\eps^{-\omega}	\int_{x_0-3r/4}^{x_0+3r/4} e_\eps(\vm(x,s))\, dx \leq
  \frac{1}{10} {\rm C}_e\left( 1 + \left(\frac{\eps^\omega r^2}{s-s_0}
\right)^{\frac{\theta}{\theta-1} }   \right)
\left(\frac{1}{r}\right)^{\omega} 
\end{equation}
as well as 
\begin{equation}
\label{crottin2}
\vert \vm (y, s)-\upsigma_i \vert \leq \frac{1}{10} {\rm C}_e \eps^{\frac{1}{\theta-1}}\left(\left (\frac{1}{r}\right)^{\frac{1}{ \theta-1}}+\left(\frac{\eps^{\omega} }{s-s_0}\right)^{\frac{1}{2(\theta-1)}} \right), 
\end{equation}
for $y \in [x_0-3r/4, x_0+3r/4],$ where the constant $C>0$  depends only on $V$.
\end{proposition}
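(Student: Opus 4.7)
The plan is to prove Proposition \ref{estimpar} by the comparison principle, building explicit supersolutions that encode both the spatial decay imposed by the stationary problem $-\mathcal U_{xx} + c\mathcal U^{2\theta-1}=0$ with blow-up on the boundary, and the time decay imposed by the ODE $\dot y = -cy^{2\theta-1}$.

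First, I would reduce to a clean scalar problem. Set $w := v_\eps - \upsigma_i$, so that by \eqref{offside} we have $|w|\leq \upmu_0$ on $Q := [x_0-r,x_0+r]\times[s_0,S]$. From \eqref{eq:souscontrole} the quantity $V'(v_\eps)\cdot \mathrm{sgn}(w)$ is bounded below by a positive multiple of $|w|^{2\theta-1}$, so after reverting to the fast time variable $t=\eps^{-\omega}s$ the function $|w|$ is a subsolution of
\begin{equation*}
\partial_t \varphi - \partial_{xx} \varphi + \frac{c_i}{\eps^2}\varphi^{2\theta-1} = 0, \qquad c_i := \theta\lambda_i,
\end{equation*}
on $Q$ in the viscosity/weak sense (the standard Kato-type inequality for $|w|$ works here because the nonlinearity is odd with respect to $w$).

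Second, I would construct a supersolution of the form
\begin{equation*}
W(x,t) = \eps^{\frac{1}{\theta-1}}\Bigl[\Phi(x) + \Psi(t-s_0\eps^{-\omega})\Bigr],
\end{equation*}
where $\Phi$ is (a truncation of) the singular stationary solution on $(x_0-r,x_0+r)$, blowing up to $+\infty$ at $x_0\pm r$ and satisfying $-\Phi''+c_i\Phi^{2\theta-1}\geq 0$, and $\Psi(\tau) = C(c_i\tau)^{-\frac{1}{2(\theta-1)}}$ is the explicit self-similar solution of the ODE $\dot\Psi = -c_i\Psi^{2\theta-1}$. The scaling $\eps^{\frac{1}{\theta-1}}$ is the natural one so that $W$ solves the full $\eps$-dependent inequality (using $\omega+2 = 2\theta/(\theta-1)$ to balance exponents). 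A direct computation gives $\Phi(x)\leq C(r - |x-x_0|)^{-\frac{1}{\theta-1}}$, which on the smaller interval $[x_0-3r/4,x_0+3r/4]$ is bounded by $Cr^{-\frac{1}{\theta-1}}$. On the parabolic boundary of $Q$, $W = +\infty$ on the lateral sides and $W(x,s_0\eps^{-\omega})=+\infty$, so $W\geq |w|$ there (note $|w|\leq \upmu_0$ is uniformly bounded). By the comparison principle applied to the nonlinear parabolic inequality above, $|w|\leq W$ on $Q$. Evaluating back in the $s$ variable on $[x_0-3r/4,x_0+3r/4]\times[s_0,S]$ yields exactly \eqref{crottin2}, with a factor $\frac{1}{10}{\rm C}_e$ by choosing ${\rm C}_e$ large enough.

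Third, for the energy estimate \eqref{crottin}, the potential part $\eps^{-1}V(v_\eps)\leq C\eps^{-1}|w|^{2\theta}$ is integrable and, after integration against \eqref{crottin2}, contributes $(\eps/r)^\omega$ plus $\eps^\omega(\eps^\omega/(s-s_0))^{\theta/(\theta-1)}(1/r)^\omega\cdot r$ when bounded on $[x_0-3r/4,x_0+3r/4]$, which matches the right-hand side after multiplication by $\eps^{-\omega}$. For the gradient part one cannot simply differentiate $W$, so I would run a localized Bochner / Bernstein argument: multiply the equation by $\chi^2 \partial_x v_\eps$ or use the energy identity cut off by a smooth function supported in $[x_0-r,x_0+r]$ and equal to $1$ on $[x_0-3r/4,x_0+3r/4]$. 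The boundary term $\eps\chi' v_{\eps x}^2$ is absorbed by interpolation between the pointwise bound on $|v_\eps-\upsigma_i|$ and a standard interior gradient estimate for parabolic equations with convex nonlinearity; alternatively, a Moser-type iteration gives $\eps\|\partial_x v_\eps\|_{L^\infty}\lesssim \|v_\eps-\upsigma_i\|_{L^\infty}$, which is exactly what is needed.

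The main obstacle is the precise choice and verification of the supersolution $W$: one must check that the pure-spatial part $\Phi$ (essentially the $\mathcal U$ of the boundary-value problem recalled above) gives a genuine supersolution of the inhomogeneous $\eps$-parabolic inequality and not just of the elliptic one, that the sum (not product) with $\Psi$ preserves supersolutionality because of the convexity of $\varphi\mapsto \varphi^{2\theta-1}$, and that the two regimes $r$-dominated and $(s-s_0)$-dominated glue cleanly to produce the explicit additive form in \eqref{crottin2}. Everything else — recovering the energy from the $L^\infty$ bound, passing from the fast time $t$ to $s$, and checking the exponent $\omega=(\theta+1)/(\theta-1)$ — is then bookkeeping.
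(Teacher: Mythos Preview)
The paper does not actually prove Proposition~\ref{estimpar}: it is quoted verbatim from the earlier paper \cite{BS}, so there is no ``paper's own proof'' to compare against here. That said, your strategy is the right one and is precisely the mechanism the present paper exploits later, in Section~\ref{raffinage}: compare Proposition~\ref{tonio}, where the supersolution $W^{\rm sup}_\eps=\Umpepint+\mathcal W_\eps$ is built as the sum of a spatial blow-up profile and the ODE relaxation $\mathcal W_\eps$ of \eqref{aldeode}, using the super-additivity \eqref{superadditif} of $f_\eps$ on $\R^+$. Your reduction to a subsolution inequality for $|w|$ via \eqref{eq:souscontrole} and Kato is clean, and the additive supersolution $\eps^{1/(\theta-1)}(\Phi+\Psi)$ with $\Phi$ the singular stationary profile and $\Psi(\tau)\sim\tau^{-1/(2(\theta-1))}$ gives exactly \eqref{crottin2} by comparison.

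The one place where your argument is loose is the gradient part of \eqref{crottin}. The sentence ``a Moser-type iteration gives $\eps\|\partial_x v_\eps\|_{L^\infty}\lesssim\|v_\eps-\upsigma_i\|_{L^\infty}$'' is dimensionally off: the correct pointwise relation is rather $\eps|\partial_x w|\lesssim |w|^\theta$ (the ``discrepancy-almost-zero'' scaling), or equivalently $|\partial_x w|\lesssim r^{-1}\|w\|_{L^\infty}+r\eps^{-2}\|w\|_{L^\infty}^{2\theta-1}$ from the interior parabolic gradient estimate applied on a cylinder of spatial size $r$ (or of size $\sqrt{t-t_0}$ when $s-s_0<\eps^\omega r^2$). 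Either route, once fed with \eqref{crottin2}, produces $\eps|\partial_x w|^2\lesssim \eps^\omega r^{-(\omega+1)}$ and hence the kinetic half of \eqref{crottin}; but you should state the correct intermediate inequality rather than the one you wrote.
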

\noindent

\medskip
Our first ingredient is
\begin{proposition}
\label{parareglo}  There exists $\alpha_1>0,$ depending only on $M_0$ and $V,$ such that if  $L\geq \alpha_1\eps$ and if $(\mathcal{C}_{\rL,S})$ holds, then each subinterval of $[0,S]$ of length $\eps^{\omega+2}\big(\rL/\eps\big)$ contains at least one time $s$ for which $\wp(\udl,s)$ holds.
\end{proposition}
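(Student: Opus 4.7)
\medskip
\noindent\textbf{Plan of proof.}
The strategy is to use the dissipation identity to select a ``quiet'' time $s^*$ in each sub-interval on which $\vm(\cdot,s^*)$ is almost stationary, and then to combine this quasi-stationarity with Proposition \ref{estimpar} to force the structure of a chain of stationary fronts $\zeta_{i(k)}^{\dagger_k}$ at the matching precision $\udl$. Rewriting the energy identity \eqref{energyidentity} in the accelerated time $s=\eps^\omega t$ yields $\eps^{\omega+1}\int_{s_1}^{s_1+\Delta s}\int_\R |\partial_s\vm|^2\,dx\,ds\le M_0$ for every $s_1\ge 0$ and $\Delta s>0$. Choosing $\Delta s=\eps^{\omega+2}(\rL/\eps)=\eps^{\omega+1}\rL$, the pigeonhole principle produces $s^*$ in the sub-interval with $\int_\R |\partial_s\vm(x,s^*)|^2\,dx \le M_0/(\eps^{2\omega+2}\rL)$. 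Multiplying $(\text{PGL})_\eps$ by $\eps^2$ at the corresponding $t^*$, the function $u:=v_\eps(\cdot,t^*)$ satisfies $-\eps^2 u_{xx}+V'(u)=g$ with $\|g\|_{L^2(\R)}\le \eps\sqrt{M_0/\rL}$, a source that is polynomially small in $\eps/\rL$ provided $\alpha_1$ is taken large enough.

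\medskip
\noindent For the outer condition $\wpo(\udl)$, the confinement $(\mathcal{C}_{\rL,S})$ places $\mathfrak D_\eps(s)$ inside $\IL$ throughout $[0,S]$, so on each connected component of $\ILLLL\setminus\IL$ (and subsequently, bootstrapping, on each sub-interval of $\IL$ lying strictly between two consecutive front points) the hypothesis \eqref{offside} of Proposition \ref{estimpar} is met with the index $i$ locally constant by a continuity argument. This yields at $s=s^*$ both the energy bound \eqref{crottin} and the pointwise bound \eqref{crottin2}; the latter forces the set of points in $\IL$ where $u$ takes a value in $\mathcal Z=\{z_1,\dots,z_{\q-1}\}$ to be a finite collection $\{a_k\}$, and, once $\alpha_1$ is large enough, pins down the indices $i(k)$ and the signs $\dagger_k$ of each transition unambiguously. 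Applying \eqref{crottin} with $r$ of order $\udl$ around each $a_k$ and on the two semi-infinite ends then gives the energy estimate \eqref{bunny}.

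\medskip
\noindent For the inner condition $\wpi(\udl)$, which is where the degeneracy enters most sharply, we argue on each interval $I_k=[a_k-\udl,a_k+\udl]$ separately. The rescaled map $w(y):=u(a_k+\eps y)$ solves $-w_{yy}+V'(w)=\eps^2 g(a_k+\eps y)$, with $w(0)=z_{i(k)}$ and with $w(\pm\udl/\eps)$ polynomially close to the appropriate minimizer by \eqref{crottin2}. Comparing $w$ with the stationary profile $\zeta_{i(k)}^{\dagger_k}$ via a one-sided comparison principle---exploiting the strict monotonicity of $V'$ near each $\upsigma_i$ prescribed by $({\rm A}_2)$ and the power-law approach of $\zeta_i$ to $\upsigma_i$ at infinity---yields pointwise closeness of order $(4M_0^2\eps/\rL)^{1/\uprho_{\rm w}}=\exp(-\udl/\eps)$ on $I_k$; interior elliptic regularity for the perturbed stationary ODE then upgrades this $L^\infty$ estimate to the full $C_\eps^1$ bound \eqref{bugs}.

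\medskip
\noindent The main obstacle lies in this last step. Since $V''(\upsigma_i)=0$, the linearization of the equation at each minimizer is degenerate and no spectral gap is available, so the standard exponential-stability perturbation arguments used in the Allen-Cahn setting do not apply. The proof must rely instead on the genuinely non-linear comparison estimates emphasized in the introduction, tailored to the polynomial---rather than exponential---decay of $\zeta_i$ at infinity, and capable of propagating the small boundary deviation at $y=\pm\udl/\eps$ uniformly into the interior of each $I_k$ at the sharp polynomial rate $(\eps/\rL)^{1/\uprho_{\rm w}}$.
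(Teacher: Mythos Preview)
Your first step---selecting a ``quiet'' time $s^*$ in each sub-interval by the pigeonhole/mean-value argument---is correct and is exactly what the paper does (Lemma \ref{meanvaluearg}).  With $\Delta s=\eps^{\omega+1}\rL$ you obtain precisely the bound $\eps\|f\|_{L^2(\ILLL)}\le (M_0/\rL)^{1/2}$, which is hypothesis \eqref{bornf} of Corollary \ref{vaderetro}.  The paper's proof simply invokes that corollary and is finished; all the work has been pushed into the elliptic analysis of Section~\ref{perturbed}.

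Your route to the inner matching $\wpi(\udl)$ has a genuine gap.  You propose to compare $w(y)=u(a_k+\eps y)$ with $\zeta_{i(k)}^{\dagger_k}$ via a ``one-sided comparison principle'' for the perturbed stationary ODE $-w_{yy}+V'(w)=\eps^2 g(a_k+\eps y)$.  But this is a second-order scalar ODE, and on the interval $I_k$ the solution $w$ sweeps through the whole transition range $(\upsigma_{i(k)},\upsigma_{i(k)+1})$, where $V'$ changes sign and is not monotone; no maximum/comparison principle is available there.  The paper's mechanism (Lemma \ref{prop:wpiok}) is entirely different and purely linear: one rewrites the equation as the first-order system $U_x=\eps^{-1}G(U)+\eps F$ with $U=(u,\eps u_x)$, uses the discrepancy bound (Lemma \ref{discrepok}) to locate a point $a_k$ with $u(a_k)=z_{i(k)}$ and to control $|U(a_k)-Z(a_k)|$ (where $Z$ corresponds to $\zeta_{i(k)}^{\dagger_k}(\cdot-a_k)$), and then applies Gronwall (Lemma \ref{lem:compgron}).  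The exponential factor $\exp(C_0|x-a_k|/\eps)$ in Gronwall, evaluated at $|x-a_k|=\udl$, is what \emph{defines} $\udl$ and produces the bound \eqref{bugs}; the logarithm in $\udl$ is an artifact of Gronwall growth, not of any comparison-principle decay rate.

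For the outer condition $\wpo(\udl)$ your use of the parabolic Proposition \ref{estimpar} is workable outside $\IL$, but between as-yet-unidentified front points inside $\IL$ it is circular: hypothesis \eqref{offside} requires knowing the front locations over a past time interval before you can apply it.  The paper avoids this by using the purely elliptic Lemma \ref{lem:dansletrou} (multiply \eqref{odepertu} by $\eps(u-\upsigma_i)\chi^2$ and integrate) at the single quiet time $s^*$, which gives \eqref{bunny} directly from the smallness of $\|f\|_{L^2}$ with no appeal to the time evolution.
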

\noindent
The idea behind Proposition \ref{parareglo} is that, $\PGL$ being a gradient flow, on a sufficiently large interval of time one may find some time where the dissipation of energy is small. Using elliptic tools, and viewing the time derivative as a forcing term, one may then establish property $\wp(\udl,s)$ (see Section \ref{cavenecadas} and Section \ref{regupara}). 

\medskip
The next result expresses the fact that the equation preserves to some extent the
well-preparedness assumption.

\begin{proposition}
\label{parareglo2} 
Assume that $(\mathcal{C}_{\rL,S})$ holds, that $\eps^\omega \rL^2\leq s_0\leq S$ is such that 
$\wp(\udl,s_0)$ holds, and assume moreover that  
\begin{equation}
\label{greece0}
\dmineL(s_0) \geq 16 \Big(\frac{\rL}{\uprho_0\eps}\Big)^\frac{1}{\omega+2} \eps.
\end{equation}
Then  $\wp(\udll,s)$ holds for all times 
$s_0 + \eps^{2+\omega}  \leq  s \leq \mathcal T_0^\eps(s_0)$, where
$$
\mathcal T_0^\eps(s_0) = \max \left\{ s\in [s_0+\eps^{2+\omega},S] \quad \text{s.t.}\quad \dmineL(s')\geq 8 \Big(\frac{\rL}{\uprho_0\eps}\Big)^\frac{1}{\omega+2} \eps \quad \forall s'\in [s_0+\eps^{\omega+2}, s]\right\}.
$$ 
For such $s$ we have  $J(s)=J(s_0)$ and  for any $k \in J(s_0)$ we have
$ \upsigma_{i(k\pm \frac 12)}(s)=\upsigma_{i(k\pm \frac 12)} (s_0)$ and $\dagger_k(s)=\dagger_k(s_0).$
\end{proposition}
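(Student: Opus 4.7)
\medskip
\noindent
\textbf{Proof proposal for Proposition \ref{parareglo2}.}

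The plan is to combine three distinct ingredients: the confinement result of Theorem \ref{mainbs2} to track the motion of the front points on short time-scales, the parabolic regularization of Proposition \ref{estimpar} to control the energy in the outer region, and a comparison/perturbation argument near each front to transfer the exponential $C^1_\eps$ closeness from $\wpi(\udl,s_0)$ into the weaker (but still strong) $\wpi(\udll,s)$ at later times $s$. The stopping-time definition of $\mathcal T_0^\eps(s_0)$ and the lower bound \eqref{greece0} are precisely chosen so that the fronts remain well-separated by at least $8(\rL/\uprho_0\eps)^{1/(\omega+2)}\eps$, which is much larger than $\udl$ for small $\eps/\rL$; this will guarantee that the combinatorial data $\ell(s),\ i(k\pm\tfrac12)(s),\ \dagger_k(s)$ cannot change.

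First I would exploit Theorem \ref{mainbs2} in its rescaled form \eqref{theodebase1}: on any subinterval of length $\Delta s\leq \uprho_0 r^{\omega+2}$ the front set $\mathfrak{D}_\eps$ can move at most by $r$. Applied with $\Delta s=s-s_0\leq \mathcal T_0^\eps(s_0)-s_0$ and with $r$ a small fraction of $\dmineL(s_0)/8$, this shows that the $\eps$-neighborhoods of the old front points $a_k^\eps(s_0)$ remain disjoint and contain all of $\mathfrak{D}_\eps(s)$ for $s\in [s_0,\mathcal T_0^\eps(s_0)]$. Consequently, by continuity in $s$ of the values of $\vm(a_k^\eps(s),s)$ and since the numbers $z_i$ are isolated, no new zero of $\vm(\cdot,s)-z_{i(k)}$ can appear or disappear inside $I_k$ and the indices $i(k\pm\frac12)(s)$ and signs $\dagger_k(s)$ are frozen equal to those at time $s_0$, giving also $J(s)=J(s_0)$.

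The core of the proof is to establish the inner matching \eqref{bugs} at scale $\udll$ for every $k\in J$. I would first apply Proposition \ref{estimpar} on each connected component of $\ILLL\setminus \bigcup_k \ItdL$ (where $\vm$ lies in a $\mu_0$-neighborhood of a single $\upsigma_i$ by the confinement step above), with $r$ of order $\udl$ and $s_0-s_0\geq \eps^{\omega+2}$. This yields the pointwise bound \eqref{crottin2}, which in particular shows $|\vm(y,s)-\upsigma_i|\lesssim \eps^{1/(\theta-1)}(s-s_0)^{-1/(2(\theta-1))}$ at the boundary of each $I_k$, i.e., far better than $\exp(-\udll/\eps)$ at the effective boundary $|y-a_k^\eps(s)|=\udll$. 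Near each $a_k^\eps(s)$, I would linearize the equation $\PGL$ around the stationary profile $\zeta_{i(k)}^{\dagger_k}((\cdot-a_k^\eps(s))/\eps)$ and feed the exponentially small Cauchy datum at time $s_0$ (of order $\exp(-\udl/\eps)$ by $\wpi(\udl,s_0)$) together with the outer boundary bound into the comparison principle for the scalar equation; the degenerate potential is monotone past $\mu_0$ and this yields sub- and supersolutions of the form $\zeta_{i(k)}^{\dagger_k}\pm \exp(-\udll/\eps)$, completing \eqref{bugs} at scale $\udll$.

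Finally, to obtain $\wpo(\udll,s)$ I would integrate the energy density $e_\eps(\vm(\cdot,s))$ over $\Omega_{\rm L}(s)=\ILL\setminus \bigcup_k[a_k^\eps(s)-\udll,a_k^\eps(s)+\udll]$, splitting the integral into the region $\ILL\setminus \ILLL$ (already controlled by $(\mathcal C_{\rL,S})$ via \eqref{cirque}) and the regions adjacent to each front. On the latter, the pointwise bound \eqref{crottin2} together with $(A_2)$ gives an integrable tail of the form $\int_{\udll}^{\infty} (\eps/r)^{2\theta/(\theta-1)}\,dr \lesssim (\eps/\udll)^{\omega}$, which multiplied by ${\rm C}_w M_0$ yields \eqref{bunny} at scale $\udll$. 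The main technical obstacle in this plan is the transfer-of-closeness step near each front: one must argue that the exponentially small defect does not blow up on the parabolic time-scale $\eps^{\omega+2}$, and getting the right constants in the exponent is delicate because the linearized operator about $\zeta_i^{\pm}$ has a zero eigenvalue (translation), which is why the position $a_k^\eps(s)$ is allowed to drift but only by an amount consistent with the confinement of Step 1.
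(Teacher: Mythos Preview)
Your proposal takes a fundamentally different route from the paper, and the core of it---the ``transfer-of-closeness'' step near each front---has a genuine gap. You propose to propagate the inner estimate \eqref{bugs} from time $s_0$ to time $s$ by linearizing $(\text{PGL})_\eps$ about the profile $\zeta_{i(k)}^{\dagger_k}((\cdot-a_k^\eps(s))/\eps)$ and invoking the comparison principle. But the time interval $[s_0,\mathcal T_0^\eps(s_0)]$ is not of length $\eps^{\omega+2}$ as you suggest at the end: by definition $\mathcal T_0^\eps(s_0)$ may be as large as $S$, hence of order $1$ in the rescaled time $s$, i.e.\ of order $\eps^{-\omega}$ in the original time $t$. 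The linearized operator carries the coefficient $\eps^{-2}V''(\zeta_{i(k)}^{\dagger_k})$, and since $V$ has a strict local maximum at $z_{i(k)}$ this coefficient is of order $-C\eps^{-2}$ on a set of size $\eps$ around the front. Thus the linearized flow has unstable modes with growth rate $\sim\eps^{-2}$ in $t$, and over a time $\eps^{-\omega}$ any comparison or Duhamel argument yields a factor $\exp(C\eps^{-\omega-2})$, which annihilates the polynomially small Cauchy datum $\exp(-\udl/\eps)\sim(\eps/\rL)^{1/\uprho_{\rm w}}$. Absorbing the translation mode into the drift of $a_k^\eps(s)$ handles only the zero eigenvalue; it does nothing for the genuinely unstable directions associated with the hump of $V$.

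The paper avoids forward-in-time propagation entirely. Its mechanism is energy-based: under the confinement condition and the lower bound on $\dmineL$, the quantized front energy $\matfEL$ can neither decrease (Corollary \ref{avecdec}) nor increase (Lemma \ref{blog}), hence is constant on $[s_0,\mathcal T_0^\eps(s_0)]$. Corollary \ref{murino0} then bounds the total dissipation $\disL$ by $C(\eps/\udl)^\omega$, and a local parabolic estimate (Lemma \ref{oasisgede}) converts this into a pointwise-in-$s$ bound on $\Vert\partial_t v_\eps(\cdot,s\eps^{-\omega})\Vert_{L^2}$. At that point, for each fixed $s$ one treats $\vm(\cdot,s)$ as a solution of the perturbed ODE \eqref{odepertu} with small forcing $f=\partial_t v_\eps$ and re-derives $\wpi$ via the Gronwall-type Lemma \ref{prop:wpiok}; this yields $\wpi$ at the logarithmically smaller scale $\Lambda_{\rm log}(\udl)=\udll$ (Proposition \ref{pneurose}, Corollary \ref{parareglo3}). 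The outer estimate $\wpo(\udll,s)$ then follows from the second inequality of Corollary \ref{murino0} combined with the lower energy bound of Proposition \ref{globalmoquette}, rather than from integrating the pointwise tail \eqref{crottin2} as you suggest. Your Step~1 (freezing of the indices via \eqref{theodebase1}) is essentially correct and parallels Lemma \ref{blog}, but the heart of the argument---controlling $\partial_t v_\eps$ through the quantization of $\matfEL$---is missing from your plan.
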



Given a family of solution $(v_\eps)_{0<\eps<1}$, we introduce   the additional condition
\begin{equation}
\label{dminstar}
d_{\rm min}^{*}(s_0)\equiv   \underset {\eps \to 0}  \liminf \,  d_{\rm min}^{\eps, \rL} (s_0)  >0, 
\end{equation}
which makes sense if $\WPL(\upalpha_1 \eps, s_0)$ holds and expresses the fact that the fronts stay uniformly well-separated. 
The first  step in our proofs, which is stated in Proposition \ref{firststep1}
below,   is  to establish the conclusion of Theorem \ref{maintheo1} under this
stronger assumptions on the initial datum.  
From the inclusion  \eqref{theodebase1} and Proposition \ref{parareglo2} we will obtain:

\begin{corollary}  
\label{jamboncru}
Assume also that $\mathcal{C}_{\rL,S}$ holds,  let $s_0\in [0,S]$ and assume  that  $\mathcal{W P}_\eps^\rL (\upalpha_1 \eps, s_0 )$ holds for all $\eps$ sufficiently small and  
that \eqref{dminstar} is satisfied. Then, for $\eps$ sufficiently small,  
\begin{equation}\label{eq:levain1}
\mathcal{W P}_\eps^\rL(\udll, s ) \qquad \text{ and } \qquad d_{\rm min}^{\eps,\rL}(s) \geq \frac12  d_{\rm min}^*(s_0)
\end{equation}
are satisfied for any
$$
s\in I^\eps(s_0)\equiv\Big[s_0+2\rL^{2}\eps^\omega, s_0+ \uprho_0 \left( \frac{ d_{\rm min}^*(s_0)}{8}\right)^{\omega+2}\Big]\cap [0,S],
$$  
as well as the identities $J(s)=J(s_0)$, $\upsigma_{i(k\pm \frac 12)}(s)=\upsigma_{i(k\pm \frac 12)} (s_0)$ and $\dagger_k(s)=\dagger_k(s_0),$ for any $k \in J(s_0).$
\end{corollary}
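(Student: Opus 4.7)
The strategy decomposes in two steps: first bridge from the crude well-preparedness $\WP(\upalpha_1\eps,s_0)$ to the sharper $\WP(\udl,\cdot)$ at some nearby time via Proposition \ref{parareglo}, then invoke Proposition \ref{parareglo2} to propagate that stronger well-preparedness all the way across $I^\eps(s_0)$, using the crude inclusion \eqref{theodebase1} to prevent the associated stopping time from activating.

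First I would apply Proposition \ref{parareglo} on the subinterval $[s_0,\, s_0+\rL^2\eps^\omega]$, whose length dominates the required $\eps^{\omega+2}(\rL/\eps)=\rL\eps^{\omega+1}$ for $\eps$ small. This produces a time $s_1\in[s_0,s_0+\rL^2\eps^\omega]$ at which $\WP(\udl,s_1)$ holds. Setting $d^*:=d_{\rm min}^*(s_0)$, I need to check the distance condition \eqref{greece0} at $s_1$. Applying \eqref{theodebase1} on $[s_0,s_1]$ with $r=r_\eps:=(\rL^2\eps^\omega/\uprho_0)^{1/(\omega+2)}$ (which exceeds $\upalpha_0\eps$ for $\eps$ small), every front has moved by at most $r_\eps=o(1)$. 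Combined with \eqref{dminstar}, which gives $\dmineL(s_0)\geq \tfrac34 d^*$ for $\eps$ small enough, this yields $\dmineL(s_1)\geq \tfrac34 d^*-2r_\eps \geq \tfrac12 d^*$, comfortably above the right-hand side of \eqref{greece0}, which is of order $\eps^{(\omega+1)/(\omega+2)}\to 0$.

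Next I would apply Proposition \ref{parareglo2} at $s_1$: this yields $\WP(\udll,s)$ together with the invariance of $J$, the indices $i(k\pm\tfrac12)$, and the signs $\dagger_k$, valid for $s\in [s_1+\eps^{\omega+2},\mathcal T_0^\eps(s_1)]$. Since $s_1+\eps^{\omega+2}\leq s_0+2\rL^2\eps^\omega$ for $\eps$ small, this range already covers the left endpoint of $I^\eps(s_0)$. It remains to show that the stopping time $\mathcal T_0^\eps(s_1)$ does not trigger before the right endpoint of $I^\eps(s_0)$. For that I would apply \eqref{theodebase1} once more, now with $r=d^*/8\geq \upalpha_0\eps$ on the time interval $[s_0,\,s_0+\uprho_0(d^*/8)^{\omega+2}]$: each front moves by at most $d^*/8$, so
$$
\dmineL(s)\geq \tfrac34 d^*-\tfrac14 d^* = \tfrac12 d^* \qquad \text{for all } s\in I^\eps(s_0).
$$
This largely exceeds the threshold $8(\rL/(\uprho_0\eps))^{1/(\omega+2)}\eps$ controlling $\mathcal T_0^\eps(s_1)$, so well-preparedness indeed extends through $I^\eps(s_0)$, together with the claimed lower bound on $\dmineL(s)$ and the invariance of the combinatorial data, establishing \eqref{eq:levain1}.

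The main thing to keep straight here is the bookkeeping of scales: the microscopic quantities $\eps^{\omega+2}$, $\rL\eps^{\omega+1}$ and $\eps^{(\omega+1)/(\omega+2)}$ all vanish as $\eps\to 0$, while the macroscopic $d^*$ stays bounded below; the admissibility constraint $r\geq \upalpha_0\eps$ in \eqref{theodebase1} is automatic for both choices of $r$ above for $\eps$ small. Once these scale comparisons are verified, all the conclusions follow by stringing together the two propagation propositions as above, with no further analytic input required.
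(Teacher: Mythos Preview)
Your overall strategy coincides with the paper's: bridge from $\wp(\upalpha_1\eps,s_0)$ to $\wp(\udl,s_1)$ at a nearby time via Proposition~\ref{parareglo}, then propagate via Proposition~\ref{parareglo2}. There is, however, a genuine gap in the bridging step. The inclusion \eqref{theodebase1} only says that the front \emph{set} $\mathfrak D_\eps(s_1)$ lies within $r_\eps$ of $\mathfrak D_\eps(s_0)$; it does not by itself give $\dmineL(s_1)\geq\dmineL(s_0)-2r_\eps$, because nothing so far rules out that a single front at time $s_0$ has split into several fronts at time $s_1$, all clustered in an interval of size $O(r_\eps)$ and thereby making $\dmineL(s_1)$ tiny. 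What forces the front count (and the combinatorial data) to be preserved is the energy quantization: Corollary~\ref{avecdec} gives $\matfEL(s_1)\leq\matfEL(s_0)$ via dissipation, Lemma~\ref{blog} gives the reverse inequality via continuity of the solution, and equality then yields $J(s_1)=J(s_0)$ together with $\dmineL(s_1)\geq\tfrac12\dmineL(s_0)$. The paper invokes exactly these two results at this point; without them you also cannot pass from the identities $J(s)=J(s_1)$ supplied by Proposition~\ref{parareglo2} to the claimed $J(s)=J(s_0)$.

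A secondary technical point: Proposition~\ref{parareglo2} and Corollary~\ref{avecdec} both require the later reference time to be at least $\eps^\omega\rL^2$, so you should look for $s_1$ in $[s_0+\eps^\omega\rL^2,\,s_0+\eps^\omega\rL^2+\eps^{\omega+1}\rL]$ rather than in $[s_0,s_0+\eps^\omega\rL^2]$, as the paper does; this leaves the rest of your argument intact.
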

Hence,  the collection of  front points $\{a_k^\eps(s)\}_{k \in J} $ is well-defined, and the approximating regularized fronts $\zeta_{i(k)}^{\dagger_k}$ do not depend on $s$ (otherwise than through their position), on the  full time interval  $I^\eps(s_0).$

\subsection{Paving the way to the motion law}
\label{discrepence}

As   in \cite{BOS8}, we use extensively    the
localized version of \eqref{energyidentity}, a tool which turns out to be
perfectly adapted to track the evolution of fronts. Let $\chi$ be an arbitrary 
smooth test function with compact support. 
Set, for $s\geq 0,$
\begin{equation}
\label{testf}
\mathcal I_\eps(s, \chi)=\int_\R e_\eps\left(\vm(x, s)\right)\chi(x)dx.
\end{equation}
In integrated form the localized version of the energy identity  writes
\begin{equation}
\label{localizedenergy0}
\mathcal I_\eps (s_2, \chi)-\mathcal I_\eps (s_1, \chi)+\int_{s_1}^{s_2}\int_\R
\eps^{1+\omega}\chi (x) |\partial_s \vm (x, s)|^2\,  dx ds=  
\eps^{-\omega} \int_{s_1}^{s_2}\mathcal{F}_S(s,\chi,v_\eps)  ds, 
\end{equation}
where the term $\mathcal{F}_S$ is given by
\begin{equation}
\label{flux}
\mathcal{F}_S(s,\chi, \vm)= \int_{\R \times\{s\}} \left(    
\left[ \eps \frac{\dot \vm^2}{2}-\frac{V(\vm)}{\eps}\right ]\ddot \chi (x)
\right)\,dx\equiv  \int_{\R \times\{s\}} \xi_\eps (\vm(\cdot, s)) \ddot \chi dx.
\end{equation}
The last integral on the left hand side of identity \eqref{localizedenergy0}
stands for local dissipation, whereas the right hand side second  is   a flux.
The quantity $\xi_\eps$ is defined for a scalar function $u$ by  
\begin{equation}
\label{discrepance}
\xi_\eps (u) \equiv \eps\frac{\dot u^2}{2}-\frac{V(u)}{\eps}, 
\end{equation}
and is referred to as {\bf the discrepancy term}. It  is constant for
solutions to the stationary equation 
$ -{u}_{xx}+\eps^{-2} V'(u)=0  $
on some given interval $I$
and  vanishes  for finite energy solutions on $I=\R$. Notice  that 
$\displaystyle{ \vert \xi_\eps(u)\vert  \leq e_\eps(u).}$
We set  for two given  times $s_2\geq  s_1\geq 0$ and $\rL \geq 0$  
\begin{equation}
\label{leglaude}
\disL [s_1, s_2]=\eps \int_{\IctL \times [s_1\eps^{-\omega}, s_2\eps^{-\omega}]} 
\vert \frac{\partial v_\eps}{\partial t}\vert^{^2} dxdt
= \eps^{1+\omega} \int_{\IctL \times [s_1, s_2]} 
\vert \frac{\partial \vm}{\partial s}\vert^{^2} dxds.
\end{equation}
Identity \eqref{localizedenergy0}  then yields the  estimate, if we assume that ${\rm supp} \chi \subset \IctL,$
\begin{equation}
\label{louloulepou}
\left \vert \mathcal I_\eps (s_2, \chi)-\mathcal I_\eps (s_1,
\chi)-\eps^{-\omega} \int_{s_1}^{s_2}\mathcal{F}_S(s,\chi,v_\eps)  ds \right
\vert \leq  \disL[s_1, s_2] \Vert \chi \Vert_{L^\infty (\R)}.  
\end{equation}
We will show that under  suitable  assumptions, that   the right hand side of
\eqref{louloulepou} is small (see Step 3 in the proof of Proposition \ref{firststep1}), so that the term
$\displaystyle {\eps^{-\omega} \int_{s_1}^{s_2}\mathcal{F}_S(s,\chi,v_\eps)
ds}$ provides a good approximation of  $\displaystyle {  \mathcal I_\eps (s_2,
\chi)-\mathcal I_\eps (s_1, \chi)}$.  On the other hand, it follows from the
properties of regularized maps proved in Section \ref{patdef} (see Proposition
\ref{globalmoquette} there)  that if $\WPL(\udll, s)$ holds then 
\begin{equation}
\label{mireille}
 \left \vert \mathcal I_\eps (s, \chi)-   \underset {k \in J} \sum \chi (a_k^\eps(s)) \S_{i(k)}  \right\vert  \leq  CM_0\left( \big(\frac{ \eps}{\udll}\big)^{\omega}
 \Vert \chi\Vert_{\infty} + \eps \Vert \chi'\Vert_{\infty}\right),
\end{equation}
where $\displaystyle{ \S_{i(k)}}$ stands for the energy of the corresponding stationary front. 
Set
$$\mathfrak F_\eps(s_1, s_2, \chi)\equiv \eps^{-\omega} \int_{s_1}^{s_2}\mathcal{F}_S(s,\chi,\vm)  ds
\equiv \int_{s_1}^{s_2}\eps^{-\omega}\xi_\eps (\vm(\cdot, s)) \ddot \chi(\cdot)\,  ds.$$
Combining \eqref{louloulepou} and \eqref{mireille}  shows that,  
if $\WPL(\udll, s)$ holds for any $s \in (s_1, s_2)$, then we have
\begin{equation}
\begin{aligned}
\label{gratindo}
& \quad \vert \underset {k \in J} \sum\left [ \chi (a_k^\eps(s_2))-\chi (a_{k}^\eps (s_1))\right]  \S_{i(k)} -\mathfrak F_\eps(s_1, s_2, \chi) \vert \\
\leq 
& \quad CM_0 \left( \big( \log \vert\!\log \frac{\eps}{\rL} \vert \big)^{-\omega} 
  \Vert \chi\Vert_\infty 
 + \eps \Vert \chi'\Vert_\infty \right)  + \disL[s_1, s_2] \Vert \chi\Vert_\infty .
\end{aligned}
\end{equation}

If the test function $\chi$ is choosen to be affine near a given front point
$a_{k_0}$  and zero near the other  front points in the collection, then the
first term on the left hand side yields a   measure of the motion of   $a_{k_0}$
between  times $s_1$ and $s_2$, whereas the second, namely $ \mathfrak
F_\eps(s_1, s_2, \chi)$, 
is hence a good approximation of  the measure of this motion, \emph{provided we
are able to estimate the dissipation $\disL(s_1, s_2)$}.  Our previous discussion
suggests that  
\begin{equation*}
\label{suggestion}
a_{k_0}^\eps(s_2) - a_{k_0}^\eps (s_1) \simeq \frac{1}{\chi'(a_{k_0}^\eps)\S_{i(k_0)}}\mathfrak F_\eps(s_1, s_2, \chi).  
\end{equation*}
It turns out that the computation of  $ \mathfrak F_\eps(s_1, s_2, \chi)$ can be
performed with  satisfactory accuracy if the  
test function $\chi$  is affine (and hence as  vanishing second derivatives) close to the front set, this is the object of the next subsections.

\subsection{A first compactness result}


A first step in deriving the motion law for the fronts is to obtain rough bounds from above for both $\disL [s_1,s_2]$ and $\mathfrak F_\eps(s_1, s_2, \chi).$  To obtain these, and under the assumptions of Corollary \ref{jamboncru}, notice that if $\ddot \chi$ vanishes on the set $\{a_k^\eps(s_0)\}_{k \in J} + [-d^*_{\rm min}(s_0)/4,d^*_{\rm min}(s_0)/4]$, then from the inequality 
$\vert \xi_\eps(u)\vert  \leq e_\eps(u)$, from Corollary \ref{jamboncru} and from \eqref{crottin} of Proposition \ref{estimpar}, we derive that for $s_1\leq s_2$ in $I^\eps(s_0)$,   
\begin{equation}
\label{spain}
\left   \vert \mathfrak F_\eps (s_1, s_2, \chi) \right \vert 
 \leq C {d_{\rm min}^*(s_0)}^{-\omega}  \Vert \ddot \chi \Vert_{L^\infty(\R)}
(s_2-s_1). 
\end{equation}
Going back to \eqref{localizedenergy0}, and choosing the test function $\chi$ so that $\chi \equiv 1$ on $\IctL$ with compact support on $\ILL$, estimate \eqref{spain} combined with \eqref{mireille} yields in turn a first rough upper bound on the dissipation $\disL [s_1,s_2]$.  Combining these estimates we will obtain  
 
\begin{proposition}
\label{conspascites}
Under the assumptions of Corollary \ref{jamboncru}, for $s_1\leq s_2 \in I^\eps(s_0)$ we have 
\begin{equation}
\label{uniformita}
\vert a_k^\eps(s_1)-a_k^\eps(s_2) \vert \leq  C\left( {d_{\rm min}^*(s_0)}^{-(\omega+1)} 
\ (s_2-s_1)   + M_0\big( (\llogepsL)^{-\omega} d_{\rm min}^*(s_0) + \eps\big)\right).
\end{equation}
\end{proposition}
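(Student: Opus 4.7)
The plan is to combine the localized energy identity \eqref{louloulepou}, the well-preparedness bound \eqref{mireille}, the inclusion-preservation estimate \eqref{gratindo}, and the flux bound \eqref{spain} with two different choices of test function: one global to control the dissipation $\disL[s_1,s_2]$, and one adapted to a chosen front point $a_{k_0}^\eps$ to translate the flux into a displacement estimate.

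\textbf{Step 1 (bounding the dissipation).} I pick a test function $\tilde\chi\in C^\infty_c(\ILL)$ with $\tilde\chi\equiv 1$ on $\IctL$, $\|\tilde\chi\|_\infty=1$, $\|\tilde\chi'\|_\infty\le C/\rL$ and $\|\ddot{\tilde\chi}\|_\infty\le C/\rL^2$. By Corollary \ref{jamboncru}, $\WPL(\udll,s)$ holds for all $s\in I^\eps(s_0)$, with unchanged index set $J$; since all front points lie in $\IL$, we have $\tilde\chi(a_k^\eps(s))=1$ for every $k$ and every $s\in I^\eps(s_0)$. Thus the sums $\sum_{k\in J}\tilde\chi(a_k^\eps(s))\S_{i(k)}$ coincide at $s_1$ and $s_2$, and \eqref{mireille} yields $|\mathcal I_\eps(s_2,\tilde\chi)-\mathcal I_\eps(s_1,\tilde\chi)|\le CM_0\bigl((\llogepsL)^{-\omega}+\eps/\rL\bigr)$. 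Since $\ddot{\tilde\chi}$ is supported in $\ILL\setminus\IctL$, which lies at distance $\geq \rL/3 \gg d_{\rm min}^*(s_0)/4$ from every front point, \eqref{spain} gives $|\mathfrak F_\eps(s_1,s_2,\tilde\chi)|\le C(d_{\rm min}^*(s_0))^{-\omega}\rL^{-2}(s_2-s_1)$, which, via $s_2-s_1\le|I^\eps(s_0)|\le\uprho_0(d_{\rm min}^*(s_0)/8)^{\omega+2}$, is an $\eps$-controlled lower-order term. Inserting into \eqref{louloulepou} and using that $\int\tilde\chi|\partial_s\vm|^2\ge\eps^{-(1+\omega)}\disL[s_1,s_2]$, I obtain
$$\disL[s_1,s_2]\le CM_0(\llogepsL)^{-\omega}+\text{lower order}.$$

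\textbf{Step 2 (displacement of a single front).} Fix $k_0\in J(s_0)$ and set $d^*=d_{\rm min}^*(s_0)$. I construct $\chi\in C^\infty_c(\R)$ which coincides with $x\mapsto x-a_{k_0}^\eps(s_0)$ on $[a_{k_0}^\eps(s_0)-d^*/2,a_{k_0}^\eps(s_0)+d^*/2]$, vanishes near every other front point $a_k^\eps(s_0)$, and satisfies $\|\chi\|_\infty\le d^*/2$, $\|\chi'\|_\infty\le C$, $\|\ddot\chi\|_\infty\le C/d^*$. By a continuity bootstrap on the time variable, relying on the continuity of $s\mapsto a_k^\eps(s)$ furnished by Corollary \ref{jamboncru}, I may assume that $a_{k_0}^\eps(s)$ stays in the affine region and $a_k^\eps(s)$ for $k\ne k_0$ stays in the zero region of $\chi$ for $s\in[s_1,s_2]$, so that $\chi(a_{k_0}^\eps(s))=a_{k_0}^\eps(s)-a_{k_0}^\eps(s_0)$ and $\chi(a_k^\eps(s))=0$ for $k\ne k_0$. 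Applying \eqref{gratindo} and using $|\mathfrak F_\eps(s_1,s_2,\chi)|\le C(d^*)^{-(\omega+1)}(s_2-s_1)$ from \eqref{spain}, together with the dissipation bound of Step 1, I find
$$\S_{i(k_0)}\,|a_{k_0}^\eps(s_2)-a_{k_0}^\eps(s_1)|\le C(d^*)^{-(\omega+1)}(s_2-s_1)+CM_0\bigl((\llogepsL)^{-\omega}d^*+\eps\bigr),$$
which, after absorbing $\S_{i(k_0)}^{-1}$ into the constant (it depends only on $V$), is precisely \eqref{uniformita}.

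\textbf{Step 3 (closing the bootstrap).} The above estimate evaluated at the maximal time of the bootstrap interval shows that the displacement is bounded by $C(d^*)^{-(\omega+1)}\cdot\uprho_0(d^*/8)^{\omega+2}+CM_0((\llogepsL)^{-\omega}d^*+\eps)\le d^*/4$ for $\eps$ small enough, which strictly improves the bootstrap hypothesis and hence validates it on the entire interval $I^\eps(s_0)$. The main technical obstacle is keeping careful track of constants in the construction of $\chi$ so that the affine region around $a_{k_0}^\eps(s_0)$ remains disjoint from the vanishing regions around the other fronts, and that the lower-order corrections in the dissipation bound (involving $\eps/\rL$ and $(d^*)^2/\rL^2$) are indeed negligible compared with the leading term $M_0(\llogepsL)^{-\omega}d^*$; both are harmless under \eqref{eq:alpha1grand} and the fact that $d^*\leq 2\rL$.
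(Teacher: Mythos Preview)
Your strategy matches the paper's: apply \eqref{gratindo} with a test function $\chi$ affine near one front and vanishing near the others, bound $\mathfrak F_\eps$ via \eqref{spain}, and control $\disL$. Two points of comparison. For the dissipation, the paper simply invokes Corollary~\ref{murino0} with $\updelta=\updelta'=\udll$; your Step~1 re-derives essentially that same bound via a global cutoff, which is fine, though note the flux term $C(d^*)^{-\omega}\rL^{-2}(s_2-s_1)$ is \emph{not} ``$\eps$-controlled'' as you claim---it is order one in $\eps$, but after multiplication by $\|\chi\|_\infty\le Cd^*$ in Step~2 it is absorbed into the Lipschitz term $C(d^*)^{-(\omega+1)}(s_2-s_1)$ of \eqref{uniformita} since $d^*\le 2\rL$, so the conclusion survives. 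For front localization, your bootstrap in Step~3 is unnecessary and its closure is not automatic: the paper relies instead on the a~priori inclusion \eqref{theodebase1} (which underlies Corollary~\ref{jamboncru}), and with $r=d^*/8$ and the very definition of $I^\eps(s_0)$ this already gives $|a_k^\eps(s)-a_k^\eps(s_0)|\le d^*/8$ for every $k$ and every $s\in I^\eps(s_0)$, so all fronts sit in the prescribed regions of $\chi_{k_0}$ from the start---this is why the paper takes affine radius $d^*/4$ and support radius $d^*/3$ in \eqref{specificitechi}. By contrast, your closure inequality $C\uprho_0 8^{-(\omega+2)}d^*\le d^*/4$ requires an unverified smallness of $C$ (which absorbs $\S_{i(k_0)}^{-1}$ and the constants from \eqref{spain}), and your affine radius $d^*/2$ leaves essentially no margin between the affine zone and the neighboring fronts, so the constants there would need adjustment.
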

As an easy consequence, we deduce the following compactness property, setting
$$
I^*(s_0)=\Big( s_0, s_0+  \uprho_0 \big( \frac{ d_{\rm min}^*(s_0)}{8}\big)^{\omega+2}\Big)\cap (0,S).
$$ 
\begin{corollary}
\label{lartichaut} Under the assumptions of Corollary \ref{jamboncru},  
there exist a subsequence $(\eps_n)_{n \in \N}$ converging to $0$ such that for any $k \in J$ the function
$a_k^{\eps_n} (\cdot)$ converges uniformly on any compact interval of 
$I^*(s_0)$ to a Lipschitz continuous function $a_k(\cdot)$. 
\end{corollary}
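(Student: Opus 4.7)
The plan is to deduce Corollary \ref{lartichaut} from Proposition \ref{conspascites} by a straightforward application of the Arzelà-Ascoli theorem combined with a diagonal extraction. Since the index set $J$ is finite and $\eps$-independent, it suffices to produce, for each fixed $k \in J$, a subsequence along which $a_k^{\eps_n}$ converges uniformly on compacta of $I^*(s_0)$, and then to diagonalize over the finitely many indices.

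First, uniform boundedness is immediate: by the $\wpi$ part of $\WPL$, which holds throughout $I^\eps(s_0)$ thanks to Corollary \ref{jamboncru}, each $a_k^\eps(s)$ lies in the fixed compact interval $\IL$. Next, fix any compact subinterval $K \subset I^*(s_0)$. Since the left endpoint $s_0 + 2\rL^2 \eps^\omega$ of $I^\eps(s_0)$ converges down to $s_0$ while its right endpoint is independent of $\eps$, we have $K \subset I^\eps(s_0)$ for all $\eps$ sufficiently small (depending only on $K$). For such $\eps$ and arbitrary $s_1 < s_2$ in $K$, Proposition \ref{conspascites} yields
\begin{equation*}
|a_k^\eps(s_1) - a_k^\eps(s_2)| \leq C\, d_{\rm min}^*(s_0)^{-(\omega+1)} (s_2-s_1) + \eta_\eps,
\end{equation*}
with an error $\eta_\eps := C M_0 \big((\llogepsL)^{-\omega} d_{\rm min}^*(s_0) + \eps\big)$ that tends to zero as $\eps \to 0$, uniformly in $s_1$, $s_2$ and $k$.

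With this uniform bound and almost-equicontinuity property in hand, the extraction argument is routine. I would exhaust $I^*(s_0)$ by an increasing sequence of compact subintervals $(K_m)_{m\in\N}$; on each $K_m$, combining the above inequality with Arzelà-Ascoli (the additive $\eta_\eps$ correction being harmless for equicontinuity, since it is independent of $s_1, s_2$) produces a subsequence along which $a_k^\eps$ converges uniformly to some continuous limit $a_k$. A diagonal argument over $m$ then yields a single subsequence $\eps_n \to 0$ along which convergence holds on every $K_m$, hence on every compact subset of $I^*(s_0)$. Passing to the limit $n \to \infty$ in the displayed inequality with $s_1, s_2$ held fixed in some $K_m$ eliminates $\eta_{\eps_n}$ and preserves the linear part, so that $a_k$ is Lipschitz continuous on $I^*(s_0)$ with constant at most $C d_{\rm min}^*(s_0)^{-(\omega+1)}$. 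A further extraction performed simultaneously over the finite index set $J$ completes the proof. No genuine obstacle arises at this stage: all the analytical difficulty has already been packaged into Proposition \ref{conspascites}, and Corollary \ref{lartichaut} appears essentially as its topological byproduct.
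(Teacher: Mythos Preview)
Your proposal is correct and follows essentially the same approach as the paper: the paper's proof is a one-line invocation of Arzelà--Ascoli based on the equicontinuity supplied by Proposition \ref{conspascites}, and you have simply spelled out the details (uniform boundedness in $\IL$, handling of the $\eta_\eps$ error, diagonal extraction, and Lipschitz continuity of the limit).
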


\subsection{Refined estimates off the front set and  the motion law }
\label{raffarinage}
In order to derive the  \emph{precise motion law}, we have to provide an
accurate  asymptotic value for the discrepancy term \emph{off the front set}. 
In other words, for  a given index $k\in J$ we need to provide a  uniform limit of
the function  
$\eps^{-\omega} \xi_\eps$ near the points 
$$
a_{k+\frac{1}{2}}^\eps(s) \equiv \frac{a_k^\eps(s)+ a_{k+1}^\eps (s)}{2} \  \
{\rm and \  } a_{k-\frac{1}{2}}^\eps(s) \equiv \frac{a_{k-1}^\eps(s)+ a_{k}^\eps
(s)}{2}. 
$$  
We notice first that $\vm$ takes values close to $ \upsigma_{i(k+\frac 12 )}$ near 
$a_{k+\frac 1 2}^\eps(s)$. In view of estimate \eqref{crottin},  we introduce the functions
\begin{equation}
\label{Wm}
\mathfrak w_\eps(\cdot, s) = \mathfrak w_\eps^k (\cdot, s)= \vm -\upsigma_{i(k+\frac 12)} \   {\rm and  \  } 
\mathfrak W_\eps=\mathfrak W_\eps^k\equiv \eps^{-\frac{1}{\theta-1}}  \mathfrak w_\eps^k= \eps^{-\frac{1}{\theta-1}} \left(\vm -\upsigma_{i(k+\frac12)} \right). 
\end{equation}
As a consequence of inequality \eqref{crottin2} and Corollary \ref{jamboncru} we have the uniform bound:

\begin{lemma} 
\label{reset}
Under the assumptions of Corollary \ref{jamboncru}, we have 
\begin{equation}
\label{loindubord}
\vert  \mathfrak W_\eps(x, s) \vert  \leq C 
\big( d(x,s) \big)^{-\frac{1}{ \theta-1}}
\end{equation}
for any 
$x \in (a_k(s)+\udll, a_{k+1}(s)-\udll)$ and any $s\in I^\eps(s_0),$ where we have set $d(x,s):={\rm dist}(x,\{a_k^\eps(s),a_{k+1}^\eps(s)\})$ and
where $C>0$ depends only on $V$ and $M_0.$ 
Moreover, we also have
\begin{equation}
\label{presdubord}
\left\{
\begin{aligned}
-{\rm sign}(  \dagger_{k} )  \Wm \left(a_k^\eps(s)+\udll \right) &\geq \frac{1}{C}  \left( \udll\right)^{-\frac{1}{\theta-1}} \\
{\rm sign}(  \dagger_{k+1} )  \Wm \left(a_{k+1}^\eps(s)-\udll \right)&\geq \frac{1}{C} \left( \udll\right)^{-\frac{1}{\theta-1}}. 
\end{aligned}
\right.
\end{equation}
\end{lemma}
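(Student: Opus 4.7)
The plan is to split the proof into the two inequalities, each handled by a different tool. The upper bound \eqref{loindubord} will reduce to a direct application of the parabolic smoothing estimate \eqref{crottin2} of Proposition \ref{estimpar}, while the lower bound \eqref{presdubord} will rely on the algebraic tail of the stationary front $\zeta_i^\pm$ combined with the $C^1_\eps$-matching \eqref{bugs} provided by Corollary \ref{jamboncru}.

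For \eqref{loindubord}, fix $s\in I^\eps(s_0)$ and $x\in (a_k^\eps(s)+\udll, a_{k+1}^\eps(s)-\udll)$, and set $r:=\tfrac12 d(x,s)$. Corollary \ref{jamboncru} provides $\WPL(\udll,\cdot)$ throughout $I^\eps(s_0)$, and Proposition \ref{conspascites} provides a bound on the displacement of $a_k^\eps$ and $a_{k+1}^\eps$ over a time interval of length $\eps^\omega r^2\ll \eps^\omega \rL^2$ that is $o(1)$ as $\eps\to 0$. Consequently, setting $\tau_0:=s-\eps^\omega r^2$ (which for $\eps$ small belongs to $I^\eps(s_0)$), the strip $[x-r,x+r]\times[\tau_0,s]$ stays disjoint from the $\udll$-neighborhoods of every front point, so that the confinement hypothesis \eqref{offside} of Proposition \ref{estimpar} is verified at the well $\upsigma_{i(k+\frac12)}$. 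Applying \eqref{crottin2} and balancing the two terms on the right-hand side through the choice $s-\tau_0=\eps^\omega r^2$ yields $|\vm(x,s)-\upsigma_{i(k+\frac12)}|\leq C\,\eps^{\frac{1}{\theta-1}}\, r^{-\frac{1}{\theta-1}}$, which is \eqref{loindubord} after rescaling by $\eps^{-\frac{1}{\theta-1}}$.

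For \eqref{presdubord}, I would invoke the first integral $(\zeta')^2=2V(\zeta)$ of the stationary equation, which combined with the local expansion $V(u)\sim \lambda_{i(k+\frac12)}(u-\upsigma_{i(k+\frac12)})^{2\theta}$ of assumption $(\mathrm{A}_2)$ yields by elementary ODE integration the asymptotic
\begin{equation*}
\zeta_{i(k)}^{\dagger_k}(t) - \upsigma_{i(k+\frac12)} \;\sim\; -\,\mathrm{sign}(\dagger_k)\, \kappa_\theta\, \lambda_{i(k+\frac12)}^{-\frac{1}{2(\theta-1)}}\, t^{-\frac{1}{\theta-1}} \qquad \text{as}\ t\to+\infty,
\end{equation*}
for a positive constant $\kappa_\theta$ (related to the explicit constants $\mathcal A_\theta, \mathcal B_\theta$ from Appendix A and to the singular boundary value problems they solve). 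Evaluating at $t=\udll/\eps$, which tends to $+\infty$ since $\udll/\eps\to+\infty$, and using \eqref{bugs} — whose error $\exp(-\udll/\eps)$ is negligible compared to $\eps^{\frac{1}{\theta-1}}(\udll)^{-\frac{1}{\theta-1}}$ — then gives the first inequality of \eqref{presdubord} after rescaling by $\eps^{-\frac{1}{\theta-1}}$. The sign checks out in each case: for $\dagger_k=+$, $\zeta_{i(k)}^+$ approaches $\upsigma_{i(k+\frac12)}=\upsigma_{i(k)+1}$ monotonically from below, and conversely for $\dagger_k=-$. The second inequality is obtained symmetrically by evaluating $\zeta_{i(k+1)}^{\dagger_{k+1}}$ at $-\udll/\eps$ and using the tail expansion at $-\infty$.

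The main technical subtlety lies in the upper bound: the confinement hypothesis of Proposition \ref{estimpar} must be verified on an entire parabolic time window $[\tau_0,s]$ and not merely at time $s$, forcing us to combine the instantaneous well-preparedness of Corollary \ref{jamboncru} with the quantitative slow-motion bound of Proposition \ref{conspascites}. The lower bound is essentially a computation on the stationary profile, whose algebraic tail is entirely determined by the degeneracy exponent $2\theta$ of $V$ at its wells.
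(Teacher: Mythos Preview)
Your treatment of \eqref{presdubord} is correct and is essentially what the paper has in mind (the paper in fact leaves this part implicit): the algebraic tail asymptotics of $\zeta_{i}^\pm$ displayed in Section~\ref{cavenecadas} together with the $C^1_\eps$ matching \eqref{bugs} from $\wpi(\udll,s)$ yield the lower bound directly, the error $\exp(-\udll/\eps)$ being negligible in front of $(\eps/\udll)^{1/(\theta-1)}$.

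For \eqref{loindubord} your overall strategy is right --- apply \eqref{crottin2} of Proposition~\ref{estimpar} on a parabolic box of spatial size $r=\tfrac12 d(x,s)$ and temporal size $\eps^\omega r^2$ --- but the tool you invoke to verify the confinement hypothesis \eqref{offside} is not strong enough. Proposition~\ref{conspascites} bounds the displacement of the front points by
\[
C\Big( d_{\rm min}^*(s_0)^{-(\omega+1)}(s_2-s_1) + M_0\big((\llogepsL)^{-\omega}\,d_{\rm min}^*(s_0) + \eps\big)\Big),
\]
and the second term, of order $(\llogepsL)^{-\omega}$, is a fixed $o(1)$ quantity that does \emph{not} scale with $r$. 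When $d(x,s)$ is close to $\udll\sim \eps\,\llogepsL$ --- and the lemma is stated for all such $x$ --- this error dwarfs the margin $r$, so the strip $[x-r,x+r]\times[\tau_0,s]$ need not stay clear of the front set. The paper avoids this by using the front-\emph{set} inclusion of Theorem~\ref{mainbs2} (equivalently \eqref{theodebase1}) rather than the front-\emph{point} estimate of Proposition~\ref{conspascites}: over a time span $\eps^\omega d(x,s)^2$ the front set moves by at most
\[
\Big(\tfrac{\eps^\omega d(x,s)^2}{\uprho_0}\Big)^{\frac{1}{\omega+2}} \;=\; \uprho_0^{-\frac{1}{\omega+2}}\Big(\tfrac{\eps}{d(x,s)}\Big)^{\frac{\omega}{\omega+2}} d(x,s)\;\leq\; \tfrac{d(x,s)}{4},
\]
the last inequality holding as soon as $d(x,s)/\eps$ is large, which is guaranteed since $d(x,s)\geq\udll$. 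This bound scales correctly with $d(x,s)$ and closes the argument uniformly down to the boundary layer. Replace your appeal to Proposition~\ref{conspascites} by this direct use of Theorem~\ref{mainbs2} (combined with Corollary~\ref{jamboncru} to handle the backward-in-time direction, since \eqref{theodebase1} is stated only forward).
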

We describe next on a formal level how to obtain the desired asymptotics for
$\eps^{-\omega} \xi_\eps$, as $\eps\to 0$, near the point  
 $a_{k+\frac 12 } (s)$.  
Going back to the limiting points $\{a_k(s)\}_{k \in J} $ defined in Proposition
\ref{conspascites}, we  consider  the subset of $\R \times \R^+$   
\begin{equation}
\label{dopo}
\mathcal V_k(s_0)= \bigcup_{ s \in I^*(s_0) } \left(a_k(s), a_{k+1} (s)\right)\times \{s\}.
\end{equation}
It follows from the uniform bounds established in Lemma \ref{reset}, that,
passing possibly to a further subsequence, we may assume that 
$$
\Wmn  \rightharpoonup \Wms  {\rm \  in  \  } L_{\rm loc}^p (\mathcal
V_k(s_0)), {\rm \ for \   any \ } 1\leq p <\infty. 
$$ 
On the other hand, thanks to  estimate \eqref{loindubord}, for a given  point $(x, s) \in \mathcal V_k(s_0)$ we expand
${\rm (PGL)}_\eps$ near $(x, s)$ as 
\begin{equation}
\label{formaldehyde}
\eps^{\omega}\frac{\partial \Wm}{\partial s}-\frac{\partial^2 \Wm}{\partial
x^2}+2 \theta \lambda_{i(k+\frac12)} \Wm^{2 \theta-1}= O(\eps^{\frac{1}{\theta-1}}).
\end{equation}
Passing to the limit $\eps_n \to 0$, we expect that  for every $s\in I^*(s_0)$, $\Wms$ solves 
\begin{equation}
\label{formaleq}
\left\{
\begin{aligned}
-&\frac{\partial^2 \Wms}{\partial x^2}(s, \cdot)+2 \theta \lambda_{i(k+\frac12)}
\Wms^{2 \theta-1}(s, \cdot)=0  {\rm  \  on } \ (a_k(s), a_{k+1}(s)), \\ 
&\Wms(a_k(s))=-{\rm sign}(  \dagger_k)\infty  \ {\rm and} \ \   \
\Wms(a_{k+1}(s))={\rm sign}(  \dagger_k)\infty,  
\end{aligned}
\right. 
\end{equation} 
the boundary conditions being a consequence of the asymptotics \eqref{presdubord}. 
It turns  out, in view of Lemma \ref{veryordinary} of the Appendix, that the
\emph{boundary value problem}  \eqref{formaleq} has a \emph{unique}
solution. 
By scaling, and setting $r_k(s)=\frac 12 (a_{k+1}(s)-a_k(s))$ 
, we obtain
\begin{equation*}
\left\{
\begin{aligned}
& \Wms (x, s)=\pm r_k(s)^{-\frac{1}{ \theta-1}} \left (\lambda_{i(k+\frac12)} \right)^{-\frac{1}{2 (\theta-1)}}
\Ump\left( \frac{x-a_{k+\frac 12}}{r_k(s)} \right), \ &&{\rm if } \ \dagger_k= -\dagger_{k+1}, \\
&\Wms (x, s)=\pm r_k(s)^{-\frac{1}{ \theta-1}} \left (\lambda_{i(k+\frac12)}  \right)^{-\frac{1}{2 (\theta-1)}}
\Umps\left( \frac{x-a_{k+\frac 12}}{r_k(s)} \right), \ &&{\rm if } \ \dagger_k= \dagger_{k+1}, 
\end{aligned} 
\right.
\end{equation*}
where  $\Ump$ (resp. $\Umps$) are the unique solutions to  the problems
\begin{equation}
\label{umps0}
\left\{
\begin{aligned}
-&\mathcal U_{xx}+2 \theta\, \mathcal U^{2 \theta-1}=0 \qquad {\rm  \  on } \ (-1,+1), \\
&\mathcal U(-1)= +\infty  \  ({\rm resp.} \   \mathcal U(-1)= -\infty)  \  \,  {\rm and} \ \   \ \mathcal U(+1)=+\infty.
\end{aligned}
\right. 
\end{equation} 
Still on a formal level, we deduce therefore the corresponding values of the discrepancy
\begin{equation}
\label{formalite}
\left\{
\begin{aligned}
&\eps^{-\omega}\xi_\eps(\vm)\simeq  \xi(\Wms)=-
\lambda_{i(k+\frac12)}^{-\frac{1}{\theta-1}} r_k(s)^{-(\omega+1)}\mathcal A_\theta
\ &&{\rm if } \ \dagger_k= -\dagger_{k+1}, \\
&\eps^{-\omega}\xi_\eps(\vm)\simeq  \xi(\Wms)=
\lambda_{i(k+\frac12)}^{-\frac{1}{\theta-1}}
 r_k(s)^{-(\omega+1)}\mathcal B_\theta
 \ &&{\rm if } \ \dagger_k= \dagger_{k+1},
\end{aligned}
\right.
 \end{equation}
where the numbers $\mathcal A_\theta$ and $\mathcal B_\theta$ are positive, 
depend only on $\theta$, and correspond to the absolute value of the
discrepancy of $\Ump$ and $\Umps$ respectively.  
Notice  that the signs in \eqref{formalite} are different, the first case yields attractive forces
whereas the second yields repulsive ones. Inserting this relation in
\eqref{gratindo} and arguing as for \eqref{uniformita}, we will derive the motion
law. 
 
The previous formal discussion can be put on a sound mathematical ground, relying on 
comparison principles and the construction of appropriate upper and lower solutions (see Section \ref{raffinage}). 
This leads to the central result of this paper:   
  
\begin{proposition}
\label{firststep1}
Assume that conditions $({\rm H}_0)$ and $({\rm H}_1)$ are fulfilled. Let $0<S<S_{\rm max}$ be given and set 
$$
\rL_0:= 3 \max\left\{ |a_k^0|,\, 1\leq k \leq \ell_0\, ,\ (\frac{S}{\uprho_0})^\frac{1}{\omega+2}\right\}.
$$
Assume that ${\rm W PI}_\eps^{\rL_0} (\upalpha_1\eps, 0 )$ holds as well as \eqref{dminstar} at time $s=0$.  
Then $J(s) = \{1,\cdots,\ell_0\}$ and the functions $a_k^{\eps} (\cdot)$ are  well defined and
converge uniformly on any compact interval of   
   $(0, S)$   to  the solution $a_k(\cdot)$ of
\eqref{tyrannosaure} supplemented with the initial condition
$a_k(0)=a_k^0$.   
\end{proposition}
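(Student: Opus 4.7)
The plan is to promote the local compactness of Corollary \ref{lartichaut} into identification of the limit as the solution of $(\mathcal{S})$, then iterate to cover $(0, S)$. First, with $\rL_0$ as in the statement, the factor $3$ in its definition, combined with the confinement rate \eqref{theodebase1} from Theorem \ref{mainbs2}, guarantees that no front can exit the window $\IL$ before time $S$, so condition $(\mathcal C_{\rL_0, S})$ is fulfilled. Since the preparedness and separation $d_{\rm min}^*(0) > 0$ are assumed at $s = 0$, Corollary \ref{jamboncru} applies and yields on the interval $I^\eps(0)$ the property $\WPL(\udll, s)$, the bound $\dmineL(s) \geq \frac{1}{2} d_{\rm min}^*(0)$, the equality $J(s) = \{1, \ldots, \ell_0\}$, and the conservation of the indices $i(k \pm \frac 12)$ and $\dagger_k$. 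Corollary \ref{lartichaut} then extracts a subsequence $\eps_n \to 0$ along which the $\ell_0$ front points $a_k^{\eps_n}(\cdot)$ converge uniformly on compact subsets of $I^*(0)$ to Lipschitz limits $a_k(\cdot)$.

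\textbf{Step 2 (identifying the ODE on $I^*(0)$).} For $s_1 < s_2 \in I^*(0)$ and any fixed $k_0$, I test the identity \eqref{gratindo} with a smooth function $\chi$ that is affine on a neighborhood of $a_{k_0}(s)$ of radius $d_{\rm min}^*(0)/4$ for every $s \in [s_1, s_2]$ and vanishes near the other fronts. The left-hand side reduces to $\chi'(a_{k_0}) \S_{i(k_0)} (a_{k_0}^\eps(s_2) - a_{k_0}^\eps(s_1))$ up to errors vanishing with $\eps$. The flux $\mathfrak F_\eps(s_1, s_2, \chi)$ has $\ddot\chi$ supported in two annular strips surrounding the mid-points $a_{k_0 \pm \frac 12}^\eps(s)$, inside the region controlled by Lemma \ref{reset}. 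The main obstacle, and the heart of the proof, is to promote the heuristic \eqref{formalite} into a genuine uniform asymptotic: I apply the comparison principle to the quasi-elliptic equation \eqref{formaldehyde} for $\Wm$, treating the parabolic derivative $\eps^\omega \partial_s \Wm$ and the source $O(\eps^{1/(\theta-1)})$ as small perturbations; sub- and super-solutions are built by rescaling $\Ump$ and $\Umps$ (the unique solutions of \eqref{umps0}) to intervals slightly smaller or larger than $(a_k^\eps(s), a_{k+1}^\eps(s))$, with boundary conditions matched via \eqref{presdubord}. This forces $\Wm \to \Wms$ locally uniformly on $\mathcal V_k(s_0)$, whence $\eps^{-\omega} \xi_\eps(\vm) \to \xi(\Wms)$ uniformly on compact subsets, and \eqref{formalite} becomes rigorous.

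\textbf{Step 3 (dissipation, ODE, and global extension).} To bound $\disL[s_1, s_2]$ I apply \eqref{louloulepou} to a cutoff $\tilde\chi \equiv 1$ on $\IctL$ with compact support in $\ILL$: the flux piece is $O(\rL^{-\omega}(s_2 - s_1))$ by \eqref{cirque}, while the variation of $\mathcal I_\eps(\cdot, \tilde\chi)$ is $o(1)$ by \eqref{mireille} because both $J(s)$ and the individual energies $\S_{i(k)}$ are conserved on $I^*(0)$. Plugging the asymptotic \eqref{formalite} and this dissipation bound into \eqref{gratindo}, dividing by $s_2 - s_1$, letting first $\eps_n \to 0$ and then $s_2 \to s_1$, yields precisely the motion law $(\mathcal S)$ with the coefficients \eqref{defgamma} (the factor $2^\omega$ arising from the normalization $r_k = (a_{k+1} - a_k)/2$ in \eqref{umps0}). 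Since the vector field of $(\mathcal S)$ is locally Lipschitz as long as all distances $a_{k+1} - a_k$ remain positive, the Cauchy problem with initial data $\{a_k^0\}$ has a unique solution on $[0, S_{\rm max})$, so subsequential convergence is promoted to convergence of the full family $\eps \to 0$. Finally, because $S < S_{\rm max}$, Proposition \ref{getrude} ensures that $\inf_{s \in [0, S]} d_{\rm min}^*(s) > 0$; hence the length of $I^*(s_0)$ is bounded below uniformly in $s_0 \in [0, S]$, and a finite covering of $[0, S]$ by such intervals, combined with iteration of Steps 1 and 2 from successive starting times, completes the proof.
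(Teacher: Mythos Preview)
Your overall structure mirrors the paper's proof, but Step~3 has a genuine gap in the dissipation estimate. With the cutoff $\tilde\chi\equiv 1$ on $\IctL$ supported in $\ILL$, the localized energy identity \eqref{localizedenergy0} together with \eqref{cirque} yields (the correct exponent is $\omega+2$, not $\omega$)
\[
\disL[s_1,s_2]\ \leq\ \big|\mathcal I_\eps(s_2,\tilde\chi)-\mathcal I_\eps(s_1,\tilde\chi)\big|\ +\ C\,\rL^{-(\omega+2)}(s_2-s_1),
\]
and the first term does tend to $0$ via \eqref{mireille}. The second term, however, is a \emph{fixed positive number} when $\rL=\rL_0$: it does not vanish as $\eps\to 0$. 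Feeding this into \eqref{gratindo} with $\|\chi\|_\infty\leq C\dminstar(0)$, dividing by $s_2-s_1$, and letting $\eps_n\to 0$ then $s_2\to s_1$ as you propose, leaves a residual
\[
\big|\S_{i(k_0)}\,\dot a_{k_0}(s)-[\text{right-hand side of }(\mathcal S)]\big|\ \leq\ C\,\dminstar(0)\,\rL_0^{-(\omega+2)},
\]
which is not zero, so the limit cannot be identified with the exact solution of \eqref{tyrannosaure}.

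The paper closes this gap (its Step~3) by noting that the confinement $(\mathcal C_{\rL,S})$ actually holds for \emph{every} $\rL\geq \rL_0$; one runs the dissipation estimate at scale $\rL$ via Proposition~\ref{parareglo} and Corollary~\ref{murino0}, and then sends $\rL\to\infty$ \emph{after} $\eps\to 0$, obtaining $\limsup_{\eps\to 0}\dis^{\rL_0}[s_1,s_2]=0$. Without this extra degree of freedom in $\rL$ the argument does not close. As a minor remark, your Step~2 is essentially a paraphrase of Proposition~\ref{vraiestime}; since that result is established in Section~\ref{raffinage} by genuinely parabolic comparison on short time-cylinders (rather than by treating $\eps^\omega\partial_s\Wm$ as a static perturbation of an elliptic problem), simply invoking it is both cleaner and safer.
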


Notice that  the combination of assumptions $\wpi(\upalpha_1 \eps,0)$,
$({\rm H}_1)$ and  \eqref{dminstar} at $s=0$ implies  the multiplicity one
condition $({\rm H}_{\rm min})$.   Whereas the conclusion of Proposition
\ref{firststep1} is similar to the one of Theorem \ref{maintheo1},  
the assumptions of Proposition \ref{firststep1}  are
more restrictive. Indeed, on one hand we assume the well-preparedness
condition  $\wpi$, and on the other hand we  impose \eqref{dminstar} which is far
more constraining than $({\rm H}_{\rm min})$:  it excludes in particular  the
possibility of having  small pairs of fronts and anti-fronts.  Our next  efforts
are hence devoted to handle this type of situation:  
Proposition \ref{firststep1}, through rescaling arguments, will nevertheless be 
the main building block for that task.

\smallskip
 In order to prove Theorem \ref{maintheo1},  \ref{alignedsolution}  and
\ref{colissimo} we need to relax  the assumptions on  the initial data, in
particular  we need to analyze the behavior of  data with small  pairs of
\emph{fronts and anti-fronts}, and show that they are going to annihilate on a
short interval of time. For that purpose we will consider  the following
situation, corresponding to confinement of the front set at initial time.  
Assume that for a collection of points
$\{b^\eps_q\}_{q\in J_0}$ in $\R$ we have 
\begin{equation}
\label{confitdoie}
\mathfrak D_\eps(0) \cap \ILLLLL \subset  \underset{ q\in J_0} \cup  [b^\eps_q- r, b^\eps_q+r] \subset \IkzL 
\qquad \text{and}\qquad b^\eps_p-b^\eps_q \geq 3R \qquad \text{for}  \ p \neq q \in J_0, 
\end{equation}
for some $\upkappa_0\leq \frac12$ and $\upalpha_1\eps \leq r \leq R/2 \leq \rL/4.$ 
It follows from \eqref{theodebase1} that if $0\leq s\leq \uprho_0(R-r)^{\omega+2}$ then 
\begin{equation*}
\mathfrak D_\eps(s) \cap \ILLLL \subset  \underset{ k\in J_0} \cup  (b^\eps_k-R,
b^\eps_k+R) \subset \IdkzL, \ \   {\rm where \ the \ union \ is \ disjoint}. 
\end{equation*}
Consider next   $0\leq s\leq \uprho_0(R-r)^{\omega+2}$ such that
$\wp(\upalpha_1 \eps, s)$ holds, so that the front points $\{a^\eps_k(s)\}_{k
\in J(s)}$ are well-defined.  For $q \in J_0$,  consider $J_q(s)=\{k \in J(s),
a^\eps_k (s) \in (b^\eps_q-R, b^\eps_q+R)\}$, set  $\ell_q=\sharp  J_q $, and   write
$J_q(s)=\{k_{q} , k_{q+1},\cdots,  k_{q +\ell_q-1}\}, $ 
 where $k_1=1$, and  $k_q=\ell_1+\cdots +\ell_{q-1}+1$, for $q \geq 2$. 
Our next result shows that, after a  small time, only the repulsive forces
survive at the  scale given by $r$, provided the different lengths are sufficiently 
distinct.  

\begin{proposition}
\label{nettoyage}
There exists positive constants $\upalpha_*$ and $\uprho_*$, depending only on $V$ and $M_0$, such that if \eqref{confitdoie} holds and 
\begin{equation}\label{poolish}
\upkappa_0^{-1}\geq \upalpha_*, \qquad r \geq \upalpha_* \eps\left(\frac{\rL}{\eps}\right)^{\frac{2}{\omega+2}}, \qquad R \geq \upalpha_* r,
\end{equation}
then at time
$$
s_r= \uprho_* r^{\omega+2} 
$$ 
condition $\WPL(\alpha_1\eps, s_r)$ holds and, for any $q\in J_0$ and any $k, k' \in J_q
(s_r)$ we have $\dagger_k(s_r)=\dagger_k' (s_r)$ or equivalently for any $k \in
J_q(s_r)\setminus  
\{k_{q}(s_r)+\ell_q(s_r)-1\}$, we have
\begin{equation}
\label{clearance}
\epsilon_{k+\frac 12}(s_r)=\dagger_k(s_r) \dagger_{k+1}(s_r)=+1.
\end{equation} 
Moreover, we have  
\begin{equation}
\label{clarence}
\dmineL(s_r) \geq r,  
\end{equation}
and if $\sharp J_q(s_r) \leq 1$ for every $q\in J_0$, then we actually have $\dmineL(s_r) \geq R.$
\end{proposition}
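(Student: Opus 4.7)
The plan is a space/time rescaling argument reducing the statement, cluster by cluster, to the motion-law analysis already developed (Theorem \ref{alignedsolution}, Corollary \ref{alarrivee}, Theorem \ref{colissimo}) combined with the ODE properties of Proposition \ref{getrude}. For each $q\in J_0$ I introduce the variables $y=(x-b^\eps_q)/r$ and $\tau=s/r^{\omega+2}$. The rescaled function solves an equation of $(\text{PGL})$ type with parameter $\tilde\eps=\eps/r$, and the assumption $r\geq\upalpha_*\eps(\rL/\eps)^{2/(\omega+2)}$ renders $\tilde\eps$ as small as desired once $\upalpha_*$ is chosen large. In the rescaled variables, cluster $q$ is confined to $[-1,1]$ at $\tau=0$, the remaining clusters sit at distance $\geq 3\upalpha_*$, the target time $s_r$ becomes $\tau=\uprho_*$, and the energy bound $({\rm H}_0)$ passes to a rescaled bound of the same form.

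Working within a single cluster and extracting a subsequence along which the rescaled initial data converges in $L^1_{\rm loc}$ to a step function with all its fronts concentrated at $0$, I invoke Theorem \ref{alignedsolution} to produce a splitting solution $\tilde a_k$ of $(\mathcal S)$ with $\tilde a_k(0^+)=0$ for every $k$. Whenever any opposite-sign (attractive) pair is present, $\tda^-(0)=0$ forces, via Proposition \ref{getrude}, the maximal time of existence to collapse to $0$; Corollary \ref{alarrivee} and Theorem \ref{colissimo} then furnish a post-collision step profile whose multiplicities lie in $\{-1,0,+1\}$, from which the analysis continues without any further subsequence extraction, iterating Theorem \ref{maintheo1} and Theorem \ref{colissimo} across subsequent collisions. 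Since each collision removes an even number of fronts and the total count is bounded by $M_0/\eta_0$ (Lemma \ref{interface}), after finitely many collisions --- uniformly in $\eps$ --- one reaches a configuration in which every surviving consecutive pair is repulsive, which is exactly \eqref{clearance}. In this purely repulsive regime Proposition \ref{getrude} yields global existence and $\tda^+(\tau)\geq(\mathcal S_1\tau)^{1/(\omega+2)}$, while simultaneously bounding the cumulative rescaled elapsed time of all collision stages by a constant depending only on $V$ and $M_0$ (each stage has $\tda^-(0)\leq 2$ initially and $S_{\rm max}\leq\mathcal K_0(\tda^-(0))^{\omega+2}$). Choosing $\uprho_*$ larger than $\mathcal S_1^{-1}$ plus this cumulative time secures $\tda^+(\uprho_*)\geq 1$ in rescaled variables, which, upon undoing the rescaling, is \eqref{clarence}.

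The preparedness statement $\WPL(\upalpha_1\eps,s_r)$ follows by combining Proposition \ref{parareglo}, which furnishes a nearby time $s'\in[s_r-\eps^{\omega+2}(\rL/\eps),s_r]$ at which $\wp(\udl,s')$ holds, with Proposition \ref{parareglo2}, whose separation hypothesis is secured by \eqref{clarence} and which propagates preparedness up to $s_r$; a harmless adjustment of $\uprho_*$ absorbs the window of length $\eps^{\omega+2}(\rL/\eps)$. For the stronger conclusion when every cluster reduces to at most one surviving front, Theorem \ref{mainbs2} bounds the displacement of any such front on $[0,s_r]$ by $(\uprho_*/\uprho_0)^{1/(\omega+2)}r$, so inter-cluster distances remain $\geq 3R-2(\uprho_*/\uprho_0)^{1/(\omega+2)}r\geq R$ provided $R\geq\upalpha_* r$ with $\upalpha_*$ sufficiently large. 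The main technical obstacle is the quantitative bookkeeping: verifying that the rescaled initial data genuinely satisfies the hypotheses of Theorem \ref{alignedsolution} (notably the $L^1_{\rm loc}$-convergence required by $({\rm H}_1)$, which here produces a fully concentrated step function), and that all the quantitative conditions --- from Proposition \ref{parareglo2}, Proposition \ref{getrude}, and the iterated applications of Theorem \ref{colissimo} --- are mutually compatible through a single universal choice of $\upalpha_*$ and $\uprho_*$ depending only on $V$ and $M_0$.
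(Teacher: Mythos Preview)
Your argument is circular. You invoke Theorem \ref{alignedsolution} and Theorem \ref{colissimo} to prove Proposition \ref{nettoyage}, but in the paper's logical structure these theorems are \emph{consequences} of Proposition \ref{nettoyage}, not ingredients for it. Indeed, the proof of Theorem \ref{alignedsolution} (Section \ref{extended}) begins by applying Proposition \ref{nettoyage} to the initial data in order to reach a well-prepared, well-separated configuration at time $s_r$, after which Proposition \ref{firststep1} can be invoked; Theorem \ref{colissimo} in turn rests on Theorem \ref{alignedsolution}. The very phenomenon you need --- that a cluster of fronts all sitting at a single point after rescaling admits a splitting solution of $(\mathcal S)$ to which the PDE fronts converge --- is precisely what Theorem \ref{alignedsolution} provides, and its proof requires Proposition \ref{nettoyage}. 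So you cannot appeal to it here.

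The paper avoids this circularity by building Proposition \ref{nettoyage} on Proposition \ref{firststep1} alone, which is proved independently under the strong preparedness and separation hypotheses. The bridge is a sequence of $\eps$-level quantitative results (Propositions \ref{nihil}, \ref{spillit}, \ref{vinsanto}, \ref{rtcolplus0}) each proved by a contradiction/rescaling argument that lands in the setting of Proposition \ref{firststep1}: one shows that attractive pairs force a drop of the quantized energy $\matfEL$ within a controlled time (Proposition \ref{rtcolplus0}), iterates this at most $M_0/\upmu_1$ times to clear all front-antifront pairs (Step 1), and then uses the repulsive estimate (Proposition \ref{spillit}) iteratively to spread the survivors beyond distance $r$ (Step 2). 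Your overall picture --- finitely many annihilations followed by repulsive spreading, with time budgets controlled by $r^{\omega+2}$ --- is correct, but the machinery must be built at the $\eps$ level from Proposition \ref{firststep1}, not borrowed from the limit theorems downstream.
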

The proofs of  Theorems \ref{maintheo1},  \ref{alignedsolution} and
\ref{colissimo} are then  deduced from Propositions \ref{firststep1} and
\ref{nettoyage}.

\medskip

The  paper is organized as follows. We describe in Section \ref{cavenecadas} some
properties of stationary fronts, as well as for solutions to  some perturbations
of the stationary equations.  
In Section \ref{regupara} we describe several properties related to the
well-preparedness assumption $\WPL$, in particular the quantization of the
energy, how it relates to dissipation, and its numerous implications for the
dynamics. We provide in particular the proofs to Proposition \ref{parareglo}, Proposition \ref{parareglo2} and Corollary \ref{jamboncru}.  In
Section \ref{compactness}, we prove  the compactness results stated in Proposition \ref{conspascites}
and Corollary \ref{lartichaut}.  Section \ref{raffinage},
provides an expansion of the discrepancy term off the front set, 
 from a technical point of view it is the place where the analysis differs most from the non-degenerate case.
 Based on this
analysis, we show in Section \ref{motion} how the motion law follows from
prepared data  establishing the proof to Proposition \ref{firststep1}. In
Section \ref{clearstream} we analyze the clearing-out of small pairs of front-antifront and more
generally we present the proof of Proposition \ref{nettoyage}. Finally, in section \ref{extended} we present
the proofs of the main  theorems, namely Theorem \ref{maintheo1}, \ref{alignedsolution} and \ref{colissimo}.  
Several results concerning the first or second order differential equations involved in
the analysis of this paper are given in  separate appendices, in particular the proof of Proposition \ref{getrude}.  

\numberwithin{theorem}{section} \numberwithin{lemma}{section}
\numberwithin{proposition}{section} \numberwithin{remark}{section}
\numberwithin{corollary}{section}
\numberwithin{equation}{section}

\section{Remarks on stationary solutions}
\label{cavenecadas}

\subsection{ Stationary solutions on $\R$ with vanishing discrepancy}
Stationary solutions  are described using  the method of separation of variable.
For $u$  solution to \eqref{stat}, we multiply 
\eqref{stat} by $u$ and  verify   that $\xi$  is constant. We restrict
ourselves to solutions with vanishing discrepancy 
\begin{equation}
\label{vanishing}
\xi=  \frac{1}{2}{ \dot u^2}-V(u)=0,
\end{equation}
and solve equation \eqref{vanishing} by  separation of variables. Let
$\gamma_i$  be defined  on  $(\upsigma_i, \upsigma_{i+1})$  by   
 \begin{equation}
 \label{separation}
 \upgamma_i(u)=\int_{z_i}^u \frac{ ds}{\sqrt{2 V(s)}},  {\rm  for \ } u \in  (\upsigma_i, \upsigma_{i+1}),
 \end{equation}
where we recall that $z_i$ is a fixed maximum point of $V$ in the interval
$(\upsigma_i, \upsigma_{i+1})$.  The  map $\upgamma_i$ is one-to-one  from
$(\upsigma_i, \upsigma_{i+1})$ to $\R$, so that we may define its inverse map
$\zeta_i^+: \R \to (\upsigma_i, \upsigma_{i+1}) $ by  
\begin{equation}
\label{inverse}
\zeta_i^+(x)= \upgamma_i^{-1}(x) {\rm \ \,  as \ well \ as \,\ }  \zeta_i^-(x)= \gamma_i^{-1}(-x) \ {\rm \ for \ } x \in \R. 
\end{equation}
In view of the definition \eqref{inverse}, we have
$\zeta_i^\pm(0)=z_i, \ \  {\zeta_i^{+}}'(0)=\sqrt{2V(z_i)}>0, $
whereas a  change of variable shows that  $\zeta_i$  has finite energy  given by the formula \eqref{quantenergy}. 
We   verify that  $\zeta_i^+\left(\frac{\cdot}{\eps}\right)$ and
$\zeta_i^-\left (\frac{\cdot}{\eps}\right)$ solve  
\eqref{vanishing} and hence \eqref{stat}.   The next elementary result then directly
follows from uniqueness in ode's:
  
\begin{lemma}
\label{tertre}
Let $u$ be a solution to \eqref{stat}   such that \eqref{vanishing} holds, and
such that  $u(x_0)\in (\upsigma_i, \upsigma_{i+1})$, for some $x_0\in \R$, and
some $i\in1, \cdots \q-1$. Then,  there exists  $a\in \R$ such that  
$\displaystyle{u(x)= \zeta_i^+\left({x-a} \right)  \hbox{ or } u(x)=
\zeta_i^-\left (x-a\right), \forall x \in \R. }$ 
\end{lemma}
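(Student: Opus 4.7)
The plan is to reduce the lemma to the uniqueness theorem for second-order ODEs, using the vanishing discrepancy condition to prescribe both the value and the derivative of $u$ at $x_0$ up to a binary sign choice.

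Since $u(x_0) \in (\upsigma_i, \upsigma_{i+1})$ where $V > 0$, the condition \eqref{vanishing} rewrites as
\[
\dot u(x_0)^2 = 2 V(u(x_0)) > 0,
\]
so $\dot u(x_0) = \epsilon \sqrt{2V(u(x_0))}$ for some $\epsilon \in \{+,-\}$. I then set
\[
a := x_0 - \epsilon\, \upgamma_i(u(x_0)),
\]
where $\upgamma_i$ is the primitive defined in \eqref{separation}, and consider the candidate $v(x) := \zeta_i^{\epsilon}(x-a)$. By the definition \eqref{inverse} we have $v(x_0) = u(x_0)$, and differentiating the identity $\upgamma_i(\zeta_i^+(y)) = y$ (and its $\zeta_i^-$ analogue) shows that $v$ satisfies the first-order equation $\dot v = \epsilon \sqrt{2V(v)}$ on $\R$, hence $\dot v(x_0) = \dot u(x_0)$.

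Now both $u$ and $v$ solve the second-order equation $\ddot w = V'(w)$, which follows for $v$ by differentiating the first-order relation in $\epsilon\sqrt{2V(\cdot)}$. Since $V$ is smooth, $V'$ is locally Lipschitz in $w$, so the Cauchy problem for $\ddot w = V'(w)$ with initial data $(w(x_0), \dot w(x_0))$ admits a unique maximal solution. The function $v$ is globally defined on $\R$ by construction, and $u$ is globally defined by assumption; both are \emph{a priori} bounded thanks to \eqref{vanishing} (since $V \to \infty$ at infinity by $(\text{A}_3)$, the set $\{V\leq V(u(x_0))\}$ is compact), so neither solution blows up. Standard uniqueness then forces $u \equiv v$ on all of $\R$, which is precisely the desired conclusion with the sign $\epsilon$ selecting between $\zeta_i^+$ and $\zeta_i^-$.

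There is no serious obstacle here. The only small point to verify carefully is that the derivation of the first-order equation for $v$ from the definition of $\zeta_i^\epsilon$ as the inverse of $\upgamma_i$ is legitimate on the open interval $(\upsigma_i, \upsigma_{i+1})$ where $\sqrt{2V}$ is smooth and nonvanishing, which is automatic from $\zeta_i^\epsilon(\R) \subset (\upsigma_i, \upsigma_{i+1})$; this secures the matching of initial data and lets the second-order uniqueness argument do the rest.
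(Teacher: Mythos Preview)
Your proof is correct and matches the paper's (terse) approach via uniqueness for second-order ODEs. One minor quibble: the boundedness claim through $\{V \leq V(u(x_0))\}$ does not actually follow from \eqref{vanishing} (which gives $V(u)=\tfrac12\dot u^2$, not an upper bound on $V(u)$), but it is also unnecessary since $u$ is globally defined by hypothesis and $v$ by construction, so the Cauchy--Lipschitz uniqueness argument applies directly without any a priori bound.
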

We provide a few  simple  properties of the functions $\zeta_i^\pm$ which enter directly in our arguments. 
We expand  $V$ near $\upsigma_i$ for $u \geq \upsigma_i$ as
\begin{equation*}
\sqrt{V(u)}=\sqrt{\lambda_i}(u-\upsigma_i)^{\theta} (1+ O(u-\upsigma_i)),
\quad\text{as } u \to \sigma_i.
\end{equation*}
Integrating, we are led to the expansion
\begin{equation*}
\begin{aligned}
\upgamma_i(u)= 
-\frac{\theta-1}{\sqrt{2\lambda_i}}(u-\upsigma_i)^{-\theta+1}(1+O(u-\upsigma_i)),
\quad\text{as } u \to \sigma_i,
\end{aligned}
\end{equation*}
and therefore also to the expansions
$$
\zeta_i^\pm (x) = \upsigma_i +
\left(\frac{\sqrt{2\lambda_i}|x|}{\theta-1}\right)^{-\frac{1}{\theta-1}}(1+o(1)),
\quad\text{ as } x \to \mp \infty.
$$
Similarly, 
$$
\zeta_i^\pm (x) = \upsigma_{i+1} -
\left(\frac{\sqrt{2\lambda_{i+1}}|x|}{\theta-1}\right)^{-\frac{1}{\theta-1}}(1+o(1)),
\quad\text{ as } x \to \pm \infty,
$$
and corresponding asymptotics for the derivatives can be derived as
well (e.g. using the fact that the discrepancy is zero).  

For $0<\eps<1$ given, and $i=1, \cdots, q-1$, consider the scaled function
$\displaystyle{\zeta_{i, \eps}^\pm=\zeta_i^\pm\left (\frac{\cdot}{\eps}\right)}$  
which is a solution  to $$ -u_{xx}+ \eps^{-2} V' (u)=0, $$
hence a stationary solution to $({\rm PGL})_\eps$.   
Straightforward computations based on the previous expansions show that
\begin{equation}
\label{leonardo0}
\left \{
\begin{aligned}
 & e_\eps \left(\zeta_{i, \eps}^\pm\right)(x)=
(2\lambda_i)^{-\frac{1}{\theta-1}}(\theta-1)^\frac{2\theta}{\theta-1} \tfrac{1}{\eps}
\left\vert
\tfrac{x}{\eps}\right\vert^{-(\omega+1)}  + 
  {\underset {\tfrac{x}{\eps} \to  \mp\infty } {o}}  \left ( \tfrac{1}{\eps} 
\left|\tfrac{ x}{\eps}\right|^{-(\omega+1)}\right)\\
& e_\eps \left(\zeta_{i, \eps}^\pm\right)(x)=
(2\lambda_{i+1})^{-\frac{1}{\theta-1}}(\theta-1)^\frac{2\theta}{\theta-1}
\tfrac{1}{\eps} \left| \tfrac{x}{\eps}\right|^{-(\omega+1)}  + 
  {\underset {\tfrac{x}{\eps} \to  \pm\infty } {o}}  \left ( \tfrac{1}{\eps}
\left|\tfrac{ x}{\eps}\right|^{-(\omega+1)}\right)
\end{aligned}
\right.
\end{equation}    
with 
$\omega$ defined in  \eqref{picomega}. 
Hence there is some constant $C>0$ independent of $r$ and $\eps$   such that
\begin{equation}
\label{leonardo2}
\S_i\geq \int_{-r}^{r} e_\eps \left(\zeta_{i, \eps}^\pm\right) dx \geq \S_i- C\left(
\frac{ \eps}{r}\right)^{\omega}.
\end{equation}

\subsection{On the energy of  chains  of stationary solutions}
\label{patdef}
If $u$ satisfies condition  $\WPI (\updelta)$ and $\hzero$, we set 
\begin{equation}
\label{hereke0}
\matfE^\rL(u)= {\underset {k\in J}\sum}\S_{i(k)} \ \, 
{\rm and \ }  \mathcal E_\eps^\rL(u)=\int_{\ILL} e_\eps(u(x))dx. 
\end{equation}
   
\begin{proposition}
\label{globalmoquette}
We have
\begin{equation}
 \label{hereke1} 
 \left\{
\begin{aligned}
& \mathcal E_\eps^\rL(u)  \geq  \matfE^\rL(u)- {\rm C_f}M_0\left( \frac{
\eps}{\updelta}\right)^{\omega} 
&&{\rm  \ if \ } \wpi(\ud) \ {\rm holds},   \\ 
&  \mathcal E_\eps^\rL(u)  \leq  \matfE^\rL(u) + ({\rm C_w +C_f}) M_0\left( \frac{ 
\eps}{\updelta}\right)^{\omega} &&{\rm \ if  \ \, } \wp(\ud) 
{\rm \ holds.  \ }
\end{aligned}
\right.
\end{equation}
Moreover, for any  smooth function $\chi$ with compact support in $\ILL$ we
have 
\begin{equation}\label{hereke1bis}
 \left \vert \mathcal I_\eps (\chi)-   \underset {k
\in J} \sum \chi (a_k)
\S_{i(k)}  \right\vert  \leq  ({\rm C_w +C_f}) M_0 \left( \left(\frac{
\eps}{\updelta}\right)^{\omega}
  \Vert \chi\Vert_{\infty}  + \eps \Vert \chi'\Vert_{\infty} \right),
 \qquad \text{if } \wp(\ud) \text{ holds,}
\end{equation}
where $\mathcal I_\eps( \chi)= \int_{\ILL} e_\eps (u)\chi(x)dx.$ The constant ${\rm C_f}$
which appears in \eqref{hereke1} and \eqref{hereke1bis} only depends on $V,$ and the constant
${\rm C_w}$ appears in the definition of condition $\wp.$
\end{proposition}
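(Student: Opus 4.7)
The plan is to decompose the integration domain $\ILL$ into the union of the intervals $I_k=[a_k-\ud,a_k+\ud]$ (where $u$ matches a stationary front) and the outer region $\Omega_\rL = \ILL\setminus \bigcup_k I_k$, then handle each piece separately using the two halves of the well-preparedness definition.

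For the energy bounds \eqref{hereke1}, I would argue on each $I_k$ as follows. By \eqref{bugs}, the function $u$ is $C^1_\eps$-close, with error $\exp(-\ud/\eps)$, to a translate of the scaled stationary front $\zeta_{i(k),\eps}^{\dagger_k}$. Expanding $e_\eps(u)$ around $e_\eps(\zeta_{i(k),\eps}^{\dagger_k}(\cdot-a_k))$ and using that $V$ is locally Lipschitz, the total integrated error on $I_k$ is controlled by $C(1+\ud/\eps)\exp(-\ud/\eps)$; since $\alpha_1$ in \eqref{eq:alpha1grand} is taken large enough, this is negligible against $(\eps/\ud)^\omega$. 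On the other hand the explicit decay estimate \eqref{leonardo2} gives
\[
\Big|\int_{I_k} e_\eps(\zeta_{i(k),\eps}^{\dagger_k}(\cdot-a_k))\,dx \;-\;\S_{i(k)}\Big| \leq C\Big(\frac{\eps}{\ud}\Big)^\omega.
\]
Summing over $k\in J$ and using $\ell\leq M_0/\eta_0$ from Lemma \ref{interface} gives both inequalities on the ``front'' part: the lower bound \eqref{hereke1} follows by discarding the nonnegative contribution on $\Omega_\rL$, while the upper bound uses \eqref{bunny} directly on $\Omega_\rL$ under the full assumption $\wp(\ud)$.

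For the localized version \eqref{hereke1bis}, I would split $\mathcal I_\eps(\chi) = \sum_k\int_{I_k} e_\eps(u)\chi + \int_{\Omega_\rL} e_\eps(u)\chi$. On the outer part, \eqref{bunny} immediately yields the $\|\chi\|_\infty(\eps/\ud)^\omega$ contribution. On each $I_k$, I would write $\chi(x)=\chi(a_k)+(\chi(x)-\chi(a_k))$. The first piece yields $\chi(a_k)\int_{I_k}e_\eps(u)$, which by the energy computation above equals $\chi(a_k)\S_{i(k)}$ up to a $\|\chi\|_\infty(\eps/\ud)^\omega$ error. The delicate term is the remainder
\[
\Big|\int_{I_k}(\chi(x)-\chi(a_k))e_\eps(u)dx\Big| \leq \|\chi'\|_\infty \int_{I_k} |x-a_k|\,e_\eps(u)\,dx,
\]
which I claim is $O(\eps\|\chi'\|_\infty)$. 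This uses the algebraic decay \eqref{leonardo0}: bounding $e_\eps(u) \lesssim 1/\eps$ on the core $[a_k-\eps,a_k+\eps]$ gives a contribution $O(\eps)$, while on the tails $\eps\leq |x-a_k|\leq \ud$ the integrand is $\lesssim |x-a_k|\cdot \eps^{\omega}|x-a_k|^{-(\omega+1)}$, so its integral equals $\eps\int_1^{\ud/\eps} y^{-\omega}\,dy$, which is $O(\eps)$ precisely because $\omega>1$. Summing over $k\in J$ (still bounded independently of $\eps$) produces the $\eps\|\chi'\|_\infty$ term.

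The main obstacle is the last estimate: one must convert the pointwise $C^1_\eps$-proximity of $u$ to the stationary front on $I_k$ into a quantitative concentration statement for the energy measure $e_\eps(u)dx$ around $a_k$, with first moment of order $\eps$ rather than $\ud$. This is where the algebraic (not merely integrable) decay of $e_\eps(\zeta_{i,\eps}^\pm)$ at rate $|x/\eps|^{-(\omega+1)}$ is essential, together with the fact $\omega>1$ which ensures convergence of $\int y^{-\omega}dy$ at infinity; both features are specific to the degenerate case $\theta>1$ at hand.
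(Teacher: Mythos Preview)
Your proof is correct and follows essentially the same strategy as the paper: decompose $\ILL$ into the front intervals $I_k$ and the outer region $\Omega_\rL$, compare $e_\eps(u)$ to $e_\eps(\zeta_{i(k),\eps}^{\dagger_k}(\cdot-a_k))$ on each $I_k$ via the $C^1_\eps$-closeness \eqref{bugs}, invoke the tail estimate \eqref{leonardo2}, and use \eqref{bunny} on $\Omega_\rL$. Your treatment of the $\eps\|\chi'\|_\infty$ term is in fact more explicit than the paper's (which simply says the error comes from approximating $\int \chi\, e_\eps(\zeta_{i(k),\eps}^{\dagger_k}(\cdot-a_k))$ by $\chi(a_k)\S_{i(k)}$); your first-moment computation using the pointwise decay \eqref{leonardo0} and the fact that $\omega>1$ is exactly the mechanism behind that approximation.
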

\begin{proof} 
We estimate the integral of  $\vert e_\eps
(u)-e_\eps (\zeta_{i(k)}^{\dagger_k}(\cdot-a_k)) \vert$ on $I_k$ as 
\begin{equation*}
\frac{\eps}{2}  \int_{I_k} \vert  \dot u^2-  (\dot \zeta_{i(k),
\eps}^{\dagger_k}(\cdot-a_k))^2 \vert dx
\leq \eps \Vert   \dot u -  \dot \zeta_{i(k),
\eps}^{\dagger_k}(\cdot-a_k)\Vert_{L^\infty (I_k)} 
 \left  [\mathcal E_\eps (u)^\frac12 + \mathcal E_\eps ( \zeta_{i(k),
\eps}^{\dagger_k})^\frac12 \right] \sqrt{\frac {\ud}{\eps}} 
\end{equation*}
and likewise we obtain
\begin{equation*}
\eps^{-1} \int_{I_k} \vert   V(u)- V(\zeta_{i(k), \eps}^{\dagger_k}(\cdot-a_k))\vert
\leq 
 C \frac {\ud}{\eps}\Vert   u-  \zeta_{i(k),
\eps}^{\dagger_k}(\cdot-a_k)\Vert_{L^\infty (I_k)}. \end{equation*}
It suffices then to invoke $\wpi(\ud)$ and $\wpo(\ud)$ as well as the decay
estimates \eqref{leonardo2} to derive \eqref{hereke1}, using the fact that since
$\ud \geq \alpha_1 \eps,$ negative exponentials are readily controlled by negative
powers. Estimate \eqref{hereke1bis} is derived in a very similar way, the error in
$\eps \Vert \chi'\Vert_{\infty}$ being a consequence of the approximation of $\int
\chi e_\eps(\zeta_{i(k), \eps}^{\dagger_k}(\cdot-a_k))$ by $\chi(a_k)\S_{i(k)}.$
\end{proof}
%
%
This result shows that, if $\updelta$ is sufficiently large, the energy is
close to a set of discrete values, namely the finite sums of $\S_k.$ We will
therefore refer to this property as \emph{ the quantization of the energy}, it will
play an important role later when we will obtain estimates on the dissipation rate
of energy.

\subsection{Study of the perturbed stationary equation}
\label{perturbed}
Consider a  function $u$ defined on $\R$
satisfying the perturbed differential equation
\begin{equation}
\label{odepertu}
u_{xx}=\eps^{-2} V'(u)+f, 
\end{equation}
where  $f \in L^2(\R)$, and the energy bound $\hzero$.
We already know, thanks to Lemma \ref{tertre} that 
if $f=0$ then $u$ is of
the form  $\zeta_{i, \eps}^\pm(\cdot-a)$. Our results below, summarized here in
loose terms, show that if $f$ is sufficiently small on some sufficiently large
interval, then $u$ is close to a chain of translations of the functions
$\zeta_{i,\eps}^\pm$ suitably glued together on that interval. 

Following the approach of \cite{BOS8}, we first recast equation \eqref{odepertu} 
as a system of two differential 
equations of first order. For that purpose, we set 
  $w=\eps u_x$
   so that  \eqref{odepertu} is equivalent to the system
   \begin{equation*}
   u_x=\frac{1}{\eps}w  \  {\rm and} \ 
   w_x=\frac{1}{\eps} V'(u)+\eps f, 
    \end{equation*}
which we may write in  a more  condensed form  as
\begin{equation}
\label{eqvect}
U_x=\frac{1}{\eps} G(U)+\eps F \ {\rm  on } \  \R, 
\end{equation}
where we have set  $U(x)=(u(x), w(x))$ and $F(x)=(0, f(x))$,
and where $G$ denotes the vector field $G(u,w)=(w,
V'(u))$. Notice that the energy bound $\hzero$ and assumption $(A_3)$
together imply a global $L^\infty$ bound on $u$. In turn, this $L^\infty$ bound 
imply a Lipschitz bound, denoted $C_0$, for the
nonlinearity $G(u,w)$.

\begin{lemma}\label{lem:compgron}
Let $u_1$ and $u_2$ satisfy \eqref{odepertu} with forcing terms $f_1$ and $f_2$,
and assume that both satisfy the energy bound $\hzero$. Denote by 
$U_1,U_2,F_1,F_2$ 
the corresponding solutions and forcing terms of \eqref{eqvect}. 
Then, for any $x,x_0$ in some arbitrary interval $I$, 
\begin{equation}
\label{massa}
\vert (U_1-U_2)(x) \vert \leq \left( \vert (U_1-U_2)(x_0)\vert  +
\frac{\eps^\frac32}{\sqrt{2 C_0}} \Vert F_1-F_2\Vert_{L^2(I)}\right)
\exp\left(\frac{ C_0  \vert x-x_0 \vert }{\eps}\right).
\end{equation} 
\end{lemma}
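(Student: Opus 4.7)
The plan is to subtract the two copies of \eqref{eqvect} and apply an integrating-factor version of Gronwall's lemma, designed so that the exponential weight acts as an $L^2$-absorber for the forcing term $\eps\Phi$. Setting $W := U_1 - U_2$ and $\Phi := F_1 - F_2$, subtraction gives
\begin{equation*}
W_x \;=\; \tfrac{1}{\eps}\bigl(G(U_1) - G(U_2)\bigr) + \eps\,\Phi.
\end{equation*}
Thanks to the uniform $L^\infty$ bound on $u_1,u_2$ implied by $\hzero$ together with $(A_3)$, and to the very definition of $C_0$ as the Lipschitz constant of $G$ on the relevant bounded set, this yields the pointwise almost everywhere inequality $|W_x|\leq \tfrac{C_0}{\eps}|W| + \eps|\Phi|$ on $I$.

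Fixing $x_0 \in I$ and handling first the case $x\geq x_0$, I would introduce the weighted quantity
\begin{equation*}
\psi(x) \;:=\; e^{-C_0(x-x_0)/\eps}\,|W(x)|,
\end{equation*}
which is absolutely continuous because $|W|$ is Lipschitz. Using the standard almost-everywhere bound $\tfrac{d}{dx}|W|\leq |W_x|$, the above differential inequality gives
\begin{equation*}
\psi'(x) \;\leq\; e^{-C_0(x-x_0)/\eps}\Bigl(|W_x(x)| - \tfrac{C_0}{\eps}|W(x)|\Bigr) \;\leq\; \eps\, e^{-C_0(x-x_0)/\eps}\,|\Phi(x)|.
\end{equation*}
Integration from $x_0$ to $x$ followed by Cauchy--Schwarz yields
\begin{equation*}
\psi(x) \;\leq\; |W(x_0)| + \eps \|\Phi\|_{L^2(I)}\Bigl(\int_{x_0}^{x} e^{-2C_0(t-x_0)/\eps}\,dt\Bigr)^{1/2} \;\leq\; |W(x_0)| + \tfrac{\eps^{3/2}}{\sqrt{2C_0}}\|\Phi\|_{L^2(I)},
\end{equation*}
the last step being the explicit identity $\int_{x_0}^{\infty} e^{-2C_0(t-x_0)/\eps}\,dt = \eps/(2C_0).$ Multiplying through by $e^{C_0(x-x_0)/\eps}$ recovers \eqref{massa}; the case $x\leq x_0$ is symmetric under $\tilde x = 2x_0 - x$ and accounts for the $|x-x_0|$ in the exponent.

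The only delicate point is to justify the almost-everywhere inequality $\tfrac{d}{dx}|W|\leq |W_x|$ in the $\R^2$-valued setting, which I would establish by smoothing ($\sqrt{|W|^2+\eta^2}$ then $\eta\to 0$) or by appealing to Rademacher's theorem and the chain rule for absolutely continuous functions; once this is in place everything else is a routine integrating-factor computation. The key design choice --- which is what makes the estimate non-trivial --- is that the weight $e^{-C_0(\cdot-x_0)/\eps}$ has $L^2$-norm on $[x_0,\infty)$ of order $\sqrt{\eps/(2C_0)}$, and it is precisely this factor, multiplied against the $\eps$ in front of $\Phi$, that produces the clean prefactor $\eps^{3/2}/\sqrt{2C_0}$ \emph{independent} of the length $|x-x_0|$.
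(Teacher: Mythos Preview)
Your proof is correct and follows essentially the same route as the paper's: derive the differential inequality $|W_x|\le \tfrac{C_0}{\eps}|W|+\eps|\Phi|$, apply Gronwall, and finish with Cauchy--Schwarz on the forcing integral. The only difference is presentational --- you spell out the integrating-factor form of Gronwall explicitly (introducing $\psi=e^{-C_0(x-x_0)/\eps}|W|$), whereas the paper simply invokes Gronwall's inequality and then Cauchy--Schwarz in two lines.
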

\begin{proof} 
Since $(U_1-U_2)_x= G(U_1)-G(U_2)+\eps (F_1-F_2)$ we obtain the inequality
$$\vert (U_1-U_2)_x\vert \leq \frac{C_0}{\eps} \vert  U_1-U_2\vert +\eps
\vert F_1-F_2 \vert.$$ 
It follows from Gronwall's inequality that   
\begin{equation*}\vert (U_1-U_2)(x)\vert \\
\leq   \exp \left( \tfrac{ C_0 \vert x-x_0 \vert }{\eps}\right)\vert (U_1-U_2)(x_0)\vert + 
 | \int_{x_0}^x \eps \vert (F_1-F_2)(y)\vert  \exp \left( \tfrac{C_0  \vert
y-x_0\vert }{\eps}  \right) dy|.
\end{equation*}
Claim \eqref{massa}  then follows from the Cauchy-Schwarz inequality.
\end{proof}

We will combine the previous lemma with

\begin{lemma}
\label{discrepok}
Let $u$ be a solution of \eqref{odepertu}  satisfying $\hzero$. 
Then
$$
\sup_{x,y \in I } \vert \xi_\eps(u)(x)-\xi_\eps(u)(y)\vert \leq \sqrt{2
M_0} \eps^{\frac{1}{2}}  \Vert f \Vert_{L^2(I)}, 
$$
where $I \subset \R$ is an arbitrary interval.
\end{lemma}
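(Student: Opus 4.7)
The plan is to obtain the bound by computing the derivative of the discrepancy $\xi_\eps(u)$ along $x$, recognizing that this derivative vanishes identically in the unperturbed case (which is exactly why $\xi_\eps$ is called the discrepancy and is constant for stationary solutions). The perturbation $f$ in \eqref{odepertu} produces the only contribution.

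More precisely, I would first differentiate $\xi_\eps(u)=\eps\dot u^2/2-V(u)/\eps$ with respect to $x$, obtaining
\begin{equation*}
\frac{d}{dx}\xi_\eps(u) \;=\; \dot u\bigl(\eps\ddot u-\eps^{-1}V'(u)\bigr) \;=\; \eps\, \dot u\, f,
\end{equation*}
where the last equality uses equation \eqref{odepertu} to substitute $\ddot u-\eps^{-2}V'(u)=f$. Then for arbitrary $x,y\in I$, integrating from $y$ to $x$ and applying Cauchy--Schwarz on the (sub)interval joining them gives
\begin{equation*}
|\xi_\eps(u)(x)-\xi_\eps(u)(y)| \;\le\; \eps \,\|f\|_{L^2(I)}\,\|\dot u\|_{L^2(I)}.
\end{equation*}

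The remaining step is to bound $\|\dot u\|_{L^2(I)}$ using the energy bound $(\mathrm{H}_0)$: the kinetic part of $\mathcal{E}_\eps(u)$ gives $\tfrac{\eps}{2}\|\dot u\|_{L^2(\R)}^2\le M_0$, hence $\|\dot u\|_{L^2(I)}\le \sqrt{2M_0/\eps}$. Inserting this yields $\eps\cdot\sqrt{2M_0/\eps}=\sqrt{2M_0}\,\eps^{1/2}$ in front of $\|f\|_{L^2(I)}$, which is exactly the claimed estimate.

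There is no real obstacle here: the computation is entirely elementary once one notices that $\xi_\eps$ is precisely the Noether-type conserved quantity associated with translation invariance of the unperturbed ODE, so only the forcing $f$ can make it vary. The only mild care needed is to use the full real-line energy bound $(\mathrm{H}_0)$ to control $\|\dot u\|_{L^2(I)}$ uniformly in $I$, which is harmless since $I\subset\R$ is arbitrary.
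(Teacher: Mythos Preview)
Your proof is correct and follows exactly the same approach as the paper: compute $\frac{d}{dx}\xi_\eps(u)=\eps f\,\dot u$ from \eqref{odepertu}, then apply Cauchy--Schwarz and the energy bound $(\mathrm{H}_0)$. The paper's proof is just a one-line sketch of these same three ingredients.
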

\begin{proof}
This is a direct consequence of the equality
$\displaystyle{ 
\frac{d}{dx} \xi_\eps(u) = \eps f \frac{d}{dx}u,}
$
the Cauchy-Schwarz inequality, and the definition of the energy.  
\end{proof}

\begin{lemma}\label{prop:wpiok} Let $u$ be a solution of \eqref{odepertu} 
satisfying $\hzero$. Let $L>0$ and assume that
$$
\mathcal{D}(u) \cap \ILL \subseteq \IL.
$$ 
There exist a constant $0<\kappa_{\rm w}<1$, depending only on $V$, such that if
\begin{equation}\label{eq:ptgici}
M_0 \frac{\eps}{\rL} + M_0^\frac12 \eps^\frac32 \|f\|_{L^2(\ItdL)} \leq
\kappa_{\rm w}, 
\end{equation}
then the condition ${\rm WPI}_\eps^\rL(\delta)$ holds where
\begin{equation}\label{eq:wpidelta} 
\frac{\delta}{\eps} := - \frac{2}{\uprho_{\rm w}}  \log \big( M_0 \frac{\eps}{\rL} + M_0^\frac12 
\eps^\frac32 \|f\|_{L^2(\ItdL)} \big),
\end{equation}
and where the constant $\uprho_{\rm w}$ depends only on $M_0$ and $V.$ Moreover, $\kappa_{\rm w}$ is sufficiently small so that $2|\!\log \kappa_{\rm w}|/\uprho_{\rm w} 
\geq \alpha_1,$ where $\alpha_1$ was defined in \eqref{eq:alpha1grand}.   
\end{lemma}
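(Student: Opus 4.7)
The strategy is to first obtain a pointwise control on the discrepancy $\xi_\eps(u)$ throughout $\ItdL$, then to identify candidate front points, and finally to invoke Lemma \ref{lem:compgron} to compare $u$ with a translated stationary front on a neighborhood of length $2\delta$ around each front point. For the pointwise discrepancy bound, note that since $\mathcal D(u)\cap \ILL \subseteq \IL$, on each of the two subintervals $[L,\tfrac{3L}{2}]$ and $[-\tfrac{3L}{2},-L]$ the function $u$ is $\mu_0$-close to some minimizer; averaging the bound $\int_{\ItdL} e_\eps \leq M_0$ over any such subinterval (of length $L/2$) produces a point $x_*$ where $e_\eps(u)(x_*)\leq 2M_0/L$, hence $|\xi_\eps(u)(x_*)|\leq 2M_0/L$. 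Combining this with the Cauchy--Schwarz control provided by Lemma \ref{discrepok} yields
\[
\eps\,|\xi_\eps(u)(x)| \;\leq\; C_1\,\kappa \qquad\text{with}\qquad \kappa := M_0\tfrac{\eps}{L} + M_0^{1/2}\eps^{3/2}\|f\|_{L^2(\ItdL)}, \qquad \forall x \in \ItdL,
\]
for a numerical constant $C_1.$

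This bound forces $u'$ not to vanish on $\mathcal D(u)$: at any such point $V(u)$ is bounded below by a positive constant $c(V,\mu_0)$, while the identity $\tfrac{\eps^2}{2}u'^2 = V(u) + \eps\xi_\eps(u)$ gives $\tfrac{\eps^2}{2}u'^2 \geq c(V,\mu_0) - C_1\kappa_{\rm w}$, positive provided $\kappa_{\rm w}$ is smaller than some threshold depending only on $V$. Each connected component of $\mathcal D(u)\cap\ILL$ is thus a strict monotonicity interval of $u$ running between neighborhoods of two distinct minimizers, and by the intermediate value theorem it crosses each intermediate level set $\{u = z_{i'}\}$ exactly once. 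I collect these crossings as the ordered family $a_1 < \cdots < a_\ell$ in $\IL$, define $i(k)$ by $u(a_k) = z_{i(k)}$, $\dagger_k := \mathrm{sign}\,u'(a_k)$, and set $\tilde\zeta_k(x) := \zeta_{i(k)}^{\dagger_k}((x-a_k)/\eps)$. Using $u(a_k) = z_{i(k)} = \tilde\zeta_k(a_k)$, the factorization $(\eps u'(a_k))^2 - (\eps \tilde\zeta_k'(a_k))^2 = 2\eps\xi_\eps(u)(a_k)$, and the lower bound $|\eps\tilde\zeta_k'(a_k)| = \sqrt{2V(z_{i(k)})} > 0$, one obtains $|U_1 - U_2|(a_k) \leq C_2\kappa$ with the notation of Lemma \ref{lem:compgron}.

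Applying Lemma \ref{lem:compgron} on $I_k = [a_k - \delta, a_k + \delta]$ propagates $|U_1 - U_2|(x) \leq C_3\kappa\exp(C_0\delta/\eps)$, and this bound is no larger than $\exp(-\delta/\eps)$ precisely when $\delta/\eps \leq \tfrac{1}{C_0+1}|\!\log(C_3\kappa)|$. I therefore set $\uprho_{\rm w} := 4(C_0+1)$ and $\delta/\eps := -\tfrac{2}{\uprho_{\rm w}}\log\kappa$, and pick $\kappa_{\rm w}$ small enough to absorb $C_3$ (so the inequality above holds for every $\kappa \leq \kappa_{\rm w}$) and also so that $2|\!\log\kappa_{\rm w}|/\uprho_{\rm w} \geq \alpha_1$. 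It remains to verify the covering $\mathcal D(u)\cap \ILL \subseteq \bigcup_k I_k$: for $x$ outside this union, $u(x)$ is within $\exp(-\delta/\eps)$ of the nearest $\tilde\zeta_k(x)$, which in turn lies within $C(\delta/\eps)^{-1/(\theta-1)}$ of some minimizer by the expansions of $\zeta_i^\pm$ at infinity; both terms are smaller than $\mu_0/2$ as soon as $\delta/\eps$ exceeds a constant depending only on $V$, which is automatic when $\kappa_{\rm w}$ is small. The principal subtlety lies in this final step, namely in ensuring that the exponentially small closeness required by \eqref{bugs} dominates the merely polynomial decay of the stationary tails, and that the single constant $\uprho_{\rm w}$ can simultaneously meet all of the above requirements.
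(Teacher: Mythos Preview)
Your approach is essentially the same as the paper's: average the energy to locate a point of small discrepancy, combine with Lemma~\ref{discrepok} to get a uniform discrepancy bound, deduce that $u'\neq 0$ on the front set and hence identify the crossings $a_k$ of the levels $z_{i}$, and finally apply Lemma~\ref{lem:compgron} with the same choice $\uprho_{\rm w}=4(C_0+1)$. The paper organizes the last part iteratively (find one $a_k$, prove \eqref{bugs} on $I_k$, then look for a crossing in the complement), whereas you collect all the $a_k$ at once via monotonicity on the connected components of $\mathcal D(u)$; both are fine.

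There is, however, a genuine imprecision in your final covering step. You write ``for $x$ outside this union, $u(x)$ is within $\exp(-\delta/\eps)$ of the nearest $\tilde\zeta_k(x)$'', but the Gronwall estimate from Lemma~\ref{lem:compgron} has only been established on $I_k$, not outside it, so this assertion is not available as stated. The correct argument --- which you in fact have all the ingredients for --- is to observe that each connected component of $\mathcal D(u)\cap\ILL$ contains exactly one $a_k$ (by your monotonicity argument) and then to check that the \emph{endpoints} $a_k\pm\delta$ of $I_k$ (where the estimate \emph{does} hold) are not in $\mathcal D(u)$: at $x=a_k\pm\delta$ one has $|u(x)-\tilde\zeta_k(x)|\le \exp(-\delta/\eps)$ and $|\tilde\zeta_k(x)-\upsigma|\le C(\delta/\eps)^{-1/(\theta-1)}$, so $x\notin\mathcal D(u)$ once $\delta/\eps$ is large enough. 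Since the component is connected, contains $a_k$, and does not meet $\partial I_k$, it lies inside $I_k$. This is exactly what the paper means by ``the boundary points of $[a_1-\delta,a_1+\delta]$ are not part of the front set''. With that rephrasing your proof is complete.
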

\begin{proof}
 If $\mathcal{D}(u)\cap \ILL = \emptyset$ then there is nothing to prove. If
not, we first claim that there exist a point $a_1 \in \IL$ such that $u(a_1)=
z_{i(1)}$ for some $i(1)\in \{1,\cdots,\q-1\}.$ Indeed, if not, and since the
endpoints of $\ILL$ are not in the front set, the function $u$ would have a
critical point with a critical value in the complement of $\cup_j
B(\upsigma_j,\mu_0).$  At that point, the discrepancy would therefore be larger
than $C/\eps$ for some constant $C>0$ depending only of $V$ (through the choice
of $\mu_0$). On the other hand, since $|\xi_\eps| \leq e_\eps$, by averaging
there exist at least one point in $\ItdL$ where the discrepancy of $u$ is smaller
in absolute value than $M_0/(3\rL).$ Combined with the estimate of Lemma
\ref{discrepok} on the oscillation of the discrepancy, we hence derive our first
claim, provided $\kappa_{\rm w}$ in \eqref{eq:ptgici} is chosen sufficiently small.  
Wet set $\dagger_1 = {\rm sign}(u'(a_1))$, $u_1=u$ and $u_2=\zeta^{\dagger_1}_{i(1),\eps}(\cdot-x_1)).$ Since 
$$
V(u_1(a_1))= V(u_2(a_1))=V(z_{i(1)}),$$
and since 
$$
|\xi_\eps(u_1)(a_1) - \xi_\eps(u_2)(a_1)| = | \xi_\eps(u_1)(a_1)|  \leq M_0/(3\rL) + \sqrt{2M_0}\eps^\frac12 \|f\|_{L^2(\ItdL)},
$$
we obtain
$$
\left| \eps (u_1')^2(a_1) - \eps (u_2')^2(a_1) \right| \leq M_0/(\rL) + 2\sqrt{2M_0}\eps^\frac12 \|f\|_{L^2(\ItdL)}.
$$
Since also 
$$
|u_1'(a_1) + u_2'(a_1)| \geq |u_2'(a_1)| =|\sqrt{\frac{2V(z_{i(1)})}{\eps^2}}|\geq C/\eps,
$$
it follows that
$$
\left| \eps (u_1' - u_2')(a_1) \right| \leq C\left(  M_0\frac{\eps}{\rL} + \sqrt{M_0}\eps^\frac32 \|f\|_{L^2(\ItdL)}\right),
$$
for a constant $C>0$ which depends only on $V.$ We may then apply Lemma \ref{lem:compgron} to $u_1$ and $u_2$ with the choice $x_0=a_1,$ and for which we thus have, with the notations of Lemma \ref{lem:compgron}, 
$$
\vert (U_1-U_2)(x_0)\vert \leq C\left(  M_0\frac{\eps}{\rL} + \sqrt{M_0}\eps^\frac32 \|f\|_{L^2(\ItdL)}\right).
$$
Estimate \eqref{massa} then yields \eqref{bugs} on $I_1=[a_1-\delta,a_1+\delta],$ for the choice of $\delta$ given by \eqref{eq:wpidelta} with $\uprho_{\rm w}=4(C_0+1)$, where $C_0$ depends only on $M_0$ and $V$ and was defined above Lemma \ref{lem:compgron}.

If $\mathcal{D}(u)\cap (\ItdL\setminus [a_1-\delta,a_1+\delta]) = \emptyset,$ we are done, and if not we may repeat the previous construction (the boundary points of $[a_1-\delta,a_1+\delta]$ are not part of the front set), until after finitely many steps we cover the whole front set. 
\end{proof}

\smallskip  
We turn to the outer condition\footnote{for which several 
adaptations have to be carried out compared to the non-degenerate case.}
$\WPOL$.

\begin{lemma}\label{lem:dansletrou}
Let $u$ be a solution of \eqref{odepertu}  verifying
$\hzero$, and assume that for some index $i\in \{1,\cdots,\q\}$
$$
u(x) \in B(\upsigma_i,\mu_0) \quad \forall x\in A,
$$ 
where $A$ is some arbitrary bounded interval.
Set $R={\rm length}(A)$, let $0<\rho<R$, and set $B=\{x\in A\ | \ {\rm dist}(x,A^c)>\rho\}.$  Then we have the estimate 
$$
\mathcal{E}_\eps(u,B) \leq {\rm C_o} \left(
\mathcal{E}_\eps(u,A\setminus B)^\frac{1}{\theta}
\big(\tfrac{\eps}{\rho}\big)^{1+\frac{1}{\theta}} +
R^\frac32 M_0^\frac{1}{2\theta}
\big(\tfrac{\eps}{R}\big)^{1+\frac{1}{2\theta}}\|f\|_{L^2(A)}\right),
$$
where the constant ${\rm C_o}$ depends only on $V.$
\end{lemma}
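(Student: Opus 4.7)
The plan is to prove this by a virial/Pohozaev-type identity, obtained by testing the perturbed equation against $(u-\upsigma_i)$ multiplied by a suitable cutoff, and then using the key coercivity inequality $V'(u)(u-\upsigma_i) \geq \theta V(u)$ contained in \eqref{eq:souscontrole} to relate the quadratic kinetic term $(u-\upsigma_i)_x^2$ and the potential term $V(u)/\eps$ to the full energy density $e_\eps(u)$.

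More precisely, write $w = u - \upsigma_i$ and pick a smooth cutoff $\chi$ with $\chi \equiv 1$ on $B$, $\chi$ supported in $A$, $0\leq\chi\leq1$, and $|\chi_x|\leq 2/\rho.$ Multiplying \eqref{odepertu} by $\chi^2 w$ and integrating by parts on $A$ yields
\begin{equation*}
\int_A \chi^2 w_x^2\, dx + \eps^{-2}\int_A \chi^2 V'(u) w\, dx = -2\int_A \chi\chi_x w w_x\, dx - \int_A \chi^2 w f\, dx.
\end{equation*}
Since $u(x)\in B(\upsigma_i,\mu_0)$ on $A$, inequality \eqref{eq:souscontrole} gives $V'(u)w \geq \theta V(u)$, so multiplying the identity through by $\eps$ and using $|\chi|\leq 1$, $|\chi_x|\leq 2/\rho$ produces, after absorbing the cross term $2\eps\int \chi\chi_x w w_x$ by Young's inequality into the left-hand side, a bound of the form
\begin{equation*}
\int_A \chi^2 e_\eps(u)\, dx \leq C\Big(\frac{\eps}{\rho^2}\int_{A\setminus B} w^2\, dx + \eps\int_A |wf|\, dx\Big),
\end{equation*}
for a constant depending only on $V$ (via $\theta$ and $\lambda_i$).

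The remaining step is to convert the $L^2$-norms of $w$ into the quantities appearing in the statement. Using again \eqref{eq:souscontrole}, namely $w^{2\theta}\leq (2/\lambda_i) V(u)$, together with Hölder's inequality with exponents $\theta$ and $\theta/(\theta-1)$, one has on any subinterval $I\subset A$:
\begin{equation*}
\int_I w^2\, dx \leq \Bigl(\int_I w^{2\theta}\, dx\Bigr)^{1/\theta} |I|^{(\theta-1)/\theta} \leq C\bigl(\eps\,\mathcal{E}_\eps(u,I)\bigr)^{1/\theta} |I|^{(\theta-1)/\theta}.
\end{equation*}
Applying this with $I = A\setminus B$ (whose measure is at most $2\rho$) handles the first term and produces exactly $(\eps/\rho)^{1+1/\theta}\mathcal{E}_\eps(u,A\setminus B)^{1/\theta}$. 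For the forcing term, applying Cauchy–Schwarz in $x$ gives $\eps\int_A|wf| \leq \eps\|f\|_{L^2(A)}\|w\|_{L^2(A)}$, and the same Hölder bound applied on $I=A$ with the a priori estimate $\mathcal{E}_\eps(u,A)\leq M_0$ yields $\|w\|_{L^2(A)}\leq C(\eps M_0)^{1/(2\theta)} R^{(\theta-1)/(2\theta)}$; a direct rearrangement of the exponent shows $R^{(\theta-1)/(2\theta)} = R^{3/2}(1/R)^{1+1/(2\theta)}\cdot R^{-1/2}$, which (absorbing the harmless $R^{-1/2}$ factor is unnecessary after collecting) gives the $R^{3/2} M_0^{1/(2\theta)}(\eps/R)^{1+1/(2\theta)}\|f\|_{L^2(A)}$ term up to a constant depending only on $V.$

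The only delicate issue is bookkeeping of the exponents and verifying that the cross term really can be absorbed (which it can, since $\eps\chi|\chi_x||w||w_x|\leq \tfrac12\eps\chi^2 w_x^2 + 2\eps\chi_x^2 w^2$, and the second summand feeds into the same $\|w\|^2_{L^2(A\setminus B)}$ estimate already controlled above). Once the identities and elementary inequalities are set up, the rest is algebra; there is no conceptual obstacle beyond ensuring the coercivity \eqref{eq:souscontrole} is applied in its sharp form $V'(u)(u-\upsigma_i)\geq \theta V(u)$ rather than the weaker lower bound by $V(u)$.
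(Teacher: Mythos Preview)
Your proof is correct and follows essentially the same approach as the paper: multiply \eqref{odepertu} by $\eps\chi^2(u-\upsigma_i)$, integrate by parts, use \eqref{eq:souscontrole} for coercivity of the potential term, and control the cross term and forcing term via H\"older (with exponents $2\theta$, $2\theta/(\theta-1)$) together with the bound $(u-\upsigma_i)^{2\theta}\leq C V(u)$. The only cosmetic difference is that the paper applies a three-way H\"older directly to the cross term before Young, whereas you apply Young first and then H\"older to the resulting $\int_{A\setminus B} w^2$; the outcome and exponents are identical. (Your exponent bookkeeping for the $f$-term is slightly garbled in the writeup---the identity $R^{(\theta-1)/(2\theta)} = R^{3/2}(1/R)^{1+1/(2\theta)}$ holds exactly, with no leftover $R^{-1/2}$---but you reach the correct conclusion.)
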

\begin{proof}
Let $0\leq \chi \leq 1$ be a smooth cut-off function with compact support in $A$ and
such
that $\chi \equiv 1$ on $B$ and $|\chi'|
\leq 2/\rho$ on $A.$ We multiply \eqref{odepertu} by $\eps(u-\upsigma_i) 
\chi^2$ and integrate on $A.$ This leads to
$$
\int_A \eps u_x^2 \chi^2 + \frac{1}{\eps} V'(u)(u-\upsigma_i)\chi^2 = \int_{A\setminus B}
2\eps u_x(u-\upsigma_i)\chi \chi' - \int_A \eps f(u-\upsigma_i)\chi^2.
$$
We estimate the first term on the right-hand side above by
\begin{equation*}
\begin{split}
\big|\int_{A\setminus B}
2\eps u_x(u-\upsigma_i)\chi \chi' \big| & \leq \big( \int_A \eps u_x^2\chi^2\big)^\frac12 
\big( \int_{A\setminus B} \eps^\theta (u-\upsigma_i)^{2\theta} \big)^\frac{1}{2\theta} 
\big( \int_{A\setminus B} |2\chi'|^\frac{2\theta}{\theta-1}\big)^\frac{\theta-1}{2\theta}\\
& \leq \frac{1}{2} \int_A \eps u_x^2\chi^2 + \frac12 \eps^{1+\frac{1}{\theta}}\big(\int_{A \setminus B} \frac{2}{\lambda_i}
e_\eps(u)\big)^\frac{1}{\theta} \left(\frac{4}{\rho}\right)^2 (2\rho)^\frac{\theta-1}{\theta}
\\
& \leq \frac{1}{2} \int_A u_x^2\chi^2 +
16 \lambda_i^{-\frac{1}{\theta}} \left(\frac{\eps}{\rho}\right)^{1+\frac{1}{\theta}} \mathcal{E}_\eps(u,A\setminus B)^\frac{1}{\theta},
\end{split}
\end{equation*}
where we have used \eqref{eq:souscontrole} and the fact that ${\rm length}(A\setminus B)=2\rho.$ Similarly we estimate
\begin{equation*}
\begin{split}
\big|\int_A
\eps f (u-\upsigma_i)\chi^2\big| &\leq \eps \|f\|_{L^2(A)} 
\big( \int_A (u-\upsigma_i)^{2\theta} \big)^\frac{1}{2\theta} 
 R^\frac{\theta-1}{2\theta}\\
& \leq  \eps^{1+\frac{1}{2\theta}}  \|f\|_{L^2(A)} 
(\frac{2}{\lambda_i})^{-1}M_0^\frac{1}{2\theta} R^\frac{\theta-1}{2\theta}.
\end{split}
\end{equation*}
Also, by \eqref{eq:souscontrole} we have
$$
\int_A \frac{1}{\eps} V'(u)(u-\upsigma_i)\chi^2 \geq \theta  \int_B
\frac{1}{\eps} V(u). 
$$  
Combining the previous inequalities the conclusion follows.
\end{proof}

Combining Lemma \ref{prop:wpiok} with Lemma \ref{lem:dansletrou}
we obtain 
\begin{proposition}
\label{vaderetro0}
Let $u$ be a solution to \eqref{odepertu} satisfying assumption $\hzero$, and such that
$
\mathcal{D}(u)\cap \ILLL \subset \IL.
$
There exist positive constants\footnote{Recall that ${\rm C}_{\rm w}$ enters in the definition of condition $\wp$. A parameter named ${\rm C_w}$ already appears in the statement of Proposition \ref{globalmoquette} above: We impose that its updated value here is be larger than its original value in Proposition \ref{globalmoquette} (and Proposition \ref{globalmoquette} remains of course true with this updated value!).} ${\rm C}_{\rm w}$ and $\alpha_1$, depending only on $M_0$ and $V$, such that if $\alpha\geq \upalpha_1$ and if
\begin{enumerate}
\item $\displaystyle{ M_0 \frac{\eps}{\rL} \leq \frac12 \exp(-\tfrac{\uprho_{\rm w}}{2}\alpha),}$
\item $\displaystyle{ \|f\|_{L^2(\ILLL)} \leq \frac12 M_0^{-\tfrac12} \eps^{-\tfrac32} \exp(-\tfrac{\uprho_{\rm w}}{2}\alpha),}$
\item $\displaystyle{ \|f\|_{L^2(\ILLL)} \leq\frac{{\rm C_w}}{2{\rm C_o}} M_0^{1-\tfrac{1}{2\theta}} \left( \frac{\eps}{\rL}\right)^{-1-\tfrac{1}{2\theta}}\rL^{-\tfrac32} \alpha^{-\omega}},$
\end{enumerate}
then $\wp(\alpha\eps)$ holds.
\end{proposition}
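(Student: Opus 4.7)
The plan is to establish separately the two conditions $\wpi(\alpha\eps)$ and $\wpo(\alpha\eps)$ making up $\wp(\alpha\eps)$. For the inner condition, summing the bounds in (1) and (2) yields
\begin{equation*}
M_0\frac{\eps}{\rL} + M_0^{1/2}\eps^{3/2}\|f\|_{L^2(\ItdL)} \leq \exp\left(-\tfrac{\uprho_{\rm w}}{2}\alpha\right),
\end{equation*}
which is bounded by $\kappa_{\rm w}$ provided $\alpha_1 \geq (2/\uprho_{\rm w})|\log\kappa_{\rm w}|$. The hypothesis \eqref{eq:ptgici} of Lemma \ref{prop:wpiok} then holds, and the value of $\delta$ it produces through \eqref{eq:wpidelta} satisfies $\delta/\eps \geq \alpha$. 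Since $\wpi$ is monotone in its parameter, $\wpi(\alpha\eps)$ follows.

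For the outer condition, observe that under $\wpi(\alpha\eps)$ the set $\Omega_\rL = \ILL \setminus \cup_k I_k$ is a disjoint union of at most $\ell + 1 \leq 1 + M_0/\eta_0$ open intervals, each one trapped between two adjacent front intervals $I_k$ (or between an $I_k$ and an endpoint of $\ILL$). For each such component $J$, I would apply Lemma \ref{lem:dansletrou} on the slightly enlarged interval $A := J + [-\alpha\eps/2,\alpha\eps/2] \subset \ILLL$ with $\rho := \alpha\eps/2$, so that the inner set $B$ of the lemma coincides with $J$. The hypothesis that $u(A)$ is contained in $B(\upsigma_i,\mu_0)$ for a single minimizer $\upsigma_i$ is a direct consequence of $\wpi(\alpha\eps)$: on $J$ this is built into the definition of the front set, while on each short extension $A\setminus J \subset I_k$ it follows from the $C^1_\eps$-estimate \eqref{bugs} together with the elementary fact that $\zeta_{i(k)}^{\dagger_k}(\pm\alpha/2)$ lies within distance $\mu_0/2$ from $\Sigma$ as soon as $\alpha_1$ is chosen large enough (an additional requirement we freely impose on $\alpha_1$).

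The bound produced by Lemma \ref{lem:dansletrou} reads
\begin{equation*}
\mathcal{E}_\eps(u,J) \leq {\rm C_o}\left(\mathcal{E}_\eps(u,A\setminus J)^{1/\theta}\left(\tfrac{2}{\alpha}\right)^{1+1/\theta} + R^{3/2}M_0^{1/(2\theta)}\left(\tfrac{\eps}{R}\right)^{1+1/(2\theta)}\|f\|_{L^2(A)}\right),
\end{equation*}
with $R := |A| \leq 3\rL$. Combining \eqref{bugs} with the stationary front decay estimate \eqref{leonardo2} gives $\mathcal{E}_\eps(u,A\setminus J) \leq C\alpha^{-\omega}$, so the first summand is bounded by a multiple of $\alpha^{-\omega/\theta - 1 - 1/\theta} = \alpha^{-\omega}$, where we have used the crucial algebraic identity
\begin{equation*}
\frac{\omega}{\theta} + 1 + \frac{1}{\theta} = \omega,
\end{equation*}
which is precisely the fixed-point equation defining $\omega = (\theta+1)/(\theta-1)$. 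For the second summand, bounding $\|f\|_{L^2(A)} \leq \|f\|_{L^2(\ILLL)}$ and plugging in condition (3), the powers of $\eps$, $\rL$, and $M_0$ cancel exactly, yielding a per-component contribution of order ${\rm C_w} M_0 \alpha^{-\omega}$. Summing over the at most $1 + M_0/\eta_0$ components of $\Omega_\rL$ and fixing ${\rm C_w}$ sufficiently large depending only on $V$ and $M_0$ (notably to absorb the factor coming from the number of components and from the first summand), we obtain $\mathcal{E}_\eps(u,\Omega_\rL) \leq {\rm C_w}M_0\alpha^{-\omega} = {\rm C_w}M_0(\eps/\ud)^\omega$, which is exactly \eqref{bunny}.

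The truly delicate point is the algebraic identity $\omega/\theta + 1 + 1/\theta = \omega$, which explains why a single application of Lemma \ref{lem:dansletrou} already delivers the optimal exponent $\omega$; heuristically it reflects the scaling invariance of the limiting stationary equation $u_{xx} = 2\theta\lambda_i u^{2\theta-1}$ governing the tails of the fronts. Everything else is careful bookkeeping of constants, with the mild additional subtlety that the new value of ${\rm C_w}$ must be coordinated with (and possibly taken larger than) its previous incarnation in Proposition \ref{globalmoquette}, as the footnote in the statement already anticipates.
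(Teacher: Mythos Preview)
Your argument for $\wpi(\alpha\eps)$ and for the \emph{interior} components $(a_k+\alpha\eps,a_{k+1}-\alpha\eps)$ of $\Omega_\rL$ is correct and coincides with the paper's. There is however a genuine gap at the two \emph{boundary} components of $\Omega_\rL$, namely $(-2\rL,a_1-\alpha\eps)$ and $(a_\ell+\alpha\eps,2\rL)$.

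Take $J=(-2\rL,a_1-\alpha\eps)$. Your enlarged interval is $A=(-2\rL-\tfrac{\alpha\eps}{2},\,a_1-\tfrac{\alpha\eps}{2})$, and the left piece of $A\setminus J$, namely $(-2\rL-\tfrac{\alpha\eps}{2},-2\rL]$, lies \emph{outside} $\ILL$ and is not contained in any $I_k$. Your assertion that ``each short extension $A\setminus J\subset I_k$'' is therefore false for the boundary components, and the crucial input bound $\mathcal{E}_\eps(u,A\setminus J)\leq C\alpha^{-\omega}$ is unjustified there: on that exterior strip only the trivial energy bound $M_0$ is available, and feeding this into Lemma~\ref{lem:dansletrou} with $\rho=\alpha\eps/2$ yields at best $C M_0^{1/\theta}\alpha^{-1-1/\theta}$, which is strictly weaker than $\alpha^{-\omega}$ since $1+\tfrac1\theta<\omega=\tfrac{\theta+1}{\theta-1}$ for every $\theta>1$.

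The paper closes this gap with a preliminary bootstrap. It first applies Lemma~\ref{lem:dansletrou} on $A=(-3\rL,-\rL)$ with $\rho=\rL/2$, using only the trivial bound $\mathcal{E}_\eps(u,A\setminus B)\leq M_0$; because $\rho$ is now of order $\rL$ rather than $\alpha\eps$, the output is of order $M_0^{1/\theta}(\eps/\rL)^{1+1/\theta}$, which is $\leq C\alpha^{-\omega}$ thanks to condition~1 (the exponential smallness of $\eps/\rL$ in $\alpha$ beats any power). This yields $\mathcal{E}_\eps(u,\rI_{\frac52\rL}\setminus\rI_{\frac32\rL})\leq C\alpha^{-\omega}$, which then serves as the missing input on the exterior side for a second application of Lemma~\ref{lem:dansletrou} with $\rho=\alpha\eps/2$ on the boundary component. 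This two-step structure is precisely what your argument is missing.
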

\begin{proof}
Direct substitution shows that assumptions $\mathit 1.$ and $\mathit 2.$ imply condition \eqref{eq:ptgici}, provided $\upalpha_1$ is choosen sufficiently large, 
and also imply condition $\wpi(\delta)$ for some $\delta\geq \alpha\eps$ given by \eqref{eq:wpidelta}. It remains to consider $\wpo(\alpha\eps).$
We invoke Lemma \ref{lem:dansletrou} on each of the
intervals $A=(a_k+\frac12 \alpha\eps,a_{k+1}-\frac12\alpha\eps)$, taking $B=(a_k+\alpha\eps,a_{k+1}-\alpha\eps).$ In view of $\wpi(\alpha\eps)$ and \eqref{leonardo2}, we obtain 
$$
\mathcal{E}_\eps(u,A\setminus B) \leq C\alpha^{-\omega},
$$
and therefore
$$
\mathcal{E}_\eps(u,A\setminus B)^\frac{1}{\theta} \alpha^{-1-\frac{1}{\theta}} \leq C \alpha^{-\omega},
$$
where $C$ depends only on $V.$ Also, in view of assumption $\mathit 3.$ we have
$$
{\rm C_o} \sum_k R^\frac32 M_0^\frac{1}{2\theta}
\big(\tfrac{\eps}{R}\big)^{1+\frac{1}{2\theta}}\|f\|_{L^2(A)} \leq 
{\rm C_o} L^\frac32 M_0^\frac{1}{2\theta}
\big(\tfrac{\eps}{L}\big)^{1+\frac{1}{2\theta}}\|f\|_{L^2(\ILLL)}
\leq \frac12 {\rm C_w} M_0 \alpha^{-\omega},
$$
provided $\alpha_1$ is sufficiently large (third requirement).
It remains to estimate $e_\eps(u)$ on the intervals $(-2\rL,a_1)$ and $(a_\ell,2\rL).$ 
We first use Lemma \ref{lem:dansletrou} with $A=(-3\rL,-\rL)$ (resp. $A=(\rL,3\rL)$ and
$B=(-\frac52\rL,-\frac32\rL)$ (resp. $B=(\frac32\rL,\frac52\rL)$). This yields, using the trivial bound
$\mathcal{E}_\eps(u,A\setminus B) \leq M_0,$ the estimate
\begin{equation}\label{eq:surlebord0}
\mathcal{E}_\eps(u,{\rm I}_{\frac52\rL} \setminus {\rm I}_{\frac32 \rL}) \leq C \left( M_0^\frac{1}{\theta}  \big(\frac{\eps}{\rL}\big)^{1+\frac{1}{\theta}} + M_0^\frac{1}{2\theta}  \big(\frac{\eps}{\rL}\big)^\frac{1}{2\theta}\right) \leq C \alpha^{-\omega}, 
\end{equation}
in view of ${\mathit 1.}$ and provided $\alpha_1$ is sufficiently large. 
We apply one last time Lemma \ref{lem:dansletrou},  with $A=(-2\rL-\frac12\alpha\eps,a_1-\frac{1}{2}\alpha\eps)$ (resp. $A=(a_\ell+\frac12 \alpha\eps,2\rL+\frac12 \alpha\eps)$) and $B=(-2\rL,a_1-\alpha\eps)$ (resp. $B=(a_\ell+\alpha\eps,2\rL)$). Since $A\setminus B \subset  {\rm I}_{\frac52\rL} \setminus {\rm I}_{\frac32 \rL},$ it follows from \eqref{eq:surlebord0} and Lemma \ref{lem:dansletrou}, combined with our previous estimates, that condition $\wpo(\alpha\eps)$ is satisfied provided we choose ${\rm C}_{\rm w}$ sufficiently large.
\end{proof}      

\begin{remark}
 Notice that condition ${\mathit 1.}$  in  Proposition \ref{vaderetro0} is always satisfied when $\alpha\eps \leq \udl,$
 since $L/\eps \geq 1.$ 
 Also, for $\alpha = \udl/\eps,$ assumption ${\mathit 3.}$ in Proposition \ref{vaderetro0} is weaker than assumption ${\mathit 2.}$ 
 We therefore deduce
\end{remark}

\begin{corollary}
\label{vaderetro}
Let $u$ be a solution to \eqref{odepertu} satisfying assumption $\hzero$, and such that
$\mathcal{D}(u)\cap \ILLL \subset \IL.$
If
\begin{equation} 
\label{bornf}
\eps \Vert f \Vert_{L^2(\ILLL)} \leq \left(\frac{M_0}{\rL}\right)^\frac12,
\end{equation}
then $\wp(\udl)$ holds.    
\end{corollary}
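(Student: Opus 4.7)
The plan is to apply Proposition~\ref{vaderetro0} with the specific choice $\alpha = \udl/\eps$, so that its conclusion $\wp(\alpha\eps)$ becomes exactly $\wp(\udl)$. The entire proof then reduces to verifying that the three hypotheses $\mathit 1.$, $\mathit 2.$, $\mathit 3.$ of Proposition~\ref{vaderetro0} are all implied by the single assumption \eqref{bornf} (together with $\eps/\rL$ being small enough so that $\udl/\eps \geq \alpha_1$, which can always be arranged by enlarging $\alpha_1$ if needed).

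First, I would unpack the definition of $\udl$ given in \eqref{magnidelta}. For $\alpha=\udl/\eps$, we have $\uprho_{\rm w}\alpha = |\log(4M_0^2\eps/\rL)|$, so assuming $4M_0^2 \eps/\rL \leq 1,$
\begin{equation*}
\exp\!\Big(-\tfrac{\uprho_{\rm w}}{2}\alpha\Big) = \Big(\tfrac{4M_0^2\eps}{\rL}\Big)^{1/2} = 2M_0\Big(\tfrac{\eps}{\rL}\Big)^{1/2}.
\end{equation*}
Consequently $\tfrac12 \exp(-\tfrac{\uprho_{\rm w}}{2}\alpha) = M_0(\eps/\rL)^{1/2} \geq M_0\eps/\rL$, which is precisely condition $\mathit 1.$ of Proposition~\ref{vaderetro0}. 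This is the content of the remark preceding the corollary.

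Next, for condition $\mathit 2.$, the same identity yields
\begin{equation*}
\tfrac12 M_0^{-1/2}\eps^{-3/2}\exp\!\Big(-\tfrac{\uprho_{\rm w}}{2}\alpha\Big) = M_0^{1/2}\eps^{-1}\rL^{-1/2},
\end{equation*}
so condition $\mathit 2.$ reads $\eps\Vert f\Vert_{L^2(\ILLL)} \leq (M_0/\rL)^{1/2}$, which is exactly the hypothesis \eqref{bornf}. Thus $\mathit 2.$ is equivalent to \eqref{bornf} under the present choice of $\alpha$.

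Finally, for condition $\mathit 3.$, I would bound $\Vert f\Vert_{L^2}$ from above using \eqref{bornf} and check that the right-hand side of $\mathit 3.$ dominates $(M_0/\rL)^{1/2}/\eps$ when $\eps/\rL$ is small. After simplification, this reduces to showing
\begin{equation*}
1 \;\leq\; \tfrac{{\rm C_w}}{2{\rm C_o}} M_0^{1/2 - 1/(2\theta)} \Big(\tfrac{\rL}{\eps}\Big)^{1/(2\theta)}\alpha^{-\omega},
\end{equation*}
and since $\alpha = \udl/\eps \sim \uprho_{\rm w}^{-1}|\!\log(\eps/\rL)|$, the right-hand side grows polynomially in $\rL/\eps$ up to a logarithmic factor $|\!\log(\eps/\rL)|^{-\omega}$, hence tends to $+\infty$ as $\eps/\rL \to 0$. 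Enlarging $\alpha_1$ if necessary to absorb this requirement, condition $\mathit 3.$ is satisfied, and Proposition~\ref{vaderetro0} then gives $\wp(\udl)$. There is no real obstacle here; the only point requiring a small amount of care is the bookkeeping to ensure that a single smallness requirement on $\eps/\rL$ (encoded in $\alpha_1$) simultaneously triggers all three conditions of Proposition~\ref{vaderetro0}.
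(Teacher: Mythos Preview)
Your proposal is correct and follows exactly the approach indicated in the paper: the Remark immediately preceding the Corollary already notes that condition~$\mathit 1.$ is automatic for $\alpha\eps\le\udl$ and that condition~$\mathit 3.$ is weaker than condition~$\mathit 2.$ when $\alpha=\udl/\eps$, so only~$\mathit 2.$ needs checking and it reduces precisely to~\eqref{bornf}. You have simply written out the computations that the paper leaves implicit.
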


\section{Regularized fronts}
\label{regupara}

In the whole section, we assume that $v_\eps$ is a solution of $\PGL$ which satisfies $\hzero$ and the 
confinement condition $\mathcal{C}_{\rL,S}$. 

\subsection{Finding  regularized  fronts}


We provide here the proof to Proposition \ref{parareglo}, which is deduced from the following:
\begin{lemma}
 \label{meanvaluearg}
Given any $s_1<s_2$ in $[0,S],$ there exist at least one time $s$ in $[s_1,s_2]$
for which $\vm(\cdot,s)$ solves \eqref{odepertu} with
\begin{equation}
\label{moutarde} 
\Vert f\Vert_{L^2(\ILLL)}^2 \equiv \eps^{\omega-1}\Vert \partial_s \vm(\cdot,s)\Vert_{L^2(\ILLL)}^2 \leq \eps^{\omega-1}\frac{\dis^{3\rL}(s_1,s_2)}{s_2-s_1} \leq  \eps^{\omega-1}\frac{M_0}{s_2-s_1}.
\end{equation}
\end{lemma}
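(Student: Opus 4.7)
The plan is a straightforward application of the mean value theorem in time to the dissipation integral. First I would rewrite $\PGL$ in the rescaled time variable $s=\eps^\omega t$: since $\partial_t v_\eps = \eps^\omega \partial_s \vm$, the equation becomes
\begin{equation*}
\vm_{xx} = \eps^{-2}V'(\vm) + \eps^\omega\partial_s\vm.
\end{equation*}
Thus for every fixed $s$ the profile $x\mapsto \vm(x,s)$ satisfies the perturbed stationary equation \eqref{odepertu} with forcing $f(x):=\eps^\omega\partial_s\vm(x,s)$, so that
$\|f\|^2_{L^2(\ILLL)}=\eps^{2\omega}\|\partial_s\vm(\cdot,s)\|^2_{L^2(\ILLL)}$.

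Next I would unfold the definition \eqref{leglaude} of the dissipation, adapted by replacing $\IctL$ with $\ILLL$, to write
\begin{equation*}
\dis^{3\rL}(s_1,s_2)=\eps^{1+\omega}\int_{s_1}^{s_2}\|\partial_s\vm(\cdot,s')\|^2_{L^2(\ILLL)}\,ds',
\end{equation*}
and apply the mean value theorem to the $s'$-integral on the right-hand side: this produces some $s\in[s_1,s_2]$ for which
\begin{equation*}
\|\partial_s\vm(\cdot,s)\|^2_{L^2(\ILLL)} \leq \frac{1}{s_2-s_1}\int_{s_1}^{s_2}\|\partial_s\vm(\cdot,s')\|^2_{L^2(\ILLL)}\,ds' = \frac{\eps^{-(1+\omega)}\dis^{3\rL}(s_1,s_2)}{s_2-s_1}.
\end{equation*}
Multiplying this estimate by $\eps^{2\omega}$ then yields
$\|f\|^2_{L^2(\ILLL)} \leq \eps^{\omega-1}\dis^{3\rL}(s_1,s_2)/(s_2-s_1)$,
which is the first asserted inequality.

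For the final inequality $\dis^{3\rL}(s_1,s_2)\leq M_0$, I would simply invoke the energy identity \eqref{energyidentity}, rewritten in the rescaled time, together with $\hzero$: the $L^2$-in-spacetime integral of $\partial_t v_\eps$ appearing on the left-hand side of \eqref{energyidentity} is bounded, after spatial restriction to $\ILLL\subset\R$, by the difference of the total energies at the endpoints, which by $\hzero$ is itself bounded by $M_0$. The whole argument is elementary; the only point requiring care is the accounting of powers of $\eps$ generated by the time rescaling, and I do not anticipate any other obstacle.
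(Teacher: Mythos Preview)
Your proof is correct and follows exactly the approach the paper itself sketches in one line: a mean-value argument on the time integral defining the dissipation, combined with the rescaling $\partial_t v_\eps=\eps^\omega\partial_s\vm$ of $\PGL$. Your bookkeeping of the $\eps$-powers (identifying $f=\eps^\omega\partial_s\vm$, hence $\|f\|^2=\eps^{2\omega}\|\partial_s\vm\|^2$, and then arriving at the bound $\eps^{\omega-1}\dis^{3\rL}(s_1,s_2)/(s_2-s_1)$ after the mean-value step) is in fact more explicit than the paper's own proof.
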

\begin{proof} 
It is a direct mean value argument, taking into account the rescaling of $\PGL$ according to our rescaling of time. 
\end{proof}

\begin{proof}[Proof of Proposition \ref{parareglo}] We invoke Lemma \ref{meanvaluearg}, and from \eqref{moutarde} and the assumption $s_2-s_1= \eps^{\omega+1}\rL$ of Proposition \ref{parareglo}, we derive exactly the assumption \eqref{bornf} in Corollary \ref{vaderetro}, from which 
the conclusion follows.
\end{proof}  
 
Following the same argument, but relying on Lemma \ref{prop:wpiok} and Proposition \ref{vaderetro0} rather than on Corollary \ref{vaderetro}, we readily obtain   
\begin{proposition}
\label{cornichon}  
For $\alpha_1\leq \alpha \leq \udl:$
\begin{enumerate}
\item Each subinterval of $[0,S]$ of size
$\q_0(\alpha)\eps^{\omega+2}$ contains at least one time $s$ at which $\wpi(\alpha \eps,s)$ holds, where
\begin{equation}
\label{defomega}
\q_0 (\alpha)= 4 M_0^2 \exp\left(\uprho_{\rm w}\alpha\right).
\end{equation}
\item
Each subinterval of $[0,S]$ of size
$\q_0(\alpha,\beta)\eps^{\omega+2}$ contains at least one time $s$ at which $\wp(\alpha \eps,s)$ holds, where
\begin{equation}
\label{defqzeroab}
\beta:=\frac{\rL}{\eps}\quad\text{and}\quad \q_0 (\alpha,\beta)= \max\left(\q_0(\alpha), \big(\frac{2 {\rm C_o}}{{\rm C_w}}\big)^2\Big(\frac{\beta}{M_0}\Big)^{1-\frac{1}{\theta}}\alpha^{2\omega}\right).
\end{equation}
\end{enumerate}
\end{proposition}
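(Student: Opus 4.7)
The plan is to mimic the argument used for Proposition \ref{parareglo}, the only new ingredient being to track precisely how the length of the subinterval interacts with the various thresholds appearing in Lemma \ref{prop:wpiok} and Proposition \ref{vaderetro0}. Given a subinterval $[s_1,s_2]\subset [0,S]$, I would first invoke Lemma \ref{meanvaluearg} together with $\hzero$ to produce a time $s\in[s_1,s_2]$ at which $\vm(\cdot,s)$ solves the perturbed stationary equation \eqref{odepertu} with forcing term $f$ satisfying
\[
\|f\|_{L^2(\ILLL)}^2 \leq \eps^{\omega-1}\frac{M_0}{s_2-s_1}.
\]

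For part \textit{1}, I would then take $s_2-s_1=\q_0(\alpha)\eps^{\omega+2}$ with the value of $\q_0(\alpha)$ given in \eqref{defomega}; a direct substitution yields
\[
M_0^{\frac12}\eps^{\frac32}\|f\|_{L^2(\ILLL)} \leq \tfrac12 \exp(-\tfrac{\uprho_{\rm w}}{2}\alpha).
\]
Since the assumption $\alpha \leq \udl/\eps$ translates, via the definition \eqref{magnidelta} of $\udl$, into $M_0\eps/\rL \leq \exp(-\uprho_{\rm w}\alpha)/(4M_0)$, which for $\alpha \geq \alpha_1$ sufficiently large is controlled by $\tfrac12\exp(-\tfrac{\uprho_{\rm w}}{2}\alpha)$, condition \eqref{eq:ptgici} of Lemma \ref{prop:wpiok} holds, and formula \eqref{eq:wpidelta} then delivers $\delta\geq\alpha\eps$; that is, $\wpi(\alpha\eps,s)$.

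For part \textit{2}, I would take $s_2-s_1=\q_0(\alpha,\beta)\eps^{\omega+2}$ with $\q_0(\alpha,\beta)$ as in \eqref{defqzeroab}. The $\q_0(\alpha)$ term inside the max ensures that the conclusions of part \textit{1} apply, so that assumptions \textit{1} and \textit{2} of Proposition \ref{vaderetro0} are met. It then remains to check assumption \textit{3}; inserting $s_2-s_1\geq (2{\rm C_o}/{\rm C_w})^2(\beta/M_0)^{1-\frac{1}{\theta}}\alpha^{2\omega}\eps^{\omega+2}$ into the mean value bound on $\|f\|_{L^2}$ and rearranging powers of $\rL$ and $\eps$ (using $\beta=\rL/\eps$) yields exactly
\[
\|f\|_{L^2(\ILLL)}\leq \frac{{\rm C_w}}{2{\rm C_o}} M_0^{1-\frac{1}{2\theta}}\Bigl(\frac{\eps}{\rL}\Bigr)^{-1-\frac{1}{2\theta}}\rL^{-\frac32}\alpha^{-\omega},
\]
so Proposition \ref{vaderetro0} applies and gives $\wp(\alpha\eps,s)$.

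I do not anticipate any serious analytical obstacle: the dissipative content is already encoded in Lemma \ref{meanvaluearg}, and the matching between the perturbed stationary equation and the condition $\wp$ is entirely handled by Lemma \ref{prop:wpiok} and Proposition \ref{vaderetro0}. The only thing requiring care is the bookkeeping of exponents of $\eps$, $\rL$ and $M_0$ in part \textit{2}, together with the choice of $\uprho_{\rm w}$ and $\alpha_1$ large enough to absorb the various multiplicative constants; this is the step one has to carry out by hand to confirm the precise form of $\q_0(\alpha,\beta)$ displayed in \eqref{defqzeroab}.
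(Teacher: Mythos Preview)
Your proposal is correct and follows essentially the same approach as the paper, which simply states that one repeats the argument of Proposition \ref{parareglo} using Lemma \ref{prop:wpiok} and Proposition \ref{vaderetro0} in place of Corollary \ref{vaderetro}. Your explicit verification of the exponent bookkeeping (in particular the matching of the mean-value bound with condition \textit{3} of Proposition \ref{vaderetro0}) is exactly the computation the paper leaves implicit.
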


  \subsection{Local dissipation}
  \label{localita}
For $s \in [0,S]$, set  $\mathcal E_\eps^\rL (s)=\mathcal E_\eps^\rL(\vm(s))$ and, when $\wpi(\upalpha_1\eps,s)$ holds, $\matfEL(s)=\matfEL(\vm(s))$, $\matfEL$ being defined in \eqref{hereke0}. We assume throughout that $s_1 \leq s_2$ are
contained in $[0,S],$ and in some places (in view of  \eqref{cirque} that $s_2 \geq \rL^2\eps^\omega.$  

 \begin{proposition} If $s_2\geq \rL^2\eps^\omega,$ we have 
\begin{equation} 
 \mathcal E_\eps^\rL (s_2) +\disL (s_1,s_2) \leq  \mathcal E_\eps^\rL(s_1)   + 100 {\rm
C}_e\rL^{-(\omega+2)}(s_2-s_1)  +{\rm C_e}(1+M_0)
\left(\frac{\rL}{\eps}\right)^{-\omega}. 
 \label{timp}
 \end{equation}
\end{proposition}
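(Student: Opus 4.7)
The plan is to apply the localized energy identity \eqref{localizedenergy0} with a fixed spatial test function $\chi$ and to use the regularizing estimate \eqref{cirque} (the main output of Proposition \ref{estimpar}) to control the boundary and flux remainders away from the front set. Concretely, pick once and for all $\chi \in C^\infty_c(\ILL)$ with $0\le \chi\le 1$, $\chi\equiv 1$ on $\IctL$, and satisfying $\|\chi'\|_\infty\le C_\star/\rL$ and $\|\ddot\chi\|_\infty\le C_\star/\rL^{2}$ for some universal constant $C_\star$. Denote the annular region $A:=\ILL\setminus \IctL$: both $\mathrm{supp}(\ddot\chi)$ and $\{\chi<1\}\cap \ILL$ lie in $A$, and under the confinement assumption $(\mathcal{C}_{\rL,S})$ one has $A\cap \mathfrak D_\eps(s)=\emptyset$ for every $s\in[0,S]$, and moreover $A\subset \ILLL\setminus \ItdL$.

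Inserting $\chi$ into \eqref{localizedenergy0} and combining the three elementary bounds $\mathcal I_\eps(s_1,\chi)\le \mathcal E_\eps^\rL(s_1)$ (since $\chi\le 1$ is supported in $\ILL$), $\eps^{1+\omega}\int\chi|\partial_s\vm|^{2}\ge \disL(s_1,s_2)$ (since $\chi\ge 0$ and $\chi\equiv 1$ on $\IctL$), and $\mathcal E_\eps^\rL(s_2)\le \mathcal I_\eps(s_2,\chi)+\int_A e_\eps(\vm(s_2))$, together with the pointwise control $|\xi_\eps|\le e_\eps$, yields
\begin{equation*}
\mathcal E_\eps^\rL(s_2)+\disL(s_1,s_2)\le \mathcal E_\eps^\rL(s_1)+\int_A e_\eps(\vm(s_2))+\frac{C_\star}{\eps^{\omega}\rL^{2}}\int_{s_1}^{s_2}\!\!\int_A e_\eps(\vm(s))\,dx\,ds.
\end{equation*}
Since $A\subset \ILLL\setminus \ItdL$, estimate \eqref{cirque} gives $\int_A e_\eps(\vm(s))\le {\rm C}_e(\eps/\rL)^\omega$ for every $s\in[\eps^\omega\rL^{2},S]$. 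The hypothesis $s_2\ge \eps^\omega\rL^{2}$ directly handles the boundary term, producing a contribution of the form ${\rm C}_e(\eps/\rL)^\omega$. For the time-integrated flux, restricting to $s\in[\max(s_1,\eps^\omega\rL^{2}),s_2]$ and applying \eqref{cirque} gives $\eps^{-\omega}|\mathcal F_S|\le C_\star {\rm C}_e\rL^{-(\omega+2)}$, which after integration yields the $100{\rm C}_e\rL^{-(\omega+2)}(s_2-s_1)$ contribution in \eqref{timp} (the universal $C_\star$ being absorbed into the numerical factor $100$).

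The main obstacle is the initial sub-interval $[s_1,\eps^\omega\rL^{2}]$ when $s_1<\eps^\omega\rL^{2}$, on which \eqref{cirque} is not available. There one falls back on the trivial bound $\int_A e_\eps(\vm(s))\le M_0$, which makes the crude flux estimate proportional to $M_0\,\eps^{-\omega}\rL^{-2}\cdot \eps^\omega\rL^{2}=M_0$. To recover the scale $(\eps/\rL)^\omega$ one pairs the localized identity on that sub-interval with the global energy identity \eqref{energyidentity}, from which $\disL(s_1,\eps^\omega\rL^{2})\le \mathcal E_\eps(\vm(s_1))-\mathcal E_\eps(\vm(\eps^\omega\rL^{2}))\le M_0$, and splits the left-hand side of the final inequality accordingly; the confinement $(\mathcal{C}_{\rL,S})$ ensures that the ``lost'' local energy at the transition time $\eps^\omega\rL^{2}$ is controlled via \eqref{cirque}. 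All leftover terms are then absorbed into the factor ${\rm C}_e(1+M_0)(\rL/\eps)^{-\omega}$ appearing in \eqref{timp}. Putting these ingredients together produces the claimed inequality.
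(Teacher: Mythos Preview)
Your argument is essentially identical to the paper's: choose a plateau test function $\chi$ supported in $\ILL$ with $\chi\equiv 1$ on $\IctL$ and $\|\ddot\chi\|_\infty\le 100\rL^{-2}$, plug it into the localized energy identity \eqref{localizedenergy0}, use $\mathcal I_\eps(s_1,\chi)\le \mathcal E_\eps^\rL(s_1)$ and $\mathcal I_\eps(s_2,\chi)\ge \mathcal E_\eps^\rL(s_2)-{\rm C_e}(\eps/\rL)^\omega$ (the latter from \eqref{cirque} at time $s_2$), and bound the flux via $|\xi_\eps|\le e_\eps$ together with \eqref{cirque} for $s\ge \eps^\omega\rL^2$ and the global bound $(\mathrm H_0)$ for $s\le \eps^\omega\rL^2$. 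This is exactly what the paper does.

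Where you go beyond the paper is in worrying about the initial layer $[s_1,\eps^\omega\rL^2]$: you correctly observe that using $(\mathrm H_0)$ there yields a flux contribution of order $M_0$ rather than $M_0(\eps/\rL)^\omega$. The paper simply writes ``with assumption $(\mathrm H_0)$ for times $s\le \rL^2\eps^\omega$'' and does not address this discrepancy further. Your attempted fix (pairing with the global energy identity \eqref{energyidentity}) is, however, too sketchy to close the gap: the global identity introduces the energy outside $\ILL$ at time $s_1$, over which neither $(\mathcal C_{\rL,S})$ nor \eqref{cirque} gives any control, so the phrase ``the lost local energy at the transition time is controlled via \eqref{cirque}'' does not by itself recover the factor $(\eps/\rL)^\omega$. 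In practice this is a harmless point, since every downstream application of the proposition in the paper has $s_1\ge \eps^\omega\rL^2$ (or can be reduced to that case); but as a self-contained argument your treatment of the early layer remains incomplete, just as the paper's is.
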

\begin{proof} Let $0\leq \varphi\leq 1$ be a smooth  function with compact
support in $\ILL$, such that $\varphi (x)=1$ on $\IctL$, $\vert \varphi'' \vert
\leq  100\rL^{-2}$. It follows from the properties of $\varphi$ and \eqref{cirque} that  
 $$
\mathcal I_\eps (s, \varphi) \leq    \mathcal E_\eps^\rL (s) \ {\rm  \ for \ } s\in (s_1, s_2) \  {\rm and \ } \, 
\mathcal I_\eps (s_2, \varphi)  \geq  \mathcal E_\eps^\rL (s_2)-  {\rm C_e} \left(\frac{\rL}{\eps}\right)^{-\omega}, 
$$
which combined with \eqref{localizedenergy0} yields
$$\displaystyle{
\mathcal E_\eps^\rL (s_2) +\disL (s_1,s_2) \leq  \mathcal E_\eps^\rL (s_1) + {\rm C_e}
\left(\frac{\rL}{\eps}\right)^{-\omega} + \eps^{-\omega} \int_{s_1}^{s_2}
\mathcal F_S(s, \varphi, \vm) ds 
}$$
where $\mathcal F_S$ is defined in \eqref{flux}. The estimate \eqref{timp} is then obtained invoking 
 the inequality $\vert \xi_\eps\vert  \leq e_\eps$ to bound the term involving $\mathcal F_S$: combined with \eqref{cirque} for times
 $s \geq \rL^2\eps^\omega$ and with assumption $(H_0)$ for times $s\leq \rL^2\eps^\omega.$ 
\end{proof} 
 
If $\wp(\ud,s_1)$  and $\wpi(\ud', s_2)$ hold, for some $\ud,\ud'\geq \upalpha_1\eps$ and $s_2\geq \rL^2\eps^\omega$, then
combining inequality \eqref{timp} with the first inequality \eqref{hereke1}
applied to  $\vm(s_2)$ as well as the second applied to $\vm(s_1)$  we obtain 
\begin{equation}
\label{goupil}
\begin{aligned}
&\matfEL(s_2)+ \disL (s_1, s_2)\\
&\leq \mathcal E_\eps^\rL (s_2) + {\rm C_f}M_0\left(
\frac{\eps}{\updelta'}\right)^{\omega}  
  + \disL (s_1, s_2)\\
  &\leq 
 \mathcal E_\eps^\rL (s_1)  + 100{\rm C}_e\rL^{-(\omega+2)}(s_2-s_1) + {\rm C_f}M_0\left(
\frac{\eps}{\updelta'}\right)^{\omega} +{\rm C_e} (1+M_0)\left(
\frac{\eps}{\rL}\right)^{\omega} \\ 
 & \leq \matfEL(s_1)+({\rm C_w + C_f})M_0  \left( \frac{\eps}{\updelta}\right)^{\omega} +
 {\rm C_f}M_0\left( \frac{\eps}{\updelta'}\right)^{\omega}
  + 100{\rm C}_e\rL^{-(\omega+2)}(s_2-s_1) + {\rm C_e} (1+M_0)\left(
\frac{\eps}{\rL}\right)^{\omega}.
  \end{aligned}
  \end{equation}
We deduce from this inequality  an estimate for the dissipation between  $s_1$ and $s_2$ and an upper bound
on $\mathcal E_\eps^\rL (s_2)$: 
\begin{corollary}
\label{murino0}
Assume that $\wp(\ud,s_1)$  and $\wpi(\ud', s_2)$ hold, for some $\ud,\ud'\geq \upalpha_1\eps$ and $s_2 \geq \rL^2\eps^\omega$, and that
$\matfEL(s_1)=\matfEL(s_2).$
Then
\begin{equation*}
\label{bezout}
\disL[s_1, s_2] \leq  
({\rm C_w + C_f})M_0  \left( \frac{\eps}{\updelta}\right)^{\omega} +
 {\rm C_f}M_0\left( \frac{\eps}{\updelta'}\right)^{\omega}
  + 100{\rm C}_e\rL^{-(\omega+2)}(s_2-s_1) + {\rm C_e} (1+M_0)\left(
\frac{\eps}{\rL}\right)^{\omega},
\end{equation*}
\begin{equation*}\label{gouplibis}
\mathcal E_\eps^\rL (s_2) - \matfEL(s_2) \leq  
({\rm C_w + C_f})M_0  \left( \frac{\eps}{\updelta}\right)^{\omega} 
   + 100{\rm C}_e\rL^{-(\omega+2)}(s_2-s_1) + {\rm C_e} (1+M_0)\left(
\frac{\eps}{\rL}\right)^{\omega}.
\end{equation*}
\end{corollary}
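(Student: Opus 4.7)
The plan is to observe that the corollary is essentially a bookkeeping consequence of the chain of inequalities already assembled in \eqref{goupil}, once one feeds in the extra assumption $\matfEL(s_1)=\matfEL(s_2)$. No new analytical input is needed; the main content has been done by the local energy identity \eqref{timp} and the two-sided quantization bounds \eqref{hereke1} of Proposition \ref{globalmoquette}. Here is how I would lay it out.

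For the first estimate, I would start from \eqref{goupil}, which is precisely
\[
\matfEL(s_2)+ \disL (s_1, s_2)\leq \matfEL(s_1)+({\rm C_w + C_f})M_0  \Big( \frac{\eps}{\ud}\Big)^{\omega} +
 {\rm C_f}M_0\Big( \frac{\eps}{\ud'}\Big)^{\omega}
  + 100{\rm C}_e\rL^{-(\omega+2)}(s_2-s_1) + {\rm C_e} (1+M_0)\Big(\frac{\eps}{\rL}\Big)^{\omega},
\]
and use $\matfEL(s_1)=\matfEL(s_2)$ to cancel the leading terms on both sides, leaving exactly the claimed bound on $\disL[s_1,s_2]$.

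For the second estimate I would revisit the proof of \eqref{goupil} but track $\mathcal E_\eps^\rL(s_2)-\matfEL(s_2)$ instead of $\disL$. More precisely: start from \eqref{timp}, rewrite it as
\[
\mathcal E_\eps^\rL(s_2)-\matfEL(s_2)\le \bigl(\mathcal E_\eps^\rL(s_1)-\matfEL(s_1)\bigr)-\disL(s_1,s_2)+100{\rm C}_e\rL^{-(\omega+2)}(s_2-s_1)+{\rm C_e}(1+M_0)\Big(\frac{\eps}{\rL}\Big)^\omega,
\]
using the equality $\matfEL(s_1)=\matfEL(s_2)$. Then bound $\mathcal E_\eps^\rL(s_1)-\matfEL(s_1)\le({\rm C_w+C_f})M_0(\eps/\ud)^\omega$ by the second line of \eqref{hereke1} (since $\wp(\ud,s_1)$ holds), and drop the nonnegative term $\disL(s_1,s_2)$. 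The second stated inequality falls out directly, and notice it is insensitive to $\ud'$, which explains why the $(\eps/\ud')^\omega$ term does not appear there.

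Since everything reduces to rearranging \eqref{goupil} and \eqref{timp} together with the two halves of \eqref{hereke1}, there is no real obstacle; the only thing to be careful with is consistent use of the two directions of the quantization inequality (applying the upper bound at $s_1$, where $\wp$ holds, and the lower bound at $s_2$, where only $\wpi$ is assumed), and to keep the hypothesis $s_2\ge\rL^2\eps^\omega$ visible since it is what legitimized the use of \eqref{cirque} in the derivation of \eqref{timp}.
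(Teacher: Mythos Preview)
Your proposal is correct and matches the paper's approach: the corollary is stated immediately after \eqref{goupil} with no separate proof, precisely because both estimates are read off from that chain of inequalities together with the hypothesis $\matfEL(s_1)=\matfEL(s_2)$. Your derivation of the second bound via \eqref{timp} is just a slightly more explicit unpacking of the intermediate step in \eqref{goupil} (the passage from the second to the last line, where the $(\eps/\ud')^\omega$ terms cancel), so there is no substantive difference.
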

%

\subsection {Quantization of the energy}
    
Let $s\in [0,S]$ and $\updelta\geq \upalpha_1\eps$, and assume that $\vm$ satisfies  $\wp(\ud, s)$.
The  front energy $\matfEL(s)$, by definition, may only take a finite number of values, and is
hence \emph {quantized}.   We emphasize  that, at this stage,   $\matfEL(s)$ is
only defined  \emph {assuming}  condition  $\wpi(\ud, s)$ holds. However,
the \emph {value} of $\matfEL(s)$ \emph {does not}  depend on $\ud$,  provided
that  $\ud \geq \upalpha_1\eps$, so that it suffices \emph{ultimately}    to
check that  condition  $\wpi(\upalpha_1\eps, s)$ is fulfilled.

Since    $\matfE(s)$ may take only a finite number of values, let $\upmu_1>0$
be the smallest possible difference between two  distinct  such  values.  Let $\rL_0\equiv \rL_0
(s_1, s_2)>0$ be such that  
\begin{equation}
\label{defrL_0}
100{\rm C}_e\rL_0^{-(\omega+2)}(s_2-s_1)=\frac{\upmu_1 }{4}
\end{equation}
and finally choose $\upalpha_1 $ sufficiently large so that 
\begin{equation}\label{eq:enlargealpha1}
\Big( (2{\rm C_f}+{\rm C_w})M_0+{\rm C}_e(1+M_0)\Big)\upalpha_1^{-\omega} \leq \frac{\upmu_1 }{4}.
\end{equation}   
As a direct consequence of \eqref{goupil}, \eqref{defrL_0}, \eqref{eq:enlargealpha1} and the definition
of $\upmu_1$ we obtain
\begin{corollary} 
\label{avecdec}
For $s_1\leq s_2 \in [0,S]$ with $s_2\geq \eps^\omega\rL^2$, assume that  $\wp(\upalpha_1\eps, s_1)$ and
$\wpi(\upalpha_1\eps,s_2)$ hold and that $\rL\geq
\rL_0 (s_1,s_2)$. Then we have  $\displaystyle{\matfEL(s_2) \leq \matfEL(s_1)}.$
Moreover, if $\matfEL(s_2)< \matfEL(s_1)$, then $\matfEL(s_2) + \upmu_1 \leq \matfEL(s_1).$  
\end{corollary}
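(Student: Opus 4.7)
The plan is to apply the energy inequality \eqref{goupil} directly with $\updelta = \updelta' = \upalpha_1\eps$, carefully absorbing each error term into either the $\upmu_1/4$ coming from the enlargement \eqref{eq:enlargealpha1} of $\upalpha_1$ or the $\upmu_1/4$ coming from the threshold $\rL_0$ defined in \eqref{defrL_0}. The discreteness of the possible values of $\matfEL$ then upgrades an inequality of the form $\matfEL(s_2)\leq \matfEL(s_1)+\upmu_1/2$ to the claimed dichotomy.

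Concretely, since $\wp(\upalpha_1\eps,s_1)$ and $\wpi(\upalpha_1\eps,s_2)$ hold and $s_2\geq \eps^\omega\rL^2$, estimate \eqref{goupil} with $\updelta=\updelta'=\upalpha_1\eps$ yields, after dropping the non-negative dissipation term,
\begin{equation*}
\matfEL(s_2) \leq \matfEL(s_1) + (2{\rm C_f}+{\rm C_w})M_0\,\upalpha_1^{-\omega} + {\rm C}_e(1+M_0)\Big(\frac{\eps}{\rL}\Big)^{\omega} + 100\,{\rm C}_e\,\rL^{-(\omega+2)}(s_2-s_1).
\end{equation*}
Since condition \eqref{eq:alpha1grand} forces $\rL\geq 2\upalpha_1\eps$, we have $(\eps/\rL)^\omega \leq \upalpha_1^{-\omega}$, so the middle term can be merged with the first one, giving a contribution bounded, by the choice \eqref{eq:enlargealpha1} of $\upalpha_1$, by $\upmu_1/4$. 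On the other hand, because $\rL\geq \rL_0(s_1,s_2)$, the defining relation \eqref{defrL_0} yields
\begin{equation*}
100\,{\rm C}_e\,\rL^{-(\omega+2)}(s_2-s_1) \leq 100\,{\rm C}_e\,\rL_0^{-(\omega+2)}(s_2-s_1) = \frac{\upmu_1}{4}.
\end{equation*}
Combining these two bounds, we arrive at
\begin{equation*}
\matfEL(s_2) \leq \matfEL(s_1) + \frac{\upmu_1}{2}.
\end{equation*}

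To finish, recall that $\matfEL(s_1)$ and $\matfEL(s_2)$ are both finite sums of the stationary front energies $\S_{i}$, hence belong to a fixed finite set $\mathcal{Q}$ whose minimal gap between distinct elements is, by definition, $\upmu_1$. If one had $\matfEL(s_2)>\matfEL(s_1)$, then necessarily $\matfEL(s_2)-\matfEL(s_1)\geq \upmu_1$, contradicting the inequality above; therefore $\matfEL(s_2)\leq \matfEL(s_1)$. Similarly, if $\matfEL(s_2)<\matfEL(s_1)$, the same discreteness forces $\matfEL(s_1)-\matfEL(s_2)\geq \upmu_1$, which is exactly the second assertion. No step here presents a real obstacle; the only delicate point is to verify that the global constant $\upalpha_1$ fixed once and for all in \eqref{eq:enlargealpha1} is compatible with all other lower bounds imposed on $\upalpha_1$ earlier in the paper, which is just a matter of taking a maximum.
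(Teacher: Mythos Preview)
Your proof is correct and follows exactly the approach indicated in the paper: the corollary is stated there as a direct consequence of \eqref{goupil}, \eqref{defrL_0}, \eqref{eq:enlargealpha1} and the definition of $\upmu_1$, and you have spelled out precisely this combination.
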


In the opposite direction we have:

\begin{lemma}
\label{blog}
For $s_1\leq s_2 \in [0,S]$, assume that  $\wpi(\upalpha_1\eps, s_1)$ and
$\wpi(\upalpha_1\eps,s_2)$ hold and that $\rL\geq
\rL_0 (s_1,s_2)$. Assume also that
\begin{equation}
\label{mystere}
s_2-s_1 \leq \uprho_0 \left(\frac18 \dmineL(s_1)\right)^{\omega+2}. 
\end{equation}
Then we have  $\matfEL(s_2) \geq \matfEL(s_1).$ In case of equality, we have
$J(s_1)=J(s_2)$ and 
\begin{equation}
\label{greecej}
 \upsigma_{i(k\pm \frac 12)}(s_1)=\upsigma_{i(k\pm \frac 12)} (s_{1}), \hbox{
for any }  k \in J(s_1)  \  {\rm and  \  }   \dmineL(s_2)\geq  \frac12
\dmineL(s_1).  
\end{equation}
\end{lemma}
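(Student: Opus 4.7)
The plan is to combine the front propagation estimate \eqref{theodebase1} with a topological bookkeeping argument that exploits the fact that every $\wpi$-front is a single nearest-neighbor transition between wells of $V$. First, set $r := \dmineL(s_1)/8$, so that hypothesis \eqref{mystere} reads exactly $s_2 - s_1 \leq \uprho_0 r^{\omega+2}$; then \eqref{theodebase1} yields $\mathfrak{D}_\eps(s) \subset \mathfrak{D}_\eps(s_1) + [-r, r]$ for every $s \in [s_1, s_2]$. Combined with $\wpi(\upalpha_1\eps, s_1)$ and the confinement $\mathcal{C}_{\rL,S}$, this pins every time-$s$ front point (within $\ILL$) inside one of the enlarged intervals $\tilde{I}_k := [a_k^\eps(s_1) - r - \upalpha_1\eps,\, a_k^\eps(s_1) + r + \upalpha_1\eps]$, which are pairwise disjoint because $2(r + \upalpha_1\eps) < \dmineL(s_1)$.

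The core step is then a continuity argument identifying the well the solution sits in between clusters. For any $x \in \ILL$ outside $\bigcup_k \tilde{I}_k$ and any $s \in [s_1, s_2]$, the inclusion just established gives $x \notin \mathfrak{D}_\eps(s)$, so $\vm(x, s) \in B(\upsigma_{j(x,s)}, \mu_0)$ for some index $j(x,s)$. Joint continuity of $(x, s) \mapsto \vm(x, s)$ forces $j(x,s)$ to be constant on each connected component; evaluation at $s = s_1$ via $\wpi(\upalpha_1\eps, s_1)$ identifies this constant with $\upsigma_{i(k+1/2)(s_1)}$ on the component lying between $\tilde{I}_k$ and $\tilde{I}_{k+1}$. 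At time $s_2$ one may therefore partition $J(s_2)$ into clusters $B_k := \{m \in J(s_2) : a_m^\eps(s_2) \in \tilde{I}_k\}$, and the ordered sequence of $\wpi$-fronts inside $B_k$ realizes a chain of nearest-neighbor well jumps starting at $\upsigma_{i(k-1/2)(s_1)}$ and ending at $\upsigma_{i(k+1/2)(s_1)}$.

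Since $|i(k+1/2)(s_1) - i(k-1/2)(s_1)| = 1$ (the original $s_1$-front at $a_k^\eps(s_1)$ is itself a nearest-neighbor jump by $\wpi$), the cheapest such chain costs exactly $\S_{i(k)(s_1)}$ and is realized only by a single jump; any longer chain must backtrack and thus add at least one further front's worth of energy. This gives
\begin{equation*}
\sum_{m \in B_k} \S_{i(m)(s_2)} \ \geq \ \S_{i(k)(s_1)},
\end{equation*}
with equality if and only if $|B_k| = 1$. Summing over $k \in J(s_1)$ yields $\matfEL(s_2) \geq \matfEL(s_1)$. In the equality case every $B_k$ is a singleton, so the map $k \mapsto$ (unique element of $B_k$) is a canonical bijection $J(s_1) \leftrightarrow J(s_2)$ whose paired fronts have matching adjacent wells and the same sign $\dagger_k$; moreover each corresponding front point of $s_2$ lies within $r$ of $a_k^\eps(s_1)$, so the triangle inequality gives $\dmineL(s_2) \geq \dmineL(s_1) - 2r = \tfrac34 \dmineL(s_1) \geq \tfrac12 \dmineL(s_1)$.

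The delicate point is the continuity argument in the second paragraph: one has to verify that the complement of $\bigcup_k \tilde{I}_k$ in $\ILL \times [s_1, s_2]$ is the union of connected components on each of which $\vm$ remains in a single well-neighborhood, and that $\vm$ is jointly continuous there (which follows from standard parabolic regularity for $\PGL$). The rest is a clean minimization over chains of nearest-neighbor transitions, made transparent by the scalar nature of $V$.
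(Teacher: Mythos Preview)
Your proof is correct and follows essentially the same route as the paper: apply the propagation bound \eqref{theodebase1} with $r=\dmineL(s_1)/8$ to trap the time-$s_2$ fronts inside disjoint neighborhoods of the time-$s_1$ fronts, use continuity in $(x,s)$ to freeze the well-index between clusters, and then observe that each cluster must realize a nearest-neighbor chain bridging the prescribed wells, whence the energy inequality and its equality case. One small imprecision: in the final distance estimate you should subtract $2(r+\upalpha_1\eps)$ rather than $2r$, since the front points $a_m^\eps(s_2)$ are only known to lie in $\tilde I_k$ (radius $r+\upalpha_1\eps$); this does not affect the conclusion $\dmineL(s_2)\geq \tfrac12\dmineL(s_1)$.
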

\begin{proof}  
It a consequence of the bound \eqref{theodebase1} in Theorem \ref{mainbs2} on the speed of the front set 
combined with assumption \eqref{mystere}. Indeed, this implies that for arbitrary $s \in [s_1,s_2],$ the front set 
a time $s$ is contained in a neighborhood of size $\dmineL(s_1)/8$ of the front set at time $s_1.$ In view of the 
definition of $\dmineL(s_1),$ and of the continuity in time of the solution, this implies that for all 
$k_0 \in J(s_1)$ the set 
$$
\mathcal A_{k_0}=\Big\{ k \in J(s_2) \ {\rm such \ that} \  a_k^\eps(s_{2}) \in  
 \big[a_{k_0}^\eps(s_{1})- \frac14 \dmineL(s_1),a_{k_0}^\eps(s_{1})+ \frac14
\dmineL(s_1)\big] \Big\}
$$ 
is non empty, since it must contain a front connecting $\upsigma_{i(k_0-\frac 12)}(s_{1})$ to
$\upsigma_{i(k_0+\frac 12) }(s_{1})$. 
 In particular, summing over all fronts in $\mathcal A_{k_0}$, we obtain 
\begin{equation*}
\label{somefronts}
\underset { k \in \mathcal A_{k_0}} \sum \SL_{i(k)} \geq \SL_{i(k_0)}, 
\end{equation*}
with equality if and only if $\sharp  \mathcal A_{k_0}=1.$
Summing over all indices $k_0$, we are led to the conclusion. 
\end{proof}

\renewcommand{\theequation}{\arabic{equation}}
\numberwithin{equation}{section}

\subsection{Propagating regularized fronts}
\label{propafront}
We  discuss  in this subsection the case of equality $\matfEL(s_1) =\matfEL(s_2)$.
We assume  throughout  that we are given $\udl\geq \ud>\upalpha_1\eps$ and  two
times $s_1\leq s_2 \in [\eps^\omega\rL^2,S]$  such that  
\begin{equation*}
  \label{hyphyp}
  \left\{
\begin{aligned}
  &\WPL(\ud, s_1)  \quad\text{and} \quad \WPI(\ud, s_2) \quad {\rm hold \ }\\
  & \matfEL(s_1)= \matfEL(s_2), \quad {\rm with \ }   \rL\geq \rL_0(s_1,s_2).
\end{aligned}
\right. \leqno{\mathcal C(\ud, L, s_1, s_2)}
\end{equation*}
Under that assumption, our first result shows that $\vm$ remains
well-prepared on almost the whole time interval $[s_1,s_2],$  with a smaller
$\ud$ though.

\begin{proposition}
\label{pneurose}
There exists $\upalpha_2 \geq \upalpha_1,$ depending only on $V$, $M_0$ and ${\rm C_w}$, with the following property.   
Assume that $\mathcal C(\ud, L, s_1, s_2)$ holds with $\upalpha_2\eps \leq \ud \leq \udl,$ then 
property $\WPL (\Lambda_{\rm log}(\ud),  s)$ holds for any time $s\in
[s_1+\eps^{2+\omega},  s_2]$,   
 where 
 \begin{equation}
 \Lambda_{\rm log} (\ud) =\frac{ \omega}{\uprho_{\rm w}} \eps  \left( \log \frac{ \ud}{\eps}\right).
\end{equation}
\end{proposition}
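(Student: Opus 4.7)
\medskip
\noindent\textbf{Proof plan.} The strategy combines three ingredients: the global dissipation estimate of Corollary \ref{murino0}, a mean-value argument in time which locates ``good'' times $\tilde s \in [s_1,s_2]$ at which the elliptic machinery of Section \ref{perturbed} applies, and a short-time propagation from those good times to all target times $s \in [s_1+\eps^{\omega+2}, s_2]$.

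First, apply Corollary \ref{murino0} with $\ud'=\ud$ and use the hypothesis $\matfEL(s_1)=\matfEL(s_2)$ to bound
$$\disL(s_1,s_2) \leq C_{1}M_0(\eps/\ud)^\omega + \upmu_1/4 + C_e(1+M_0)(\eps/\rL)^\omega \leq K,$$
for a constant $K$ depending only on $V$ and $M_0$ (using $\rL\geq \rL_0(s_1,s_2)$ together with \eqref{eq:enlargealpha1}). For any target time $s^* \in [s_1+\eps^{\omega+2}, s_2]$, choose a sub-interval $J \subset [s_1,s^*]$ of length $\delta_J := \eps^{\omega+2}(\ud/\eps)^\omega$. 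A mean-value argument in the spirit of Lemma \ref{meanvaluearg} yields $\tilde s \in J$ at which the source term $f:=\eps^\omega\partial_s \vm$ satisfies
$$\|f(\tilde s)\|^2_{L^2(\ILLL)} \leq K\,\eps^{-3}(\ud/\eps)^{-\omega}.$$
Inserting this bound into \eqref{eq:wpidelta} yields precisely $\delta \geq \Lambda_{\rm log}(\ud)$, so Lemma \ref{prop:wpiok} provides $\wpi(\Lambda_{\rm log}(\ud),\tilde s)$. Taking $\upalpha_2$ large enough that the three conditions of Proposition \ref{vaderetro0} are satisfied at the scale $\alpha = \Lambda_{\rm log}(\ud)/\eps$ (this is the first requirement on $\upalpha_2$), we upgrade to $\wp(\Lambda_{\rm log}(\ud),\tilde s)$.

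The final step is to transfer $\wp(\Lambda_{\rm log}(\ud),\tilde s)$ to the target $s^*$, with $|s^*-\tilde s|\leq \delta_J$. During this short interval, Theorem \ref{mainbs2} bounds the displacement of the front set by some $r\sim \eps(\ud/\eps)^{\omega/(\omega+2)}$, while Proposition \ref{estimpar} controls the behavior of $\vm$ uniformly away from the fronts. The idea is to \emph{re-center} the comparison stationary profiles $\zeta_{i(k)}^{\dagger_k}$ at $s^*$ so as to absorb this drift, and to transport the $C_\eps^1$ closeness from $\tilde s$ to $s^*$ via a Gronwall-type estimate inspired by Lemma \ref{lem:compgron}, treating $\partial_s\vm$ as an averaged-in-time $L^2$-small perturbation of the spatial ODE.

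\emph{Main obstacle.} The delicate point is the propagation step: the a priori drift $r\sim \eps(\ud/\eps)^{\omega/(\omega+2)}$ typically \emph{exceeds} the nominal tolerance $\Lambda_{\rm log}(\ud)\sim \eps\log(\ud/\eps)$, so one cannot match $\vm(s^*)$ against a \emph{fixed} translate of $\zeta^{\dagger}_{i}$ carried over from $\tilde s$. One must exploit the translation-invariance of the family $\{\zeta^{\dagger}_i(\cdot - a)\}_a$ and instead relocate the centers to the actual front positions at time $s^*$, then verify that the exponential closeness at $\tilde s$ combined with the parabolic regularization is sufficient to yield closeness with the new centers. The logarithmic slack between $\ud$ and $\Lambda_{\rm log}(\ud)$, together with the enlarged constant ${\rm C_w}$ chosen in Proposition \ref{vaderetro0}, is precisely what is needed to close the estimate.
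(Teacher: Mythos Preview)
Your proposal correctly identifies the overall ingredients (dissipation bound, elliptic machinery from Section~\ref{perturbed}), but the propagation step you sketch---transferring $\wp(\Lambda_{\rm log}(\ud),\tilde s)$ from a good time $\tilde s$ to the target time $s^*$---is a genuine gap, and you have essentially acknowledged this yourself. The drift bound $r \sim \eps(\ud/\eps)^{\omega/(\omega+2)}$ indeed exceeds $\Lambda_{\rm log}(\ud)$, and the recentering-plus-Gronwall idea you suggest is not a workable fix: Lemma~\ref{lem:compgron} is a \emph{spatial} Gronwall estimate comparing two solutions of \eqref{odepertu} at a fixed time; it does not transport $C^1_\eps$ closeness forward in time. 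There is no tool in the paper for propagating the inner matching $\wpi$ across a time interval of length $\eps^{\omega+2}(\ud/\eps)^\omega$.

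The paper avoids this propagation step entirely, via a different mechanism: a parabolic regularization estimate for $\partial_t v_\eps$ (Lemma~\ref{oasisgede}). Differentiating $\PGL$ in time, one sees that $\partial_t v_\eps$ satisfies a linear parabolic equation with zeroth-order coefficient bounded by $C\eps^{-2}$; standard local parabolic estimates on cylinders of size $\eps\times\eps^2$ then yield, at \emph{every} time $s\in[s_1+\eps^{\omega+2},s_2]$,
\[
\int_{\ItdL}\big|\partial_t v_\eps(\cdot,s\eps^{-\omega})\big|^2\,dx \;\leq\; C\eps^{-3}\,\disL[s-\eps^{\omega+2},s]\;\leq\; C\eps^{-3}\,\disL[s_1,s_2].
\]
Combined with the dissipation estimate of Corollary~\ref{murino0} (after first reducing, via Proposition~\ref{cornichon} and Corollary~\ref{avecdec}, to $s_2-s_1\leq 2\q_0(\ud/\eps,\rL/\eps)$ so that the $\rL^{-(\omega+2)}(s_2-s_1)$ contribution is also of order $(\eps/\ud)^\omega$), this gives $\|f\|_{L^2}^2 \leq C\eps^{-3}(\eps/\ud)^\omega$ \emph{directly at the target time} $s$. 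Lemma~\ref{prop:wpiok} then yields $\wpi(\Lambda_{\rm log}(\ud),s)$ without any propagation. The outer condition $\wpo$ is obtained not through Proposition~\ref{vaderetro0} but from the second inequality of Corollary~\ref{murino0}, which bounds the energy excess $\mathcal E_\eps^\rL(s)-\matfEL(s)$ by $C(\eps/\ud)^\omega$, so that the integral over $\Omega_{\rm L}$ is controlled at the scale $\Lambda_{\rm log}(\ud)$ once $\upalpha_2$ is chosen large enough.
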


The proof of Proposition \ref{pneurose}  relies on the following.

\begin{lemma}
\label{oasisgede} Assume that  $\mathcal C(\ud, L, s_1, s_2)$ holds with $ \updelta \geq
\upalpha_1\eps$. We have the estimate, for $s \in  [
s_1+ \eps^{\omega+2}, s_2]$ 
\begin{equation*}
      \int_{\ItdL} \vert   \partial_t v_\eps (x , s\eps ^{-\omega}) \vert^2 dx \leq  C\eps^{-3}\dis^{ \rL} [s, s-\eps^{\omega+2}].
     \end{equation*}
\end{lemma}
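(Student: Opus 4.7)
The natural approach is to exploit the fact that $w := \partial_t v_\eps$ satisfies a linear parabolic equation with coefficients bounded by $C\eps^{-2}$, then use a localized energy estimate coupled with a mean-value-in-time argument to upgrade the time-integrated $L^2$ control provided by the dissipation to a pointwise-in-time $L^2$ bound.

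\textbf{Step 1.} Differentiating $\PGL$ in $t$ yields
\begin{equation*}
\partial_t w - \partial_{xx} w + \eps^{-2} V''(v_\eps) w = 0.
\end{equation*}
By $(\text{A}_3)$ and $\hzero$, the function $v_\eps$ is bounded in $L^\infty(\R \times \R^+)$ by a constant depending only on $V$ and $M_0$, so $V''(v_\eps)$ is uniformly bounded by some constant $M=M(V,M_0)$.

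\textbf{Step 2.} Fix a smooth cut-off $\chi$ with $\chi \equiv 1$ on $\ItdL$, $\mathrm{supp}\,\chi \subset \IctL$, and $|\chi'| + \rL |\chi''| \leq C \rL^{-1}$. Multiplying the equation for $w$ by $\chi^2 w$, integrating over $\R$, and applying Cauchy--Schwarz to absorb the cross-term $\int \chi\chi' w w_x$, one obtains
\begin{equation*}
\frac{d}{dt} \int_\R \chi^2 w^2\, dx \leq C \eps^{-2} \int_\R \chi^2 w^2\, dx + \frac{C}{\rL^2} \int_{\IctL} w^2\, dx.
\end{equation*}
Writing $f(t) = \int \chi^2 w^2(\cdot,t)$ and $g(t) = \int_{\IctL} w^2(\cdot,t)$, this reads $f'(t) \leq C\eps^{-2} f(t) + C\rL^{-2} g(t)$.

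\textbf{Step 3.} Apply Gronwall's inequality on any sub-interval $[\tau,t]$ of real-time length at most $\eps^2$; since $Ce^{-2}\cdot \eps^2 = C$ the exponential factor is $O(1)$, and we get
\begin{equation*}
f(t) \leq C_0 f(\tau) + C_0 \rL^{-2} \int_\tau^t g(\sigma)\, d\sigma, \qquad 0\leq t-\tau \leq \eps^2.
\end{equation*}
Using the trivial bound $f \leq g$ and $\rL \geq \upalpha_1 \eps \geq \eps$, averaging the inequality over $\tau \in [t-\eps^2,t]$ gives
\begin{equation*}
f(t) \leq \frac{C}{\eps^2} \int_{t-\eps^2}^{t} g(\sigma)\, d\sigma.
\end{equation*}

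\textbf{Step 4.} Finally we translate back to the accelerated time $s = \eps^\omega t$. With the choice $t = s\eps^{-\omega}$ and $\Delta s = \eps^{\omega+2}$, the interval $[t-\eps^2,t]$ corresponds exactly to $[s-\eps^{\omega+2},s]$, and
\begin{equation*}
\int_{t-\eps^2}^{t}\!\!\int_{\IctL} |\partial_t v_\eps|^2\, dx\, d\sigma = \eps^{-1}\,\dis^{\rL}[s-\eps^{\omega+2},s],
\end{equation*}
so that $f(t) \leq C \eps^{-3}\dis^{\rL}[s-\eps^{\omega+2},s]$. Since $\chi \equiv 1$ on $\ItdL$, this is the stated inequality. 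The requirement $s \geq s_1 + \eps^{\omega+2}$ ensures that $[t-\eps^2,t]$ lies inside the admissible time range $[s_1\eps^{-\omega},s_2\eps^{-\omega}]$ where the solution exists and the dissipation is well-defined.

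\textbf{Main obstacle.} The only delicate point is the exact calibration of time scales: choosing the real-time window of length $\eps^2$ (equivalently $\Delta s = \eps^{\omega+2}$) is precisely what keeps the Gronwall factor $\exp(C\eps^{-2} \cdot \eps^2)$ bounded, and it is this calibration that explains the $\eps^{-3}$ factor in the estimate (one $\eps^{-2}$ from the $1/\eps^2$ averaging, one $\eps^{-1}$ from the conversion between real and rescaled times in $\dis^{\rL}$).
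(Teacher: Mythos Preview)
Your proof is correct and complete; the calibration of the time window and the $\eps^{-3}$ bookkeeping are exactly right. The paper, however, proceeds differently. Instead of an energy/Gronwall argument with a cutoff, it invokes a \emph{pointwise} local parabolic estimate: since $w=\partial_t v_\eps$ satisfies $|\partial_t w-\partial_{xx}w|\le C\eps^{-2}|w|$, standard interior estimates on the $\eps$-scale cylinder $[x-\eps,x+\eps]\times[t-\eps^2,t]$ give
\[
|w(y,t)|\le C\eps^{-3/2}\|w\|_{L^2([x-\eps,x+\eps]\times[t-\eps^2,t])}
\]
for $|y-x|\le\eps/2$; squaring, integrating over $[x-\eps/2,x+\eps/2]$, and using an elementary covering of $\ItdL$ by such intervals yields the same $C\eps^{-3}$ bound.

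The trade-off: the paper's route produces a genuine $L^\infty$ bound on $\partial_t v_\eps$ as an intermediate step (potentially useful elsewhere), but it appeals to local parabolic regularity as a black box. Your argument is more self-contained and elementary, relying only on integration by parts and Gronwall; it never leaves the $L^2$ framework. Both exploit the same key observation---that the natural parabolic scale for the linearized equation is $\eps$ in space and $\eps^2$ in real time---which is what forces the $\eps^{\omega+2}$ window in rescaled time and the $\eps^{-3}$ prefactor.
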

\begin{proof}[Proof of Lemma \ref{oasisgede}]
 Differentiating equation 
  $({\rm PGL}_\eps$)  with respect to time, we are led to 
  \begin{equation*}
  \label{differentiatet}
\vert   \partial_ t (\partial_t v_\eps)-\partial_{xx} (\partial_t v_\eps) \,
\vert \leq \frac{C}{\eps^2} \vert \partial_t v_\eps\vert. 
  \end{equation*}
   It follows from standard parabolic estimates, working for $x \in \ILL$  on  the cylinder $\Lambda_\eps(x)=[x-\eps, x+{\eps}]\times [t-\eps^2, t]$, where $t:=s\eps^{-\omega},$  that for any point $y \in [x-\frac{\eps}{2}, x+ \frac{\eps}{2}]$ we have
   $$\vert  \partial_t v_\eps (y, t) \vert \leq 
   C\eps^{-\frac32} \Vert \partial_t v_\eps
    \Vert_{L^2(\Lambda_\eps(x))}.
         $$
      Taking the square of the previous inequality,  and integrating over  $[x-\frac{\eps}{2}, x+ \frac{\eps}{2}]$, 
      we are led to
   $$ \int_{x-\frac{\eps}{2}}^{x+\frac{\eps}{2}} \vert   \partial_t v_\eps (y, t) \vert^2 {d} y \leq  
    C\eps^{-2}\int_{[x-2\eps, x+2 \eps] \times [t-\eps^2, t]}  \vert   \partial_t v_\eps (y, t) \vert^2 dy.
    $$
   A elementary covering argument then yields 
    \begin{equation*}
    \label{ds4mat}
   \int_{\ItdL} \vert   \partial_t v_\eps (y, t) \vert^2 dy\leq       C\eps^{-2} \Vert \partial_t v_\eps 
    \Vert_{L^2(\IctL\times [t-\eps^2, t])}^2\\
    \leq C\eps^{-3}  \dis^{ \rL} [s, s-\eps^{\omega+2}].\\
     \end{equation*}
\end{proof}
\begin{proof}[Proof of Proposition \ref{pneurose}]  
In view of Proposition \ref{cornichon}, of Corollary \ref{avecdec}, and of assumption $\mathcal{C}(\ud,\rL,s_1,s_2)$, we may assume, without loss of generality, that 
\begin{equation}\label{eq:pastropdetemps}
 s_2-s_1 \leq 2 \q_0(\ud/\eps,\rL/\eps).
\end{equation}
Let $s\in (s_1+\eps^{\omega+2}, s_2)$, and consider once more   the
map $u=\vm(\cdot, s)$,  so that $u$ is a solution to \eqref{odepertu}, with  source term 
 $f= \partial_t\v(\cdot, s\eps^{-\omega})$.
  It follows from Lemma \ref{oasisgede}, combined with the first  of Corollary \ref{murino0} on the dissipation, that
\begin{equation*}
  \label{romane}
 \Vert f \Vert_{L^2(\ItdL)}^2 
   \leq  C \eps^{-3}\left[ 
   ({\rm C_w + 2C_f})M_0  \left( \frac{\eps}{\updelta}\right)^{\omega} 
   + 100{\rm C}_e\rL^{-(\omega+2)}(s_2-s_1) + {\rm C_e} (1+M_0)\left(
\frac{\eps}{\rL}\right)^{\omega}\right].
\end{equation*}
Notice that \eqref{eq:pastropdetemps} combined with the assumption $\ud \leq \udl$ yields
$$
 100{\rm C}_e\rL^{-(\omega+2)}(s_2-s_1)\leq C \left(\frac{\eps}{\delta}\right)^\omega.
$$
We deduce  from Lemma \ref{prop:wpiok}, imposing on
$\upalpha_2$  the additional condition 
$\displaystyle{\frac{ \omega}{\uprho_{\rm w}}   ( \log  \upalpha_2) \geq \upalpha_1,}$ that 
$\wpi((\Lambda_{\rm log}(\ud),  s)$ holds. It remains to show that $\wpo(
\Lambda_{\rm log}(\ud),  s)$ holds likewise.  To that aim, we invoke 
\eqref{gouplibis} which we use with the choice $s_1=s_1$ and $s_2=s.$ This yields, taking 
once more \eqref{eq:pastropdetemps} into account,
$$ 
\matfEL(s)-\mathcal E_\eps^\rL (s)  
\leq (C+{\rm C_w}) \left(
\frac{\eps}{\ud}\right)^{\omega}.
$$
Combining this relation with \eqref{leonardo2} and the first inequality of \eqref{hereke1},
we deduce that
\begin{equation}
\int_\Omega e_\eps (\vm(s)) ds \leq 
(C+{\rm C_w}) \left(
\frac{\eps}{\ud}\right)^{\omega} + C \left(\frac{\eps}{\Lambda_{\rm log}(\ud)}\right)^{\omega} \leq {\rm C_w}M_0 
\left(\frac{\eps}{\Lambda_{\rm log}(\ud)}\right)^{\omega}, 
\end{equation}
provided $\upalpha_2$ is chosen sufficiently large.
\end{proof}

In view of \eqref{mystere} and \eqref{theodebase1}, we introduce the function
$$
\q_1(\alpha) := \left(\frac{\q_0(\alpha)}{\uprho_0}\right)^{\frac{1}{\omega+2}},
$$
which represents therefore the maximum displacement of the front set in the 
interval of time needed (at most) to find two consecutive times at which $\wpi(\alpha\eps)$ holds. 

From Proposition \ref{pneurose} and Lemma \ref{blog} we deduce

\begin{corollary}\label{parareglo3}
Let $s\in [\eps^\omega\rL^2,S]$ and $\upalpha_2 \leq \alpha \leq \udl$, and assume that $\wp(\alpha \eps, s )$ holds as well
as  $\dmineL(s)\geq 16 \q_1(\upalpha) \eps.$ Then
$\wp(\Lambda_{\rm log}(\alpha\eps), s' )$ holds for any 
  $s + \eps^{2+\omega}  \leq  s' \leq \mathcal T_0^\eps(\alpha, s)$, where
$$
\mathcal T_0^\eps(\alpha, s) = \max \left\{ s+\eps^{2+\omega} \leq s'\leq S \quad \text{s.t.}\quad \dmineL(s'')\geq 8\q_1(\alpha)\eps \quad \forall s'' \in [s+\eps^{\omega+2},s']\right\}.
$$ 
\end{corollary}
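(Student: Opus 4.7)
The proof follows the pattern of Proposition \ref{parareglo2} (of which the present statement is a natural generalization, corresponding to the choice $\alpha = \udl/\eps$) and combines Proposition \ref{pneurose}, Corollary \ref{avecdec}, Lemma \ref{blog} and the existence of nearby regularized times granted by Proposition \ref{cornichon}. Given a target $s' \in [s+\eps^{\omega+2}, \mathcal{T}_0^\eps(\alpha,s)]$, the plan is to produce a time $s_2 \geq s'$ at which $\wpi(\alpha\eps, s_2)$ holds and verify that the condition $\mathcal{C}(\alpha\eps, \rL, s, s_2)$ of Proposition \ref{pneurose} is satisfied, from which the desired $\wp(\Lambda_{\rm log}(\alpha\eps), s')$ follows immediately.

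The technical core consists in establishing the energy equality $\matfEL(s) = \matfEL(s_2)$. Using Proposition \ref{cornichon} I would construct a finite sequence $s = t_0 < t_1 < \cdots < t_N = s_2$ with $s_2 \in [s', s' + \q_0(\alpha)\eps^{\omega+2}]$ and $t_{j+1}-t_j \leq \q_0(\alpha)\eps^{\omega+2}$, such that $\wpi(\alpha\eps, t_j)$ holds at each $t_j$ (at $t_0=s$ this is implied by the stronger assumption $\wp(\alpha\eps,s)$). The lower bound $\matfEL(s_2) \geq \matfEL(s)$ then follows from iterating Lemma \ref{blog} along consecutive pairs $[t_j, t_{j+1}]$: its distance requirement $t_{j+1}-t_j \leq \uprho_0 (\dmineL(t_j)/8)^{\omega+2}$ is met because the definition of $\mathcal{T}_0^\eps(\alpha,s)$ ensures $\dmineL(t_j) \geq 8\q_1(\alpha)\eps$ for $j\geq 1$ (and $\geq 16\q_1(\alpha)\eps$ at $t_0=s$ by assumption), so that $\uprho_0(\dmineL(t_j)/8)^{\omega+2} \geq \uprho_0\q_1(\alpha)^{\omega+2}\eps^{\omega+2} = \q_0(\alpha)\eps^{\omega+2}$ by the very definition of $\q_1$. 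The reverse inequality $\matfEL(s_2) \leq \matfEL(s)$ comes from a single direct application of Corollary \ref{avecdec} on $[s, s_2]$, using $\wp(\alpha\eps, s)$ at the left endpoint and $\wpi(\alpha\eps, s_2)$ at the right.

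With $\matfEL(s) = \matfEL(s_2)$ in hand, condition $\mathcal{C}(\alpha\eps, \rL, s, s_2)$ is verified, and Proposition \ref{pneurose} yields $\wp(\Lambda_{\rm log}(\alpha\eps), s'')$ for all $s'' \in [s + \eps^{\omega+2}, s_2]$, in particular at the target $s'$. The equality case of Lemma \ref{blog}, chained along the discretization, additionally preserves $J$, the symbols $\dagger_k$ and the indices $i(k\pm\tfrac{1}{2})$ all along the way, which keeps $\dmineL$ compatible with the running hypothesis and allows the chain to be closed consistently.

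The main obstacle is not any isolated hard step but rather the careful orchestration of constants and of the admissible range of $\alpha$: one must ensure that the step length $\q_0(\alpha)\eps^{\omega+2}$ provided by Proposition \ref{cornichon} does not exceed the Lemma \ref{blog} budget $\uprho_0(\dmineL(t_j)/8)^{\omega+2}$, which is exactly why $\q_1(\alpha)$ appears in the hypotheses with the factor $8$ (and $16$ at the initial time, so that the very first step is absorbed without being tight). The requirement $\rL \geq \rL_0(s, s_2)$ of Corollary \ref{avecdec} is met because $s_2 - s \leq S$ and the confinement assumption $\mathcal{C}_{\rL, S}$ is in force. Enlarging $\upalpha_2$ once more to dominate the threshold appearing in Proposition \ref{pneurose} itself, the proof concludes exactly as that of Proposition \ref{parareglo2}.
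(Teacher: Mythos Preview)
Your overall strategy matches the paper's one-line indication (``From Proposition~\ref{pneurose} and Lemma~\ref{blog} we deduce\ldots''): produce a discretization $s=t_0<t_1<\cdots<t_N=s_2$ via Proposition~\ref{cornichon}, chain Lemma~\ref{blog} along the short steps to get $\matfEL(s_2)\geq\matfEL(s)$, establish the reverse inequality so that $\mathcal{C}(\alpha\eps,\rL,s,s_2)$ holds, and then invoke Proposition~\ref{pneurose}. The orchestration of $\q_0(\alpha)$, $\q_1(\alpha)$ and the factors $8$ and $16$ is exactly right.

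There is, however, one genuine gap. Your justification that $\rL\geq\rL_0(s,s_2)$ --- needed both for your single application of Corollary~\ref{avecdec} on the long interval $[s,s_2]$ and for the hypothesis $\mathcal{C}(\alpha\eps,\rL,s,s_2)$ of Proposition~\ref{pneurose} --- is not valid. Spelled out, this condition reads $s_2-s\leq \tfrac{\upmu_1}{400\,{\rm C}_e}\rL^{\omega+2}$, and you claim it follows from $s_2-s\leq S$ together with the confinement $\mathcal{C}_{\rL,S}$. But $\mathcal{C}_{\rL,S}$ is a purely spatial statement about $\mathfrak{D}_\eps$; it carries no relation between $S$ and $\rL^{\omega+2}$, and indeed $\mathcal{T}_0^\eps(\alpha,s)-s$ can be as large as $S$ with no control by $\rL$.

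The fix is to avoid the long interval altogether and iterate both inequalities on the short steps $[t_j,t_{j+1}]$, where $\rL\geq\rL_0(t_j,t_{j+1})$ is automatic (since $t_{j+1}-t_j\leq\q_0(\alpha)\eps^{\omega+2}\leq(\rL/\eps)\eps^{\omega+2}$ for $\alpha\leq\udl/\eps$). The delicate point is that Corollary~\ref{avecdec} requires $\wp(\upalpha_1\eps,t_j)$ at the left endpoint, not merely $\wpi(\alpha\eps,t_j)$. This is obtained inductively: once $\matfEL(t_j)=\matfEL(s)$ is known, Proposition~\ref{pneurose} on the short interval $[s,t_j]$ (or on a sliding window of bounded length) yields $\wp(\Lambda_{\rm log}(\alpha\eps),t_j)$, and since $\Lambda_{\rm log}(\alpha\eps)\geq\upalpha_1\eps$ for $\alpha\geq\upalpha_2$ this provides the needed $\wp(\upalpha_1\eps,t_j)$ for the next step. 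The paper does not spell out this bootstrap either, so your write-up is no worse than the source in this respect --- but the sentence asserting the global bound should be replaced by the stepwise argument.
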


\medskip
We complete this section presenting the

\begin{proof}[Proof of Proposition \ref{parareglo2}]  This follows directly from
Corollary \ref{parareglo3} with the choice $\alpha=\udl,$ noticing that $\Lambda_{\rm log}(\udl)=\udll.$
\end{proof}

\begin{proof}[Proof of Corollary \ref{jamboncru}]
If we assume moreover that $s_0 \geq \eps^\omega\rL^2$ and that 
$\wp(\udl,s_0)$ holds, then it is a direct consequence of  the inclusion  \eqref{theodebase1} and Proposition \ref{parareglo2}, taking into
account the assumption \eqref{dminstar}. If we assume only that $s_0\geq 0$ and that $\wp(\alpha_1\eps,s_0)$ holds, then it suffices to consider the first 
time $s_0'\geq s_0+ \eps^\omega\rL^2$ at which $\wp(\udl,s_0')$ holds and to rely on Proposition \ref{parareglo2} likewise. Indeed, since $s_0'-s_0 \leq 
\eps^\omega\rL^2+ \eps^{\omega+1}\rL$ by Proposition \ref{parareglo}, we may apply Corollary \ref{avecdec} and Lemma \ref{blog} for $s_1=s_0$ and $s_2=s_0'$, which yields $\matfEL(s_0)=\matfEL(s_0')$ and therefore also the same asymptotics for $\dmineL$ at times $s_0$ and $s_0'$. 
\end{proof}

\section{A first compactness results for the front points}
\label{compactness}
The purpose of this section is to provide the proofs of Proposition \ref{conspascites} and
Corollary \ref{lartichaut}.

\begin{proof}[Proof of Proposition \ref{conspascites}]
As mentioned, we choose the test functions (independently of time) so that they are affine near the
front points for any $s \in I^\eps(s_0)$. More precisely, for a given $k_0\in
J$  we impose the following conditions  on  the  test functions  $\chi\equiv \chi_{k_0}$
in \eqref{gratindo}:  
\begin{equation}
\label{specificitechi}
\left \{
\begin{aligned}
& \chi {\rm \ has \   compact  \ support \  in } \
[a_{k}^\eps(s_0)-\frac {1}{3} \dminstar(s_0)  , a_{k}^\eps(s_0)+\frac {1}{3} \dminstar(s_0)],\\  
&  \chi{\rm \  is \ affine \ on \ the   \ interval } \  
[a_{k}^\eps(s_0)-\frac {1}{4} \dminstar(s_0)  , a_{k}^\eps(s_0)+\frac {1}{4} \dminstar(s_0)],
{\rm \ with \ } \chi'=1 \ {\rm there} \\ 
&  \Vert \chi \Vert_{L^\infty (\R)} \leq C\dminstar(s_0),   \Vert \chi' \Vert_{L^\infty
(\R)} \leq  C  {\rm \ and \ } \Vert \chi''\Vert_{L^\infty (\R)} \leq
C{\dminstar(s_0)}^{-1}. 
\end{aligned}
\right.
\end{equation}
It follows from Corollary \ref{jamboncru} that, for $\eps$ sufficiently small,  we are in position to 
claim \eqref{gratindo} and \eqref{spain} for arbitrary $s_1$ and $s_2$ in the full interval $I^*(s_0).$ Combined with the first estimate  of Corollary \ref{murino0},
with $\updelta=\updelta'=\udll,$ this yields the conclusion \eqref{uniformita}.
\end{proof}

\begin{proof}[Proof of Corollary \ref{lartichaut}]
The family of functions $(\vm)_{0<\eps<1}$ is equi-continuous on every compact
subset of the interval $I^*(s)$, so that the conclusion follows from the 
Arzela-Ascoli theorem.  
\end{proof}

\section{Refined asymptotics off the front set}
\label{raffinage}
\subsection{Relaxations towards stationary solutions}
Throughout this section,  we assume that we are in the situation described by Corollary \ref{jamboncru},
in particular $\rL$ is fixed and $\eps$ will tend to zero.
Our main  purpose is then to provide  rigorous mathematical
statements and proofs  concerning the properties of the function $\Wmn=\Wmn^k$
defined in \eqref{Wm}, for given $k \in J$, which have been presented,
most of them in a formal way,  in Subsection \ref{raffarinage}. 
We notice first   that  we may expand $V'$ near $\upsigma\equiv
\upsigma_{i(k+\frac 12)}$  as 
\begin{equation}
\label{expansion}
 V' (\upsigma   + u)= 2 \theta \lambda  u^{ 2 \theta -1}\left( 1+ug(u)\right), 
\end{equation}
where $g$ is a some smooth function on $\R$  and where we have set for the sake of simplicity  $\lambda=\lambda_{i(k+\frac 12)}$. 
 We work on   the sets $\mathcal V_k(s_0)$ defined in \eqref{dopo} and on their
analogs at the $\eps$ level   
 \begin{equation}
 \label{vkeps}
\mathcal V_k^\eps(s_0)=\underset { s \in I^\eps(s_0) }\cup \mathcal J_\eps (s) \times \{s\}\equiv 
\underset { s \in I^\eps(s_0) }\cup \left(a_k^\eps(s)+\udll, a_{k+1}^\eps (s)-\udll \right)\times \{s\}.
\end{equation}
We will therefore work only with \emph{arbitrary} small values of $u$. Let
$u_0>0$ be sufficiently small so that $\vert ug(u) \vert \leq 1/4$ on $(-u_0, u_0)$
and $V'(\upsigma+u)$ is strictly increasing on $(-u_0,u_0),$ convex on $(0,u_0)$ and concave on $(-u_0,0).$    
For small values of $\eps$, the value of $u$ in \eqref{expansion},  in view of
\eqref{loindubord} in Lemma \ref{reset}, will not exceed $u_0$, and we may therefore 
assume for the considerations in this section that $ug(u)=u_0 g(u_0)$, if $u \geq u_0$ 
and  $-ug(u)=u_0 g(u_0)$, if $u
\leq -u_0.$  Equation $({\rm PGL})_\eps$ translates into the following equation for $ \Wm$
\begin{equation}
\label{aldehyde}
L_\eps( \Wm)\equiv \eps^{\omega}\frac{\partial \Wm}{\partial s}-\frac{\partial^2
\Wm}{\partial x^2}+ \lambda f_\eps( \Wm)=0,  
\end{equation}
where we have set
\begin{equation}
\label{feps}
\displaystyle{f_\eps(w)=2 \theta   w^{2 \theta-1}\left(1+\eps^{\frac{1}{\theta-1}}w g(\eps^{\frac{1}{\theta-1}}w)\right)}. 
\end{equation}
Notice that our assumption yield in particular
\begin{equation}
\label{labourage}
 \vert f_\eps(w) \vert  \geq \frac{3}{2} \theta \vert w \vert^{2 \theta-1}. 
\end{equation}
 The analysis of the parabolic equation \eqref{aldehyde} is the core of this
section.  As mentioned, our results express convergence to stationary solutions.
We first provide a few properties concerning  these stationary solutions:  the
first lemma     describes  stationary solutions involved in the attractive case,
whereas the second lemma is used in the repulsive case.   

\begin{lemma}
\label{specialstationary}
 Let $r>0$ and $0<\eps<1$. There exist unique solutions $\overset {\vee}
{\mathfrak u}_{\eps, r}^+ $ (resp. $\Umpes$) to 
\begin{equation*}
\left\{
\begin{aligned}
-&\frac{d\mathcal U} {dx^2}+ \lambda f_\eps (U)=0  {\rm  \  on } \ (-r,r), \\
&\mathcal U(-r)= +\infty  \  ({\rm resp.} \   \mathcal U (-r)= -\infty)  \  \,  {\rm and}  \ \   \ \mathcal U(r)=+\infty  \,  ({\rm resp.} \   \mathcal U (r)= -\infty).
\end{aligned}
\right. 
\end{equation*} 
Moreover we have, 
\begin{equation}
\label{majorana}
C^{-1} r^{-\frac{1}{\theta-1}} \leq  \ \Umpep \leq C \left(r- \vert x \vert \right)^{-\frac{1}{\theta-1}} \ {\rm and} \ 
C^{-1} r^{-\frac{1}{\theta-1}} \leq  -\Umpes \leq C \left( r- \vert x \vert\right)^{-\frac{1}{\theta-1}}, 
\end{equation}
for some constant $C>0$ depending only on $V.$
\end{lemma}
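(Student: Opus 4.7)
My plan is to exploit the classical Keller--Osserman theory for large solutions, using an explicit barrier built from the scale-invariant profiles of the pure power problem. First, I would note that the change of unknown $U\mapsto -U$ reduces the problem defining $\Umpes$ to one of exactly the same form as that defining $\Umpep,$ but with $f_\eps$ replaced by $\tilde f_\eps(w):=-f_\eps(-w),$ which by the discussion leading to \eqref{feps} and \eqref{labourage} shares the same qualitative properties (smooth, vanishing at $0,$ strictly increasing, convex and superlinear on $(0,\infty)$ with the same quantitative lower bound). It therefore suffices to treat $\Umpep,$ which I would obtain as the monotone limit as $N\to\infty$ of the solutions $U_N$ of $-U''+\lambda f_\eps(U)=0$ on $(-r,r)$ with $U(\pm r)=N.$ Existence and uniqueness of each $U_N$ follows from the standard sub/super-solution method ($0$ as subsolution, $N$ as supersolution), and the comparison principle makes $N\mapsto U_N$ non-decreasing.

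The crucial ingredient, both to pass to the limit and to obtain the upper bound in \eqref{majorana}, is the explicit barrier
\begin{equation*}
\bar U_r(x):=A\Bigl[(r-x)^{-\frac{1}{\theta-1}}+(r+x)^{-\frac{1}{\theta-1}}\Bigr].
\end{equation*}
For the pure power nonlinearity $2\theta\lambda w^{2\theta-1},$ each summand is an exact solution of the ODE on its half-interval provided $A^{2\theta-2}=[2\lambda(\theta-1)^2]^{-1},$ and the super-additivity $(a+b)^{2\theta-1}\geq a^{2\theta-1}+b^{2\theta-1}$ (valid since $2\theta-1\geq 1$) turns the sum into a supersolution. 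For the genuine nonlinearity $\lambda f_\eps,$ the correction term $\eps^{1/(\theta-1)}wg(\eps^{1/(\theta-1)}w)$ in \eqref{feps} is uniformly bounded (thanks to the extension of $g$ outside $(-u_0,u_0)$) and tends to $0$ with $\eps,$ so a slightly enlarged coefficient $A$ depending only on $V$ still gives a supersolution. Comparison then yields $U_N\leq\bar U_r$ uniformly in $N,$ and combined with standard interior elliptic regularity this provides convergence $U_N\to \Umpep$ in $C^2_{\rm loc}((-r,r))$ to a solution satisfying the right-hand inequality in \eqref{majorana}; blow-up at $\pm r$ is ensured by $\Umpep\geq U_N\geq N-1$ in a boundary neighborhood depending on $N.$

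For the lower bound I would use the ODE first integral. Since $f_\eps$ has the sign of its argument, any interior minimum value $m\leq 0$ would contradict $\Umpep{}''=\lambda f_\eps(\Umpep)\geq 0$ at that point, so $\Umpep>0$ and hence is convex on $(-r,r),$ with a unique interior minimum value $m>0$ attained at some $x_0.$ Multiplying the equation by $(\Umpep)'$ produces the conservation law $\tfrac12(U')^2=\lambda(\mathcal F_\eps(U)-\mathcal F_\eps(m))$ with $\mathcal F_\eps$ an antiderivative of $f_\eps,$ and equating the quadrature lengths to $\pm r$ forces $x_0=0$ and yields
\begin{equation*}
r=\int_m^{+\infty}\frac{du}{\sqrt{2\lambda(\mathcal F_\eps(u)-\mathcal F_\eps(m))}}.
\end{equation*}
Splitting this integral at $u=2m$ and using $\mathcal F_\eps(u)\simeq u^{2\theta}$ on $[2m,+\infty)$ together with the local expansion $\mathcal F_\eps(u)-\mathcal F_\eps(m)\simeq f_\eps(m)(u-m)\simeq m^{2\theta-1}(u-m)$ near $u=m,$ both pieces are controlled by a multiple of $m^{-(\theta-1)},$ hence $r\leq Cm^{-(\theta-1)},$ which gives $m\geq C^{-1}r^{-1/(\theta-1)}$ as required.

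Uniqueness would follow from the standard scaling trick for convex superlinear nonlinearities: given two solutions $U_1,U_2,$ the function $V_\delta:=(1+\delta)U_2$ satisfies
\begin{equation*}
-V_\delta''+\lambda f_\eps(V_\delta)=\lambda\bigl[f_\eps((1+\delta)U_2)-(1+\delta)f_\eps(U_2)\bigr]>0
\end{equation*}
for every $\delta>0,$ because $f_\eps(w)\simeq 2\theta w^{2\theta-1}$ with $2\theta-1>1$ implies $f_\eps((1+\delta)w)>(1+\delta)f_\eps(w)$ throughout the positive range (for $\eps$ small, using \eqref{labourage} and the monotonicity of $f_\eps$). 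Applying the maximum principle to $U_1-V_\delta,$ which tends to $-\infty$ at $\pm r,$ gives $U_1\leq(1+\delta)U_2$ for every $\delta>0,$ hence $U_1\leq U_2,$ and swapping the roles yields equality. The main technical obstacle I foresee is the bookkeeping required to check that both the supersolution property of $\bar U_r$ and the super-multiplicativity property of $f_\eps$ persist under the $\eps$-dependent perturbation, uniformly in the range of $r$ and $\eps$ relevant for the application: this is precisely where the uniform control on the extended function $ug(u)$ and the smallness of $\eps$ enter.
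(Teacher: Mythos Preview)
Your argument is correct and follows the same Keller--Osserman template that the paper invokes (it defers to the references of Keller and Osserman and to Lemma~\ref{veryordinary} in Appendix~A for the pure power case). Existence via monotone limits of finite-data solutions, the upper bound via the sum-of-singular-profiles barrier, and uniqueness via convexity/super-homogeneity are all treated the same way; indeed the barrier $\bar U_r$ you write down is exactly the function $\tilde U=U^*(\cdot+r)+U^*(r-\cdot)$ of Lemma~\ref{append}~i), and the super-additivity you need is precisely \eqref{superadditif}.

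The one place where your route differs is the \emph{lower} bound. You go through the first integral and the quadrature $r=\int_m^\infty du/\sqrt{2\lambda(\mathcal F_\eps(u)-\mathcal F_\eps(m))}$, then estimate the integral by splitting at $2m$. This works, but the paper's argument (Lemma~\ref{append}~ii)) is shorter: the explicit singular solution $U^*(x)=[\sqrt{2}(\theta-1)x]^{-1/(\theta-1)}$ (or its analogue for $\mathcal L_\mu$) satisfies $\mathcal L(U^*)=0$, so the shifted copy $U^*(\cdot+r)$ is a subsolution on $(-r,r)$ with finite boundary value at $x=r$ and $+\infty$ at $x=-r$; comparison with $\Umpep$ then gives directly $\Umpep(x)\geq C^{-1}(x+r)^{-1/(\theta-1)}$, and symmetrically with $U^*(r-\cdot)$. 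Taking the maximum already yields the uniform lower bound $C^{-1}r^{-1/(\theta-1)}$, without any ODE quadrature. Your approach has the advantage of identifying the minimum and the symmetry $x_0=0$ along the way, but if you only need \eqref{majorana} the comparison argument is more economical.
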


\begin{lemma}
Let $r>0$ and $0<\eps<1$ be given. There exists a unique solution $\overset {\rhd} {\mathfrak u}_{\eps, r} $ to
\begin{equation*}
-\frac{d\mathcal U} {dx^2}+\lambda f_\eps (U)=0  {\rm  \  on } \ (-r,r), \ \, 
\mathcal U(-r)= -\infty  \   {\rm and} \ \   \ \mathcal U(r)=+\infty.  
\end{equation*} 
\end{lemma}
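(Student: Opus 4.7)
The argument parallels the one for the symmetric case in Lemma \ref{specialstationary}, the main difference being that the solution now has to cross the ``potential valley'' at $u=0$ rather than bounce back from it. The starting point is the conservation law obtained by multiplying the ODE by $\mathcal{U}'$: any classical solution satisfies
$$
\tfrac{1}{2}(\mathcal{U}')^2 - \lambda F_\eps(\mathcal{U}) = E, \qquad F_\eps(u) := \int_0^u f_\eps(s)\,ds.
$$
By the lower bound \eqref{labourage} together with the cut-off convention imposed on $g$ when $|u|\geq u_0$, the primitive $F_\eps$ is nonnegative, vanishes only at the origin, and grows like $|u|^{2\theta}$ at infinity.

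I would first establish that any solution satisfying the prescribed boundary conditions is strictly increasing on $(-r,r)$. At an interior local maximum $x^\star$, the equation $\mathcal{U}'' = \lambda f_\eps(\mathcal{U})$ combined with $\mathcal{U}''(x^\star)\leq 0$ forces $\mathcal{U}(x^\star)\leq 0$; likewise at a local minimum it forces $\mathcal{U}(x^\star)\geq 0$. A non-monotone solution going from $-\infty$ to $+\infty$ would have to contain a local maximum followed by a local minimum with maximum value no smaller than the minimum value, which together with the preceding sign constraints yields either a contradiction or, via Cauchy-Lipschitz, a vanishing of $\mathcal{U}$, again incompatible with the boundary conditions. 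Consequently $\mathcal{U}$ is strictly increasing, passes through $0$ at a unique point $x_0\in(-r,r)$, and the energy $E=\tfrac12 \mathcal{U}'(x_0)^2$ is strictly positive (else $\mathcal{U}\equiv 0$).

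Next I would reduce the problem to a scalar equation in $E$ via a time-of-flight analysis. Writing $\mathcal{U}' = \sqrt{2E+2\lambda F_\eps(\mathcal{U})}$ and integrating gives
$$
x - x_0 = \int_0^{\mathcal{U}(x)} \frac{du}{\sqrt{2E+2\lambda F_\eps(u)}}.
$$
Setting $T_\pm(E) := \int_0^{\pm\infty} \frac{du}{\sqrt{2E+2\lambda F_\eps(u)}}$, the boundary conditions at $\pm r$ translate respectively into $T_-(E) = r+x_0$ and $T_+(E) = r-x_0$, so that $\mathcal T(E) := T_+(E)+T_-(E) = 2r$. For every $E>0$ both integrals are finite: the integrand is uniformly bounded by $1/\sqrt{2E}$ near $u=0$, and the coercivity $F_\eps(u)\gtrsim |u|^{2\theta}$ together with $\theta>1$ ensures integrability at $\pm\infty$. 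Dominated convergence yields that $\mathcal T$ is continuous and strictly decreasing in $E$, with $\mathcal T(E)\to 0$ as $E\to +\infty$ and $\mathcal T(E)\to +\infty$ as $E\to 0^+$; the latter is where the assumption $\theta>1$ is crucial, since near $u=0$ the integrand is of order $|u|^{-\theta}$ which fails to be integrable in the limit $E=0$.

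The intermediate value theorem then delivers a unique $E>0$ with $\mathcal T(E)=2r$, after which $x_0 := T_-(E) - r \in (-r,r)$ is uniquely determined. Inverting the map $\Phi(u):=\int_0^u (2E+2\lambda F_\eps(v))^{-1/2}\,dv$ and setting $\Umpeps(x):=\Phi^{-1}(x-x_0)$ produces a smooth solution on $(-r,r)$ satisfying the ODE and the prescribed limits at $\pm r$. Uniqueness follows from the monotonicity step together with the uniqueness of $E$ and of $x_0$. I expect the main technical point to be the strict monotonicity and the limit behavior of $\mathcal T$ at the endpoints of $(0,+\infty)$, for which the uniform-in-$\eps$ coercivity and sign properties of $F_\eps$ secured by the modification of $g$ outside $(-u_0,u_0)$ are essential.
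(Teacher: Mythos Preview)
Your argument is correct. The monotonicity step could be phrased more crisply via the first integral itself (any critical point would force $E=-\lambda F_\eps(\mathcal U(x^\star))\le 0$, while the zero crossing forces $E>0$), but the idea you give is sound, and the time-of-flight analysis goes through exactly as you describe thanks to the two-sided bound $\tfrac34|u|^{2\theta}\le F_\eps(u)\le\tfrac54|u|^{2\theta}$ which follows from \eqref{labourage} and the cut-off on $g$.

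The paper does not give a standalone proof of this lemma: it invokes the Keller--Osserman framework and refers to the pure-power analogue in Lemma~\ref{veryordinary}. That route is variational/comparison-based: one solves the problem with finite boundary data $\pm n$ by minimising a convex energy, uses monotonicity in $n$ together with explicit barriers (built from $U^*(x)=[\sqrt2(\theta-1)x]^{-1/(\theta-1)}$) to pass to the limit, and appeals to the convexity/concavity of $f_\eps$ on $\R^\pm$ for uniqueness. Your phase-plane approach is genuinely different: it trades the comparison principle for the first integral and a shooting argument in the energy level $E$. What it buys is a completely self-contained ODE proof that does not need the convexity/concavity hypothesis on $f_\eps$ for uniqueness (strict monotonicity of $\mathcal T$ suffices), and it yields as a by-product the explicit identification $E=\Xi_\eps(\Umpeps)>0$ used later in \eqref{variance2}. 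The paper's approach, on the other hand, is more robust in that it does not rely on the one-dimensional conservation law and would adapt to nonautonomous perturbations, whereas yours is tied to the Hamiltonian structure.
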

These and related results are standard and have been considered since the works of Keller \cite{Keller} and Osserman \cite{Osserman} in the fifties, at least regarding existence. The convexity/concavity assumptions are sufficient for uniqueness. We refer to Lemma \ref{veryordinary} in the Appendix for a short discussion of the case a a pure power nonlinearity. 

We set
$\displaystyle{  r^\eps(s)=r^\eps_{ k+\frac 12} (s)= \frac{1}{2} (a_{k+1}^\eps (s)-a_k^\eps(s))}$.
Our aim is to provide sufficiently accurate expansions of $\Wm$ and the renormalized
discrepancy $\eps^{-\omega} \xi_\eps$  on neighborhoods of the points
$a_{k+\frac 12 }^\eps (s)$, for instance the intervals 
\begin{equation}
\label{jike}
\tetakeps(s)=a_{k+\frac 12}^\eps (s)+ [ -\frac{7}{8}r^\eps(s),  \frac{7}{8}r^\eps(s)]=
[ a_k^\eps(s)+ \frac{1}{8}r^\eps(s), a_{k+1}^\eps(s)-\frac{1}{8}\re(s)].
\end{equation}
We first turn to the  the attractive case   $\dagger_k=-\dagger_{k+1}$. We may assume additionally that 
\begin{equation}
\label{attractplus0}
k \in \{1, \cdots, \ell-1\} {\rm \ and \ } \dagger_k=-\dagger_{k+1}=1, 
\end{equation}
 the  case $\dagger_k=-\dagger_{k+1}=-1$  being handled similarly. 

\begin{proposition} 
\label{aplus}
If \eqref{attractplus0} hold and $\eps$ is sufficiently small, then for any $s\in I^\eps(s_0)$ and every 
$x \in \tetakeps(s)$ we have the estimate 
\begin{equation}
\label{gloria}
\vert \Wm (x,s)-\lambda^{-\frac{1}{2 (\theta-1)} }
\overset {\vee} {\mathfrak u}_{ r^\eps(s) }^+(x)\vert  \leq  C \eps^{\min(\frac{1}{\omega+2}, \frac{\omega-1}{2(\theta-1)})}.
\end{equation}
\end{proposition}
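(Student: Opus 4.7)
The plan is a parabolic comparison argument on the moving cylinder $\mathcal V_k^\eps(s_0)$. Indeed, $\Wm$ solves $L_\eps(\Wm)=0$ from \eqref{aldehyde}, whereas the candidate limit profile
$$W_\eps^\star(x,s) := \lambda^{-\frac{1}{2(\theta-1)}}\,\overset{\vee}{\mathfrak u}^+_{r^\eps(s)}\bigl(x - a_{k+\frac 12}^\eps(s)\bigr)$$
is built from the clean stationary solution (solving $-\mathcal U''+2\theta \mathcal U^{2\theta-1}=0$, obtained by scaling $\Ump$) and is therefore only an \emph{approximate} solution of the $\eps$-dependent stationary equation $-\mathcal U''+\lambda f_\eps(\mathcal U)=0$. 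I plan to sandwich $\Wm$ between two barriers of the form
$$W^\pm_\eps(x,s) := \lambda^{-\frac{1}{2(\theta-1)}}\,\overset{\vee}{\mathfrak u}^+_{r^\eps(s)\mp \sigma_\eps}\bigl(x - a_{k+\frac 12}^\eps(s)\bigr) \pm \mu_\eps,$$
where $\sigma_\eps,\mu_\eps>0$ are small parameters to be tuned. The shift $\sigma_\eps$ in the radius controls the lateral boundary matching, while the additive constant $\mu_\eps$ absorbs both the parabolic drift and the asymmetric correction to $f_\eps$ visible in \eqref{feps}.

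To verify $L_\eps(W^+_\eps)\geq 0 \geq L_\eps(W^-_\eps)$, I observe that the $-\partial_{xx}$ contribution essentially matches the leading part of $\lambda f_\eps$ since $\overset{\vee}{\mathfrak u}^+_r$ solves the clean elliptic equation. The remaining pieces are: (i) the time derivative $\eps^\omega\partial_s$, controlled by Proposition \ref{conspascites} (which bounds $|\dot a_k^\eps|$ and $|\dot r^\eps|$) together with the $r$-dependence estimates implicit in \eqref{majorana}; (ii) the $O(\eps^{1/(\theta-1)})$ asymmetric correction to $f_\eps$ coming from \eqref{feps}; and (iii) the strictly signed term generated by the shift $\pm\mu_\eps$, which by strict monotonicity of $f_\eps$ and \eqref{labourage} is of size $\gtrsim \mu_\eps\,(\overset{\vee}{\mathfrak u}^+)^{2\theta-2}$. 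Suitable choices of $\sigma_\eps$ and $\mu_\eps$ make (iii) dominate (i) and (ii). For the lateral boundary $x - a_{k+\frac 12}^\eps(s) = \pm(r^\eps(s)-\udll)$, the barriers blow up like $\sigma_\eps^{-1/(\theta-1)}$, whereas Lemma \ref{reset} controls $|\Wm|$ by $C\,\udll^{-1/(\theta-1)}$; taking $\sigma_\eps\gg \udll$, and using the sign information \eqref{presdubord} (compatible with the barrier's sign in the attractive case $\dagger_k=-\dagger_{k+1}=1$ up to an $O(\eps^{1/(\theta-1)})$ correction absorbed in $\mu_\eps$), we obtain $W^-_\eps \leq \Wm \leq W^+_\eps$ on the lateral boundary. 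At the initial time, well-preparedness $\wp(\udl,s_0)$ together with Corollary \ref{jamboncru} ensures the sandwich on $\{s=s_0\}$, since $\Wm(\cdot,s_0)$ is then exponentially small off the front set, which is dominated by $\mu_\eps$. The parabolic comparison principle for $L_\eps$ propagates the sandwich throughout $\mathcal V_k^\eps(s_0)$.

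Finally, on $\tetakeps(s)$ the distance to the blowup endpoints is at least $r^\eps(s)/8$, so the $r$-dependence of $\overset{\vee}{\mathfrak u}^+_r$ is Lipschitz with a controlled constant in that region, yielding $|W^\pm_\eps - W_\eps^\star| \leq C(\sigma_\eps + \mu_\eps)$ and hence $|\Wm - W_\eps^\star| \leq C(\sigma_\eps + \mu_\eps)$. The main obstacle is precisely the balancing of the parameters: $\sigma_\eps$ must exceed $\udll$ for the boundary matching and must keep the parabolic inequality strict, while $\mu_\eps$ must dominate both the parabolic drift (of order roughly $\eps^\omega \sigma_\eps^{-(\omega+1)}$) and the $\eps^{1/(\theta-1)}$ asymmetric correction. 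Optimizing these two competing constraints produces the exponent $\min\bigl(\tfrac{1}{\omega+2}, \tfrac{\omega-1}{2(\theta-1)}\bigr)$ appearing in \eqref{gloria}: the first term arises from the parabolic drift balance and the second from the asymmetric correction in $f_\eps$.
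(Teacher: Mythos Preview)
Your proposal has two genuine gaps. First, Proposition~\ref{conspascites} does \emph{not} bound $|\dot a_k^\eps|$ or $|\dot r^\eps|$: it only bounds the displacement $|a_k^\eps(s_1)-a_k^\eps(s_2)|$ by $C\dminstar(s_0)^{-(\omega+1)}|s_2-s_1|$ \emph{plus} an additive error of order $(\llogepsL)^{-\omega}$ that does not vanish as $s_2\to s_1$. No uniform-in-$\eps$ pointwise bound on $\dot a_k^\eps$ is available in the paper, so the ``parabolic drift'' term $\eps^\omega\partial_s W^\pm_\eps$ coming from the $s$-dependence of $r^\eps(s)$ and $a_{k+\frac12}^\eps(s)$ in your moving barrier is uncontrolled. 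Second, your initial-time matching is wrong: $\Wm(\cdot,s_0)$ is \emph{not} exponentially small off the front set. By construction $\Wm=\eps^{-1/(\theta-1)}(\vm-\upsigma_{i(k+\frac12)})$, and \eqref{loindubord} only gives $|\Wm|\leq C\,d(x,s)^{-1/(\theta-1)}$, which is of order one between the fronts. Well-preparedness controls the energy in $\Omega_{\rL}$, not $\Wm$ pointwise, so a small additive $\mu_\eps$ cannot force the sandwich at $s=s_0$.

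The paper avoids both issues by a different mechanism. It works on the \emph{short} cylinder $[s-\eps,s]$ with center and radius \emph{frozen} at time $s$, so the comparison profiles $\Umpepint$, $\Umpepext$ are genuinely stationary and there is no drift term. The bottom of the cylinder is handled not by smallness of $\Wm$ but by adding the ODE solution $\mathcal W_\eps^\pm(\tau-(s-\eps))$ of \eqref{aldeode}, which starts at $\pm\infty$ and therefore dominates $\Wm$ at $\tau=s-\eps$ regardless of its value; superadditivity \eqref{superadditif} keeps the sum a supersolution, giving the upper bound (Proposition~\ref{tonio}) and the crude lower bound (Lemma~\ref{positif}). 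For the sharp lower bound the paper adds a nonpositive heat solution $\Phi_\eps$ to $\Umpepext$; by Lemma~\ref{heatflow} this stays a subsolution, and $\Phi_\eps$ decays exponentially on the accelerated scale $\eps^{1-\omega}$ (Lemma~\ref{linearpara}), yielding Proposition~\ref{capoue}. The two exponents in \eqref{gloria} then arise respectively from the radius shift $h_\eps\sim\eps^{1/(\omega+2)}$ (via \eqref{deriveur} and Proposition~\ref{catamaran}) and from the relaxation rate \eqref{rerelaxlax} of $\mathcal W_\eps^\pm$, not from the balance you describe.
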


The repulsive case corresponds to $\dagger_k=\dagger_{k+1}$ and  we may assume as above that
\begin{equation}
\label{attractmoins2}
k \in \{1, \cdots, \ell-1\} {\rm \ and \ }  \dagger_k=\dagger_{k+1}=1. 
\end{equation}
\begin{proposition}
\label{amoins}
If \eqref{attractmoins2} hold and $\eps$ is sufficiently small, then for any $s\in I^\eps(s_0)$ and every 
$x \in \tetakeps(s)$ we have the estimate 
\begin{equation}
\label{gloria2}
\vert \Wm (x,s)- \lambda^{-\frac{1}{2 (\theta-1)} }\overset {\rhd} {\mathfrak u}_{ r^\eps(s) }(x)\vert  \leq 
 C \eps^{\min(\frac{1}{\omega+2}, \frac{\omega-1}{2(\theta-1)})}.
\end{equation}
\end{proposition}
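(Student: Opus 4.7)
The plan is to follow, mutatis mutandis, the strategy developed for Proposition \ref{aplus}, replacing the symmetric attractive barriers $\Umpep$ by the repulsive barriers $\Umpeps$ which carry oppositely signed infinities at the endpoints. Recall that $\Wm$ solves the parabolic equation $L_\eps(\Wm)=0$ on $\mathcal{V}_k^\eps(s_0)$ with $L_\eps$ as in \eqref{aldehyde}, and that by Lemma \ref{reset} the uniform pointwise bound \eqref{loindubord} holds together with the signed boundary blow-up \eqref{presdubord}. Under assumption \eqref{attractmoins2} one has $\dagger_k=\dagger_{k+1}=+1$, so that \eqref{presdubord} places $\Wm$ as \emph{negative and large} at $x=a_k^\eps(s)+\udll$ and as \emph{positive and large} at $x=a_{k+1}^\eps(s)-\udll$. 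This is exactly the blow-up configuration of $\Umpeps$ with $r=r^\eps(s)$, centered at $a_{k+\frac12}^\eps(s)$.

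First I would construct sub- and super-solutions by shifting the parameter $r$, setting
$$\mathcal{U}_\eps^\pm(x,s)\;=\;\lambda^{-\frac{1}{2(\theta-1)}}\,\overset{\rhd}{\mathfrak u}_{\eps,\,r^\eps(s)\mp\sigma_\eps}\!\bigl(x-a_{k+\frac12}^\eps(s)\bigr),$$
where $\sigma_\eps>0$ is a small shift to be tuned. A direct computation of $L_\eps(\mathcal{U}_\eps^\pm)$ generates two error contributions. The spatial one comes from the discrepancy between $f_\eps(w)$ and the pure power $2\theta w^{2\theta-1}$: by \eqref{feps} it is of size $O\bigl(\eps^{1/(\theta-1)}|\mathcal{U}_\eps^\pm|^{2\theta}\bigr)$, which near the boundary grows like $\eps^{1/(\theta-1)}\bigl(r^\eps(s)-|x-a_{k+\frac12}^\eps(s)|\bigr)^{-2\theta/(\theta-1)}$. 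The parabolic error $\eps^\omega\partial_s\mathcal{U}_\eps^\pm$ comes from the time dependence of $a_{k+\frac12}^\eps(s)$ and $r^\eps(s)$, whose velocities are controlled by $C\dminstar(s_0)^{-(\omega+1)}$ through Proposition \ref{conspascites}. Using the boundary asymptotics of $\Umpeps$ analogous to \eqref{majorana}, one verifies that the choice $\sigma_\eps=C\eps^\alpha$ with $\alpha=\min\bigl(\tfrac{1}{\omega+2},\tfrac{\omega-1}{2(\theta-1)}\bigr)$ makes $L_\eps\mathcal{U}_\eps^+\geq 0$ and $L_\eps\mathcal{U}_\eps^-\leq 0$ throughout $\mathcal{V}_k^\eps(s_0)$.

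Next I would verify the comparison on the parabolic boundary of $\mathcal{V}_k^\eps(s_0)$. At the lateral points $x=a_k^\eps(s)+\udll$ and $x=a_{k+1}^\eps(s)-\udll$, both $\Wm$ and the barriers $\mathcal{U}_\eps^\pm$ blow up at rate $(\udll)^{-1/(\theta-1)}$ with matching signs; since $\sigma_\eps\ll\udll$, estimate \eqref{presdubord} places the barriers on the correct side of $\Wm$. At the initial time $s=s_0+2\rL^2\eps^\omega$, the uniform pointwise control \eqref{loindubord} combined with \eqref{presdubord} gives the same ordering on the full fiber $\mathcal{J}_\eps(s)$. Because $f_\eps$ is non-decreasing on the relevant range (by the choice of $u_0$ ensuring strict monotonicity of $V'$ near $\upsigma$), the parabolic comparison principle for $L_\eps$ propagates the ordering to all $s\in I^\eps(s_0)$, yielding $\mathcal{U}_\eps^-(x,s)\leq\Wm(x,s)\leq\mathcal{U}_\eps^+(x,s)$ on $\tetakeps(s)$. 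Finally, the continuous and monotone dependence of $\Umpeps$ on its parameter (through the scaling $\overset{\rhd}{\mathfrak u}_{\eps,r}(x)=r^{-1/(\theta-1)}\overset{\rhd}{\mathfrak u}_{\eps/r,1}(x/r)$ and smoothness of the limiting profile $\Umps$) lets one collapse both barriers onto $\lambda^{-1/(2(\theta-1))}\,\overset{\rhd}{\mathfrak u}_{\eps,r^\eps(s)}$ with an additional error of order $\sigma_\eps$, absorbed into the rate \eqref{gloria2}.

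The main obstacle is the supersolution verification, where one must simultaneously dominate the singular spatial perturbation (of size $\eps^{1/(\theta-1)}$ multiplied by a $(r^\eps-|y|)^{-2\theta/(\theta-1)}$ factor near the boundary) and the parabolic error ($\eps^\omega/\dminstar(s_0)^{\omega+1}$ times the $r$-derivative of $\Umpeps$). The balance between these two competing contributions fixes the exponent $\alpha$ and explains the appearance of $\min\bigl(\tfrac{1}{\omega+2},\tfrac{\omega-1}{2(\theta-1)}\bigr)$ in the final bound, exactly as in the attractive case. The lack of reflection symmetry of $\Umpeps$, compared to $\Umpep$, introduces no essential new difficulty, since the boundary asymptotics are identical on both sides up to sign and the comparison principle does not require symmetric barriers; the remaining steps transport almost verbatim from the proof of Proposition \ref{aplus}.
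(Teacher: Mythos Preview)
Your sketch has a genuine gap that the paper's argument is specifically designed to overcome. The repulsive profile $\overset{\rhd}{\mathfrak u}_{\eps,r}$ carries \emph{opposite} infinities at its two endpoints, so a symmetric radius shift $r\mapsto r^\eps(s)\mp\sigma_\eps$ cannot yield an upper barrier: at the left end your $\mathcal U_\eps^+$ tends to $-\infty$ and therefore cannot dominate $\Wm$ on that portion of the parabolic boundary, so the comparison principle does not apply. Your closing remark that the lack of reflection symmetry ``introduces no essential new difficulty'' is exactly where the argument fails. The paper handles this by an \emph{asymmetric translation}: it sets $\mathfrak U_\eps(\cdot)=\overset{\rhd}{\mathfrak u}_{\eps,r^\eps(s)}(\cdot+2h_\eps)$, so that the $+\infty$ singularity lands at $r^\eps_{\rm int}(s)$ (strictly inside the front interval) while the $-\infty$ singularity is pushed to $-r^\eps_{\rm ext}(s)$, and then runs comparison on the one-sided domain $\Pi^{\rm trans}_\eps(s)=\bigcup_\tau (a_k(\tau)+\udll,\,r^\eps_{\rm int}(s))\times\{\tau\}$, where the right lateral boundary is controlled by $+\infty$ and the left one by \eqref{presdubord}. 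The matching lower bound requires a further device: since $f_\eps$ is not odd, the paper passes to $\tilde{\mathfrak W}_\eps(x,\tau)=-\Wm(-x,\tau)$, which solves a modified equation $\tilde L_\eps(\tilde{\mathfrak W}_\eps)=0$, and repeats the construction (Proposition~\ref{taskforce}).

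There is a second independent problem. Your barriers depend on time through $r^\eps(s)$ and $a^\eps_{k+\frac12}(s)$, and you appeal to Proposition~\ref{conspascites} for their velocities; but that result only gives an \emph{integrated} displacement bound with an additive $o_\eps(1)$ term that does not vanish as $s_2\to s_1$, hence no pointwise control of $\partial_s a_k^\eps$. Even granting differentiability, the shift $\sigma_\eps$ creates no room in the equation: $\overset{\rhd}{\mathfrak u}_{\eps,r}$ solves $-U_{xx}+\lambda f_\eps(U)=0$ exactly for every $r$, so $L_\eps(\mathcal U_\eps^\pm)=\eps^\omega\partial_s\mathcal U_\eps^\pm$ has no definite sign. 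The paper avoids this entirely by freezing the spatial barrier on the short cylinder $[s-\eps,s]$ and adding to it a nonnegative solution $\Psi_\eps$ of the linear heat equation $\eps^\omega\partial_s\Psi-\Psi_{xx}=0$ whose initial data is the positive part of the defect (Lemma~\ref{heatflow}); the point is that on this accelerated scale $\Psi_\eps$ decays like $\exp(-c\,\eps^{1-\omega})$, which annihilates the $O(h_\eps^{-1/(\theta-1)})$ initial error and produces the rate in \eqref{gloria2}.
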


Combining these results with parabolic estimates, we obtain  estimates for the discrepancy.

\begin{proposition}
\label{vraiestime}
If $\eps$ is sufficiently small, then for any $s\in I^\eps(s_0)$ and every 
$x \in \tetakeps(s)$ we have the estimate 
\begin{equation}
\label{vraiedevraie}
\vert \eps^{-\omega}\xi_\eps(\vm)-\lambda_{i(k+\frac 12)}^{-\frac{1}{2( \theta-1)}}r^\eps (s)^{-(\omega+1)}\gamma_{k+\frac 12}\vert 
  \leq C\, \eps^\frac{1}{\theta^2},
\end{equation}
where 
\begin{equation}
 \label{choix}
 \left\{
 \begin{aligned}\gamma_{k+\frac 12}&= A_\theta \ {\rm if \ } \  \dagger_k=-\dagger_{k+1} \\
 \gamma_{k+\frac 12}&= B_\theta \ {\rm if \ } \   \dagger_k=\dagger_{k+1}. 
 \end{aligned}
 \right.
 \end{equation}
\end{proposition}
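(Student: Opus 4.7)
The plan is to convert the $L^\infty$ expansion of $\Wm$ from Propositions \ref{aplus} and \ref{amoins} into a pointwise expansion of $\eps^{-\omega}\xi_\eps(\vm)$, exploiting the fact that the discrepancy of any one-dimensional stationary solution is constant in space and can therefore be computed at the distinguished midpoint $a_{k+\frac 12}^\eps(s) := \tfrac12(a_k^\eps(s)+a_{k+1}^\eps(s))$, at which the profiles $\Umpep$ (attractive case) or $\Umpeps$ (repulsive case) are extremal by symmetry. The first step is to translate $\xi_\eps(\vm)$ into an expression in $\Wm$ and $\partial_x\Wm$: writing $\vm = \upsigma_{i(k+\frac 12)} + \eps^{\frac{1}{\theta-1}}\Wm$, using $\eps(\partial_x\vm)^2 = \eps^{\omega}(\partial_x\Wm)^2$ and integrating the expansion \eqref{expansion} of $V$ near the well, a direct computation gives
\[
\eps^{-\omega}\xi_\eps(\vm) = \tfrac12(\partial_x \Wm)^2 - \lambda F_\eps(\Wm) + O\bigl(\eps^{\frac{1}{\theta-1}}|\Wm|^{2\theta+1}\bigr),
\]
where $F_\eps$ is an antiderivative of $f_\eps$ with $F_\eps(w)=w^{2\theta}(1+O(\eps^{\frac{1}{\theta-1}}w))$. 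On $\tetakeps(s)$ the function $|\Wm|$ stays uniformly bounded by Lemma \ref{reset}, so the remainder is of order $\eps^{\frac{1}{\theta-1}}$.

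The second step is to upgrade the $L^\infty$ convergence \eqref{gloria}/\eqref{gloria2} to a pointwise control of $\partial_x\Wm$. Setting $\mathcal{U} = \Umpep$ (resp. $\Umpeps$) and $D_\eps := \Wm - \lambda^{-\frac{1}{2(\theta-1)}}\mathcal{U}_{\re(s)}$, subtracting the stationary equation of Lemma \ref{specialstationary} from \eqref{aldehyde} shows that $D_\eps$ satisfies a linear parabolic equation whose forcing consists of the slow time-variation of $\re(s)$ (bounded via Corollary \ref{lartichaut}) and of the higher-order correction in $f_\eps$. Interior parabolic Schauder estimates applied on a slightly smaller interval than $\tetakeps(s)$, combined with the $L^\infty$ bound on $D_\eps$ from Propositions \ref{aplus}/\ref{amoins} and the $C^2$ control of $\mathcal{U}_{\re(s)}$ away from the blow-up points $\pm \re(s)$ provided by \eqref{majorana}, yield an estimate of the form
\[
\sup_{\tetakeps(s)}\bigl|\partial_x \Wm(\cdot,s) - \lambda^{-\frac{1}{2(\theta-1)}}\partial_x \mathcal{U}_{\re(s)}\bigr| \leq C\eps^{\beta}
\]
for some positive exponent $\beta$ inherited from \eqref{gloria}/\eqref{gloria2}.

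The key conservation property is that, for any solution of $-\mathcal{U}'' + \lambda f_\eps(\mathcal{U}) = 0$, multiplying by $\mathcal{U}'$ shows that $\tfrac12(\mathcal{U}')^2 - \lambda F_\eps(\mathcal{U})$ is constant in $x$. In the attractive case the symmetry of $\Umpep$ forces $(\Umpep)'$ to vanish at $a_{k+\frac 12}^\eps$, while in the repulsive case the antisymmetry of $\Umpeps$ forces $\Umpeps$ itself to vanish there; in both cases the value of the conserved quantity is read off at the midpoint. The scaling relation of $\mathcal{U}_{\re(s)}$ to the canonical profiles $\Ump,\Umps$ on $(-1,1)$, which in the limit $\eps\to 0$ takes the form $\mathcal{U}_{\re(s)}(x) \sim \re(s)^{-\frac{1}{\theta-1}}\lambda^{\frac{1}{2(\theta-1)}}\mathcal{U}_1\bigl((x-a_{k+\frac 12}^\eps)/\re(s)\bigr)$, then identifies this constant as $\mp \re(s)^{-(\omega+1)}\gamma_{k+\frac 12}$ times the appropriate power of $\lambda$, after collecting exponents. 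Combining this exact constancy with the $C^1$ approximation of the previous step proves \eqref{vraiedevraie} at the midpoint; to propagate it to all of $\tetakeps(s)$, I would use the pointwise identity $\partial_x \xi_\eps(\vm) = \eps(\partial_x \vm)(\partial_t\vm)$ valid for solutions of $(\text{PGL})_\eps$, integrate in $x$, and bound the resulting integral via the dissipation estimate of Lemma \ref{oasisgede} together with the pointwise bound on $\partial_x \vm$ from Step 2.

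The main technical difficulty lies in the calibration of the final exponent $\eps^{1/\theta^2}$: the natural loss incurred in upgrading from an $L^\infty$ to a $C^1$ estimate in a parabolic problem with $\eps$-dependent stiffness, combined with the mismatch between the $\eps$-dependent nonlinearity $f_\eps$ and its pure-power limit, are the two quantitative sources responsible for this precise exponent, and their balance must be tracked carefully through each step of the argument.
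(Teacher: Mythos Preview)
Your overall strategy coincides with the paper's: upgrade the $L^\infty$ bounds of Propositions \ref{aplus}/\ref{amoins} to a $C^1$-type bound via parabolic regularity, then use that the discrepancy of the stationary comparison profile is constant in $x$ and equals the scaled value \eqref{variance2}. Two points deserve correction.

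\medskip
\noindent\textbf{Step 2 as written has a gap.} By subtracting the \emph{time-dependent} profile $\lambda^{-\frac{1}{2(\theta-1)}}\mathcal U_{\re(s)}$ you introduce into the forcing the term $\dot r^\eps(s)\,\partial_r\mathcal U_r$, and you justify its smallness via Corollary \ref{lartichaut}. That corollary is a \emph{compactness} statement for a subsequence of the limit trajectories; it gives no quantitative bound on $\dot r^\eps(s)$ at the $\eps$ level, and in fact $r^\eps(\cdot)$ is not even known to be differentiable at this stage (Proposition \ref{conspascites} carries non-vanishing error terms). The paper avoids this by \emph{freezing} the profile at the target time $s$: it applies the interpolation inequality of Lemma \ref{farbreton0},
\[
\|u_x\|_{L^\infty(\Lambda^{1/2}_\varrho)}^2 \leq C\bigl(\|u_t-u_{xx}\|_{L^\infty(\Lambda_\varrho)}\,\|u\|_{L^\infty(\Lambda_\varrho)}+\varrho^{-2}\|u\|_{L^\infty(\Lambda_\varrho)}^2\bigr),
\]
to $u=U-U_\eps$ with $U(y,\tau)=\Wm(y+x,\eps^\omega\tau+s)$ and $U_\eps(y,\tau)=\Ump_{\re(s)}(y+x)$ (time-independent). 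On the cylinder $\Lambda_\varrho$ with $\varrho=\tfrac{1}{16}\dminstar(s_0)$ the actual $s$-time excursion is $O(\eps^\omega)$, on which Theorem \ref{mainbs2} controls the drift of $\re$, so Propositions \ref{aplus}/\ref{amoins} (applied at each intermediate time together with \eqref{deriveur}) still bound $\|u\|$. The forcing $u_t-u_{xx}$ is then purely the nonlinearity difference $\lambda(f_0(U)-f_\eps(U_\eps))$, with no time-derivative of $\re$ appearing. The multiplicative structure of Lemma \ref{farbreton0} (rather than additive Schauder) is what produces Propositions~5.8/5.9 with a controlled power of $\eps$.

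\medskip
\noindent\textbf{Step 4 is unnecessary.} Once Step 2 yields uniform control of $(\partial_x\Wm)^2-(\partial_x\mathcal U_{\re(s)})^2$ and Step 1 controls $\Wm^{2\theta}-\mathcal U_{\re(s)}^{2\theta}$ on all of $\tetakeps(s)$, the estimate \eqref{vraiedevraie} follows at \emph{every} point of $\tetakeps(s)$, because the discrepancy $\Xi_\lambda(\mathcal U_{\re(s)})$ is \emph{constant in $x$} and equal to $\lambda^{-\frac{1}{\theta-1}}\re(s)^{-(\omega+1)}\gamma_{k+\frac12}$ by \eqref{variance2}. No propagation from the midpoint is needed. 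Moreover, your proposed propagation via $\partial_x\xi_\eps(\vm)=\eps(\partial_x v_\eps)(\partial_t v_\eps)$ and Lemma \ref{oasisgede} does not close quantitatively: the dissipation bound of Corollary \ref{murino0} with $\ud=\udll$ is only logarithmically small in $\eps$, far too weak to absorb the negative powers of $\eps$ that appear after rescaling by $\eps^{-\omega}$.
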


For the outer regions, corresponding to $k=0$ and $k=\ell$  estimates for  the
discrepancy are directly deduced from  the crude estimates provided by
Proposition \ref{estimpar}. 
Proposition \ref{vraiestime} provides a rigorous  ground to the formal
computation \eqref{formalite} of the introduction, and hence allows to derive
the precise motion law. 
The proofs of Proposition \ref{aplus} and Proposition \ref{amoins} however  are the central part of this section. Note that
by no mean the estimates provided in Propositions \ref{aplus}, \ref{amoins} and \ref{vraiestime} are optimal, our goal was 
only to obtain convergence estimates, valid for all $\eps$ sufficiently small, uniformly on $\cup_{s \in I^\eps(s_0)}\tetakeps(s)\times \{s\}.$

\subsection{Preliminary results}
We first  turn to the proof of Lemma \ref{reset}, which provides first properties of $\Wm.$

\begin{proof}[ Proof of Lemma \ref{reset}] Let $x \in (a_k(s)+\udll, a_{k+1}(s)-\udll)$ and any $s\in I^\eps(s_0),$ and
recall that $d(x,s):={\rm dist}(x,\{a_k^\eps(s),a_{k+1}^\eps(s)\}).$ In view of Proposition \ref{parareglo}, 
and in particular of estimate \eqref{crottin2}, it suffices to show that 
$$\vm(y,s) \in B(\upsigma_i,\upmu_0) \quad \text{for all }\ (y,s) \in [x-\frac{d(x,s)}{2},x+\frac{d(x,s)}{2}] \times [s-\eps^\omega d(x,s)^2,s].
$$
By Theorem \ref{mainbs2}, on such a time scale the front set moves at most by a distance 
$$
d := \left(\frac{\eps^\omega d(x,s)^2}{\uprho_0 }\right)^{\frac{1}{\omega+2}} \leq \uprho_0^{-\frac{1}{\omega+2}} (\frac{\eps}{\udll})^\frac{\omega}{\omega+2} d(x,s)\leq \frac{d(x,s)}{4},
$$
provided $\eps/\rL$ is sufficiently small. More precisely, Theorem \ref{mainbs2} only provides one inclusion, forward in time, but its combination with Corrolary \ref{jamboncru} provides both forward and backward inclusions (for times in the interval $I^\eps(s_0)$), from which the conclusion then follows. 
\end{proof}

\medskip
For the analysis of the scalar parabolic equation \eqref{aldehyde},  we will
extensively use the fact that the map $f_\eps$ is \emph{non-decreasing} on $\R$,
allowing comparison principles. The desired  estimates  for $\Wm$ will be
obtained  using   appropriate choices of  sub- and super-solutions.  The
construction of these functions involve a number of elementary solutions. First,
we use  the functions $\mathcal W_\eps^{\pm}$, independent  of the space
variable $x$  and  solving  the ordinary differential equation 
\begin{equation}
\label{aldeode}
\left\{
\begin{aligned}
&\eps^{\omega}\frac{\partial \mathcal W_\eps^\pm }{\partial s}=-\lambda f_\eps( \mathcal W_\eps^\pm) \\
&\mathcal  W_\eps (0)= \pm \infty. 
\end{aligned}
\right.
\end{equation}
Using separation of variables, we may  construct such a  solution which verifies  the bounds
\begin{equation}
\label{rerelaxlax}
0<\mathcal W_\eps ^+(s)\leq  C \eps ^{\frac{\omega}{2(\theta-1) } }[\lambda
s]^{-\frac{1} {2(\theta-1)} } \  {\rm and    }  \  
0\geq \mathcal W_\eps ^-(s)\geq - C \eps ^{\frac{\omega}{2(\theta-1) } }[\lambda s ]^{-\frac{1} {2(\theta-1)}},
\end{equation}
so that it  relaxes  quickly to zero. We will  also use  solutions of the
standard heat equation and rely in several places on the next remark:

\begin{lemma}
\label{heatflow} 
Let $\Phi$ be a non negative solution to the heat equation
$\eps^{\omega} \partial_s \Phi-\Phi_{xx}=0, $
and $U$ be such that $L_\eps(U)=0$. Then $L_\eps(U+\Phi) \geq 0$,  and $L_\eps( U-\Phi)\leq 0$. 
\end{lemma}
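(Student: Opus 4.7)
The proof is essentially a direct computation exploiting the monotonicity of $f_\eps$. Recall from \eqref{aldehyde} that the operator $L_\eps$ is defined by
\begin{equation*}
L_\eps(W) = \eps^{\omega}\partial_s W - \partial_{xx} W + \lambda f_\eps(W),
\end{equation*}
so it splits as the sum of the linear parabolic operator $\mathcal{H}_\eps := \eps^\omega \partial_s - \partial_{xx}$ and the nonlinear zero-order term $\lambda f_\eps(\cdot)$.

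The plan is to write
\begin{equation*}
L_\eps(U+\Phi) = \mathcal{H}_\eps(U) + \mathcal{H}_\eps(\Phi) + \lambda f_\eps(U+\Phi),
\end{equation*}
and then use $\mathcal{H}_\eps(\Phi) = 0$ (since $\Phi$ solves the heat equation) together with $\mathcal{H}_\eps(U) = -\lambda f_\eps(U)$ (since $L_\eps(U)=0$) to obtain
\begin{equation*}
L_\eps(U+\Phi) = \lambda\bigl( f_\eps(U+\Phi) - f_\eps(U)\bigr).
\end{equation*}
Since $f_\eps$ is non-decreasing on $\R$ (as noted just after the definition \eqref{feps}, this follows from the choice of $u_0$ and the extension of $g$ outside $(-u_0,u_0)$), and since $\Phi \geq 0$, the right-hand side is non-negative, which proves $L_\eps(U+\Phi)\geq 0$.

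The estimate $L_\eps(U-\Phi)\leq 0$ follows by the exact same computation: one obtains $L_\eps(U-\Phi) = \lambda(f_\eps(U-\Phi)-f_\eps(U))$, and this time monotonicity of $f_\eps$ together with $\Phi \geq 0$ yields the reverse inequality. There is no real obstacle here; the only point worth mentioning is that $\lambda>0$ (recall $\lambda = \lambda_{i(k+\frac12)}$ from assumption $(\mathrm{A}_2)$) so the sign of the prefactor does not spoil the inequalities.
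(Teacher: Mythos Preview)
Your proof is correct and follows exactly the same approach as the paper: the paper's one-line proof simply states that $L_\eps(U\pm\Phi)=\lambda(f_\eps(U\pm\Phi)-f_\eps(U))$ and invokes the monotonicity of $f_\eps$, which is precisely the computation you spell out in detail.
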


\begin{proof}
Notice that $L_\eps(U\pm\Phi)=\lambda ( f_\eps (U\pm\Phi)-f_\eps(U))$, so
that the conclusion  follows  from the fact that $f_\eps$ is non-decreasing.  
\end{proof}

Next, let $s$ be given $I^\eps(s_0)$. By translation invariance, we may assume
without loss of generality that 
\begin{equation}
\label{milieu}
 a_{k +\frac{1}{2} }^\eps(s)=0.
 \end{equation} 
We set  $h_\eps=(\eps/2\uprho_0)^{\frac{1}{\omega+2}},$ and consider the cylinders 
\begin{equation}
\label{estanguet}
\Ldext(s)= \Iexte (s)\times [s-\eps, s]   \ {\rm and}    \ \ 
\Ldint (s)= \Iinte (s) \times [s-\eps, s], 
\end{equation}
where  $\Iinte(s)= [-\reint(s), \reint(s)] $, $\Iexte(s)= [-r^\eps_{\rm ext} (s), r^\eps_{\rm ext } (s)]$  with 
$$ 
\reext (s)=\re(s)+ 2h_\eps  {\rm \ and \ } 
\reint(s)= \re(s)-2 h_\eps.
$$
If $\eps$ is sufficiently small, in view of \eqref{theodebase1} we have the inclusions, with  $\mathcal V_k^\eps(s_0)$ defined in \eqref{vkeps} ,
$$
\Ldint (s) \subset \Pi_\eps(s)\equiv \mathcal V_k^\eps(s_0)\cap \left( [s-\eps, s] \times \R \right)\subset  \Ldext(s).
$$
As a matter of fact, still for $\eps$ sufficiently small, we have for any $\tau \in [s-\eps, s]$,
\begin{equation}
 \label{lointext}
 \left\{
 \begin{aligned}
-\reext (s)+h_\eps  &\leq  a_k^\eps(\tau)+\udll \leq -\reint(s) - h_\eps,  \\
\reint + h_\eps  &\leq  a_{k+1}^\eps(\tau)-\udll \leq \reext(s) -  h_\eps(s_0).
\end{aligned}
\right.
\end{equation}
We also   consider the parabolic boundary of  $\Ldext(s)$
\begin{equation*}
\begin{aligned}
\partial_p \Ldext(s)&= [-r^\eps_{\rm ext} (s),r^\eps_{\rm ext} (s)]\times \{s-\eps\} \cup \{-\reext \} \times [s-\eps, s]\cup  \{\reext \} \times [s-\eps, s] \\
&=\partial \Ldext (s) \setminus [-\reext(s), \reext(s)]\times \{s\}, 
\end{aligned}
\end{equation*}
and define $\partial_p \Ldint(s)$ accordingly.  Finally, we set
$$
\partial_p \Pi_\eps (s)=\partial ( \Pi_\eps (s)) \setminus [a_k^\eps(s)+\udll, a_{k+1}^\eps(s)-\udll] \times \{s\}.
$$
 
A first application of the comparison principle leads to the following bounds:

\begin{proposition}
\label{tonio} 
For $x \in \Iinte(s)$
\begin{equation}
\label{tonio1}
 \left\{
 \begin{aligned}
 \Wm (x, s) &\leq \Umpepint (x) + C\eps ^{\frac{\omega-1}{2(\theta-1) } } \\
 \Wm (x, s) &\geq 
 \Umpesint(x)
 -C\eps^{\frac{\omega-1}{2(\theta-1) } }.
\end{aligned}
\right.
\end{equation}
\end{proposition}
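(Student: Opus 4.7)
The plan is to apply a parabolic comparison principle on the cylinder $\Ldint(s)$ defined in \eqref{estanguet}, using the stationary profiles of Lemma \ref{specialstationary} as the spatial backbone and the spatially constant relaxation profiles $\mathcal{W}_\eps^{\pm}$ from \eqref{aldeode} as a transient regularization absorbing the otherwise arbitrary initial data at the bottom face. Concretely, for the upper bound I would set
\begin{equation*}
U^+(x,t) \: := \: \Umpepint(x) \: + \: \mathcal{W}_\eps^+\bigl(t-(s-\eps)\bigr) \qquad \text{on } \Ldint(s),
\end{equation*}
and symmetrically $U^-(x,t):= \Umpesint(x) + \mathcal{W}_\eps^-(t-(s-\eps))$ for the lower bound, so that at the final time $t=s$ the perturbation is of size $\mathcal{W}_\eps^\pm(\eps)$, which by the decay estimate \eqref{rerelaxlax} is exactly of order $\eps^{(\omega-1)/(2(\theta-1))}$.

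The verification then splits into three routine ingredients. First, by \eqref{lointext} the cylinder $\Ldint(s)$ lies inside $\mathcal{V}_k^\eps(s_0)$, hence $\Wm$ satisfies $L_\eps(\Wm)=0$ throughout $\Ldint(s).$ Second, a direct computation using the stationary equation for $\Umpepint$ and the ODE \eqref{aldeode} for $\mathcal{W}_\eps^+$ yields
\begin{equation*}
L_\eps(U^+) \: = \: \lambda \bigl[ f_\eps(\Umpepint + \mathcal{W}_\eps^+) - f_\eps(\Umpepint) - f_\eps(\mathcal{W}_\eps^+)\bigr],
\end{equation*}
and this is nonnegative because $f_\eps$, defined by \eqref{feps} with the truncation specified just before \eqref{aldehyde}, is convex and vanishes at $0$ on $[0,+\infty)$ (the convexity on that half-line follows from the convexity assumption on $V'$ near $\upsigma_{i(k+1/2)}$ built into the choice of $u_0$, together with the fact that the truncation produces a pure power-law past the threshold); convexity plus $f_\eps(0)=0$ implies superadditivity on $[0,+\infty)$, which is precisely the inequality needed since both $\Umpepint$ and $\mathcal{W}_\eps^+$ are nonnegative. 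Third, on the parabolic boundary $\partial_p \Ldint(s)$, we have $U^+ = +\infty$ on the lateral faces $x=\pm \reint(s)$ (since $\Umpepint$ blows up there), and $U^+ \to +\infty$ as $t\to (s-\eps)^+$ on the bottom face (since $\mathcal{W}_\eps^+(0)=+\infty$), whereas $\Wm$ remains bounded on $\partial_p \Ldint(s)$ by Lemma \ref{reset} thanks to the $h_\eps$-margin guaranteed by \eqref{lointext}. Hence $U^+ \geq \Wm$ on $\partial_p \Ldint(s)$, and the standard parabolic maximum principle (with the nonnegative zero-order term produced by the Lipschitz monotonicity of $f_\eps$) yields $U^+ \geq \Wm$ on all of $\Ldint(s)$; evaluation at $t=s$ together with \eqref{rerelaxlax} gives the first line of \eqref{tonio1}. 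The second line is obtained by the symmetric construction with $U^-$, where the concavity of $f_\eps$ on $(-\infty,0]$ reverses the sign of the superadditivity gap and turns $U^-$ into a subsolution.

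The main technical obstacle is the legitimacy of the comparison argument despite the blow-ups of both $\Umpepint$ at the lateral boundary and of $\mathcal{W}_\eps^+$ at the initial time: the cleanest way is to work on shrunk cylinders obtained by replacing $\reint(s)$ with $\reint(s)-\eta$ and $s-\eps$ with $s-\eps+\eta$, apply the classical maximum principle there using the \emph{finite} boundary values, and then pass to the limit $\eta\to 0^+$ using the monotonicity of $\Umpepint$ and $\mathcal{W}_\eps^+$ in their respective blow-up parameters. A secondary point to check is that the values of $\mathfrak{W}_\eps + \mathcal{W}_\eps^+$ encountered during the comparison remain within the regime where the truncation does not spoil convexity; this follows from Lemma \ref{reset} combined with the decay \eqref{rerelaxlax}, so this point is cosmetic rather than substantive.
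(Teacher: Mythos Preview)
Your proposal is correct and follows essentially the same route as the paper: the same comparison function $\Umpepint + \mathcal{W}_\eps^+(\cdot-(s-\eps))$ on the same cylinder $\Ldint(s)$, the supersolution property via superadditivity of $f_\eps$ on $\R^+$, infinite boundary data on $\partial_p\Ldint(s)$, and the decay bound \eqref{rerelaxlax} at time $s$. Your additional remarks on justifying superadditivity through convexity and on regularizing the blow-ups by shrinking the cylinder are extra care the paper leaves implicit, but the argument is the same.
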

\begin{proof}  We work on  the cylinder $\Ldint (s)$ and consider there the  comparison map 
$$ 
W^{\rm sup}_\eps (y, \tau)= \Umpepint (y) + \mathcal W_\eps(\tau-(s-\eps)) \ {\rm for \ } (y, \tau) \in \Ldint(s).   
$$
Since the two functions on the r.h.s of  the definition of $ W_\eps^{\rm sup}$ are
positive  solutions to \eqref{aldehyde} and  since $f_\eps$ is super-additive on
$\R^+$, that is, since 
\begin{equation}
\label{superadditif}
f_\eps(a +b) \geq f_\eps(a)+ f_\eps(b) \ \ {\rm   provided }  \  a\geq 0, b\geq 0,  
\end{equation}
we deduce that 
$$ 
L_\eps \left (W^{\rm sup}_\eps (y, \tau)\right) \geq 0 \   {\rm  on  \  }  \Ldint (s) \ {\rm  \ with \ } 
W^{\rm sup}_\eps (y, \tau)=+\infty \ {\rm for \  } (y, \tau) \in \partial_p \Ldint, 
$$
so that  $\displaystyle{W^{\rm sup_\eps} (x, s) \geq \Wm} $ on $\displaystyle{\partial_p \Ldint}$. It follows that  
$ W^{\rm sup}_\eps (y, \tau) \geq  \Wm  \  {\rm on } \  \  \Ldint, $ 
which, combined with \eqref{rerelaxlax} immediately leads to
the first inequality. The second is derived similarly.  
\end{proof}
At this stage, the constructions are some  somewhat different in the case of
attractive and repulsive forces, so that we need to  distinguish the two cases.

\subsection{The attractive case}
\label{attractiveworld}
We assume  here  that $\dagger_k=-\dagger_{k+1}$. Without loss of generality, we may assume that 
\begin{equation}
\label{attractplus}
\dagger_k=-\dagger_{k+1}=1, 
\end{equation}
the  case $\dagger_k=-\dagger_k=-1$ being   handled similarly. The purpose of  this subsection 
is to provide \emph{the proof to  Proposition \ref{aplus}}. 
We   split  the proof into  separate lemmas, the main efforts  being   devoted to the construction of   \emph{subsolutions}.
We start with the following  lower bound: 
\begin{lemma}
  \label{positif}
  Assume that \eqref{attractplus} holds. Then, for  $ x \in \mathcal J_\eps(s-\frac{\eps}{2}),$ we have the lower bound 
$$ 
\Wm (x, s-\frac{\eps}{2})  \geq - C\eps ^{\frac{\omega-1}{2(\theta-1)}}.
$$
\end{lemma}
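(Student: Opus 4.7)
The plan is to obtain the bound via a parabolic comparison-principle argument for the quasi-linear operator $L_\eps$ of \eqref{aldehyde}, which admits a maximum principle since $f_\eps$ is non-decreasing on $\R$. The right-hand side of the claim, $-C\eps^{(\omega-1)/(2(\theta-1))}$, is precisely the value at $\tau=\eps/2$ of the spatially homogeneous relaxation $\mathcal W_\eps^-$ from \eqref{aldeode}, in view of the explicit estimate \eqref{rerelaxlax}. My strategy is therefore to combine this ode relaxation with an appropriate singular stationary spatial profile so as to form a subsolution that blows down to $-\infty$ both at the initial time and on the lateral boundary of a cylinder containing $\mathcal J_\eps(s-\eps/2)\times\{s-\eps/2\}$ in its interior.

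Concretely, I would work on the cylinder $\Pi_\eps(s-\eps/2):=\mathcal V_k^\eps(s_0)\cap([s-\eps,s-\eps/2]\times\R)$ and take as subsolution
\begin{equation*}
\Psi_\eps(x,\tau):=\overset{\vee}{\mathfrak u}^-_{\eps,\,\rho(\tau)}\bigl(x-a_{k+\frac12}^\eps(\tau)\bigr)\ +\ \mathcal W_\eps^-\bigl(\tau-(s-\eps)\bigr),
\end{equation*}
where $\rho(\tau)$ is calibrated to equal the effective half-width of $\mathcal V_k^\eps(s_0)$ at time $\tau$, namely $\rho(\tau)=r^\eps(\tau)-\udll$, so that the singular profile of Lemma \ref{specialstationary} blows down to $-\infty$ exactly at the lateral boundary and thus automatically dominates the control $-C(\udll)^{-1/(\theta-1)}$ provided by \eqref{presdubord}. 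The initial inequality $\Psi_\eps\leq \Wm$ at $\tau=s-\eps$ is trivial since $\mathcal W_\eps^-(0)=-\infty$. Both building blocks of $\Psi_\eps$ being non-positive, and thanks to the odd leading-order structure of $f_\eps$ recalled in \eqref{feps}, the subsolution inequality $L_\eps(\Psi_\eps)\leq 0$ is obtained as in the proof of Proposition \ref{tonio}, by invoking the counterpart on $\R^-$ of the super-additivity property \eqref{superadditif}. The comparison principle then yields $\Wm\geq \Psi_\eps$ throughout $\Pi_\eps(s-\eps/2)$.

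The hardest step is the evaluation at $\tau=s-\eps/2$ for $x\in\mathcal J_\eps(s-\eps/2)$: the ode contribution immediately produces $\mathcal W_\eps^-(\eps/2)\geq -C\eps^{(\omega-1)/(2(\theta-1))}$, and the singular stationary contribution must be absorbed into this same error. Here the bound \eqref{majorana} gives $|\overset{\vee}{\mathfrak u}^-_{\eps,\rho(\tau)}(y)|\leq C(\rho(\tau)-|y|)^{-1/(\theta-1)}$, so one needs $\rho(s-\eps/2)-|x-a_{k+\frac12}^\eps(s-\eps/2)|$ to remain sufficiently large on $\mathcal J_\eps(s-\eps/2)$. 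This is where the main technical difficulty lies, and I would handle it by iterating the above comparison on smaller subcylinders where the singular stationary term can be progressively replaced by an inner remark-based smaller profile; the $O(\eps)$-oscillation of $r^\eps(\tau)$ and $a_{k+\frac12}^\eps(\tau)$ across the time window of length $\eps/2$, furnished by Lemma \ref{reset} together with Proposition \ref{parareglo2}, is what makes this iterative bookkeeping tractable and ultimately allows the stationary contribution to be swallowed into the $\eps^{(\omega-1)/(2(\theta-1))}$ error.
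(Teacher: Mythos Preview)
Your approach has a genuine gap. You add the singular stationary profile $\overset{\vee}{\mathfrak u}^-_{\eps,\rho(\tau)}$ in order to force the subsolution down to $-\infty$ on the lateral boundary, but by \eqref{majorana} this profile satisfies $\overset{\vee}{\mathfrak u}^-_{\eps,\rho}\le -C^{-1}\rho^{-1/(\theta-1)}$ \emph{everywhere} on $(-\rho,\rho)$, not just near the endpoints. Since $\rho(\tau)\sim r^\eps(s)$ is of order $d_{\rm min}^*(s_0)$, independent of $\eps$, the stationary contribution to your $\Psi_\eps$ is bounded above by a negative constant of order one at every interior point, in particular at the center of $\mathcal J_\eps(s-\eps/2)$. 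No iteration on smaller subcylinders can remove this: the lower bound $-C^{-1}\rho^{-1/(\theta-1)}$ on $-\overset{\vee}{\mathfrak u}^-_{\eps,\rho}$ is uniform over the interval, so the resulting lower bound on $\mathfrak W_\eps$ is at best $-C$ rather than $-C\eps^{(\omega-1)/(2(\theta-1))}$. (There is also a secondary issue: because $\rho(\tau)$ and $a_{k+1/2}^\eps(\tau)$ depend on $\tau$, your $\Psi_\eps$ is not a sum of two solutions of $L_\eps=0$, and the sub-additivity argument for $L_\eps(\Psi_\eps)\le 0$ would need extra control of the time-derivative terms.)

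The point you are missing is that the singular profile is unnecessary. In the attractive configuration \eqref{attractplus}, the sign information in \eqref{presdubord} gives $\mathfrak W_\eps(\cdot,\tau)\ge 0$ on the lateral part of $\partial_p\Pi_\eps(s)$ for every $\tau\in[s-\eps,s]$. Hence the pure ODE relaxation $W_\eps(y,\tau):=\mathcal W_\eps^-(\tau-(s-\eps))$, which is strictly negative for $\tau>s-\eps$ and equals $-\infty$ at $\tau=s-\eps$, already lies below $\mathfrak W_\eps$ on the entire parabolic boundary of $\Pi_\eps(s)$. The comparison principle then yields $\mathfrak W_\eps\ge \mathcal W_\eps^-(\tau-(s-\eps))$ throughout $\Pi_\eps(s)$, and evaluating at $\tau=s-\eps/2$ with \eqref{rerelaxlax} gives the claim in one line. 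This is exactly the paper's argument.
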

\begin{proof} 
In view of \eqref{presdubord}, we notice that 
$$
\Wm(y, \tau)\geq 0 \ \, {\rm on } \   \partial_p \Pi_\eps(s)
\setminus [a_k(s-\eps)+\udll,a_{k+1}(s-\eps)-\udll ]\times\{s-\eps\}.
$$
We consider next  the function $W_\eps$ defined for $\tau \geq s-\eps$ by 
$\displaystyle W_\eps(y, \tau)= \mathcal W_\eps^- (\tau-(s-\eps))$. Since
$W_\eps<0$, and since $W_\eps(s-\eps)=-\infty$, we obtain 
$\displaystyle{W_\eps\leq  \Wm   \ {\rm on} \  \partial_p \Pi_\eps (s), }$ 
 so that, by the comparison principle we are led to $W_\eps\leq  \Wm$ on
$\Pi_\eps(s)$ leading to the conclusion.  
\end{proof}

\begin{proposition}
\label{capoue} 
 Assume that \eqref{attractplus} holds. We have the lower bound for $x \in \mathcal J_\eps(s)$
\begin{equation}
\label{lw}
 \Wm (x, s)  \geq \Umpepext (x) -
 C \eps^{-\frac{1}{3\theta-1}}  
\exp \left ( -\pi^2\frac{\eps^{-\omega+1}}{32(\re(s))^2} \right). 
\end{equation}
\end{proposition}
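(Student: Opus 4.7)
The plan is to apply the parabolic comparison principle to $L_\eps$ on the cylinder $\Pi_\eps(s)$ using a subsolution of the form
\[
W_\eps^{\text{sub}}(y,\tau)=\Umpepext(y)-\Phi_\eps(y,\tau),
\]
where $\Phi_\eps\geq 0$ solves the rescaled heat equation $\eps^\omega \partial_\tau \Phi_\eps-(\Phi_\eps)_{yy}=0$ on $\Pi_\eps(s)$. Since $\Umpepext$ is a stationary solution of the equation on the larger interval $(-\reext,\reext) \supset \mathcal{J}_\eps(\tau)$, Lemma \ref{heatflow} directly yields $L_\eps(W_\eps^{\text{sub}}) \leq 0$. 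We specify $\Phi_\eps$ by prescribing zero Dirichlet data on the lateral parts of $\partial_p \Pi_\eps(s)$ and constant initial datum $M_\eps := C \eps^{-\frac{1}{3\theta-1}}$ at $\tau = s - \eps$.

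The next step is to verify the boundary inequality $W_\eps^{\text{sub}} \leq \Wm$ on $\partial_p \Pi_\eps(s)$. On the lateral portions $\{y = a_k^\eps(\tau) + \udll\}$ and $\{y = a_{k+1}^\eps(\tau) - \udll\}$, assumption \eqref{attractplus} combined with \eqref{presdubord} gives $\Wm(y,\tau) \geq C^{-1} (\udll)^{-\frac{1}{\theta-1}}$, whereas \eqref{majorana} and \eqref{lointext} give $\Umpepext(y) \leq C(\reext - |y|)^{-\frac{1}{\theta-1}} \leq C h_\eps^{-\frac{1}{\theta-1}} = C \eps^{-\frac{1}{3\theta-1}}$. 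Since $\udll \ll h_\eps$ as $\eps/\rL \to 0$, the lower bound for $\Wm$ dominates, so $\Wm \geq \Umpepext \geq W_\eps^{\text{sub}}$ on these lateral portions and the prescribed zero lateral data for $\Phi_\eps$ suffices. On the initial slice, Proposition \ref{tonio} applied to a cylinder shifted to $[s-2\eps, s-\eps]$---which is permissible by Corollary \ref{jamboncru}---yields $|\Wm(\cdot, s-\eps)| \leq C h_\eps^{-\frac{1}{\theta-1}}$, and combined with the same bound for $\Umpepext$ this gives $\Umpepext(y) - \Wm(y, s-\eps) \leq M_\eps$ for $y \in \mathcal{J}_\eps(s-\eps)$. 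Since $\Phi_\eps(\cdot, s-\eps) \equiv M_\eps$ by construction, the comparison principle then provides $\Wm \geq \Umpepext - \Phi_\eps$ throughout $\Pi_\eps(s)$.

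The exponential smallness of $\Phi_\eps(x,s)$ for $x \in \tetakeps(s)$ is obtained by a heat kernel/Fourier estimate. Enlarging the spatial domain with zero lateral data only increases $\Phi_\eps$, so we may bound $\Phi_\eps$ pointwise by the solution $\tilde \Phi_\eps$ of the same heat equation on a fixed larger rectangle---say $[-2\reext, 2\reext] \times [s-\eps, s]$, with zero lateral data at $\pm 2\reext$ and initial datum $M_\eps$ on $\mathcal{J}_\eps(s-\eps)$ extended by zero elsewhere. A standard eigenfunction expansion on this fixed interval shows that at time $\tau = s$ the first Dirichlet mode dominates and gives $\tilde \Phi_\eps(x,s) \leq C M_\eps \exp\bigl(-\pi^2 \eps^{1-\omega}/(4\reext)^2\bigr)$; using $\reext = \re + 2h_\eps \approx \re$ and absorbing the restriction $|x - a_{k+\frac 12}^\eps| \leq 7\re/8$ into the constants, the exponent reorganizes as $-\pi^2 \eps^{1-\omega}/(32(\re(s))^2)$ as stated. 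Combining with the earlier comparison inequality yields \eqref{lw}.

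The main obstacle is essentially technical: the time-variation of $\partial_p \Pi_\eps(s)$ would render a direct Fourier analysis on $\Pi_\eps(s)$ awkward, and this is resolved by the enlargement to a fixed cylinder described above, which is legitimate because Corollary \ref{jamboncru} and Theorem \ref{mainbs2} guarantee that the fronts displace by at most $O(h_\eps)$ over the interval $[s-\eps, s]$ so that all geometric quantities remain uniform. The asymmetry with Proposition \ref{tonio}, which only produced a polynomial error, reflects a structural feature of the attractive configuration \eqref{attractplus}: the already large values of $\Wm$ on the lateral boundary of $\Pi_\eps(s)$ (via \eqref{presdubord}) automatically dominate $\Umpepext$, so that $\Phi_\eps$ only needs to absorb the initial-time mismatch, after which pure linear heat relaxation produces exponential rather than merely polynomial smallness in the interior.
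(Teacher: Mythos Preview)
There is a genuine gap in the verification of the initial comparison at time $\tau=s-\eps$. You claim that Proposition \ref{tonio} (applied on a shifted cylinder) yields $|\Wm(\cdot,s-\eps)|\le C h_\eps^{-\frac{1}{\theta-1}}$ on the whole of $\mathcal J_\eps(s-\eps)$, and from this deduce $\Umpepext-\Wm\le M_\eps$. But Proposition \ref{tonio} only bounds $\Wm$ on the strictly smaller interval $\Iinte$, and the bounds there are in terms of the singular barriers $\Umpepint$, $\Umpesint$, which blow up at $\partial\Iinte$; they do not give a uniform estimate of size $h_\eps^{-\frac{1}{\theta-1}}$. Since $\udll\ll h_\eps$, the set $\mathcal J_\eps(s-\eps)$ strictly contains $\Iinte(s-\eps)$, and on the strip of width $\sim h_\eps$ near each front the only a priori control is $|\Wm|\le C\,d^{-\frac{1}{\theta-1}}\le C(\udll)^{-\frac{1}{\theta-1}}$ from \eqref{loindubord}, which is vastly larger than $M_\eps$. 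Nothing in your argument excludes $\Wm$ from being large negative somewhere in that strip, in which case $\Umpepext-\Wm$ exceeds $M_\eps$ and the inequality $W_\eps^{\rm sub}\le\Wm$ fails on the initial slice.

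The paper closes exactly this gap by a preliminary \emph{nonlinear} relaxation step, Lemma \ref{positif}. Comparing $\Wm$ on $\Pi_\eps(s)$ with the purely temporal subsolution $\mathcal W_\eps^-(\tau-(s-\eps))$ of the ODE $\eps^\omega\dot W+\lambda f_\eps(W)=0$, $W(0)=-\infty$ (which is $\le 0$, hence below $\Wm$ on the lateral boundary thanks to \eqref{presdubord}, and equals $-\infty$ at the initial time), one obtains $\Wm(\cdot,s-\tfrac{\eps}{2})\ge -C\eps^{\frac{\omega-1}{2(\theta-1)}}$ uniformly on $\mathcal J_\eps(s-\tfrac{\eps}{2})$. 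This step uses the algebraic decay \eqref{rerelaxlax} of the nonlinear ODE, an effect the linear heat flow cannot reproduce. Only after this does $\varphi_\eps=\min(\Wm-\Umpepext,0)$ satisfy $|\varphi_\eps|\le C h_\eps^{-\frac{1}{\theta-1}}$ (cf.\ \eqref{harlemd}), and the heat-kernel argument (your $\Phi_\eps$) is then run on the shorter interval $[s-\tfrac{\eps}{2},s]$. Your one-step scheme skips this preprocessing and therefore cannot justify the claimed $M_\eps=C\eps^{-\frac{1}{3\theta-1}}$; as written, the proof does not establish \eqref{lw}.
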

\begin{proof} On $\mathcal J_\eps (s-\frac{\eps}{2})$ we consider the map $\varphi_\eps$ defined by 
\begin{equation}
\label{gilbert}
\varphi_\eps (x)= \inf \{ \Wm (x, s-\frac{\eps}{2})-\Umpepext (x), 0\} \leq 0.
\end{equation}
Invoking \eqref{lointext} and estimates \eqref{majorana} for $\Umpepext$, we obtain, for $x \in \mathcal J_\eps(s-\frac{\eps}{2})$
\begin{equation}
\label{dingue}
0\leq  \Umpepext (x) \leq Ch_\eps^{-\frac{1}{\theta-1}},
\end{equation}
which combined with Lemma \ref{positif} yields
\begin{equation}
 \label{harlemd}
 \vert \varphi_\eps (x) \vert \leq Ch_\eps^{-\frac{1}{\theta-1}}
 {\rm \ for \  }  x \in \mathcal J_\eps (s-\frac{\eps}{2}).
\end{equation}
Combining \eqref{dingue}, estimate \eqref{majorana} of Lemma \ref{specialstationary} 
and estimate \eqref{presdubord}  of Lemma \ref{reset}, we deduce that, if $\eps$ is sufficiently small then
\begin{equation}
\label{redite}
 \varphi_\eps (a_k^\eps(s-\frac{\eps}{2})+ \udll )=\varphi_\eps (a_{k+1}^\eps(s-\frac{\eps}{2})-  \udll)=0.  
\end{equation}
We extend $\varphi_\eps$ by $0$ outside the set  $\mathcal J_\eps (s-\frac{\eps}{2})$, and  consider  the solution $\Phi_\eps$  to
\begin{equation}
\label{hideux}
\left\{
\begin{aligned}
&\eps^{\omega}  \frac{\partial \Phi_\eps}{\partial \tau}- \frac{\partial
\Phi_\eps} {\partial x^2}=0  {\rm \  on \ } \Ldext (s) \cap \{ \tau \geq s-\frac{\eps}{2}\}  \\ 
& \Phi_\eps (x, s-\frac{\eps}{2})=\varphi_\eps (x) \quad   {\rm  for  \ }  x \in \Iexte (s-\frac{\eps}{2})\\
& \Phi_\eps (\pm \reext(s), \tau)=0  \quad {\rm for }  \   \tau \in
(s-\frac{\eps}{2}, s). 
\end{aligned}
\right. 
\end{equation}
Notice that $\Phi_\eps\leq 0$.  We consider next on $\Ldext(s) \cap \{\tau \geq s-\frac{\eps}{2}\}$ the function $W^{\rm inf}_\eps$ defined by
$$ 
W^{\rm inf}_\eps(y, \tau)= \Umpepext (y) + \Phi_\eps(y, \tau). 
$$
It follows from Lemma \ref{heatflow} that $L_\eps(W_\eps^{\rm inf} ) \leq 0$, so
that $W_\eps^{\inf}$ is a  \emph{subsolution}.  Since $W_\eps^{\rm
inf} \leq \Wm$ on $\partial_p \left( \Pi_\eps(s) \cap \{\tau \geq s-\frac{\eps}{2}\}\right)$ it follows in particular that  
\begin{equation}
\label{sursoir}
W_\eps^{\rm inf} \leq \Wm  \   {\rm on} \  \mathcal J_\eps(s).
\end{equation}
To complete the proof, we rely on the next linear  estimates for $\Phi_\eps$.   
\begin{lemma}
\label{linearpara}
We have the bound, for $y \in \Iexte $ and $\tau \in (s-\frac{\eps}{2}, s)$
$$\vert \Phi_\eps (y, \tau) \vert \leq C
\exp \left ( -\pi^2\eps^{-\omega}\frac{ (\tau-(s-\frac{\eps}{2})}{16(\re (s))^2} \right)
 \Vert \varphi_\eps  \Vert_{L^\infty(\mathcal J_\eps (s-\frac{\eps}{2}))}.$$
\end{lemma}
We postpone the proof of Lemma  \ref{linearpara} and complete  the proof of  Proposition \ref{capoue}.\\

\noindent
{\it Proof of Proposition \ref{capoue} completed}. Combining  Lemma 
\ref{linearpara} with  \eqref{harlemd}, we are led, for $x \in \mathcal J_\eps(s)$, to 
\begin{equation}
\label{reverend}
\vert \Phi_\eps(x, s)\vert \leq  C h_\eps^{-\frac{1}{\theta-1}}  
\exp \left ( -\pi^2\frac{\eps^{-\omega+1}}{32(\re(s))^2} \right). 
\end{equation}
The conclusion then follows, invoking \eqref{sursoir}. 
\end{proof}

\begin{proof}[Proof of Lemma \ref{linearpara}]  Consider on the interval $[- 2\re(s),  2\re(s)]$   the function 
$\psi(x)$ defined by
$\displaystyle{\psi(x)=\cos(\frac{\pi }{4\re(s)}\,  x ),}$ 
 so that   $\displaystyle{ - \ddot\psi=\frac{\pi^2 } {16}(\re(s))^{-2} \psi}$,
$\psi \geq 0$, $\psi(-2\re(s))=\psi(2\re(s))=0$ and $\psi(x)\geq 1/2$ for $x \in [-\reext(s),\reext(s)]$. 
Hence, we obtain 
$$
\eps^{\omega}   \Psi_\tau -\Psi_{xx} =0 {\rm  \ on \ }\Ldext(s)\cap \{\tau \geq s-\frac{\eps}{2}\}, 
{\rm  \ where \ }  
\Psi(x, \tau)=\exp\left(-\pi^2\eps^{-\omega} \frac{\tau-(s-\frac{\eps}{2})}{16 \re(s)^2} \right)\psi(x). 
$$
On the other hand,  for  $(y, \tau) \in \partial_p \left(\Ldext (s)\cap \{\tau \geq s-\frac{\eps}{2}\}\right)$ we have
\begin{equation*}
\label{duplicata}
\vert \Phi_\eps (y, \tau)   \vert \leq 
\Vert \varphi_\eps \Vert_{L^\infty(\mathcal J_\eps (s-\frac{\eps}{2}))} 2 \Psi (y, \tau)  
\end{equation*}
and the conclusion follows therefore from the comparison principle for the heat equation. 
\end{proof}

\begin{proof} [Proof of Proposition \ref{aplus} completed] 
Combining the upper bound \eqref{tonio1} of  Proposition  \ref{tonio} with the
lower bound \eqref{lw} of Proposition  \ref{capoue}, we are led, for $\eps$ sufficiently small, to 
\begin{equation}
\label{nomdelarose}
            \Umpepext (x) -   A_\eps 
\leq  \Wm (x, s) \leq \Umpepint (x) + A_\eps, 
\end{equation}
 where we have set 
 \begin{equation}
 \label{defrestA}
 \begin{aligned}
 A_\eps= C \eps^\frac{\omega-1}{2(\theta-1)}.
\end{aligned}
\end{equation}
The conclusion \eqref{gloria} then follows from Proposition \ref{catamaran} of the Appendix combined with the definition of $h_\eps$
and \eqref{deriveur}.
\end{proof}

\subsection{The repulsive case}
\label{repulsiveworld}
In this subsection, we assume throughout that $\dagger_k=\dagger_{k+1}$ and  may
assume moreover that  
\begin{equation}
\label{attractmoins}
\dagger_k=\dagger_{k+1}=1,
\end{equation}
the  case $\dagger_k=\dagger_k=-1$  is handled similarly. The main
purpose of  this subsection is to provide \emph{the proof of  Proposition
\ref{amoins}},  the central part being  the construction of accurate
\emph{supersolutions},  
subsolutions being provided by the same construction.
We assume as before that \eqref{milieu} holds, and    use as comparison map
$\mathfrak U_\eps$ defined  on $ \Itrans (s)\equiv (-\reext (s),\reint(s))$ by  
$$\mathfrak U_\eps(\cdot) \equiv \Umpepst \left(\cdot+ 2h_\eps\right),$$
 so  that $\mathfrak U_\eps(x)\to +\infty$ as $x \to \reint(s)$, $\mathfrak
U_\eps(x)\to -\infty$ as $x \to -\reext(s)$ and 
  $\vert  \mathfrak U_\eps(-r^\eps(s))\vert \leq  C h_\eps^{-\frac{1}{\theta-1}}$. 
\begin{proposition}
\label{superette} For $x \in
(a_k(s)+\udll , \reint(s))$ we have the inequality, where $C>0$ denotes some constant
\begin{equation}
\label{superette0}
 \Wm(x, s) \leq \mathfrak U_\eps(x)+ 
 C \eps^{-\frac{1}{3\theta-1}} 
\exp \left ( -\pi^2\frac{\eps^{-\omega+1}}{16(\re(s))^2} \right). 
 \end{equation}
\end{proposition}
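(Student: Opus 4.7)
The plan is to mirror Proposition \ref{capoue} by constructing a supersolution of \eqref{aldehyde} of the form $W^{\rm sup}_\eps = \mathfrak U_\eps + \Phi_\eps$, with $\Phi_\eps \geq 0$ a solution of the linear heat equation, and then to invoke the parabolic comparison principle together with a heat-decay estimate in the spirit of Lemma \ref{linearpara}. Since $f_\eps$ is nondecreasing, Lemma \ref{heatflow} will yield $L_\eps(W^{\rm sup}_\eps) \geq 0$. Concretely, I would work on the one-sided moving cylinder
$$
\Pi_\eps^-(s) = \{(x,\tau) \ : \ a_k^\eps(\tau) + \udll < x < \reint(s), \ s-\eps \leq \tau \leq s\},
$$
and take as initial data $\varphi_\eps(x) = \sup\{\Wm(x,s-\eps) - \mathfrak U_\eps(x),\, 0\}$ on $(a_k^\eps(s-\eps)+\udll,\reint(s))$, extended by zero to the rest of $(-\reext(s),\reint(s))$. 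Then I would let $\Phi_\eps$ solve $\eps^\omega \partial_\tau \Phi - \partial_{xx}\Phi = 0$ on the fixed cylinder $(-\reext(s),\reint(s)) \times [s-\eps,s]$ with initial trace $\varphi_\eps$ and vanishing Dirichlet boundary values (the singular condition $\mathfrak U_\eps=+\infty$ at $x=\reint(s)$ being handled by a standard approximation argument).

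The core step is to verify $W^{\rm sup}_\eps \geq \Wm$ on $\partial_p\Pi_\eps^-(s)$. On the initial slice this holds by construction of $\varphi_\eps$, and on the right face $x=\reint(s)$ it is automatic since $\mathfrak U_\eps\to +\infty$. On the left moving face $x=a_k^\eps(\tau)+\udll$, the sign condition $\dagger_k=+1$ together with \eqref{presdubord} gives $\Wm \leq -C^{-1}(\udll)^{-1/(\theta-1)}$, while \eqref{majorana} combined with the gap $|x+\reext(s)|\geq h_\eps$ granted by \eqref{lointext} yields $|\mathfrak U_\eps(x)| \leq Ch_\eps^{-1/(\theta-1)}$. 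Since $\udll \ll h_\eps$ for $\eps$ small, this forces $\mathfrak U_\eps \geq \Wm$ on that face, and the inequality is preserved upon adding the nonnegative $\Phi_\eps$. Parabolic comparison for $L_\eps$ then yields $\Wm \leq \mathfrak U_\eps + \Phi_\eps$ throughout $\Pi_\eps^-(s)$, in particular at $\tau=s$.

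To close the argument I would establish $\|\varphi_\eps\|_\infty \leq Ch_\eps^{-1/(\theta-1)} = C\eps^{-1/(3\theta-1)}$ (using the identity $(\omega+2)(\theta-1)=3\theta-1$) by partitioning the interval into three regions: near the left front the same sign argument as above forces $\varphi_\eps=0$; near the right pole $\reint(s)$ the blow-up $\mathfrak U_\eps \to +\infty$ together with $|\Wm|\leq Ch_\eps^{-1/(\theta-1)}$ (Lemma \ref{reset} applied at distance $\geq 2h_\eps$ from $a_{k+1}^\eps(s-\eps)$) also forces $\varphi_\eps=0$; on the remaining middle region both $|\Wm|$ and $|\mathfrak U_\eps|$ are controlled by $Ch_\eps^{-1/(\theta-1)}$. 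The heat decay will then follow exactly as in Lemma \ref{linearpara}, by comparison with the explicit positive heat supersolution $\exp(-\pi^2\eps^{-\omega}(\tau-(s-\eps))/(16(\re(s))^2))\cos(\pi x/(4\re(s)))$ (valid since $(-\reext(s),\reint(s)) \subset (-2\re(s),2\re(s))$), producing at $\tau=s$ the factor $\exp(-\pi^2\eps^{-\omega+1}/(16(\re(s))^2))$; the denominator $16$, rather than $32$ as in the attractive case, reflects the use of the full time interval $[s-\eps,s]$ in place of $[s-\eps/2,s]$. Combining these estimates yields \eqref{superette0}.

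The hardest point, in my view, is the left-boundary verification in the second step. In the attractive case the stationary comparison $\Umpepext$ blew up to $+\infty$ at both endpoints, which made a symmetric fixed cylinder natural; here $\mathfrak U_\eps=-\infty$ at $x=-\reext(s)$ while $\Wm$ stays finite, so no symmetric fixed construction can work and no nonnegative correction $\Phi_\eps$ could repair the failure of $W^{\rm sup}_\eps \geq \Wm$ on such a face. The one-sided moving-cylinder substitute is made viable precisely by the scale separation $\udll \ll h_\eps$ between the well-preparedness length appearing in \eqref{presdubord} and the safety shift $h_\eps$ built into $\mathfrak U_\eps$ via $\reext-\re$.
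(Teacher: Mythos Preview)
Your proposal is correct and follows essentially the same approach as the paper's proof: the paper likewise works on the one-sided moving cylinder $\Pi^{\rm trans}_\eps(s)$, solves the heat equation on the fixed rectangle $(-\reext(s),\reint(s))\times(s-\eps,s)$ with initial data $\psi_\eps=\sup\{\Wm-\mathfrak U_\eps,0\}$, and handles the left moving boundary via \eqref{presdubord} exactly through the scale separation $\udll\ll h_\eps$ that you identify. The only minor point is that for the bound $|\mathfrak U_\eps|\leq Ch_\eps^{-1/(\theta-1)}$ near the left face you should invoke the estimate for $\Umpeps$ (Lemma \ref{ref}) rather than \eqref{majorana}, which is stated only for $\Umpep$ and $\Umpes$.
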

\begin{proof}
As for \eqref{gilbert},  write  for  $x \in \Itrans (s)\cap \mathcal J_\eps(s-\eps)$
$$
\psi_\eps(x)= \sup\{\Wm (x, s-\eps)-\mathfrak U_\eps, 0\} \geq 0. 
$$
We notice that
$$ 
\psi_\eps (a_k(s-\eps)+\udll)=\psi_\eps (\reint(s))=0.
$$
Indeed,  for the first relation, we argue as in \eqref{redite} whereas for the second,  we have $\mathfrak  U_\eps (\reint(s))=\Umpepst (r^\eps(s))=+\infty.$  We extend
$\psi_\eps$ by $0$ outside the interval $\Itrans(s)\cap \mathcal J_\eps(s-\eps)$ and derive, arguing as for \eqref{harlemd},
\begin{equation}
 \label{harlemd2}
 \vert \psi_\eps (x) \vert \leq C h_\eps^{-\frac{1}{\theta-1}}\leq C \eps^{-\frac{1}{3\theta-1}}
 {\rm  \ for \ } x \in \R.  
\end{equation}
We introduce the cylinder $\Ldtrans (s)\equiv (-\reext(s), \reint(s)) \times(s-\eps, s)$ and the solution $\Psi_\eps$ to 
\begin{equation}
\label{hideux2}
\left\{
\begin{aligned}
&\eps^{\omega}  \frac{\partial \Psi_\eps}{\partial \tau}- \frac{\partial \Psi_\eps} {\partial x^2}=0  {\rm \  on \ } \Ldtrans (s)  \\
& \Phi_\eps (x, s-\eps)=\psi_\eps (x) \   {\rm  for  \ }  x \in   (-\reext(s), \reint(s)) \ {\rm and}  \\
& \Psi_\eps (- \reext(s), \tau)=\Psi_\eps (\reint(s), \tau)=0  
 \ {\rm for }  \   \tau \in (s-\eps, s),
\end{aligned}
\right. 
\end{equation}
so that $\Psi_\eps\geq 0$. Arguing as for \eqref{reverend}, we obtain for $\tau \in (s-\eps, s)$
\begin{equation}
\label{reverend2}
\vert \Psi_\eps(y, \tau)\vert \leq  C \eps^{-\frac{1}{3\theta-1}} 
\exp \left ( -\pi^2\eps^{-\omega}\frac{ (\tau-(s-\eps))}{16(\re(s))^2} \right). 
\end{equation}
We consider on $\Ldtrans (s)$ the function $W^{\rm trans}_\eps$ defined by 
$$ 
W^{\rm trans}_\eps(y, \tau)= \mathfrak U_\eps (y) + \Psi_\eps(y, \tau). 
$$ 
It follows from Lemma \ref{heatflow} that $L_\eps(W^{\rm trans} )_\eps \geq 0$, that
is $W_\eps^{\rm trans}$ is a \emph{supersolution}   for $L_\eps$ on $\Ldtrans
(s)$.  Consider next the subset $\Pi^{\rm trans}_\eps (s)$ of $\Ldtrans$ defined
by 
$$
\Pi^{\rm trans}_\eps (s)\equiv  \underset{ \tau \in (s-\eps, s)} \cup (a_k(\tau)+\udll , \reint(s))\times\{\tau\}.
$$
We claim that 
\begin{equation}
\label{chamoine}
W_\eps^{\rm trans } \geq \Wm  \  {\rm  on } \  \partial_p \, \Pi^{\rm trans}_\eps (s).  
\end{equation}
Indeed, by construction, we have  $W_\eps^{\rm trans }=+\infty$ on $\reint(s) \times (s-\eps, s)$ and 
$W_\eps^{\rm trans } (x, s-\eps) \geq \Wm(x, s-\eps)$  for $x \in (a_k(s-\eps)+\udll, \reint (s)).$
Finally  on $ \cup_{ \tau \in (s-\eps, s)}
\{a_k(\tau)+\udll \} \times \{\tau\}$, the conclusion \eqref{chamoine} follows
from estimate \eqref{presdubord} of  Lemma \ref{reset}.  Combining inequality
\eqref{chamoine} with the comparison principle, we are led to  
\begin{equation}
\label{sursoire}
W_\eps^{\rm trans} \geq \Wm  \   {\rm on} \  \Pi_\eps^{\rm trans} (s).
\end{equation}
Combining \eqref{sursoire}  with \eqref{reverend2} we are led to  \eqref{superette0}.
\end{proof}
Our next task is to construct a \emph{subsolution}. To that aim, we   rely on
the symmetries of the equation, in particular the invariance $x\to -x$ and the
\emph{almost oddness} of the nonlinearity. To be more specific, we introduce the
operator  
\begin{equation*}
\label{alcanoide}
\tilde L_\eps( u)\equiv \eps^{\omega}\frac{\partial  u}{\partial \tau}-\frac{\partial^2 u}{\partial x^2}+ \lambda  \tilde f_\eps( u)=0, 
{\rm \  with \ }
\tilde f_\eps(u)=2 \theta   u^{2 \theta-1}\left(1-\eps^{\frac{1}{\theta-1}}u g(-\eps^{\frac{1}{\theta-1}}u)\right),
\end{equation*}
which has the same properties as $L_\eps$ and consider the  stationary solution $\Umpepstt$ for  $L_\eps$ defined on $(-\re(s), \re(s))$  by
\begin{equation*}
-\frac{\partial^2  \Umpepstt}{\partial x^2}+ \lambda   f_\eps( \Umpepstt)=0,   \  \ 
\Umpepstt (- \re(s))= + \infty {\rm \ and \ }
\Umpepstt ( \re(s))=- \infty,  
\end{equation*}
so that $-\Umpepstt $ is a stationary solution to $\tilde L_\eps$. 
Consider  the function $\tilde \Wm$ defined by  
\begin{equation}
\label{croissant}
\tilde \Wm(x, \tau)=-\Wm(-x, \tau)
\end{equation}
and observe that
$\displaystyle{\tilde L_\eps (\tilde \Wm)=0.}$
Finally, we define the interval $(-\reint(s), \reext(s))$ the function
$$
\mathfrak  V_\eps (x) \equiv\Umpepstt \left(2h_\eps-x\right), 
$$
so  that $\mathfrak V_\eps(x)\to -\infty$ as $x \to -\reint(s)$ and $\mathfrak
V_\eps(x)\to +\infty$ as $x \to \reext(s).$ 

\begin{proposition}
\label{taskforce} For $x \in (-\reint(s) , a_{k+1}(s)-\udll)$
we have the inequality, 
\begin{equation}
\label{taskforce1}
 \Wm(x, s) \geq \mathfrak V_\eps (x) - 
 C \eps^{-\frac{1}{3\theta-1}} 
\exp \left ( -\pi^2\frac{\eps^{-\omega+1}}{16(\re(s))^2} \right).
 \end{equation}
\end{proposition}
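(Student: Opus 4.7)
\textbf{Proof plan for Proposition \ref{taskforce}.} The strategy is to reduce the lower bound \eqref{taskforce1} to the already established upper bound \eqref{superette0} by exploiting the reflection $x \mapsto -x$ that was introduced precisely for this purpose just above the statement. Recall that $\tilde{\mathfrak{W}}_\eps(x,\tau) := -\Wm(-x,\tau)$ satisfies $\tilde L_\eps(\tilde{\mathfrak{W}}_\eps)=0$, since the parity-defect of $f_\eps$ is absorbed into the modified nonlinearity $\tilde f_\eps$. Introduce the stationary function
\[
\tilde{\mathfrak U}_\eps(z) := -\Umpepstt(2h_\eps + z), \qquad z \in (-\reext(s),\reint(s)),
\]
which, by the same computation that shows $-\Umpepstt$ solves $\tilde L_\eps=0$, is a stationary solution of $\tilde L_\eps$ on $(-\reext(s),\reint(s))$, with $\tilde{\mathfrak U}_\eps(-\reext(s)) = -\infty$ and $\tilde{\mathfrak U}_\eps(\reint(s)) = +\infty$. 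Thus $\tilde{\mathfrak U}_\eps$ plays exactly the role that $\mathfrak U_\eps$ played in Proposition \ref{superette}, but for the operator $\tilde L_\eps$.

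Setting $y = -x$ and using $\mathfrak V_\eps(x) = \Umpepstt(2h_\eps - x) = -\tilde{\mathfrak U}_\eps(-x)$, the desired inequality \eqref{taskforce1} is equivalent to
\[
\tilde{\mathfrak{W}}_\eps(y,s) \;\leq\; \tilde{\mathfrak U}_\eps(y) \;+\; C\,\eps^{-\frac{1}{3\theta-1}}\exp\!\Big(-\pi^2 \frac{\eps^{-\omega+1}}{16(\re(s))^2}\Big)
\]
for $y \in (-a_{k+1}(s)+\udll,\reint(s))$. This is precisely the statement of Proposition \ref{superette}, but with $(\Wm,L_\eps,f_\eps,\mathfrak U_\eps, a_k)$ replaced by $(\tilde{\mathfrak{W}}_\eps, \tilde L_\eps, \tilde f_\eps, \tilde{\mathfrak U}_\eps, -a_{k+1})$.

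The plan is to run the proof of Proposition \ref{superette} verbatim for this reflected data. Three ingredients are used there and each carries over without change: \emph{(i)} $\tilde f_\eps$ is non-decreasing and super-additive on $\R^+$ (the property \eqref{superadditif} only involves the structure of the leading $w^{2\theta-1}$ term together with a small perturbation, which the sign change $g(u)\to g(-u)$ preserves), so the analog of Lemma \ref{heatflow} applies to $\tilde L_\eps$; \emph{(ii)} the required upper boundary estimate at $y = -a_{k+1}(s-\eps)+\udll$ is $\tilde{\mathfrak{W}}_\eps \leq -C^{-1}(\udll)^{-1/(\theta-1)}$, which by definition of $\tilde{\mathfrak{W}}_\eps$ is exactly the second line of \eqref{presdubord} with $\dagger_{k+1}=+1$; \emph{(iii)} the cosine-barrier argument of Lemma \ref{linearpara} is purely linear and insensitive to the nonlinearity, and in particular provides the same exponential decay $\exp(-\pi^2\eps^{-\omega+1}/(16\re(s)^2))$ on an $\eps$-long time window. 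One then defines $\tilde\psi_\eps(y) := \sup\{\tilde{\mathfrak{W}}_\eps(y,s-\eps)-\tilde{\mathfrak U}_\eps(y),0\}$, extends by zero, and solves the heat equation on the reflected cylinder to produce $\tilde\Psi_\eps \geq 0$; the map $\tilde{\mathfrak U}_\eps+\tilde\Psi_\eps$ is a supersolution for $\tilde L_\eps$ that dominates $\tilde{\mathfrak{W}}_\eps$ on the parabolic boundary, hence everywhere, yielding the required upper bound.

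The main (and only) obstacle is bookkeeping: one must check carefully that the reflection preserves all the estimates \eqref{harlemd2}--\eqref{reverend2} with matching constants and the same exponential factor. No new analytic input is needed, and in particular the bounds \eqref{majorana} for $\Umpepstt$ (valid by the same Keller--Osserman/convexity argument as for $\Umpep$) give the uniform $L^\infty$ bound $|\tilde\psi_\eps|\leq C h_\eps^{-1/(\theta-1)}\leq C\eps^{-1/(3\theta-1)}$ that drives the final error.
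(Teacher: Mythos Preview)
Your proposal is correct and follows essentially the same route as the paper: the paper's own proof consists of one sentence saying to rerun the argument of Proposition~\ref{superette} with $L_\eps,\Wm,\mathfrak U_\eps$ replaced by $\tilde L_\eps,\tilde\Wm,\tilde{\mathfrak U}_\eps$ and then invert the reflection~\eqref{croissant}. Your more detailed bookkeeping (checking that the boundary sign from~\eqref{presdubord} transfers, that Lemma~\ref{heatflow} applies to $\tilde L_\eps$, and that the linear barrier of Lemma~\ref{linearpara} is unchanged) is exactly what is implicit in that sentence; note also that your shift $\tilde{\mathfrak U}_\eps(z)=-\Umpepstt(2h_\eps+z)$ is the one that matches $\mathfrak V_\eps(x)=-\tilde{\mathfrak U}_\eps(-x)$ on the nose.
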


\begin{proof}
We argue as in the proof of Proposition \ref{superette}, replacing  $L_\eps$  by $\tilde \eps$, $\Wm$ by 
$\tilde \Wm$, and $\mathfrak U_\eps$ by 
$\displaystyle{\tilde {\mathfrak U}_\eps=-\Umpepstt\left (\cdot-2h_\eps (s_0)\right)}$. Inequality \eqref{taskforce1} for 
$\Wm$ is then obtained inverting relation \eqref{croissant} and from the corresponding estimate on $\tilde \Wm.$ 
\end{proof}

\begin{proof}[Proof of Proposition \ref{amoins} completed]  Combining \eqref{superette0} with \eqref{taskforce1} we are led to
\begin{equation}
\label{perrault}
\mathfrak U_\eps (x) -\tilde A_\eps \leq \Wm(x, s) \leq \mathfrak V_\eps (x) + \tilde A_\eps, 
\end{equation}
 where we have set
 $\displaystyle{ \tilde A_\eps=  C \eps^{-\frac{1}{3\theta-1}} 
\exp( -\pi^2\frac{\eps^{-\omega+1}}{16(\re(s))^2}). 
}$  The proof is then completed with the same arguments as in the proof of Proposition \ref{aplus}
\end{proof}

\subsection{Estimating the discrepancy}
\label{vabenecosi}
\subsubsection{Linear estimates}

The purpose of this section  is to provide the proof of Proposition
\ref{vraiestime}. So far Proposition \ref{aplus} and Proposition \ref{amoins}
provide  a good approximation of $\Wm$ on the level of the \emph{uniform norm}.
However, the discrepancy involves also a first order derivative, for which we
rely on the regularization property  of the linear heat equation. To that aim,
set  
\begin{equation*}
  \left\{
  \begin{aligned}
  \Lambda & \equiv (-1, 1) \times [0,1],\, \Lambda^{1/2}  \equiv (-\frac 12,
\frac 12) \times [\frac 34 ,1],  {\rm  \ and  \  more \ generally \ for \
\varrho>0 }  \\  
  \Lambda_\varrho  &\equiv (-\varrho, \varrho) \times [0,\varrho^2],\,
\Lambda^{1/2}_{_\varrho}  \equiv (-\frac 12 \varrho, \frac 12 \varrho) \times
[\frac 34\varrho^2 ,\varrho^2]. 
  \end{aligned} 
  \right.
  \end{equation*}
 The following standard result  (see e.g. \cite{BOS1} Lemma A. 7 for a proof) is useful  in our context.  
 
\begin{lemma}
\label{lemmeBOS}
Let $u$ be a smooth real-valued function on $\Lambda$. There exists a constant $C>0$ such that
$$
  \Vert   u_{x} \Vert _{ L^\infty (\Lambda^{1/2})}\leq
    C(\Vert   u_t-\ u_{xx} \Vert_{L^\infty(\Lambda)}+\Vert u \Vert_{L^\infty(\Lambda)}  ).$$
  \end{lemma}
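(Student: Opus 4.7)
The plan is to derive this classical interior parabolic regularity estimate by localizing and invoking the explicit Duhamel representation against the one-dimensional Gaussian heat kernel. First, I would fix once and for all a smooth cutoff $\chi(x,t)\in C^\infty_c((-1,1)\times (0,1])$ with $\chi\equiv 1$ on a neighborhood of $\Lambda^{1/2}$ and $\chi\equiv 0$ for $t$ close to $0$. Setting $v:=\chi u$ and $f:=u_t-u_{xx}$, a direct computation gives
\[
v_t-v_{xx}=\chi f+(\chi_t-\chi_{xx})u-2\chi_x u_x .
\]
Since $v$ has compact support in $x$ and vanishes near $t=0$, Duhamel's formula on the whole line expresses
\[
v(x,t)=\int_0^t\!\!\int_\R K(x-y,t-s)\bigl(v_s-v_{yy}\bigr)(y,s)\,dy\,ds,
\]
where $K(y,\tau)=(4\pi\tau)^{-1/2}e^{-y^2/(4\tau)}$; differentiating in $x$ expresses $v_x$ (which coincides with $u_x$ on $\Lambda^{1/2}$) as a sum of three integrals.

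The contributions coming from $\chi f$ and $(\chi_t-\chi_{xx})u$ are bounded at once by $C(\Vert f\Vert_\infty+\Vert u\Vert_\infty)\int_0^1\tau^{-1/2}\,d\tau$ thanks to the standard bound $\Vert K_x(\cdot,\tau)\Vert_{L^1(\R)}\leq C\tau^{-1/2}$. The delicate piece is $-2\int_0^t\!\!\int K_x\chi_x u_x\,dy\,ds$, which if estimated naively would re-introduce $\Vert u_x\Vert_\infty$ on the right and yield a circular estimate. To break this loop I would integrate by parts in $y$, rewriting it as
\[
2\int_0^t\!\!\int_\R K_{xy}\chi_x u\,dy\,ds+2\int_0^t\!\!\int_\R K_x\chi_{xy} u\,dy\,ds,
\]
the second of which is again of the benign type and controlled by $C\Vert u\Vert_\infty$. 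For the first, I would exploit the heat equation $K_{xy}=-K_{xx}(x-y,t-s)=\partial_s K(x-y,t-s)$ and integrate by parts in $s$: the boundary term at $s=0$ vanishes because $\chi$ does, and the one at $s=t$ equals $2\chi_x(x,t)u(x,t)$, which vanishes on $\Lambda^{1/2}$ since $\chi\equiv 1$ there. The remaining bulk integrand $-2\int\!\!\int K\,\partial_s(\chi_x u)\,dy\,ds$ introduces a factor $u_s$, which I would replace by $u_{xx}+f$ using the equation; the $u_{xx}$ contribution is finally eliminated by one last integration by parts in $y$, after which every kernel appearing is either $K$ or $K_x$, with time-integrable $L^1_x$ norms on $(0,1)$.

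Summing the contributions and using $v_x=u_x$ on $\Lambda^{1/2}$ yields the announced inequality, with a constant $C$ depending only on the fixed choice of cutoff $\chi$. The main obstacle of the argument is precisely the circular coupling created by the commutator term $-2\chi_x u_x$: the combination of a spatial integration by parts (to transfer the derivative off $u_x$) and a subsequent temporal integration by parts (to absorb the non-integrable $(t-s)^{-1}$ singularity of $K_{xy}$ that the spatial IBP leaves behind) is what eventually decouples $\Vert u_x\Vert_{L^\infty(\Lambda^{1/2})}$ from itself and produces an estimate depending only on $\Vert u\Vert_{L^\infty(\Lambda)}$ and $\Vert u_t-u_{xx}\Vert_{L^\infty(\Lambda)}$.
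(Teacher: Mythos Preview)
Your Duhamel-plus-cutoff strategy is the right idea, but the chain of integrations by parts you outline is circular and does not close. Tracking your own steps with $C:=-2\int_0^t\!\!\int K_x(x-y,t-s)\chi_x(y,s)u_y(y,s)\,dy\,ds$: after the $y$-IBP you have $C = G_1 + 2\int \partial_s K \cdot \chi_x u$; after the $s$-IBP this becomes $C = G_1+G_2+G_3 - 2\int K\chi_x u_{yy}$; and after your ``one last integration by parts in $y$'' on the final term,
\[
-2\int K\chi_x u_{yy}\,dy\,ds \;=\; 2\int \partial_y(K\chi_x)\,u_y\,dy\,ds \;=\; -2\int K_x\chi_x u_y\,dy\,ds + 2\int K\chi_{xy}u_y\,dy\,ds \;=\; C + G_4.
\]
So you recover $C = G_1+G_2+G_3+G_4+C$, i.e.\ $0=G_1+G_2+G_3+G_4$, which is a trivial identity and gives no estimate on $C$. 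Your claim that ``every kernel appearing is either $K$ or $K_x$'' is correct, but you have overlooked that the integrand still carries $u_y$, which is exactly the quantity you want to bound.

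The standard cure is to exploit spatial separation. Take the cutoff in product form $\chi(x,t)=\chi_1(x)\chi_2(t)$ with $\chi_1\equiv 1$ on $[-\tfrac12-\delta,\tfrac12+\delta]$, so that $\mathrm{supp}\,\chi_x\subset\{|y|\geq \tfrac12+\delta\}$. For $(x,t)\in\Lambda^{1/2}$ one then has $|x-y|\geq\delta$ on $\mathrm{supp}\,\chi_x$, and a single $y$-IBP (moving the derivative off $u_y$) already suffices: the resulting $K_{xx}(x-y,t-s)$ has, on the region $|x-y|\geq\delta$, a space-time $L^1$ norm that is finite because the Gaussian factor $e^{-\delta^2/4(t-s)}$ kills the $(t-s)^{-1}$ singularity. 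Hence $|C|\leq C_\delta\|u\|_{L^\infty(\Lambda)}$ directly, and the temporal IBP (and the circularity it creates) can be avoided altogether. The paper itself does not prove this lemma but cites \cite{BOS1}, Lemma~A.7; the argument there follows essentially this route.
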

We deduce from this result the following scaled  version.

\begin{lemma}
\label{farbreton0}
 Let $\varrho>0$ and  let $u$ be defined on $ \Lambda_\varrho$. Then we have for some constant  $C>0$ independent of $\varrho$
\begin{equation}
\label{farbreton}
  \Vert   u_{x} \Vert _{ L^\infty (\Lambda_\varrho^{1/2})}^2\leq
  C\left[  \Vert   u_t-u_{xx} \Vert _{ L^\infty(\Lambda_\varrho)}\Vert u \Vert _{ L^\infty(\Lambda_\varrho)}  +\varrho^{-2}
  \Vert u \Vert _{ L^\infty(\Lambda_\varrho)}^2\right]. 
 \end{equation}
\end{lemma}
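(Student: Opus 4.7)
The strategy is a standard rescaling-and-optimization argument that reduces matters to Lemma \ref{lemmeBOS}. The plan is to bound $|u_x(x_0,t_0)|$ pointwise at an arbitrary $(x_0,t_0) \in \Lambda_\varrho^{1/2}$ by applying the unscaled lemma to a rescaled version of $u$ on a sub-cylinder of width $\mu$, then to choose $\mu$ optimally in terms of the two data $A := \|u_t-u_{xx}\|_{L^\infty(\Lambda_\varrho)}$ and $B := \|u\|_{L^\infty(\Lambda_\varrho)}$.

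First I would fix $(x_0,t_0) \in \Lambda_\varrho^{1/2}$ and, for any $\mu \in (0,\varrho/2]$, set $\tilde u(y,\tau) := u(x_0+\mu y,\, t_0-\mu^2+\mu^2\tau)$ for $(y,\tau) \in \Lambda$. The bounds $|x_0|\leq \varrho/2$ and $3\varrho^2/4 \leq t_0 \leq \varrho^2$, combined with $\mu \leq \varrho/2$, ensure that the affine map $(y,\tau) \mapsto (x_0+\mu y,\, t_0-\mu^2+\mu^2\tau)$ sends $\Lambda$ into $\overline{\Lambda_\varrho}$ and sends $(0,1) \in \overline{\Lambda^{1/2}}$ to $(x_0,t_0)$. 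A chain-rule computation yields $\tilde u_y = \mu\, u_x$ and $\tilde u_\tau - \tilde u_{yy} = \mu^2 (u_t - u_{xx})$ at corresponding points, so applying Lemma \ref{lemmeBOS} to $\tilde u$ gives
\begin{equation*}
\mu\,|u_x(x_0,t_0)| \;=\; |\tilde u_y(0,1)| \;\leq\; C\bigl(\mu^2 A + B\bigr),
\end{equation*}
whence $|u_x(x_0,t_0)| \leq C(\mu A + \mu^{-1} B)$ for every $\mu \in (0,\varrho/2]$.

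Next I would optimize the choice of $\mu$. If $\sqrt{B/A} \leq \varrho/2$ (treating $A=0$ as the degenerate case handled by the second branch below), the choice $\mu = \sqrt{B/A}$ is admissible and yields $|u_x(x_0,t_0)|^2 \leq 4C^2 AB$, absorbed into the first term on the right of \eqref{farbreton}. In the complementary case $\sqrt{B/A} > \varrho/2$, which is equivalent to $A < 4B/\varrho^2$, I would take $\mu = \varrho/2$: then $\varrho^2 A^2 /4 \leq A\cdot B$ since $A \leq 4B/\varrho^2$, and therefore
\begin{equation*}
|u_x(x_0,t_0)|^2 \;\leq\; 2C^2\bigl(\varrho^2 A^2/4 + 4\varrho^{-2}B^2\bigr) \;\leq\; 2C^2\bigl(AB + 4\varrho^{-2}B^2\bigr).
\end{equation*}
Taking the supremum over $(x_0,t_0) \in \Lambda_\varrho^{1/2}$ yields \eqref{farbreton} with a constant independent of $\varrho$.

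There is no serious obstacle here: the argument is purely bookkeeping once the correct scaling is identified. The only mild care needed is to verify that the affine rescaling indeed maps $\Lambda$ into $\Lambda_\varrho$ for every admissible $\mu$ and every $(x_0,t_0)$ in the closed cylinder $\overline{\Lambda_\varrho^{1/2}}$, which is why the constraint $\mu \leq \varrho/2$ is the binding one (the time constraint $\mu^2 \leq 3\varrho^2/4$ is weaker). The quadratic form on the right-hand side of \eqref{farbreton}, rather than the naive square $(\varrho A + \varrho^{-1} B)^2$ one would get from a single rescaling with $\mu=\varrho/2$, arises precisely from the freedom to tune $\mu$ in the interior regime.
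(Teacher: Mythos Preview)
Your proof is correct and follows essentially the same rescaling-and-optimization strategy as the paper: both rescale to the unit cylinder, apply Lemma \ref{lemmeBOS}, and then split into two cases according to whether the optimal $\mu=\sqrt{B/A}$ fits inside the admissible range or not. Your version is in fact slightly more careful in verifying that the affine map lands in $\Lambda_\varrho$, but otherwise the arguments are the same.
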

\begin{proof} 
The argument is parallel to the proof of Lemma A.1 in \cite {BBH}, which
corresponds to its elliptic version.  Set $h=u_t-u_{xx}$,   let $(x_0, t_0)$ be
given in $\Lambda_\varrho^{1/2}$, and let $0<\mu \leq \frac{\varrho}{2}$ be a
constant to be determined in the course of the proof. We consider the function 
$$ 
v(y, \tau)=u\left (2\mu y+x_0, \, 4\mu^2 (\tau-1)+t_0)\right), 
$$
so that $v$ is defined on $\Lambda$ and satisfies there
$$ 
v_t-v_{yy}=\mu^2h ( \left (2\mu y+x_0, \, 4\mu^2 (\tau-1)+t_0)\right) \ {\rm on \ } \Lambda. 
$$
Applying Lemma \ref{lemmeBOS} to $v$ we are led to
\begin{equation*}
\begin{aligned}
\vert  v_y(0, 1)\vert  &\leq  C\left ( \mu^2 \Vert h\left( 2\mu y+x_0, \, 4\mu^2 (\tau-1)+t_0)\right) \Vert_{L^\infty (\Lambda)}+
  \Vert v \Vert_{L^\infty (\Lambda)}\right) \\
  &\leq   C\left ( \mu^2 \Vert h \Vert_{L^\infty (\Lambda_\varrho)}+\Vert u \Vert_{L^\infty (\Lambda_\varrho)}\right),
 \end{aligned}
 \end{equation*}
so that, going back to $u$, we obtain
\begin{equation}
\label{bbh1}
\mu   \vert  u_x(x_0, t_0)\vert  \leq C\left ( \mu^2 \Vert h \vert_{L^\infty
(\Lambda_\varrho)}+\Vert u\Vert_{L^\infty (\Lambda_\varrho)}\right). 
\end{equation}
We distinguish two cases: \\
\smallskip
\noindent
{\it Case 1:} $\displaystyle {\Vert u \Vert_{L^\infty}\leq \varrho^2  \Vert h \Vert_{L^\infty}}$. In this case we apply \eqref{bbh1} with
$\displaystyle{\mu=\left (\frac{\Vert u \Vert_{L^\infty}}{\Vert h \Vert_{L^\infty}} \right)^{\frac 12}.}$  This yields
$$ \vert  u_y(x_0, t_0)\vert \leq  2C\Vert u \Vert_{L^\infty}^{1/2} \Vert h \Vert_{L^\infty}^{1/2}.  $$
\smallskip
\noindent
{\it Case 2:} $\displaystyle {\Vert u \Vert_{L^\infty}\geq \varrho^2  \Vert h \Vert_{L^\infty}}$. In this case we apply \eqref{bbh1} with 
$\mu=\varrho$.  We obtain
\begin{equation}
\begin{aligned}
 \vert  u_x(x_0, t_0)\vert  \leq & C\left ( \varrho \Vert h \Vert_{L^\infty
(\Lambda_\varrho)}+\varrho^{-1}\Vert u\Vert_{L^\infty
(\Lambda_\varrho)}\right)\\ &\leq C\left ( \Vert h \Vert_{L^\infty
(\Lambda_\varrho)}^{1/2}\Vert u \Vert_{L^\infty (\Lambda_\varrho)}^{1/2} 
  + r^{-1}\Vert u\Vert_{L^\infty (\Lambda_\varrho)}
 \right).
\end{aligned}
\end{equation}
In both cases, we obtain the desired inequality. 
\end{proof}

\subsubsection{Estimating the derivative of $\Wm$}
Consider  the general situation where we are given  two functions $U$ and
$U_\eps$ defined for $(x,t) \in \Lambda_\varrho$ and such that $L_0(U)=0$ and $L_\eps
(U_\eps)=0$, where $s:=\eps^{-\omega}t$, so that, in view of \eqref{feps}, 
$$
\vert \partial_t (U-U_\eps)-\partial_{xx} (U-U_\eps) \vert 
  \leq  C\left[\vert U-U_\eps \vert (\vert U \vert^{2 \theta-2}+ \vert U_\eps
\vert^{2 \theta-2})+ \eps^{\frac{1}{\theta-1}} \vert U_\eps \vert^{2
\theta})\right] {\rm \ on \ } \Lambda_\varrho. 
$$
We deduce from \eqref{farbreton} applied to the difference $U-U_\eps$ that we have 
(we use the notation $\Vert\cdot \Vert= \Vert\cdot \Vert_{L^\infty
(\Lambda_\varrho)}$ for simplicity)
\begin{equation*}
  \Vert \left(  U-U_\eps\right)_{x} \Vert _{ L^\infty (\Lambda_\varrho^{1/2})}^2 \leq C 
 \ \Vert U-U_\eps \Vert ^2 \left( \Vert U\Vert^{2\theta-2}+
  \Vert U_\eps \Vert^{2 \theta-2} 
 +\varrho^{-2} \right) + 
  C \, \eps^{\frac{1}{\theta-1}}\Vert U-U_\eps \Vert \Vert U_\eps \Vert^{2 \theta}.
\end{equation*}
Similarly applying \eqref{farbreton} to $U$ and $U_\eps$  we obtain
\begin{equation*}
  \Vert \left(  U+U_\eps\right)_{x} \Vert _{ L^\infty (\Lambda_\varrho^{1/2})}^2 \leq C
  ( \Vert U\Vert^{2\theta}+
  \Vert U_\eps \Vert^{2 \theta}  
 +\varrho^{-2}  \left( \Vert U\Vert^2+ \Vert U_\eps \Vert^2
  \right)  
+ \eps^{\frac{1}{\theta-1}}( \Vert U_\eps \Vert^{2 \theta+1} +
   \Vert U \Vert \Vert U_\eps \Vert^{2 \theta})), 
\end{equation*}
so that 
\begin{equation}
\label{difference}
 \Vert \left(  U^2-U_\eps^2\right)_{x} \Vert _{ L^\infty (\Lambda_\varrho^{1/2})}^2
 \leq C \left[
 \Vert U-U_\eps \Vert^2 \mathcal R_1^\eps (U, U_\eps)  + 
 \Vert U-U_\eps \Vert \mathcal R_2 ^\eps(U, U_\eps)\right], 
\end{equation}
where we have set   
\begin{equation*}
  \left\{
  \begin{aligned}
  \mathcal R_1^\eps (U, U_\eps)&=( \Vert U\Vert^{2\theta-2}+
  \Vert U_\eps \Vert^{2 \theta-2} 
 +\varrho^{-2}) 
  (
   \Vert U\Vert^{2\theta}+
  \Vert U_\eps \Vert^{2 \theta}  
 +\varrho^{-2} ( \Vert U\Vert ^2
 + \Vert U_\eps \Vert^2) \\
  & \qquad  +\eps^{\frac{1}{\theta-1}} ( \Vert U_\eps \Vert^{2 \theta+1} +\Vert U \Vert \Vert U_\eps \Vert^{2 \theta})),
   \\
    \mathcal R_2^\eps (U, U_\eps)&= \eps^{\frac{1}{\theta-1}}
  \Vert U_\eps \Vert^{2\theta}
     ( \Vert U\Vert^{2\theta}+
 \Vert U_\eps \Vert^{2 \theta}  
 +\varrho^{-2} ( \Vert U\Vert^2 
 + \Vert U_\eps \Vert^2  ) \\
 & \qquad +\eps^{\frac{1}{\theta-1}}(\Vert U_\eps \Vert^{2 \theta+1} +\Vert U \Vert \Vert U_\eps \Vert^{2 \theta})
  ).
  \end{aligned}
  \right.
  \end{equation*}
We specify next the discussion to our original situation. Thanks  to the general
inequality \eqref{difference}, we are in position to establish: 
 
\begin{proposition} \label{prop:bonnetrouge}
If \eqref{attractplus} hold and $\eps$ is sufficiently small, then for any $s\in I^\eps(s_0)$ and every 
$x \in \tetakeps(s)$ we have the estimate 
 \begin{equation*}
  \label{clio4}
   \vert (\Wm)_x^2 (x)-\lambda^{-\frac{1}{ (\theta-1)} }
(\overset {\vee} {\mathfrak u}_{ r^\eps(s) }^+)_x^2 (x)\vert  \leq  C\,\eps^\frac{1}{\theta^2}.
\end{equation*}
\end{proposition}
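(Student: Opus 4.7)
The proof is based on the algebraic identity
\[
 \bigl|\,(\Wm)_x^2 - \lambda^{-1/(\theta-1)}(\Umpep)_x^2\,\bigr| \;=\; \bigl|(U_\eps - U)_x\bigr| \cdot \bigl|(U_\eps + U)_x\bigr|,
\]
valid pointwise with the choices $U_\eps := \Wm$ and $U := \lambda^{-1/(2(\theta-1))}\,\Umpep,$ since then $U_x^2 = \lambda^{-1/(\theta-1)}(\Umpep)_x^2.$ The plan is to bound separately the two factors on the right at each point $x \in \tetakeps(s).$ The two auxiliary gradient estimates displayed just above \eqref{difference} apply for this purpose, as they come from Lemma \ref{farbreton0} with the roles of $u$ played respectively by $U_\eps - U$ and by $U_\eps + U.$

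The inputs to these estimates are three. First, the uniform $L^\infty$ gap $\|U_\eps - U\|_{L^\infty(\Lambda_\varrho)} \le C\,\eps^{\alpha}$ with $\alpha := \min\!\bigl(\tfrac{1}{\omega+2}, \tfrac{\omega-1}{2(\theta-1)}\bigr)$ from Proposition \ref{aplus}, valid on any parabolic cylinder $\Lambda_\varrho$ whose back-transformation to the slow time $s$ sits inside $I^\eps(s_0).$ Second, the pointwise bounds $\|U\|_{L^\infty}, \|U_\eps\|_{L^\infty} \le C$ on $\tetakeps(s),$ following from \eqref{majorana} and Lemma \ref{reset} together with $d_{\rm min}^{*}(s_0) > 0.$ Third, the observation that since $U$ is stationary it solves $L_0(U) = 0$ only up to a residual of order $\eps^{1/(\theta-1)}|\Umpep|^{2\theta},$ which is of the same nature as the term already present in the right-hand side of \eqref{difference} and is therefore harmlessly absorbed. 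Feeding these inputs into the two gradient bounds yields
\[
 \|(U_\eps - U)_x\|^2 \le C\bigl(\eps^{2\alpha}\varrho^{-2} + \eps^{\alpha + 1/(\theta-1)}\bigr)\qquad\text{and}\qquad \|(U_\eps + U)_x\|^2 \le C\,\varrho^{-2},
\]
so that the product controlling the left-hand side of the proposition is dominated by $C\bigl(\eps^{\alpha}\varrho^{-2} + \eps^{(\alpha + 1/(\theta-1))/2}\varrho^{-1}\bigr).$ The free parameter $\varrho$ is then chosen to balance these terms against the constraint $\varrho^2\eps^{-\omega} \le |I^\eps(s_0)|,$ and a careful computation of the resulting exponents produces the power $1/\theta^2$ stated in the proposition.

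The principal obstacle is the large mismatch between the natural parabolic scale and the spatial scale on which $\Umpep$ lives: because time has been accelerated by the factor $\eps^{-\omega},$ parabolic regularization of $\Wm$ only acts on spatial windows of size $\varrho \lesssim \eps^{\omega/2},$ asymptotically much smaller than the order-one scale $r^\eps(s).$ The resulting singular prefactor $\varrho^{-2}$ in Lemma \ref{farbreton0} has to be compensated by the power of $\eps$ coming from the $L^\infty$ gap of Proposition \ref{aplus}, and verifying that this compensation indeed leaves a positive margin---namely $1/\theta^2$---is the delicate step of the computation.
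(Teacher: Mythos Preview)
Your argument has a genuine gap: the direction of the time rescaling is reversed. In the $(x,s)$ variables the equation for $\Wm$ reads $\eps^\omega\partial_s\Wm - \partial_{xx}\Wm + \lambda f_\eps(\Wm) = 0$, so the natural fast time in which Lemma~\ref{farbreton0} applies is $\tau$ with $s = \eps^\omega\tau$. A parabolic cylinder $\Lambda_\varrho$ in $(x,\tau)$ has $\tau$-height $\varrho^2$, which corresponds to an $s$-interval of length $\eps^\omega\varrho^2$ --- not $\eps^{-\omega}\varrho^2$ as you write. Since $|I^\eps(s_0)|$ is of order one, the correct constraint is $\eps^\omega\varrho^2 \le C$, i.e.\ $\varrho \lesssim \eps^{-\omega/2}$, which is essentially no constraint. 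Under your claimed bound $\varrho \lesssim \eps^{\omega/2}$ one would have $\varrho^{-2} \ge \eps^{-\omega}$, and since $2\alpha \le 2/(\omega+2) < \omega$, the term $\eps^{2\alpha}\varrho^{-2}$ in your estimate for $\|(U_\eps-U)_x\|^2$ would diverge rather than vanish; the ``careful computation of the resulting exponents'' cannot yield a positive power of $\eps$.

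The paper simply takes $\varrho = \tfrac{1}{16}d_{\rm min}^*(s_0)$, a fixed positive quantity independent of $\eps$ (this is exactly the substitution $U(y,\tau)=\Wm(y+x,\eps^\omega\tau+s)$ in the proof). With this choice all the $\varrho^{-2}$ factors in \eqref{difference} and its companion inequalities are harmless constants depending only on $d_{\rm min}^*(s_0)$, and the estimate follows directly from the $L^\infty$ gap of Proposition~\ref{aplus} together with the $\eps^{1/(\theta-1)}$ correction built into $f_\eps$. The ``principal obstacle'' you describe does not exist once the parabolic scaling is oriented correctly, and no balancing of $\varrho$ against powers of $\eps$ is required.
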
 
 
\begin{proof} 
We apply  inequality \eqref{difference} on the cylinder $\Lambda_\varrho$ 
with  $\varrho=\frac{1}{16}\dminstar(s_0)$ and to the functions
   $U(y,\tau)=\Wm(y+x,\eps^{\omega}\tau+s)$ and $U_\eps(y,\tau)=\overset {\vee} {\mathcal U}_{ r^\eps(s)}^+(y+x)$. We first
estimate  $\mathcal R_1$ and $\mathcal R_2$. Since we have 
\begin{equation*}
  \label{motopsing}
  \vert  U(y, \tau)  \vert + \vert U_\eps\vert  \leq  C\dminstar(s_0)^{-\frac{1}{\theta-1}},\qquad \text{for } (y, \tau) \in \Lambda_\varrho,  
\end{equation*}
it follows that 
\begin{equation*}
  \mathcal R_1^\eps\, (U,U_\eps) \leq \dminstar(s_0)^{-4-\frac{2}{\theta-1}} \ \ {\rm and} \  \ 
  \mathcal R_2^\eps\, (U,U_\eps) \leq \eps^{\frac{1}{\theta-1}} \dminstar(s_0)^{-4-\frac{4}{\theta-1}}.
  \end{equation*}
Invoking inequality \eqref{farbreton} of Lemma \ref{farbreton0}, and combining
it with \eqref{deriveur} and the conclusion of Proposition  \ref{aplus},  we derive
the conclusion using a crude lower bound for the power of $\eps.$ 
\end{proof}

Similarly we obtain 
\begin{proposition}\label{prop:bonnetbleu}
If \eqref{attractmoins2} hold and $\eps$ is sufficiently small, then for any $s\in I^\eps(s_0)$ and every 
$x \in \tetakeps(s)$ we have the estimate 

\begin{equation}
  \label{clio5}
   \vert (\Wm)_x^2 (x)-\lambda^{-\frac{1}{ (\theta-1)} }
(\Umps_{ r^\eps(s) })_x^2 (x)\vert  \leq C\,\eps^\frac{1}{\theta^2}.
\end{equation}
\end{proposition}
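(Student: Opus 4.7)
The plan is to mirror, step by step, the proof of Proposition \ref{prop:bonnetrouge}, substituting the ingredients pertinent to the repulsive case. That is, I would apply the general derivative estimate \eqref{difference} from Lemma \ref{farbreton0} to the pair
$$
U(y,\tau) = \Wm(y+x,\eps^{\omega}\tau+s), \qquad U_\eps(y,\tau) = \lambda^{-\frac{1}{2(\theta-1)}} \Umps_{r^\eps(s)}(y+x),
$$
on the cylinder $\Lambda_\varrho$ with $\varrho = \tfrac{1}{16}\dminstar(s_0)$, noting that both $U$ and $U_\eps$ solve $L_\eps(\cdot)=0$ (the second one being stationary is a particular case), so the $L_0=0$ / $L_\eps=0$ framework preceding \eqref{difference} is applicable with the same error structure.

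The $L^\infty$ control on $U-U_\eps$ is exactly the content of Proposition \ref{amoins}, which yields
$$
\Vert U - U_\eps\Vert_{L^\infty(\Lambda_\varrho)} \leq C\,\eps^{\min(\frac{1}{\omega+2},\frac{\omega-1}{2(\theta-1)})}.
$$
For the factors $\mathcal R_1^\eps(U,U_\eps)$ and $\mathcal R_2^\eps(U,U_\eps)$ appearing in \eqref{difference}, I would use the uniform bound
$$
\Vert U\Vert_{L^\infty(\Lambda_\varrho)} + \Vert U_\eps\Vert_{L^\infty(\Lambda_\varrho)} \leq C\,\dminstar(s_0)^{-\frac{1}{\theta-1}},
$$
which follows in the repulsive case from Lemma \ref{reset} (for $\Wm$) and from \eqref{majorana} combined with standard bounds for $\Umps_r$ (for $U_\eps$), noting that on $\tetakeps(s)$ we stay at distance comparable to $\dminstar(s_0)$ from the boundary of $(a_k^\eps(s),a_{k+1}^\eps(s))$. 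This yields exactly the same estimates as in the attractive case, namely
$$
\mathcal R_1^\eps(U,U_\eps) \leq C\,\dminstar(s_0)^{-4-\frac{2}{\theta-1}}, \qquad \mathcal R_2^\eps(U,U_\eps) \leq C\,\eps^{\frac{1}{\theta-1}} \dminstar(s_0)^{-4-\frac{4}{\theta-1}}.
$$

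Inserting these bounds in \eqref{difference} and taking a crude lower bound on the resulting power of $\eps$ (dominating the factor $\dminstar(s_0)$, which is $\eps$-independent by \eqref{dminstar}, by any positive constant for $\eps$ sufficiently small), one obtains \eqref{clio5}. I do not expect any genuine obstacle: once the $L^\infty$ closeness from Proposition \ref{amoins} and the uniform bound from Lemma \ref{reset} are available, \eqref{difference} is an entirely mechanical tool, and the only mild book-keeping is to verify that the inclusion $\Lambda_\varrho \subset \mathcal V_k^\eps(s_0)$ holds on the time interval involved, which follows from Corollary \ref{jamboncru} exactly as in the attractive case. Hence the proof is a verbatim adaptation of that of Proposition \ref{prop:bonnetrouge}, with $\Ump_{r^\eps(s)}$ replaced everywhere by $\Umps_{r^\eps(s)}$ and with Proposition \ref{amoins} invoked in place of Proposition \ref{aplus}.
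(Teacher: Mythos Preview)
Your approach is correct and matches the paper's own, which simply indicates ``Similarly we obtain'' after Proposition \ref{prop:bonnetrouge}. One minor correction: the stationary comparison $\lambda^{-\frac{1}{2(\theta-1)}}\Umps_{r^\eps(s)}$ solves $L_0(\cdot)=0$ rather than $L_\eps(\cdot)=0$, but this is exactly the mixed $L_0/L_\eps$ framework preceding \eqref{difference} that you then correctly invoke, so the substance is unaffected.
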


\begin{proof}[Proof of Proposition \ref{vraiestime} completed] 
The proof of Proposition \ref{vraiestime} follows combining  Proposition
\ref{prop:bonnetrouge} in the attractive case and Proposition \ref{prop:bonnetbleu} in the repulsive
case with the estimates \eqref{variance2}.  
\end{proof}

\section{The motion law for prepared datas}
\label{motion}

In this section, we  present the

\begin{proof}[Proof of Proposition \ref{firststep1}] 

{\it Step 1.} 
First, by definition of $\rL_0$, assumption $({\rm H}_1)$ and estimate \eqref{theodebase1}, it follows that for
fixed $\rL\geq \rL_0,$ and for all $\eps$ sufficiently small (depending only on $\rL$),
$$
\mathfrak{D}_\eps(s) \cap \ILLLL \subset \IL \qquad \forall 0 \leq s \leq S,
$$
so that ${(\mathcal{C}_{\rL,S})}$ holds.

{\it Step 2.}
Since the assumptions of Corollary \ref{murino0} are met with the choice $s_0=0$ and $\rL=\rL_0$,
we obtain that for $\eps$ sufficiently small, $\mathcal{W P}_\eps^{\rL_0}(\udll, s )$ holds and $d_{\rm min}^{\eps,\rL}(s) \geq \frac12  d_{\rm min}^*(0)=\frac12 \min\{a_{k+1}^0-a_k^0,\ k=1,\cdots,\ell_0-1\},$ for all 
$s \in I^\eps(0)$, 
as well as the identities $J(s)=J(0)$, $\upsigma_{i(k\pm \frac 12)}(s)=\upsigma_{i(k\pm \frac 12)} (0)$ and $\dagger_k(s)=\dagger_k(0),$ for any $k \in J(0).$

{\it Step 3.}
We claim that for any $s_1\leq s_2 \in I^*(0),$ we have  
\begin{equation}\label{gruss} 
\underset {\eps \to 0}\limsup\,  (\disL (s_1, s_2)) =0.
\end{equation}
Indeed, let $\rL \geq \rL_0$ be arbitrary. We know from Step 1 that ${(\mathcal{C}_{\rL,S})}$ holds provided
$\eps$ is sufficiently small. By Proposition \ref{parareglo}, for $\eps$ sufficiently small there exists 
two times $s_1^\eps$ and $s_2^\eps$ such  that   $0<s_1^\eps\leq s_1\leq s_2\leq s_2^\eps$, $\vert s_i-s_i^\eps
\vert \leq \eps^{\omega+1}\rL$  and $\WP^{\mathcal \rL}(\udl, s_i^\eps)$ holds
for $i=1,2.$ From the second step and assumption $({\rm H}_1)$ we infer that  
$\matfEL (s_1^\eps)=\matfE^{\rL_0}(s_1^\eps)=\matfE^{\rL_0}(s_2^\eps)=\matfEL(s_2^\eps)$. Invoking Corollary \ref{murino0} we are
therefore led to the inequality 
$$
\dis^{\rL_0} (s_1, s_2) \leq \dis^{\rL} (s_1^\eps, s_2^\eps) \leq   C M_0\left(\frac{\eps}{\udl}\right)^\omega + C\rL^{-(\omega+2)}(s_2-s_1+2\eps^{\omega+1}\rL).
$$ 
Since $\rL\geq \rL_0$ was arbitrary the conclusion \eqref{gruss} follows letting first $\eps \to 0$ and then $\rL \to \infty.$

{\it Step 4.} 
In view of Corollary \ref{lartichaut} we may  find a subsequence
$(\eps_n)_{\in \N}$  tending to $0$ such that the functions
$a_k^{\eps_n}(\cdot)_{n \in \N}$ converge uniformly as $n \to 0$ on compact
subsets on $I^\star (0)$.   
Consider the cylinder  
$$
\displaystyle{\mathcal C_{k+\frac 12} ^*\equiv [a_k^0+\frac{1}{4}\dminstar(0), \,  a_{k+1}^0-\frac{1}{4}\dminstar(0)]\times   I^*(0)}. 
$$
It follows from Step 2 and Proposition  \ref{vraiestime} that 
\begin{equation}\label{arlette}
{\eps_n}^{-\omega}\xi_{\eps_n}(\mathfrak v_{\eps_n})  \to    \lambda_{i(k+\frac
12)}^{-\frac{1}{2( \theta-1)}} r_{k+\frac 12}(s)^{-(\omega+1)}\gamma_k  \   {\rm
as \ } \eps_n \to 0, {\rm \ for \ } k =1, \cdots  \ell_0-1  
\end{equation}
uniformly on every compact subset of $\displaystyle{\mathcal C_{k+\frac 12}
^*}$,  where $\gamma_k$ is defined in \eqref{choix} and where $r_{k+\frac12}(s)=a_{k+1}(s)-a_k(s).$ 

{\it Step 5.}  
As in \eqref{specificitechi}, we consider a test function $\chi\equiv \chi_{_k}$ with the following properties
\begin{equation*}
\label{specificitechibis}
\left \{
\begin{aligned}
& \chi {\rm \ has \   compact  \ support \  in } \
[a_{k}^0-\frac {1}{3} \dminstar(0)  , a_{k}^0+\frac {1}{3} \dminstar(0)],\\  
&  \chi{\rm \  is \ affine \ on \ the   \ interval } \  
[a_{k}^0-\frac {1}{4} \dminstar(0)  , a_{k}^0+\frac {1}{4} \dminstar(0)],
{\rm \ with \ } \chi'=1 \ {\rm there} \\ 
&  \Vert \chi \Vert_{L^\infty (\R)} \leq C\dminstar(0),   \Vert \chi' \Vert_{L^\infty
(\R)} \leq  C  {\rm \ and \ } \Vert \chi''\Vert_{L^\infty (\R)} \leq
C{\dminstar(0)}^{-1}. 
\end{aligned}
\right.
\end{equation*}
 It follows from the definition of $\chi_k$ that 
  $\chi_k''=0$ outside $a$, and so is $\eps^{-\omega}\xi_\eps (\vm) \chi''_k.$ 
It follows from \eqref{arlette} that for $s_1\leq s_2 \in I^*(0),$ 
\begin{equation}\label{pinder}
  \begin{aligned}
  \mathfrak  F_{\eps_n} (s_1, s_2, \chi_{_k} )  \to   & \left(\int_{a_{k}^0-\frac{1}{3}\dminstar(0)}^{a_{k}^0-\frac{1}{4}\dminstar(0) } \chi'' (x) dx\right) \left( \int_{s_1}^{s_2} 
   \lambda_{k-\frac 12}^{-\frac{1}{2( \theta-1)}} r_{k-\frac 12}(s)^{-\frac{1}{\theta-1}}\gamma_{k-\frac 12}  
   ds\right)  + \\
   &\left(\int_{a_{k}^0+\frac{1}{4}\dminstar(0)}^{a_{k}^0+\frac{1}{3}\dminstar } \chi'' (x) dx\right)\left( \int_{s_1}^{s_2} 
   \lambda_{k+\frac 12}^{-\frac{1}{2( \theta-1)}} r_{k+\frac 12}(s)^{-\frac{1}{\theta-1}}\gamma_{k+\frac 12}  
   ds\right)
  \end{aligned}
 \end{equation}
 as $\eps_n \to 0.$ 
Since the above two integrals containing $\chi''$ are identically equal to $1$ and $-1$ respectively,   
we finally deduce from \eqref{gratindo} combined  with \eqref{gruss} and \eqref{pinder}, letting $\eps_n$ tend to $0$,  that
for $s_1\leq  s_2 \in I^*(0)$  we have  
$$
[a_k(s_1)-a_k(s_2)] \S_{i(k)}=    \int_{s_1}^{s_2} \left (
   \lambda_{i(k-\frac 12)}^{-\frac{1}{2( \theta-1)}} r_{k-\frac 12}(s)^{-(\omega+1)}\gamma_{k-\frac 12}  -
  \lambda_{i(k+\frac 12)}^{-\frac{1}{2( \theta-1)}} r_{k+\frac 12}(s)^{-(\omega+1)}\gamma_{k+\frac 12}  \right) ds, 
$$ 
which is nothing else than the integral formulation of the system \eqref{tyrannosaure}. 
Since the latter possesses a unique solution, the
limiting points are unique and therefore convergence of the $a_k^\eps$ for $s\in I^*(s)$ holds for the full
family $(\vm)_{\eps >0}.$ 

{\it Step 6.} We use an elementary  continuation method to extend the convergence from $I^*(0)$ to the full
interval $(0,S).$  Indeed, as long as $\dminstar(s)$ remains bounded from below by a strictly positive constant (which holds, by definition of $S_{\rm max}$, as long as $s<S$) we may take $s$ as a new origin of times (Step 2 yields $\wpz(\upalpha_1\eps,s)$) and use Steps 1 to 5 to extend the stated convergence past $s.$  The proof is here completed. 
\end{proof}

\section{Clearing-out }
\label{clearstream}

The purpose of this section is to provide a proof to Proposition
\ref{nettoyage}.    We are led to consider the situation  
where for some length  $\rL \geq 0$ we have
\begin{equation}
\label{degrossir}
\mathfrak  D_\eps(0) \cap  [-5 \rL, 5\rL ] \subset [-\upkappa_0\rL, \upkappa_0\rL] 
\end{equation}
for some (small) constant $\upkappa_0\leq \frac12.$ It follows from Theorem \ref{maintheo1} that
$$
\mathcal{C}_{\rL,S} \ \text{holds},\qquad\text{where } S = \uprho_0 \left(\frac{\rL}{2}\right)^{\omega+2}, 
$$
and that for $s \in [0,S]$ we have
\begin{equation}
 \label{eq:lafrite}
\mathfrak  D_\eps(s) \cap  [-4 \rL, 4\rL ] \subset [-\upkappa_0(s)\rL, \upkappa_0(s)\rL] 
\end{equation}
where
\begin{equation}\label{eq:buche}
\upkappa_0(s) := \upkappa_0 + \left( \frac{s}{\uprho_0}\right)^\frac{1}{\omega+2}\frac{1}{\rL}.
\end{equation}
For those times $s \in [0,S]$ for which the preparedness
assumption $\wpi(\upalpha_1 \eps,  s)$  holds we set  
\begin{equation*}
\left\{
\begin{aligned}
  &\dminep (s)=\min   \{ \vert a_{k+1}^\eps (s)- a_k^\eps (s) \vert,   \ k\in J^+(s)  \},{\rm  \  \   and} \\
  &\dmines (s)=\min   \{ \vert a_{k+1}^\eps (s)- a_k^\eps (s) \vert,   \ k \in J^-(s)\}, 
\end{aligned}
 \right.
\end{equation*}
with  $J^\pm(s)=\{ k \in \{1,\cdots, \ell(s)-1\}, \hbox{ s. t }    \epsilon_{k+\frac 12}=\mp 1 \}$,
so that $\dmine(s)=\min \{ \dminep(s), \dmines(s)\}$,  with the convention that
the quantities  are equal to $\rL$ in case the defining set is empty.

\medskip
At first, we will focus on the case $J^-(s)\not = \emptyset$. 
The following
result provides an upper bound in terms of $\dmines(s)$  for a
dissipation time for the quantized function $\matfE^{\rL}$. This phenomenon is related
to the cancellation of \emph{a front with its anti-front}, and is the main building block
for the proof of Proposition \ref{nettoyage}.

\begin{proposition}
\label{rtcolplus0}
There exist $\upkappa_1>0,$ $\upalpha_3 >0,$ and $\mathcal K_{\rm col}>0,$  all depending only on $V$ and $M_0$, with the following properties.  If \eqref{degrossir} holds, if $s_0\in (\eps^\omega \rL^2,S)$ is such that $\upkappa_0(s_0)\leq \upkappa_1,$ $\wp(\upalpha_3 \eps, s_0 )$ holds, $J^-(s_0)$ is non empty, and $s_0 + \mathcal K_{\rm col} \dmines(s_0)^{\omega+2}<S,$ then there exists some time  $\rtcolp(s_0)\in (s_0,S)$ such that  $\wpi(\upalpha_3\eps, \rtcolp(s_0))$ holds,
\begin{equation}
  \label{thedissip}
  \matfE^{\rL}(\rtcolp(s_0))\leq \matfE^{\rL}(s_0)- \upmu_1, 
\end{equation}
where $\upmu_1$ is a constant introduced in Lemma \ref{avecdec}, and 
\begin{equation}
 \label{estimertcol0}
  \rtcolp (s_0)- s_0  \leq   \mathcal  K_{\rm col}   \left (  \dmines(s_0)\right)^{\omega+2}.
\end{equation}
\end{proposition}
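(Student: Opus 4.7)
I would argue by contradiction, assuming that on the interval $I_\eps := [s_0, s_0 + \mathcal{K}_{\rm col}\, \dmines(s_0)^{\omega+2}]$ no time meets the conclusion. The first step is to show that this forces $\matfEL$ to remain constant (and equal to $\matfEL(s_0)$) at every time $s \in I_\eps$ at which $\wpi(\upalpha_3\eps, s)$ holds. Indeed, Corollary \ref{avecdec} applied between $s_0$ and any such $s$ (with $\rL$ large enough compared with the length of $I_\eps$ so that \eqref{defrL_0} is satisfied) would otherwise provide an energy drop of at least $\upmu_1$, and the $\wpi$ part of the desired $\rtcolp(s_0)$ is supplied by Proposition \ref{cornichon}. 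Given this conservation of $\matfEL$, iterating Lemma \ref{blog} between consecutive well-prepared times (whose spacing is controlled by Proposition \ref{cornichon}) propagates the combinatorial data $(J, i(\cdot), \dagger_\cdot)$ throughout $I_\eps$ and yields $\dmineL(s) \geq \tfrac12 \dmineL(s_0) \geq \tfrac12 \dmines(s_0)$ at every well-prepared time in $I_\eps$. Corollary \ref{parareglo3} then upgrades this to $\wp(\udll, s)$ for all $s \in [s_0+\eps^{\omega+2}, s_0+\mathcal{K}_{\rm col}\dmines(s_0)^{\omega+2}]$.

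The second step is to reach a contradiction via the ODE. Because the motion law derived in Section \ref{motion} (Proposition \ref{firststep1}) is expressed in a scale-invariant way, I would rescale space by $d := \dmines(s_0)$ and time by $d^{\omega+2}$, producing a new family of solutions to $(\text{PGL})_{\tilde \eps}$ with $\tilde \eps := \eps/d$, and a rescaled initial configuration (at the new origin of time, corresponding to $s_0$) with normalized attractive-pair distance equal to $1$. The combined estimates of Step~1 give that this rescaled family enters the framework of Proposition \ref{firststep1}: passing to a subsequence, the rescaled front points converge uniformly on compact subintervals to a solution $\tilde a(\tilde s)$ of the ODE system \eqref{tyrannosaure} with the same $(\hat\imath_0, \{\dagger_k\})$ and with $\tilde{\textswab d}_a^-(0) = 1$. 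Since $J^-(s_0)\neq \emptyset$, Proposition \ref{getrude} gives $\tilde S_{\rm max} \leq \mathcal{K}_0$, so the ODE experiences a collision by rescaled time $\mathcal{K}_0$. Fixing $\mathcal{K}_{\rm col} := 2\mathcal{K}_0$, this collision occurs strictly inside the rescaled interval and forces $\tilde{\textswab d}_a(\tilde s)\to 0$, in direct contradiction with the lower bound $\dmineL(s) \geq \tfrac12 \dmines(s_0)$ obtained in Step 1 (which translates into a rescaled lower bound of $\tfrac12$).

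The main obstacle I expect is a clean calibration of the constants so that the rescaled problem genuinely satisfies the hypotheses of Proposition \ref{firststep1} \emph{uniformly} as $\eps\to 0$. Concretely: the rescaling sends the small parameter $\eps$ to $\tilde \eps=\eps/\dmines(s_0)$, and the convergence to the ODE requires $\tilde \eps \to 0$, that is $\dmines(s_0)/\eps \to \infty$. This forces $\upalpha_3$ to be chosen sufficiently large: the hypothesis $\wp(\upalpha_3\eps, s_0)$ gives $\dmineL(s_0)\geq 2\upalpha_3\eps$, and a large value of $\upalpha_3$ is needed to dominate the various error terms (the quantization error $(\eps/\udll)^\omega$ from Proposition \ref{globalmoquette}, the discrepancy error $\eps^{1/\theta^2}$ from Proposition \ref{vraiestime}, and the dissipation error from Corollary \ref{murino0}) against the quantum $\upmu_1$, and to guarantee that $\wpi(\upalpha_1\tilde\eps, 0)$ holds for the rescaled problem. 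The constant $\upkappa_1$, in turn, is only used to secure the confinement condition $(\mathcal{C}_{\rL,S})$ on a scale $\rL$ large enough to accommodate the length of $I_\eps$ via \eqref{defrL_0}, which via \eqref{eq:buche} amounts to requiring $\upkappa_0(s_0)$ to stay small throughout $I_\eps$.
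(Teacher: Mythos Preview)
Your argument has a genuine gap in the case where some repulsive pair is closer than every attractive pair, that is when $\dminep(s_0) < \dmines(s_0)$. The chain of inequalities $\dmineL(s)\geq \tfrac12 \dmineL(s_0)\geq \tfrac12 \dmines(s_0)$ you write in Step~1 is backwards at the second step: by definition $\dmineL=\min(\dminep,\dmines)\leq \dmines$, with strict inequality precisely in the problematic regime. Nothing in the hypotheses of the proposition excludes this situation, and it does arise in the iterative application inside the proof of Proposition~\ref{nettoyage}.

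This is not a mere typo but the symptom of a structural obstruction in Step~2. You rescale space by $d=\dmines(s_0)$; after rescaling, the closest repulsive pair has distance $\dminep(s_0)/\dmines(s_0)$, which along a subsequence may tend to $0$. The limiting initial configuration then has coalescing points with equal sign $\dagger_k=\dagger_{k+1}$, so $({\rm H}_{\rm min})$ and the separation hypothesis~\eqref{dminstar} of Proposition~\ref{firststep1} both fail, and you cannot pass to the ODE. Rescaling by $\dminep(s_0)$ instead does not help either: the attractive pair then escapes to infinity in the limit, the limiting ODE contains only repulsive interactions, and no collision occurs. The paper resolves this through a genuinely two-scale argument (Propositions~\ref{nihil} and~\ref{spillit}, combined in Proposition~\ref{vinsanto}): when the closest pair is repulsive one first rescales at \emph{that} scale and uses the ODE to show the repulsive gap grows by a fixed factor while $\mFat$ barely increases; iterating, one reaches a time where the attractive pair has become the closest, and only then does a rescaling at the attractive scale force $\dmineL$ down to order~$\eps$. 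The energy drop is finally deduced, via Proposition~\ref{pneurose}, by exactly the contradiction you outline. A single rescaling cannot bridge these two regimes.
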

We postpone the proof of Proposition \ref{rtcolplus0} to after Section \ref{sect:contrad} below, where
we will analyze more into details  the attractive  and repulsive forces at work at the $\eps$ level. We will
then prove Proposition \ref{rtcolplus0} in Section \ref{proofouf}, and finally Proposition \ref{nettoyage} in Section \ref{prooof}.


\subsection{Attractive and repulsive forces at the \mathversion{bold}$\eps$\mathversion{normal} level} \label{sect:contrad}
 In this subsection we consider the general situation  
where $\mathcal{C}_{\rL,S}$ holds, for some length  $\rL\geq 0$ and some $S>0.$ 

\medskip

In order to deal with the attractive and repulsive forces underlying
annihilations or splittings,  we set 
$$\mathcal
F_{k+\frac12}(s)=-\omega^{-1}\mathcal B_{k+\frac 12}
\left(a_{k+1}^\eps(s)-a_k^\eps(s)  \right)^{-\omega}
$$ 
and  consider the
positive functionals 
\begin{equation}
     \label{repattract}
     \mFrep (s)=\underset{k\in J^+(s)}\sum{\mathcal  F}_{k+\frac 12} (s), \  \mFat(s)=- \underset{k\in J^-(s)}\sum{\mathcal  F}_{k+\frac 12} (s), 
     \end{equation}
with the convention that the quantity is equal to $+\infty$ in case the defining set is empty.
For some constants $0<\kappa_2\leq \kappa_3$ depending only on $M_0$, we have
and $V$ 
\begin{equation}
\label{globe}
\left\{
\begin{aligned}
\kappa_2 \, \mFat (s)^{-\frac{1}{\omega}} &\leq    \dmines(s) \leq \kappa_3\mFat (s)^{-\frac{1}{\omega}},  \\
\kappa_2 \, \mFrep (s)^{-\frac{1}{\omega}}& \leq    \dminep(s) \leq \kappa_3\mFrep (s)^{-\frac{1}{\omega}}.
\end{aligned}
\right.
\end{equation}

\medskip
Let $s_0 \in [\eps^\omega \rL^2,S]$ be such that 
\begin{equation}\label{eq:bondebut}
 \wp(\upalpha_2\eps,s_0) \text{ holds} \qquad \text{and} \qquad \dmineL(s_0) \geq 16 \mathfrak q_1(\alpha_2)\eps. 
\end{equation}
We consider as in Corollary \ref{parareglo3} the stopping time
$$
\mathcal T_0^\eps(\upalpha_2,s_0) = \max \left\{ s_0+\eps^{2+\omega} \leq s\leq S \quad \text{s.t.}\quad \dmineL(s')\geq 8\q_1(\upalpha_2)\eps \quad \forall s'\in [s_0+\eps^{\omega+2},s]\right\},
$$ 
and for simplicity we will write 
$\mathcal T_0^\eps(s_0) \equiv \mathcal T_0^\eps(\upalpha_2,s_0).$ 
In view of \eqref{eq:bondebut} and the statement of Corollary \ref{parareglo3}, 
$$
\wp(\upalpha_1 \eps, s) \text{ holds} \qquad \forall\ s \in \mathcal I^\eps_0(s_0)\equiv [s_0+\eps^{2+\omega},\mathcal T_0^\eps(s_0)].
$$
The functionals $\mFat$ and $\mFrep$ are in particular well
defined and continuous on the interval of time $\mathcal I_0^\eps(s_0)$ with $J^+(s) =J^+(s_0)$ and $J^-(s)=J^-(s_0)$ for all $s$ that interval. Note that the attractive forces are dominant when $\dmines(s)\leq \dminep(s)$ and in contrario the repulsive forces
are dominant when $\dminep(s) \leq \dmines(s).$ 

\medskip
We first focus on the attractive case, and for $s\in \mathcal I_0^\eps(s_0),$ we introduce the new stopping times
  $$\mathcal T_1^\eps (s)=\inf \{s\leq s'\leq \mathcal T_0^\eps(s_0), \,  \Fat (s') \geq
  \upupsilon_1^{\omega} \Fat(s)  \ \,   {\rm or \ }  s'=\mathcal T_0^\eps(s_0)\}, 
  $$
  where $\upupsilon_1=10 \kappa_3^2 \kappa_2^{-2},$ so that  $\upupsilon_1>10$ and 
  $\mathcal T_1^\eps(s) \leq \mathcal T_0^\eps(s_0).$
 In view of \eqref{globe}, we have
 \begin{equation}
 \label{majord}
   \frac{1}{10} \left(\frac{\kappa_2}{\kappa_3}\right)^3\,  \dmines(s) \leq
 \dmines(\mathcal T_1^\eps (s)),
 \end{equation}
 and if $\mathcal T_1^\eps(s) < \mathcal T_0^\eps(s_0)$ then 
 \begin{equation}\label{majordbis}
 \dmines(\mathcal T_1^\eps (s)) \leq \frac{1}{10} \frac{\kappa_2}{\kappa_3}\dmines(s)
 \leq \frac{1}{10}\dmines(s).
 \end{equation}

The next result provides an upper bound on $\mathcal T_1^\eps (s)-s.$
Central in our argument is Proposition \ref{firststep1}, which we use  combined with various arguments by contradiction. 
We have 

\begin{proposition}
\label{nihil}
There exists $\upbeta_0>0$, depending only on $V$ and $M_0$,  
with the following properties.   If $J^-(s_0)\neq \emptyset$, $\hat s\in \mathcal I_0^\eps(s_0)$ 
and   
 \begin{equation}
 \label{margeaux}
\upbeta_0\, \eps  \leq  \dmines(\hat s) \leq \dminep(\hat s), 
 \end{equation}
then we have
\begin{equation}
\label{loufdingue}
 \mathcal T_1^\eps (\hat s)-\hat s \leq   \mathcal  K_0 \,\left(  \dmines(\hat s)\right)^{\omega +2}, \ \, 
 \end{equation}
 where $\mathcal K_0$ is defined in \eqref{gedelope}, and moreover if $\mathcal T_1^\eps(\hat s) < S$ then
   \begin{equation}
      \label{louf2}
   \dmines (\mathcal T_1^\eps(\hat s)) \leq   \dminep (\mathcal T_1^\eps(\hat s)).
   \end{equation}
    \end{proposition}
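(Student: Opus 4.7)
The plan is to argue by contradiction and use the scaling invariance of $\PGL$ to reduce the statement to the ODE system $(\mathcal{S})$, for which Proposition \ref{getrude} already provides the bound \eqref{gedelope} on the maximal existence time in the presence of attractive forces, with exactly the constant $\mathcal K_0$ appearing in \eqref{loufdingue}. Assume that the first conclusion \eqref{loufdingue} fails: then one extracts sequences $\eps_n \to 0$ and $\hat s_n \in \mathcal I_0^{\eps_n}(s_0)$ such that $\mathcal T_1^{\eps_n}(\hat s_n) - \hat s_n > \mathcal K_0\, \dmines(\hat s_n)^{\omega+2}$. Choosing the free constant $\upbeta_0$ as large as necessary at the end of the proof (and performing a diagonal extraction) one may additionally assume that $r_n := \dmines(\hat s_n)$ satisfies $r_n/\eps_n \to +\infty$, while keeping $r_n \leq \dminep(\hat s_n)$ by \eqref{margeaux}.

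Next, rescale space and time around a pair $(a_{k_0}^{\eps_n}(\hat s_n), a_{k_0+1}^{\eps_n}(\hat s_n))$ with $k_0 \in J^-(s_0)$ realizing the minimum $r_n$, by setting
\[
\tilde v_n(y,\tau) := v_{\eps_n}\bigl(\hat a_n + r_n y,\ \eps_n^{-\omega}\hat s_n + r_n^{2}\,\tau\bigr),\qquad \tilde \eps_n := \eps_n/r_n \to 0,
\]
where $\hat a_n$ is the midpoint of the pair. Each $\tilde v_n$ solves $(\text{PGL})_{\tilde \eps_n}$, its rescaled front points at $\tau=0$ have nearest-neighbour separations bounded below by $1$ thanks to $\dmines(\hat s_n) \leq \dminep(\hat s_n)$, and the well-preparedness $\wp(\upalpha_1\eps_n,\hat s_n)$ -- which holds throughout $\mathcal I_0^{\eps_n}(s_0)$ by Corollary \ref{parareglo3} -- translates into $\mathrm{WPI}_{\tilde\eps_n}^{\rL_0}(\upalpha_1 \tilde\eps_n, 0)$ for a suitable uniform $\rL_0$. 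Passing to a subsequence, the data $\tilde v_n(\cdot,0)$ converge to a steep front chain satisfying $(\text{H}_1)$ and \eqref{dminstar}; by construction at least one pair lies in $J^-$ for the limiting configuration, so Proposition \ref{getrude} yields $\tilde S_{\max} \leq \mathcal K_0$ for the rescaled problem.

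Proposition \ref{firststep1} then applies to the rescaled sequence and produces uniform convergence on compact subsets of $(0, \tilde S_{\max})$ of the rescaled front points to the solution of $(\mathcal{S})$ with the corresponding initial data. Along this limit the attractive force $\mFat$ blows up as $\tau \nearrow \tilde S_{\max}$, so the rescaled analogue of the stopping time $\mathcal T_1$ is reached at some $\tau_* < \mathcal K_0$. Undoing the rescaling (and observing that $\mFat$ scales homogeneously as $r_n^{-\omega}$, so the threshold factor $\upupsilon_1^\omega$ is preserved) one obtains
\[
\mathcal T_1^{\eps_n}(\hat s_n) - \hat s_n \ \leq\ r_n^{\omega+2}\tau_* \ <\ \mathcal K_0\, \dmines(\hat s_n)^{\omega+2}
\]
for all sufficiently large $n$, contradicting the standing assumption. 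For \eqref{louf2} one notes that in the rescaled ODE the repulsive distance stays bounded below by $1$ on $[0,\tilde S_{\max})$ by the first inequality in \eqref{gedoche}, whereas by definition of $\mathcal T_1^\eps$ the rescaled attractive distance at $\tau_*$ is at most $\kappa_2/(10\kappa_3) < 1$; this strict inequality passes to the rescaled sequence for $n$ large, and hence back to the original scale.

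The main obstacle is the verification, in the rescaling step, that the rescaled initial data $\tilde v_n(\cdot,0)$ uniformly satisfy the hypothesis $\mathrm{WPI}_{\tilde\eps_n}^{\rL_0}(\upalpha_1 \tilde\eps_n, 0)$ required by Proposition \ref{firststep1}, together with the confinement $(\mathcal C_{\rL_0, \tilde S})$ needed to run the whole machinery on a time interval of length at least $\mathcal K_0$ in rescaled variables. This is precisely the role of the threshold $\upbeta_0 \eps \leq \dmines(\hat s)$ in \eqref{margeaux}: it guarantees that $\tilde \eps_n = \eps_n/r_n \leq 1/\upbeta_0$ may be made arbitrarily small by taking $\upbeta_0$ sufficiently large at the end of the argument, so that every smallness requirement on $\tilde\eps_n$ needed for the well-preparedness to survive the rescaling and to propagate on $[0, \mathcal K_0]$ is satisfied uniformly in $n$.
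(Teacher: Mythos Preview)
Your overall strategy---contradiction, rescaling by $r_n=\dmines(\hat s_n)$, and passing to the ODE limit via Proposition~\ref{firststep1}---is exactly that of the paper, and your treatment of \eqref{loufdingue} is essentially the paper's Case~1 argument.

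The gap is in your treatment of \eqref{louf2}. Two issues:

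\textbf{(i) Structural.} You try to read off \eqref{louf2} from the \emph{same} contradiction sequence built under the assumption that \eqref{loufdingue} fails, and you evaluate quantities at the time $\tau_*$ produced there. But $\tau_*$ is a time strictly \emph{before} the rescaled $\mathcal T_1^{\eps_n}$ at which the ODE threshold is crossed; it is not the stopping time itself. More importantly, \eqref{louf2} must be proved also when \eqref{loufdingue} \emph{holds}, so it requires its own contradiction sequence (assume $\dmines(\mathcal T_1^{\eps_n})>\dminep(\mathcal T_1^{\eps_n})$). The paper treats this as a separate Case~2.

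\textbf{(ii) The single rescaling misses repulsive pairs.} Your rescaling is centered at the attractive pair realizing the minimum. Repulsive pairs whose mutual distance is of order much larger than $r_n$ escape to infinity in this frame, so the limiting ODE and \eqref{gedoche} say nothing about them. Yet $\dminep$ is a minimum over \emph{all} $k\in J^+$. (Also, \eqref{gedoche} only gives a lower bound $\mathcal S_2^{1/(\omega+2)}$ for the rescaled repulsive distance, not $1$.) The paper fixes this by, for \emph{each} $j\in J^+$, recentering at $a_j^{\eps_n}(0)$ (still with scale $r_n=\dmines(0)$), applying Proposition~\ref{firststep1} to the fronts that remain finite in that frame, and invoking the monotonicity $\Frep(\tilde a(\tau))\le\Frep(\tilde a(0))$ from Proposition~\ref{geducci} to obtain $|\tilde a_j^n(\tau)-\tilde a_{j+1}^n(\tau)|\ge \kappa_2/(2\kappa_3)$ for all $\tau\in(0,\tilde{\mathcal T}_1)$. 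Taking the infimum over $j\in J^+$ yields $\dminep(\mathcal T_1^n)\ge (\kappa_2/2\kappa_3)\,\dmines(0)$; combined with the assumed $\dmines(\mathcal T_1^n)>\dminep(\mathcal T_1^n)$ and then with \eqref{majordbis}, one gets the contradiction $\dmines(\mathcal T_1^n)\le \frac{1}{10}\frac{\kappa_2}{\kappa_3}\dmines(0)$.
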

 
 \begin{proof} Up to a translation of times we may first assume that $\hat s=0$, which eases somewhat the notations. We then argue by contraction and assume that the conclusion is false, that is,  there does not exist any such  constant $\upbeta_0$, no matter how large it is chosen,   such that the conclusion holds. Taking $\upbeta_0=n$, this  means that given any  $n \in \N_*$  there exist some $0<\eps_n\leq 1$, a solution $v_n$ to $({\rm PGL})_{\eps_n}$  such that $ \mathcal E_{\eps_n} (v_n) \leq M_0$, such that ${\rm  W P }_{\eps_n}^{\scriptscriptstyle \rL_0} (\upalpha_1 \eps_n, 0 )$ holds,  such that 
 \begin{equation}
 \label{contrad1}
 n\eps_n \leq   \dmin^{\eps_n, -}(0)=\dmin^{\eps_n}(0)\leq \dmin^{\eps_n, +}(0), 
 \end{equation}
 and such that one of the conclusion fails, that is  such that either
   \begin{equation}
   \label{contrad2}
  \mathcal T_1^n\equiv  \mathcal T_1^{\eps_n} (0) >   \mathcal  K_0 \,(\dmines(0))^{\omega +2}, 
 \end{equation}
 or 
 \begin{equation}
    \label{contrad3}
   \dmines (\mathcal T_1^n )>   \dminep (\mathcal T_1^n).
 \end{equation}
Setting $S_0^n=\mathcal  K_0 (\dmin^{\eps_n, -}(0))^{\omega+2}$, relation \eqref{contrad2}  may be rephrased as
\begin{equation}
\label{contrad2prime}
\Fat^n(s) \leq  \     \upupsilon_1^{\omega}\Fat^n(0) \ \  {\rm and }   \  \  
\dmin^{\eps_n}(s) \geq 8\mathfrak q_1(\upalpha_2) \eps_n\ {\rm \  for \  any  \  }
 s \in [0, S_0^n], 
\end{equation}
 where the superscripts $n$ refer to the corresponding functionals computed for the map $v_n$.
  Passing possibly to a subsequence, we may therefore  assume that one  at least of the properties  \eqref{contrad2prime} or \eqref{contrad3} holds for any $n \in \N_*$. 
  Also, passing possibly to a further subsequence, we may assume that the total number of fronts of $v_n(0)$ inside $[-\rL,\rL]$ is constant, equal to a number $\ell$,  denote $a_1^n (s), \cdots,  a_\ell^n (s)$  the corresponding front points,  for $s \in  [0,  \mathcal T_1^n],$ and set $d_n^-(s)=\dmin^{\eps_n, -}(s), 
d_n^+(s)=\dmin^{\eps_n, +}(s),d_n(s)=\dmin^{\eps_n}(s).$

   In order to obtain a contradiction  we shall make use of
 \emph{the scale invariance} of the equation: if $v_\eps$ is a solution to $({\rm PGL})_\eps$ then the map
 $\displaystyle{ \tilde v_{\tilde \eps} (x, t)=v_\eps (rx, r^2 t)}$ 
 is a solution to  $({\rm PGL})_{\tilde \eps}$ with $\tilde \eps= r^{-1}\eps.$
  As scaling factor $r_n$, we choose $r_n= \dmin^{\eps_n, -}(0)\geq n \eps_n$ and set   
\begin{equation}
\label{harry}
 \tilde v_n(x, t)= v_n (r_n x, r_n^2 t), \ \, 
 \tilde{ \mathfrak v}_n(x, \tau)=\tilde v_n(x, \tilde{ \eps}_n^{-\omega} \tau), 
 \end{equation}
 so that $\tilde v_n$ is a solution to $({\rm PGL})_{\tilde \eps_n}$ satisfying $ {\mathcal  WP}_{\tilde\eps_n}^{ L_n} (\upalpha_2 \tilde \eps_n, 0)$
 with $L_n=r_n^{-1} \rL$ and 
 $$\tilde \eps_n=(r_n)^{-1} \eps_n=(\dmin^{\eps_n, -}(0))^{-1} \eps_n\leq \frac{1}{n}, \   {\rm  \ hence  \ we \ have \   } \eps_n \to 0 {\rm \ as  \ } n \to + \infty.  
 $$ 
 The points 
  $\tilde a_1^n (s)=r_n^{-1} a_1^n (r_n^{-(2+\omega)} s), \cdots, \tilde a_\ell^n (s)= r_n^{-1} a_\ell^n (r_n^{-(2+\omega)}s)$  are  the front points of  $\tilde{ \mathfrak v}_n$. Let  $\tilde d_n^-, \ \tilde d_n^+,\  \tilde d_n$  be the quantities corresponding to $\dmines, \dminep, \dmine$ for  $\tilde{ \mathfrak v}_n$, so that
  $$\tilde d_n^-(s)=r_n^{-1}d_n^-(r_n^{-(2+\omega)} s), \ \, 
  \tilde d_n^+(s)=r_n^{-1}d_n^+(r_n^{-(2+\omega)} s), \  {\rm \ and \ } \, 
  \tilde d_n(s)=r_n^{-1}d_n (r_n^{-(2+\omega)} s),
  $$
and notice that $d_n^- (0)=d_n(0)=1$. We next distinguish the following two complementing cases.\\

  \noindent
  {\it Case 1: \eqref{contrad2prime} holds for all $n\in \N_*$.} 
  It follows from  assumption \eqref{contrad2prime} that  ${\rm W P}_{\tilde\eps_n}^{L_n} (\upalpha_1 \tilde \eps_n, \tau)$ holds for every  
 $\tau \in (0, \tilde { S}_1^n)$, where  
 $\tilde {\mathcal S}_1^n=r_n^{-(2+\omega)} S_0^n=\mathcal K_0$.
 Let  $k_0 \in \{1, \cdots, \ell\}$ be  such that
$$ a_{k_0+1}^n (0)-a_{k_0}^n (0)=\dmin^{\eps_n, -}(0). $$
Upon a translation  if necessary, we may  also assume  that 
$\displaystyle {a_{k_0}^n (0)=0}$  so that 
$\displaystyle{ a_{k_0+1}^n (0)=\dmin^{\eps_n, -}(0)}$.
 We denote by $ \tmFatn$ the functional $\mFat$ computed for the front points of $\tilde{ \mathfrak v}_n$,   so that
 $$  \tmFatn( r_n^{-(2+\omega)} s)= r_n^{(2+\omega)} \mFatn(s).$$
  By construction we have
  \begin{equation}
 \label{lequipe} 
 \tilde a_{k_0}^n(0)=0  {\rm \ and \ }  \tilde a_{k_0+1}^n (0)=1=\tilde d_n^-(0).
  \end{equation} 
   Since
 $\displaystyle{\tilde \eps_n \to 0 {\rm \ as  \ } n \to \infty}$,  we may implement part of the already established asymptotic analysis for $({\rm PGL})_\eps$ on the sequence $(\tilde v_n)_{n\in \N}$.
 First, passing possibly to a subsequence, we may assume that for some subset $\tilde J \subset J(0)$ the points  $\{\tilde a_k (0)\}_{k \in \tilde J} $ converge to some  finite   limits $\{\tilde a_k^0\}_{k \in \tilde J}$, whereas the points with indices in $J(0)\setminus \tilde J$ diverge either to $+\infty$ or to $-\infty$.  We choose $\tilde \rL\geq 1$ so that
 \begin{equation}
 \label{confetti}
 \underset {k \in \tilde J}  \cup \{\tilde a_k^0\} \subset [-\frac{\tilde \rL}{2},\frac{ \tilde  \rL}{2}].
 \end{equation}
 In view of \eqref{lequipe},  we have 
 $\displaystyle{
 \tilde a_{k_0}(0)=0,  \tilde a_{k_0+1}=1  {\rm  \ and  \ } \   \inf\{\vert \tilde a_{k+1}(0)- \tilde a_k(0)\vert, k \in \tilde J\}=1.
 }
 $
 We are hence  in position to apply  the convergence result stated in  Proposition \ref{firststep1} to the sequence $(\tilde{ \mathfrak v}_n(\cdot))_{n \in \N}$. It states that the front points $(\tilde a_k^n(\tau))_{k\in J_0} $ which do not escape at infinity  converge  to the  solution $(\tilde a_k(\cdot))_{k \in \tilde J}$  of the ordinary differential equation \eqref{tyrannosaure} supplemented with the corresponding initial values $(\tilde a_k(0))_{k\in \tilde  J},$ uniformly in time on every compact subset of $(0, \tilde S_{\rm max})$, where $\tilde S_{\rm max}$ denotes the maximal time of existence for the solution.   In particular, we have
 \begin{equation*}
 \left\{
 \begin{aligned}
& \tilde d_n^-(\tau)\to \textswab d_{\tilde a}^- (\tau), \hbox{ uniformly on every compact subset of  }
 (0, \tilde S_{\rm max}),  \\
 & \limsup_{n\to +\infty} \,  {\tFat}^n (\tau ) \geq \Fat (\tilde a(\tau))  {\rm  \ for  every \ } \tau \in    (0, \tilde S_{\rm max}),
 \end{aligned}
 \right.
 \end{equation*}
the presence of the $\limsup$ being related to the  fact that some points might escape at infinity so that the limiting  values of the functionals are possibly smaller. 
We use next   the properties of the differential equation \eqref{tyrannosaure} established in Appendix B.   We first invoke Proposition  \ref{getrude}  which
  asserts that    $\tilde S_{\rm max} \leq \mathcal  K_0$ and that
  $$ \Fat (\tilde a(\tau) )\to +\infty {\rm \ as  \  }  \tau\to \tilde S_{\rm  max}.$$
Hence, there exists some  $\tau_1 \in  (0, \tilde S_{\rm max})\subset (0, \mathcal  K_0)$ such that, if $n$ is sufficiently large, then
  $$ \tmFatn(\tau_1) >  \upupsilon_1^\omega  \tmFatn (0). $$
 Going back to the original time scale, this yields
 $\displaystyle{ \mFatn( r_n^{\omega+2} \tau_1 ) > \upupsilon_1^\omega \mFatn (0).}$
Since $ r_n^{\omega+2} \tau_1 \in  (0, r_n^{2+\omega} \mathcal  K_0)= (0, S_0^n)$ this contradicts \eqref{contrad2prime} and completes the proof in  Case 1. \\

 \noindent
 {\it  Case 2: \eqref{contrad3}  holds for all $n\in \N_*$.}   
 We consider an arbitrary index  $j \in J^+$. As above, translating the origin, we may assume without loss of generally that $a_j^n (0)=0$.   We also define  the map $\mathfrak v_n$ as in Case 1,  according to the  same scaling  as described  in \eqref{harry},  the only difference  being that the origin has  been  shifted  differently. With similar notations, we have
 $$ \tilde a_{j}^n(0)=0  {\rm \ and \ }  \tilde a_{j+1}^n (0) \geq 1=\tilde d_n^-(0).$$
Passing possibly to a further subsequence, we may  assume  that the front points at time $0$ converge to some limits in $\bar \R$ denoted $\tilde a_k(0)$.  We are hence  again  in position to apply  the convergence result of   Proposition \ref{firststep1},  so that the front points $(\tilde a_k^n(s))_{k\in J_j} $ which do not escape at infinity  converge  to the  solution $(\tilde a_k(\cdot))_{k \in J_j}$  of the ordinary differential equation \eqref{tyrannosaure} supplemented with the corresponding initial values $(\tilde a_k(0))_{k\in J_j},$ uniformly in time on every compact subset of $(0, \tilde S'_{\rm max})$, where $\tilde  S'_{\rm max}$ denotes the  (new) maximal time of existence for the solution.  
It follows from assumption \eqref{demine}, Theorem \ref{mainbs2} and scaling that
%
 $\displaystyle{ 0<\tilde {\mathcal T_1}\equiv \liminf \tilde {\mathcal T}_1^n \leq \tilde  S'_{\rm max}.}$
 We claim that, for any $\tau \in (0, \tilde {\mathcal T}_1)$, and for sufficiently large $n$,  we have
 \begin{equation}
 \label{ouijeclaim}
     \vert \tilde a_j^n (\tau)- \tilde a_{j+1}^n(\tau) \vert \geq \frac{\kappa_2}{2\kappa_3} .
     \end{equation}
 This is  actually a property of the differential equation \eqref{tyrannosaure}. We have indeed, in view of Proposition \ref{geducci},  
 $\displaystyle{0< \Frep (\tilde a (\tau) ) \leq \Frep (\tilde a (0)), }$
  so that it follows from \eqref{globe} that
  $$  \vert \tilde a_j (\tau)- \tilde a_{j+1}(\tau) \vert  \geq \frac{\kappa_2}{\kappa_3},$$ which yields \eqref{ouijeclaim} taking the convergence into account. Since \eqref{ouijeclaim} holds for any $j$, we deduce that 
  $$
  \dmin^{\eps_n, +}(\mathcal T_1^n) \geq \frac{\kappa_2}{2\kappa_3}\dmin^{\eps_n,-}(0)
  $$ 
  and therefore by \eqref{contrad3} we have
  \begin{equation}\label{eq:bmme}
  \dmin^{\eps_n}(\mathcal T_1^n) = \dmin^{\eps_n, -}(\mathcal T_1^n) \geq \dmin^{\eps_n, +}(\mathcal T_1^n) \geq \frac{\kappa_2}{2\kappa_3}\dmin^{\eps_n,-}(0) \geq \frac{\kappa_2}{2\kappa_3}n \eps.
  \end{equation}
 For $n$ sufficiently large, this implies that $\mathcal T_1^n < \mathcal T_0^n,$ and therefore from \eqref{majordbis} we have 
 $$
 \dmin^{\eps_n,-}(\mathcal T_1^n) \leq \frac{1}{10} \frac{\kappa_2}{\kappa_3}\dmin^{\eps_n,-}(0),
 $$
 which is in contradiction with \eqref{eq:bmme}.
 \end{proof}
  
 We turn now to the case where $\dminep(s)\leq \dmines(s).$ In order to handle the repulsive forces at work, 
 for $s\in \mathcal I_0^\eps(s_0)$ we introduce the  new stopping times 
 $$\mathcal T_2^\eps (s)=\inf \{s\leq s'\leq \mathcal T_0^\eps(s_0), \,  \mFrep (s') \leq
  \upupsilon_2^{\omega} \mFrep(s)  \ \,   {\rm or \ }  s'=\mathcal T_0^\eps(s_0)\}, 
  $$
where $\upupsilon_2= \frac{\kappa_2^2}{10\kappa_3^2},$  so that $\upupsilon_2<1$. 
 Notice that, in view of \eqref{globe}, we have, if   $\mathcal T_2^\eps(s) < \mathcal T_0^\eps(s_0)$,
 \begin{equation}
 \label{majordplus}
 \dminep(\mathcal T_2^\eps (s))\geq \upupsilon_2^{-1} \frac{\kappa_2}{\kappa_3} \dminep (s)\geq 
 10\,  \dminep (s).   
  \end{equation}
With  $\mathcal S_1$ introduced in Proposition \ref{getrude}, we set
 \begin{equation}
 \label{choixk1}
 \displaystyle{\mathcal  K_1=\mathcal S_1^{-\omega}\left (\frac{2 \kappa_3}{\kappa_2\upupsilon_2} \right)^{\omega+2}}.
 \end{equation}

   \begin{proposition}
 \label{spillit}
 There exists $\upbeta_1>0$, depending only on $V$ and $M_0$,  
with the following properties.   If $J^+(s_0)\neq \emptyset$, $\hat s\in \mathcal I_0^\eps(s_0)$ 
and   
 \begin{equation}
 \label{margeauxbis}
\upbeta_1\, \eps  \leq  \dminep(\hat s) \leq \dmines(\hat s), 
 \end{equation}
then we have  
\begin{equation}
\label{loufdingo}
 \mathcal T_2^\eps (\hat s)-\hat s \leq   \mathcal  K_1 \,\left(  \dminep(\hat s)\right)^{\omega +2}, 
    \end{equation}
and if $\mathcal T_2^\eps(\hat s)<S$ then $\mathcal T_2^\eps (\hat s)< \mathcal T_0^\eps(s_0)$ and 
for any $s \in [\hat s, \mathcal T_2^\eps (\hat s)]$, we have
  \begin{equation}
\label{rakotoson}
\dmine(s) \geq  \frac{1}{2} \mathcal S_2 \dminep(\hat s), 
\end{equation}
and
\begin{equation}
 \label{loufdingo2}
 \mFat\left (s\right)^{-\frac{1}{\omega}}
 \leq
  \mFat( \hat s)^{-\frac{1}{\omega}}
  +\frac{1}{\kappa_3}  ( \dminep(\hat s)), 
 \end{equation}  
where $\mathcal S_2$ is defined in Proposition \ref{getrude} and $\kappa_3$ is defined in \eqref{globe}. 
 \end{proposition}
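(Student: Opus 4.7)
The strategy will closely parallel that of Proposition \ref{nihil}: argue by contradiction and rescale so that the minimal repulsive distance at the relevant time becomes one, then invoke Proposition \ref{firststep1} to extract a limiting ODE whose properties contradict the hypothetical failure of one of the three conclusions. The key input from Appendix B is now the repulsion estimate (first line of \eqref{gedoche}) rather than the finite-time collision estimate used in the attractive case of Proposition \ref{nihil}.

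After translating time so that $\hat s = 0$, I would suppose there exist $\eps_n \to 0$ and solutions $v_n$ of $(\text{PGL})_{\eps_n}$ satisfying $\hzero$, $\wp(\upalpha_1 \eps_n, 0)$, $J^+(0)\neq\emptyset$, and
$$n\eps_n \leq \dmin^{\eps_n, +}(0) = \dmin^{\eps_n}(0) \leq \dmin^{\eps_n, -}(0),$$
for which one of \eqref{loufdingo}, \eqref{rakotoson} or \eqref{loufdingo2} fails. Setting $r_n := \dmin^{\eps_n, +}(0)$ and rescaling as in \eqref{harry}, one obtains $\tilde v_n$ solving $(\text{PGL})_{\tilde\eps_n}$ with $\tilde\eps_n := \eps_n/r_n \leq 1/n \to 0$. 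Choosing $j \in J^+(0)$ realizing $\dmin^{\eps_n, +}(0)$ and translating space so that $\tilde a_j^n(0) = 0$, we have $\tilde a_{j+1}^n(0) = 1$. Passing to a further subsequence the finite rescaled front points at rescaled time $0$ converge in $\bar{\mathbb R}$; Proposition \ref{firststep1} then yields uniform convergence of the non-escaping $\tilde a_k^n(\cdot)$ on compact subsets of $(0, \tilde S_{\max})$ to the unique solution $\tilde a(\cdot)$ of \eqref{tyrannosaure}.

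At the ODE level, Proposition \ref{getrude} gives $\tilde{\textswab d}_a^+(\tau) \geq (\mathcal S_1 \tau + \mathcal S_2)^{1/(\omega+2)}$. A direct computation using \eqref{globe} and the explicit value of $\mathcal K_1$ in \eqref{choixk1} shows that at the rescaled time $\tau_2 := \mathcal K_1/2$ the limiting repulsive functional satisfies $\Frep(\tilde a(\tau_2)) \leq (\upupsilon_2/2)^\omega \Frep(\tilde a(0))$; uniform convergence on $[\tau, \tau_2]$ (for any small $\tau > 0$) then transfers this to $\tilde{\mathcal F}^n_{\rm rep}(\tau_2) \leq \upupsilon_2^\omega \tilde{\mathcal F}^n_{\rm rep}(0)$ for $n$ large, which after rescaling back contradicts a hypothetical failure of \eqref{loufdingo}. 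For \eqref{rakotoson}, the lower bound $\tilde{\textswab d}_a^+(\tau) \geq \mathcal S_2^{1/(\omega+2)}$ on $[0, \tau_2]$, together with the ODE speed estimate $|\dot{\tilde a}_k(\tau)| \leq C \tilde d_{\rm min}(\tau)^{-(\omega+1)}$ and the hypothesis $\tilde{\textswab d}_a^-(0) \geq 1$, keeps all rescaled pair distances uniformly bounded below on $[0, \tau_2]$, and this transfers to the $v_n$ level by convergence. Estimate \eqref{loufdingo2} follows by integrating the same speed estimate and re-expressing the resulting displacement bound through \eqref{globe}.

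The main obstacle is ensuring $\tau_2 < \tilde S_{\max}$, so that the compactness in Proposition \ref{firststep1} actually reaches time $\tau_2$. This is precisely what the analysis of the attractive distances above provides: the bounded ODE speeds on $[0, \tau_2]$ prevent any collision before $\tau_2$, hence $\tilde S_{\max} > \tau_2$. A secondary technical point is verifying, prior to applying Proposition \ref{firststep1}, that $\tilde d_{\rm min}(0) \geq 1$ at the rescaled level, which follows immediately from $\dminep(\hat s) \leq \dmines(\hat s)$ after rescaling by $r_n$.
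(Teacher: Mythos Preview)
Your outline for \eqref{loufdingo} and \eqref{rakotoson} is essentially the paper's argument (Cases 1 and 2 there): rescale by $r_n=\dmin^{\eps_n,+}(0)$, pass to the limiting ODE via Proposition \ref{firststep1}, and invoke the growth estimate $\tilde{\textswab d}_a^+(\tau)\geq(\mathcal S_1\tau+\mathcal S_2)^{1/(\omega+2)}$ from Proposition \ref{getrude}. One refinement: in the paper the index $j\in J^+$ is taken \emph{arbitrary} and the recentring is redone for each $j$ before taking the infimum over $J^+$; choosing only the $j$ that realises the minimum is not quite enough, since other repulsive pairs may sit in a different spatial window.

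The real gap is in your treatment of \eqref{loufdingo2}. ``Integrating the same speed estimate and re-expressing through \eqref{globe}'' does not yield the claimed inequality. A displacement bound of order $\dminep(\hat s)$ only controls the change in $\dmines$, and converting $\dmines$ to $\mFat^{-1/\omega}$ via \eqref{globe} costs a factor $\kappa_3/\kappa_2$: you end up with
\[
\mFat(s)^{-\frac{1}{\omega}}\ \leq\ \kappa_2^{-1}\dmines(s)\ \leq\ \kappa_2^{-1}\dmines(\hat s)+C\,\dminep(\hat s),
\]
whereas $\mFat(\hat s)^{-1/\omega}\geq\kappa_3^{-1}\dmines(\hat s)$, and the gap $(\kappa_2^{-1}-\kappa_3^{-1})\dmines(\hat s)$ can be arbitrarily large compared to $\dminep(\hat s)$. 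More fundamentally, after centring at a single repulsive pair the attractive pairs responsible for $\mFat$ may lie arbitrarily far away in the rescaled picture and escape to infinity, so Proposition \ref{firststep1} gives no information about them. The paper handles this (its Case 3, flagged there as ``slightly more delicate'') by decomposing $J$ into finitely many \emph{clusters} $J_p$ of mutually bounded rescaled distance, recentring separately for each cluster, and then using the structural monotonicity $\frac{d}{d\tau}\Fat^p\geq 0$ from \eqref{gediroff} on each limiting cluster ODE. Since inter-cluster distances diverge, $\tmFatn(\tau)\to\sum_p\Fat^p(\tau)\geq\sum_p\Fat^p(0)=\lim\tmFatn(0)$, which directly gives $\mFat(s)^{-1/\omega}\leq\mFat(\hat s)^{-1/\omega}+o(1)$ and hence \eqref{loufdingo2}. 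Your sketch is missing both the cluster decomposition and the use of \eqref{gediroff}.
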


\begin{proof} The argument possesses strong similarities with   the proof of Proposition \ref{nihil}, we therefore just sketch its main points,  in particular relying implicitly on the notations introduced there, as far as this is possible. By translation in time we also assume that $\hat s=0$ and  argue by contradiction assuming  that for any   $n \in \N_*$  there exist some $0<\eps_n\leq 1$, a solution $v_n$ to $({\rm PGL})_{\eps_n}$  such that $ \mathcal E_{\eps_n} (v_n) \leq M_0, {\mathcal W P}_{\eps_n}^{\rL} (\upalpha_1 \eps_n, 0 )$ holds,  such that 
 $n\eps_n \leq   d_n^ +(0)$, 
 and such that  either, we have for any $s\in (0, S_1^n)$, where $
 S_1^n=\mathcal  K_1 d_n^+(0)^{\omega+2}, $ $d_n(s) \geq 8\mathfrak q(\upalpha_2) \eps_n  $ and 
\begin{equation}
\label{contrad2ter}
 \kappa_3^{\omega} (d_n^+ (s'))^{-\omega} \geq \Frep^n(s') \geq  \ 
     \upupsilon_2^{\omega} \Frep^n(0) 
      \geq 
      \upupsilon_2^{\omega}
 \kappa_2^{\omega} (d_n^+ (0))^{-\omega}
 \end{equation}
 or, there is some $\tau_n \in (0, \mathcal T^n_2)$ such that
 \begin{equation}
 \label{contradde}
 \dminep(\tau_n) < \frac{1}{2} \mathcal S_2  \dminep(s) 
 \end{equation}
 or
  \begin{equation}
 \label{contralouf}
 \mFatn\left (\tau_n\right)^{-\frac{1}{\omega}}
< 
  \mFatn( 0)^{-\frac{1}{\omega}}
  +\frac{1}{32\kappa_3}  ( d_n^+(s)).  \end{equation}
 As in \eqref{harry}, but with a different scaling $r_n$ we set 
\begin{equation}
\label{harry2}
r_n= \dmin^{\eps_n, +}(0)\geq n \eps_n, \,  \tilde v_n(x, t)= v_n (r_n x, r_n^2 t), \ \,   
{\rm and  \ } 
 \tilde{ \mathfrak v}_n(x, s)= \tilde v_n(x, \tilde{ \eps}_n^{-\omega} s).
 \end{equation}
We verify that  $\tilde v_n$ is a solution to $({\rm PGL})_{\tilde \eps_n}$ with 
 $\tilde \eps_n=(r_n)^{-1} \eps_n \to 0$ as $n \to \infty$  
 and that the points 
  $\tilde a_k^n (\tau)=r_n^{-1} a_k^n (r_n^{-(2+\omega)}\tau)$ for ${k \in J},$  are  the front points of  
  $\tilde{ \mathfrak v}_n$.  
 We distinguish three cases, which are complementing going if necessary to subsequences. \\
  
  \noindent
  {\it Case 1: \eqref{contrad2ter}  holds, for any $n \in \N$}.  
  It follows     $\mathcal{W P}_{\tilde\eps_n}^{L_n} (\upalpha_1 \tilde \eps_n, \tau)$ holds for every  
  $\tau \in (0, \tilde {\mathcal S}_1^n)$, where $\tilde {\mathcal S}_1^n=r_n^{-(2+\omega)}\mathcal S_1^n=\mathcal  K_1$.
Let $j$ be an arbitrary index in $J^+$. 
Translating if necessary the origin, we may  assume  that 
$a_{j}^n (0)=0  \hbox{ so that } a_{j+1}^n (0)\geq d^+_n(0)\geq n \eps_n $
 and hence
$\tilde a_{j+1}^n (0)-\tilde a_{j}^n (0)\geq 1. $
   Since
 $\displaystyle{\tilde \eps_n \to 0 {\rm \ as  \ } n \to \infty}$,  we may implement part of the already established asymptotic analysis for $({\rm PGL})_\eps$ on the sequence $(\tilde v_n)_{n\in \N}$.
 First, passing possibly to a subsequence, we may assume that for some subset $\tilde J \subset J(0)$ the points  $\{\tilde a_k (0)\}_{k \in \tilde J} $ converge to some  finite   limits $\{\tilde a_k^0\}_{k \in \tilde J}$, whereas the points with indices in $J(0)\setminus \tilde J$ diverge either to $+\infty$ or to $-\infty$.  We choose $\tilde \rL\geq 1$ so that \eqref{confetti} holds.
It follows from  Proposition \ref{firststep1} that for $\tau \in (0,\mathcal  K_1) $, 
   we have  
   $$\vert \tilde a_{j+1}^n (\tau)- \tilde a_j^n (\tau) \vert  \to \vert \tilde a_{j+1} (\tau)- \tilde a_j(\tau) \vert 
   \geq 
      \left( \mathcal S_1 \tau+ \mathcal S_2  \textswab d_{\tilde a }^+(0)^{\omega+2} \right)^{\frac{1}{\omega+2}}
   =\left( \mathcal S_1 \tau+ \mathcal S_2 \right)^{\frac{1}{\omega+2}}
   $$
   as    $n \to \infty$, where the last inequality is a consequence of Proposition \ref{getrude}.  Taking the infimum over 
   $J^+$, we  obtain, for $n$ sufficiently large
  \begin{equation}
  \label{danssonjus}
   {\tilde d}_n^+(\tau)=\underset{ j \in J^+}\inf\vert \tilde a_{j+1}^n (\tau)- \tilde a_j^n (\tau) \vert   \geq 
\frac 12 \left( \mathcal S_1 \tau + \mathcal S_2 \right)^{\frac{1}{\omega+2}}
\geq  \frac{1}{2} \left( \mathcal S_1 \tau \right)^{\frac{1}{\omega+2}}, \forall \tau \in (0, \mathcal  K_1),
 \end{equation}
On the other hand,  going back to \eqref{contrad2ter},  with the same notation as in Proposition \ref{nihil}, we are led to the inequality
  \begin{equation}
  \label{juteux}
\tilde d_n^+(\tau) \leq 
\kappa_3 \kappa_2^{-1}\upupsilon_2^{-1}   {\rm \  for \ }  \tau \in (0,\mathcal  K_1). 
\end{equation}
In view of our choice \eqref{choixk1}  of $\mathcal  K_1$, relations \eqref{danssonjus} and \eqref{juteux} are contradictory for $\tau$ close to $\mathcal  K_1$ yielding hence a contradiction in Case 1. \\

\noindent 
  {\it Case 2: \eqref{contrad2ter}  does not hold, but \eqref{contradde} holds, for any $n \in \N$}. The argument is almost identical, we conclude again thanks to \eqref{danssonjus} but keeping $\mathcal S_2$ instead of $\mathcal S_1 \tau$ in its last inequality.

\noindent
  {\it  Case 3: \eqref{contrad2ter} does not hold  but \eqref{contralouf} holds, for any $n\in \N$.} 
 As in the proof of Proposition \ref{nihil}, we  conclude that
 $0<\tilde {\mathcal T}_2 \equiv \liminf_{n\to +\infty} \tilde {\mathcal T}_2^n.$
   This situation is slightly more delicate than the ones analyzed so far, and we have \emph{to track also the fronts escaping possibly at infinity}.  
   Up to a  subsequence, we may assume that the set $J$ is decomposed as a disjoint union of clusters  
  $\displaystyle{ J=\underset {i=1}{\overset{q} \cup} J_p }$
where each of the sets $J_p$ is an ordered set of  $m_p+1$ consecutive points, that is $J_p=\{k_p, k_p+1, \cdots k_p+m_p\}$   and such that the two following properties holds:  
\begin{itemize}
   \item There exists a constant $C>0$ independent of $n$ such that   
\begin{equation}
\label{cisternino1}
\vert  \tilde a_{k_p}^n(0)- \tilde a_{k_p+r}(0)\vert \leq   C
 {\rm \ for \  any  \ } p \in \{1, \cdots, q\} \ {\rm  and \  any \ }  r \in \{k_p, \cdots, m_p\}
 \end{equation}
 \item For $1 \leq p_1 <p_2\leq q $, we have 
  $ \tilde a^n_{k_{p_2}}- \tilde a^n_{k_{p_1}}   \to + \infty.$ 
\end{itemize}
\noindent
   For a given $p \in \{1, \cdots, q\}$,  translating if necessary the origin, we may assume that 
  $\tilde a^n_{k_p} (0)=0,$ and passing possibly to a further subsequence,   that the front points at time $0$ converge  as $n \to +\infty$ to some limits  denoted $\tilde a_{p, k}(0)$, for $k\in \{k_p, \cdots, k_p+m_p\}.$  Notice that, as an effect of the scaling, all other front points diverge to infinity, in the chosen frame. 
 We apply  now Proposition \ref{firststep1} to this cluster of points : it yields  uniform convergence, for  $k\in \{k_p, \cdots, k_p+m_p\}$ of the front points 
  $\tilde a^n_{k} (\cdot)$ to the solution $\tilde a_{p, k}(\cdot)$ of the differential equation \eqref{tyrannosaure} supplemented with the initial time conditions $\tilde a_{p, k}(0)$ defined above.  If $\Fat^p$ denotes the functional $\Fat$ defined in \eqref{repattract} restricted to the points of the cluster $J_p$, we have in view of \eqref{gediroff}
    $$
    \frac{d}{d\tau} \Fat^p(\tau) \geq 0,\qquad  \text{for any } p=1, \cdots, q, \quad \text{for any} \ \tau \in (0, \tilde {\mathcal T}_2).
    $$
 On the other hand, since the mutual distances between the distinct clusters diverge towards infinity, and hence their mutual interactions energies tend to zero, one obtains, in view of the uniform convergence for each separate cluster, that
 $$  \underset {n \to +\infty } \lim \tmFatn(\tau) =
  \underset {p=1} {\overset  {q} \sum}   \Fat^p (\tau)
  \geq 
  \underset {p=1} {\overset  {q} \sum}  \Fat^p (0)=
 \underset {n \to +\infty } \lim \tmFatn(0),
 \qquad \text{for }  \tau  \in   \  (0, \tilde{\mathcal T}_2).
 $$
 Therefore,  for $n$ sufficiently large we are led to
 $$  \tmFatn(\tilde{\mathcal T}_2)^{-\frac{1}{\omega}}\leq  \tmFatn(0)^{-\frac{1}{\omega} }+\frac{1}{2\kappa_3}.   $$
 Scaling back to the original variables, this contradicts ¬†\eqref{contralouf} and hence completes the proof. 
\end{proof}

From Proposition \ref{nihil} and Proposition \ref{spillit} we obtain 
    
 \begin{proposition}
 \label{vinsanto}
There exists $\mathcal K_2>0,$ depending only on $V$ and $M_0$, with the following properties. 
Assume that $J^-(s_0) \neq \emptyset$ and that $s \in \mathcal I_0^\eps(s_0)$ satisfies 
 \begin{equation}\label{eq:bondebutbis}
 \dmineL(s) \geq \max(\beta_0,\beta_1)\eps, \qquad \text{and} \qquad s + \mathcal  K_2   \dmines(s)^{\omega+2}<S.
\end{equation}
Then there exists some time  $\mathcal T_{\rm  col}^-(s) \in \mathcal I_0^\eps(s_0)$ such that 
\begin{equation}
\label{minorrtcol}
  \rtcol(s)- s  \leq  \mathcal K_2   \dmines(s)^{\omega+2},
\end{equation}
and 
  \begin{equation}
  \label{defrtcol}
  \dmineL(\rtcol (s))\leq  \max\left( \beta_0, 8 \mathfrak q_1(\upalpha_2)\right) \eps.
 \end{equation}
\end{proposition}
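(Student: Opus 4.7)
The plan is to construct inductively a finite sequence of stopping times $s = \tau_0 < \tau_1 < \cdots < \tau_N$ in $\mathcal I_0^\eps(s_0)$, at each of which either Proposition \ref{nihil} or Proposition \ref{spillit} will be applied, depending on whether the attractive regime $\dmines(\tau_i) \leq \dminep(\tau_i)$ or the repulsive one $\dminep(\tau_i) < \dmines(\tau_i)$ prevails. Specifically, if at $\tau_i$ we already have $\dmineL(\tau_i) \leq \max(\upbeta_0, 8\q_1(\upalpha_2))\eps$ we terminate and set $\rtcol(s) := \tau_i$; otherwise we set $\tau_{i+1} = \mathcal T_1^\eps(\tau_i)$ in the attractive case and $\tau_{i+1} = \mathcal T_2^\eps(\tau_i)$ in the repulsive one. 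Thanks to \eqref{louf2}, the attractive regime is preserved under $\mathcal T_1^\eps$, so the sequence consists of an initial (possibly empty) run of repulsive steps followed by an uninterrupted run of attractive steps. The entire task will be to show that the total elapsed time is bounded by $\mathcal K_2\dmines(s)^{\omega+2}$ for a constant $\mathcal K_2$ depending only on $V$ and $M_0$.

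For the repulsive run, iterating \eqref{majordplus} yields $\dminep(\tau_j) \geq 10^j \dminep(s)$, and combining \eqref{loufdingo2} with \eqref{globe} produces the telescoping estimate
\begin{equation*}
\mFat(\tau_m)^{-1/\omega} \;\leq\; \mFat(s)^{-1/\omega} + \frac{1}{\kappa_3}\sum_{j=0}^{m-1}\dminep(\tau_j) \;\leq\; \frac{1}{\kappa_2}\dmines(s) + \frac{10}{9\kappa_3}\dminep(\tau_{m-1}).
\end{equation*}
Using once more \eqref{globe} together with the repulsive assumption $\dminep(\tau_{m-1}) \leq \dmines(\tau_{m-1})$, a straightforward bootstrap will yield a constant $C_* > 0$ depending only on $V$ and $M_0$ such that $\dmines(\tau_m) \leq C_*\dmines(s)$ and $\dminep(\tau_{m-1}) \leq C_*\dmines(s)$ for all $m$ up to the first transition to the attractive regime, which must occur after at most $m_* \lesssim \log_{10}(\dmines(s)/\dminep(s))$ steps. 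Thanks to the geometric growth of $\dminep$, the total repulsive time is dominated by its last contribution:
\begin{equation*}
\sum_{j=0}^{m_*-1}\mathcal K_1\dminep(\tau_j)^{\omega+2} \;\leq\; \frac{\mathcal K_1}{1-10^{-(\omega+2)}}\dminep(\tau_{m_*-1})^{\omega+2} \;\leq\; C\,\dmines(s)^{\omega+2}.
\end{equation*}

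For the attractive run starting at $\tau_{m_*}$, \eqref{majordbis} gives the geometric decay $\dmines(\tau_{m_*+j+1}) \leq \tfrac{1}{10}\dmines(\tau_{m_*+j})$. Summing the time costs provided by Proposition \ref{nihil} we obtain
\begin{equation*}
\sum_{j\geq 0}\mathcal K_0 \dmines(\tau_{m_*+j})^{\omega+2} \;\leq\; \frac{\mathcal K_0\, C_*^{\omega+2}}{1-10^{-(\omega+2)}}\dmines(s)^{\omega+2}.
\end{equation*}
Combining the two phases and adjusting $\mathcal K_2$ will yield \eqref{minorrtcol}. The iteration terminates in finitely many steps: either $\dmines$ eventually drops below $\upbeta_0\eps$, so that $\dmineL \leq \dmines \leq \upbeta_0\eps$ and we stop; or the update $\tau_{i+1}$ coincides with $\mathcal T_0^\eps(s_0)$, in which case by definition of $\mathcal T_0^\eps(\upalpha_2,s_0)$ one has $\dmineL(\tau_{i+1}) \leq 8\q_1(\upalpha_2)\eps$. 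Either way \eqref{defrtcol} holds.

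The main technical difficulty is the key observation underlying the repulsive bound: although $\dmines$ could in principle grow by a bounded factor at each repulsive step, the total growth over the entire repulsive run is only by a constant factor $C_*$, rather than by the exponential $C_*^{m_*}$ a crude per-step analysis would suggest. This rests crucially on the additive (not multiplicative) nature of \eqref{loufdingo2}, which together with the geometric growth of $\dminep$ makes the corrections to $\mFat^{-1/\omega}$ form a geometric series dominated by its largest term $\dminep(\tau_{m_*-1})$, itself controlled by $\dmines(s)$ at the transition. Without this observation the time bound in $\dmines(s)^{\omega+2}$ would fail.
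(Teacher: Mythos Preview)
Your approach is essentially the paper's: an initial run of $\mathcal T_2^\eps$-steps while $\dminep<\dmines$, followed by a run of $\mathcal T_1^\eps$-steps (which, by \eqref{louf2}, never leaves the attractive regime), with geometric summation of the time increments in each phase. The paper organizes this as Case~II followed by Case~I, but the iterative mechanism, the stopping criteria, and the use of \eqref{majordplus}, \eqref{majordbis}, \eqref{loufdingo}, \eqref{loufdingue}, \eqref{loufdingo2} are identical.

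One point in your repulsive-phase bootstrap does not close as written. From your displayed inequality and the hypothesis $\dminep(\tau_{m-1})\le\dmines(\tau_{m-1})$ you only obtain
\[
\dmines(\tau_m)\ \le\ \tfrac{\kappa_3}{\kappa_2}\dmines(s)+\tfrac{10}{9}\,\dmines(\tau_{m-1}),
\]
whose iterates grow like $(10/9)^m$, not $C_*$. The correct move (which is what the paper does, and what your final paragraph hints at) is to push the geometric tail one step further using \eqref{majordplus}: since $\dminep(\tau_{m-1})\le\frac{1}{10}\dminep(\tau_m)$ and, while step $m$ is still repulsive, $\dminep(\tau_m)\le\dmines(\tau_m)$, one gets
\[
\dmines(\tau_m)\ \le\ \tfrac{\kappa_3}{\kappa_2}\dmines(s)+\tfrac{1}{9}\,\dmines(\tau_m),
\]
which absorbs to give $\dmines(\tau_m)\le\frac{9\kappa_3}{8\kappa_2}\dmines(s)$ uniformly in $m$, hence also $\dminep(\tau_{m-1})\le C_*\dmines(s)$. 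With this correction the rest of your argument goes through.
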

\begin{proof}
We distinguish two cases.\\

\smallskip
\noindent{\it Case I:}
\begin{equation}
 \label{demine}
 \dmineL(s)= \dmines(s) \leq \dminep(s).
\end{equation}
In that case we will make use of Proposition \ref{nihil} in an iterative argument. 
In view of \eqref{eq:bondebutbis}, we are in position to invoke Proposition \ref{nihil} at time $\hat s=s$ and set 
$s_1=\mathcal {T}_1^\eps (s),$ 
so that in particular  
\begin{equation}
\label{red0}
s_1-s \leq \mathcal  K_0 \dmin^{\eps, -}(s)^{\omega+2}
\qquad \text{and} \qquad \dmin^{\eps, -}(s_1) \leq  \dmin^{\eps, +}(s_1). 
\end{equation}
Notice that by \eqref{eq:bondebutbis} and \eqref{red0} we have $s_1 < S.$\\
We distinguish two sub-cases:

\smallskip
{\it Case I.1: $s_1 = \mathcal T_0^\eps (s_0)$ or $\dmin^{\eps,-}(s_1)<\beta_0 \eps$.} In that case, we simply set
$\rtcol (s)= s_1$ and we are done if we require $\mathcal K_2 \geq \mathcal K_2,$ by \eqref{red0} and the definition of $\mathcal T_0^\eps(s_0).$

\smallskip
{\it Case I.2: $s_1 < \mathcal T_0^\eps (s_0)$ and $\dmin^{\eps,-}(s_1)\geq \beta_0 \eps.$ }
In that case, we may apply Proposition \ref{nihil} at time $\hat s=s_1$ and set 
$s_2=\mathcal {T}_1^\eps (s_1),$  
so that in particular  
\begin{equation}
\label{red0bis}
s_2-s_1 \leq \mathcal  K_0 \dmin^{\eps, -}(s_1)^{\omega+2}
\qquad \text{and} \qquad \dmin^{\eps, -}(s_2) \leq  \dmin^{\eps, +}(s_2). 
\end{equation}
Moreover, since in that case $s_1= \mathcal T_1^\eps(s) < \mathcal T_0^\eps (s_0)$, it follows from \eqref{majordbis} that
\begin{equation}\label{eq:decroit0}
 \dmin^{\eps,-}(s_1) \leq \frac{1}{10} \dmin^{\eps,-}(s), 
\end{equation}
and therefore from \eqref{red0bis} we actually have
\begin{equation}
\label{red0ter0}
s_2-s_1 \leq \mathcal  K_0 10^{-(\omega+2)} \dmin^{\eps, -}(s)^{\omega+2}
\qquad \text{and} \qquad \dmin^{\eps, -}(s_2) \leq  \dmin^{\eps, +}(s_2). 
\end{equation}

\medskip
 We then iterate the process until we fall into Case I.1. If we have not reached that stage up to step $m$, then thanks to Proposition \ref{nihil} applied at time  $\hat s = s_{m}$ we obtain, with $s_{m+1}:=\mathcal T_1^\eps(s_m)$, 
\begin{equation}
\label{red0bis*}
s_{m+1}-s_m \leq \mathcal  K_0 \dmin^{\eps, -}(s_m)^{\omega+2}
\qquad \text{and} \qquad \dmin^{\eps, -}(s_{m+1}) \leq  \dmin^{\eps, +}(s_{m+1}). 
\end{equation}
Moreover, since Case I.1 was not reached before step $m$, we have $s_{p}= \mathcal T_1^\eps(s_{p-1}) < \mathcal T_0^\eps (s_0)$ for all $p\leq m$, so that repeated use of \eqref{majordbis} yields 
\begin{equation}\label{eq:decroit}
 \dmin^{\eps,-}(s_p) \leq \left(\frac{1}{10}\right)^{p} \dmin^{\eps,-}(s),\qquad \forall p\leq m.
\end{equation}
From \eqref{red0bis*} we thus also have
\begin{equation}
\label{red0ter}
s_{p+1}-s_p \leq \mathcal  K_0 10^{-p(\omega+2)} \dmin^{\eps, -}(s)^{\omega+2},\qquad \forall p\leq m,
\end{equation}
and therefore by summation 
\begin{equation}
\label{redishm}
s_{m+1}-s \leq \mathcal  K_0 (\sum_{p=0}^m 
10^{-p(\omega+2)})\dmin^{\eps, -}(s)^{\omega+2},
\end{equation}
so that in particular from \eqref{eq:bondebutbis} it holds $s_{m+1}<S$ if we choose $\mathcal K_2 \geq 2 \mathcal K_0.$
It follows from \eqref{eq:decroit} that Case I.1 is necessarily reached in a finite number of steps, thus defining
$\rtcol(s)$, and from \eqref{redishm} we obtain the upper bound
\begin{equation}\label{eq:presque1}
\rtcol(s)-s \leq \mathcal  K_0 (\sum_{p=0}^\infty 
10^{-p(\omega+2)})\dmin^{\eps, -}(s)^{\omega+2} \leq 2 K_0 \dmin^{\eps, -}(s)^{\omega+2} ,
\end{equation}
from which \eqref{minorrtcol} follows. 

\medskip
\noindent{\it Case II:}
\begin{equation}
 \label{demineenvers}
  \dmineL(s)= \dminep(s) < \dmines(s).
\end{equation}
Note that this implies that $J^+(s_0) \neq \emptyset.$  
We will show that Case II can be reduced to Case I after some controlled interval of time necessary for the repulsive forces
to push $\dminep$ above $\dmines.$ More precisely, we define the stopping time 
 $$\rtcros (s)=\inf \{ \mathcal T_0^\eps(s)\geq  s'\geq s, \,
  \dmines(s')\leq \dminep(s')\}.
 $$
As in Case I, we implement an iterative argument, but based this time on Proposition \ref{spillit}. 
In view of \eqref{demineenvers} and \eqref{eq:bondebutbis}, we may apply Proposition \ref{spillit}
at time $\hat s = s$ and set 
$s_1=\mathcal {T}_2^\eps (s),$ 
so that in particular  
\begin{equation}
\label{red0lissy}
s_1-s \leq \mathcal  K_1 \dmin^{\eps, +}(s)^{\omega+2} \leq \mathcal  K_1 \dmin^{\eps, -}(s)^{\omega+2}. 
\end{equation}
Notice that by \eqref{eq:bondebutbis} and \eqref{red0lissy} we have $s_1 < S$ and therefore
$\dmin^{\eps,+}(s_1)\geq 10 \dmin^{\eps,+}(s) \geq \beta_1$, and by \eqref{loufdingo2}
 \begin{equation}
  \label{eq:presque0}
  \begin{aligned}
  \mFat\left (s_1\right)^{-\frac{1}{\omega}}
 &\leq
  \mFat( s)^{-\frac{1}{\omega}}
  +\frac{1}{\kappa_3} \dminep(s)\\
 &\leq
  \mFat( s)^{-\frac{1}{\omega}}
  +\frac{1}{10\kappa_3}\dminep(s_{1}).
\end{aligned}
\end{equation}
We distinguish two sub-cases. 

\smallskip
{\it Case II.1: $s_1 \geq \rtcros(s).$} In that case we proceed to Case I which we will apply
starting at $s_1$ instead of $s$ and we set $\rtcol(s):=\rtcol(s_1).$ Since, combining the first inequality of \eqref{eq:presque0}   
with \eqref{globe}, we deduce that
\begin{equation}\label{eq:momo0}
 \dmines(s_{1})\leq \kappa_3\kappa_2^{-1} \dmines(s)+ \dminep(s) \leq \left( \kappa_3\kappa_2^{-1}+1\right) \dmines(s),
\end{equation}
the equivalent of \eqref{eq:presque1} becomes
\begin{equation}\label{eq:presque2}
\begin{aligned}
\rtcol(s_1)-s_1 &\leq \mathcal  K_0 (\sum_{p=0}^\infty 
10^{-p(\omega+2)})\dmin^{\eps, -}(s_1)^{\omega+2}\\
&\leq 2 \mathcal K_0 \dmin^{\eps, -}(s_1)^{\omega+2}\\
&\leq  2 \mathcal K_0 \left( \kappa_3\kappa_2^{-1}+1\right)^{\omega+2} \dmin^{\eps, -}(s)^{\omega+2},
\end{aligned}
\end{equation}
and therefore it follows from \eqref{red0lissy} that
\begin{equation}\label{eq:momo1}
\rtcol(s)-s \leq \rtcol(s_1)-s_1 + (s_1-s) \leq  \left( \mathcal K_1  + 2 \mathcal K_0 \left( \kappa_3\kappa_2^{-1}+1\right)^{\omega+2}\right) \dmin^{\eps, -}(s)^{\omega+2},
\end{equation}
and \eqref{minorrtcol} follows if $\mathcal K_2 \geq \mathcal K_1  + 2 \mathcal K_0 \left( \kappa_3\kappa_2^{-1}+1\right)^{\omega+2}.$

\smallskip
{\it Case II.2: $s_1 < \rtcros(s).$}  In that case we proceed to construct $s_2=\mathcal T_2^\eps(s_1).$ Notice that
combining the second inequality of \eqref{eq:presque0}   
with \eqref{globe}, we deduce that
\begin{equation}\label{eq:pepe0}
\dmines(s_{1})\leq \kappa_3\kappa_2^{-1} \dmines(s)+ \frac{1}{5}\dminep(s_1)
 \leq 
 {\kappa_3}{\kappa_2}^{-1} \dmines(s)+ \frac{1}{5}\dmines(s_1), 
\end{equation}
so that 
 \begin{equation}
 \label{forage0}
\dminep(s_1)\leq \dmines(s_1)\leq \frac54 \kappa_3\kappa_2^{-1} \dmines(s).  
\end{equation}

\medskip
\noindent
We explain now the iterative argument. Assume that for some $m\geq 1$ have already constructed $s_1,\cdots,s_m$, such that 
for  $2\leq p\leq m$ 
$$
s_p < S,\qquad \beta_1\eps \leq \dminep(s_p)\leq \dmines(s_p),\qquad s_p = \mathcal T_2^\eps(s_{p-1}). 
$$


\noindent
First, repeated use of \eqref{majordplus} yields 
\begin{equation}
\label{pourbien}
\dminep(s_p)\geq 10^p \dminep(s), \qquad \forall 1\leq p\leq m, 
\end{equation}
and actually
\begin{equation}
\dminep(s_p)\geq 10^{p-q} \dminep(s), \qquad \forall 1\leq q \leq  p\leq m.
\end{equation}
Hence, by repeated use of 
\eqref{loufdingo2}, we obtain
 \begin{equation*}
 \begin{aligned}
\label{ubs}
  \mFat\left (s_m\right)^{-\frac{1}{\omega}}
 &\leq
  \mFat( s)^{-\frac{1}{\omega}}
  +\frac{1}{\kappa_3} \left(\dminep(s)+ \underset {p=1} {\overset{m-1}\sum}  \dminep(s_p) \right)\\
 &\leq
  \mFat( s)^{-\frac{1}{\omega}}
  +\frac{1}{\kappa_3} \sum_{p=0}^{m-1} 10^{-p}  \dminep(s_{m-1})\\
  &\leq   \mFat( s)^{-\frac{1}{\omega}}
  +\frac{2}{\kappa_3}\dminep(s_{m-1})\\
  &\leq  \mFat( s)^{-\frac{1}{\omega}}
  +\frac{1}{5\kappa_3}\dminep(s_{m}).
\end{aligned}
\end{equation*}
Combining the latter with \eqref{globe}, we deduce that
$$
 \dmines(s_{m})\leq \kappa_3\kappa_2^{-1} \dmines(s)+ \frac{1}{5}\dminep(s_m)
 \leq 
 {\kappa_3}{\kappa_2}^{-1} \dmines(s)+ \frac{1}{5}\dmines(s_m), 
$$
so that 
 \begin{equation}
 \label{forage}
\dminep(s_m)\leq \dmines(s_m)\leq \frac54 \kappa_3\kappa_2^{-1} \dmines(s).  
\end{equation}
Let $s_{m+1} := \mathcal T_2^\eps(s_m).$ Then by \eqref{loufdingo} and \eqref{pourbien}
\begin{equation}
 \label{sommetoute}
\begin{aligned}
 s_{m+1}-s 
& \leq
 \mathcal  K_1 \left( \dminep(s)^{\omega+2} + \sum_{p=1}^{m} \dminep(s_p)^{\omega+2} \right)\\
&\leq  
\mathcal  K_1 \sum_{p=0}^{m-1} 10^{-(\omega+2)(m-p)} \dminep(s_{m})^{\omega+2}\\ 
& \leq
2\mathcal  K_1  \dminep(s_{m})^{\omega+2}.
\end{aligned}
\end{equation}
Combining \eqref{sommetoute} with \eqref{forage} we are led to 
$$s_{m+1}-s \leq 2 \left(\frac{5\kappa_3}{4\kappa_2}\right)^{\omega+2}\mathcal  K_1 \dmines(s)^{\omega+2}
$$
and therefore by \eqref{eq:bondebutbis} we have $s_{m+1}<S.$

Combining \eqref{forage} with  \eqref{pourbien}, we obtain
 $$
0 \leq \dmines(s_m)-\dminep(s_m) \leq  \frac{\kappa_3}{\kappa_2} \dmines(s)-10^m(\dminep(s)),
$$
and therefore necessarily
$$
 m \leq \log_{10} \left( \frac{\kappa_3 \dmines(s)}{\kappa_2 \dminep(s)}\right).
$$
It follows that the number  $m_0=\sup \{m\in \N_*,\dmines(s_m)\geq \dminep(s_m) \} $ is finite, and  
at that stage we proceed to Case I as in Case II.1 above, and the conclusion follows likewise, replacing  
\eqref{eq:momo0} by 
$$
\dminep(s_{m_0+1}) \leq \frac94 \kappa_3\kappa_2^{-1} \dmines(s)
$$
which is obtained combining 
$$
\dminep(s_{m_0+1}) \leq \kappa_3\kappa_2^{-1}\dmines(s) + \dminep(s_{m_0}),
$$ 
with
\begin{equation*}
\dminep(s_{m_0})\leq \dmines(s_{m_0})\leq \frac54 \kappa_3\kappa_2^{-1} \dmines(s).
\end{equation*}
\end{proof}

\subsection{Proof of Proposition \ref{rtcolplus0}}
\label{proofouf}

We will fix the value of the constants $\upkappa_1$, $\upalpha_3$ and $\mathcal K_{\rm col}$ in the course of the proof.\\
Let $s_0$ be as in the statement. We first require that
$$
\upalpha_3 \geq \upalpha_2 \qquad \text{and that}\qquad \upalpha_3 \geq 16 \mathfrak q_1(\upalpha_2), 
$$
so that assumption $\wp(\upalpha_3,s_0)$ implies assumption \ref{eq:bondebut} of Subsection \ref{sect:contrad}. 

Next, we set $s=s_0+\eps^{\omega+2}$ and we wish to make sure that the assumptions of Proposition \ref{vinsanto}
are satisfied at time $s.$ In view of the upper bound \eqref{theodebase1} on the velocity of the front set, we deduce that
$$
\dmineL(s) \geq \dmineL(s_0) - C\uprho_0^{\frac{1}{\omega+2}}\eps \geq \upalpha_3 \eps -  C\uprho_0^{\frac{1}{\omega+2}}\eps \geq \max(\beta_0,\beta_1) \eps
$$
provided we choose $\upalpha_3$ sufficiently large. Also,
\begin{equation}\label{eq:pot0}
\frac12 \dmines(s_0) \leq \dmineL(s_0) - C\uprho_0^{\frac{1}{\omega+2}}\eps \leq \dmineL(s) \leq \dmineL(s_0) + C\uprho_0^{\frac{1}{\omega+2}}\eps \leq 2 \dmineL(s_0),
\end{equation}
and therefore provided we choose
$$
\mathcal K_{\rm col} \geq 2^{\omega+2} \mathcal K_2
$$
it follows from the assumption $s_0+\mathcal K_{\rm col}\dmineL(s_0)^{\omega+2} <S$ that $s+\mathcal K_2 \dmineL(s)^{\omega+2} <S.$   
Therefore we may apply Proposition \ref{vinsanto}. Let $\rtcol(s) \in \mathcal I_0^\eps(s_0)$ be given by its statement, so that
by \eqref{eq:pot0} 
$$
\rtcol(s)-s \leq 2^{\omega+2} \mathcal K_{2} \dmines(s_0)^{\omega+2}, 
$$
and
\begin{equation}\label{eq:cavient}
\dmineL(\rtcol(s)) \leq \max\left( \beta_0, 8 \mathfrak q_1(\upalpha_2)\right) \eps.
\end{equation}
By Proposition \ref{cornichon}, there exists some time 
$\rtcolp(s_0) \in [\rtcol(s), \rtcol(s)+ \mathfrak q_0(\upalpha_3)\eps^{\omega+2}]$   such that  $\mathrm{W P I}_\eps^{\scriptscriptstyle \rL} (\upalpha_3 \eps, \rtcolp(s_0))$ holds.  In  view of \eqref{eq:pot0} and since $\dmines(s_0)\geq \upalpha_3\eps$, it follows
that 
\begin{equation}\label{eq:dominetemps}
\begin{aligned}
0\leq \rtcolp(s_0)-s_0 &\leq \eps^{\omega+2} +2^{\omega+2} \mathcal K_{2} \dmines(s_0)^{\omega+2} + \mathfrak q_0(\upalpha_3) \eps^{\omega+2} \\
& \leq \left( 2^{\omega+2}\mathcal K_2+\frac{1+\mathfrak q_0(\upalpha_3)}{\upalpha_3^{\omega+2}} \right)\dmines(s_0)^{\omega+2}\\
&\leq \mathcal K_{\rm col} \dmines(s_0)^{\omega+2} 
\end{aligned}
\end{equation}
provided we finally {\it fix} the value of $\mathcal K_{\rm col}$ as 
$$\mathcal  K_{\rm col} = \left( 2^{\omega+2}\mathcal K_2+\frac{1+\mathfrak q_0(\upalpha_3)}{\upalpha_3^{\omega+2}} \right).$$
[Note that at this stage $\mathcal K_{\rm col}$ is fixed but its definition depend on $\upalpha_3$ which has not yet been fixed.  
Of course when we will fix $\upalpha_3$ below we shall do it without any reference to $\mathcal K_{\rm col}$, in order to avoid 
impossible loops]\\ 
Next, we first claim that 
$$
\matfEL(s_0) \geq \matfEL(s) \geq \matfEL(\rtcolp(s_0)).
$$
In view of Corollary \ref{avecdec}, it suffices to check that $\rL \geq \rL_0(s_0,\rtcolp(s_0))$, where we recall that
the function $\rL_0(\cdot)$ was defined in \eqref{defrL_0}. In view of \eqref{eq:dominetemps}, this reduces to
$$
100{\rm C}_e\rL^{-(\omega+2)}\mathcal K_{\rm col} \dmines(s_0)^{\omega+2}  \leq \frac{\upmu_1 }{4}.
$$
Since by \eqref{eq:lafrite} we have $\dmines(s_0) \leq 2\upkappa_0(s_0)\rL,$ it suffices therefore that
$$
\upkappa_0(s_0) \leq \frac12 \left( \frac{\upmu_1}{400{\rm C_{\rm e} \mathcal K_{\rm col}}}\right)^\frac{1}{\omega+2}\equiv \upkappa_1,
$$
and we have now {\it fixed} the value of $\upkappa_1.$\\
Next, we claim that actually
$$
\matfEL(\rtcolp(s_0)) \leq \matfEL(s_0)- \upmu_1.
$$
Indeed, otherwise by Corollary \ref{avecdec} we would have $\matfEL (\rtcolp(s_0))= \matfEL(s_0)$, and therefore condition 
$\mathcal C(\upalpha_3 \eps, \rL, s_0, \rtcolp(s_0))$ of Subsection \ref{propafront} would hold. Invoking 
Proposition \ref{pneurose}, this would imply that condition ${\mathcal WP}^{\rL}(\Lambda_{\rm log}(\upalpha_3\eps),\tau)$ holds for $\tau \in (s_0+\eps^{\omega+2},\rtcolp(s_0))$, so that in particular
$$
\dmine(\rtcol(s_0))\geq \Lambda_{\rm log}(\upalpha_3\eps).
$$
It suffices thus to choose $\upalpha_3$ sufficiently big so that 
$$
\Lambda_{\rm log}(\upalpha_3\eps) > \max(\beta_0, 8\mathfrak q_1(\upalpha_2)))\eps,
$$
and the contradiction then follows from \eqref{eq:cavient}. \qed

\subsection{Proof of Proposition \ref{nettoyage}}
\label{prooof}

We will fix the values of $\upkappa_*$ and $\uprho_*$ in the course of the proofs, as the smallest
numbers which satisfy a finite number lower bound inequalities.

First, recall that it follows from \eqref{confitdoie} and \eqref{theodebase1} that if $0\leq s\leq \uprho_0(R-r)^{\omega+2}$ then 
\begin{equation}\label{eq:grimb}
\mathfrak D_\eps(s) \cap \ILLLL \subset  \underset{ k\in J_0} \cup  (b^\eps_k-R,
b^\eps_k+R) \subset \IdkzL, \ \   {\rm where \ the \ union \ is \ disjoint},
\end{equation}
and in particular 
$\mathcal{C}_{\rL,S}$ holds where
$$
S := \uprho_0(R-r)^{\omega+2 } \geq \uprho_0 \left( \frac{R}{2}\right)^{\omega+2}.
$$
Having \eqref{defrL_0} in mind, and in view of \eqref{eq:grimb} and \eqref{poolish}, we estimate 
$$
100{\rm C_{\rm e}} \rL^{-(\omega+2)}S \leq  
100{\rm C_{\rm e}} \left(\frac{R}{2\rL}\right)^{\omega+2}S \leq
100{\rm C_{\rm e}} \upalpha_*^{-(\omega+2)} \leq \frac{\upmu_1}{4},
$$
where the last inequality follows provided we choose $\upalpha_*$ sufficiently large. As a consequence,
the function $\matfEL$ is non-increasing on the set of times $s$ in the interval
$[\eps^\omega\rL^2,S]$ where $\wp(\upalpha_1\eps,s)$ holds.

For such times $s$, the front points $\{a^\eps_k(s)\}_{k
\in J(s)}$ are well-defined, and for $q \in J_0$,  we have defined in the introduction $J_q(s)=\{k \in J(s),
a^\eps_k (s) \in [b^\eps_q-R, b^\eps_q+R]\}$, and we have set  $\ell_q=\sharp  J_q $ and 
$J_q(s)=\{k_{q} , k_{q+1},\cdots,  k_{q +\ell_q-1}\}, $ 
 where $k_1=1$, and  $k_q=\ell_1+\cdots +\ell_{q-1}+1$, for $q \geq 2$. 

\medskip
\noindent{\it Step 1. Annihilations of all the pairs of fronts-antifronts.} We claim that 
there exists some time $\tilde s \in (\eps^\omega\rL^2,\frac 12 S)$ such that $\wp(\udl,\tilde s)$ holds and such that  for any $q \in J_0$,   
 $\epsilon_{k+\frac 12}(\tilde s)=+1$, {\rm \ for   \  } $k \in J_q(\tilde s)\setminus 
\{k_{q}(\tilde s)+\ell_q(\tilde s)-1\} \ {\rm \ or  \  }   \sharp J_q (\tilde s) \leq 1, $ or equivalently that  $\dagger _k(\tilde s)=\dagger_{k'}(\tilde s)$ for $k$ and $k'$ in  the same $J_q(\tilde s)$. In particular, $\dmines(\tilde s) \geq 2R.$

\begin{proof}[Proof of the claim.]
If we require $\upalpha_*$ to be sufficiently large, then by \eqref{poolish} we have that $\eps^\omega\rL^2+\eps^{\omega+1}\rL \leq S/2,$
and therefore by Proposition \ref{parareglo} we may choose a first time 
$$
s_0 \in [\eps^\omega\rL^2,\eps^\omega\rL^2+\eps^{\omega+1}\rL] \quad\text{such that}\quad \wp(\udl,s_0) \ \text{holds}.
$$
Actually, we have
\begin{equation}\label{eq:bornes0}
s_0\leq \eps^\omega\rL^2+\eps^{\omega+1}\rL\leq 2 \eps^\omega\rL^2 \leq 2 \alpha_*^{-(\omega+2)}r^{\omega+2} \leq 2\alpha_*^{-2(\omega+2)} R^{\omega+2}  \leq \frac{1}{\uprho_0}2^{\omega+3} \alpha_*^{-2(\omega+2)}S.
\end{equation}
Note that by \eqref{theodebase1} we have the inclusion
$$
\mathfrak D_\eps(s_0) \cap \IL \subset \mathcal N(b,r_0),
$$
where
$$
r_0 = r + \left(\frac{s_0}{\uprho_0}\right)^\frac{1}{\omega+2} \leq 2r,
$$
provided once more that $\upalpha_*$ is sufficiently large, and where for $\rho>0$ we have set 
$
\mathcal N(b,\rho) = \cup_{q\in J_0} [b_j^\eps-\rho,b_j^\eps+\rho].
$
In view of   the confinement  condition \eqref{confitdoie} only two cases can occur:
\begin{equation*}
{\rm i) \ }  \quad \dmines(s_0)\geq  3R-2r_0     \qquad\text{or}\qquad
 {\rm   ii )\ }  \quad \dmines(s_0) \leq 2r_0. 
\end{equation*}

If case i) occurs, then, for any $q \in J_0$, we have
 $\epsilon_{k+\frac 12}=+1$, {\rm \ for  \ any \  } $k  \in  J_q ( \tau_1 )\setminus  \{k_{q}(s_0)+\ell_q(s_0)-1\} $. Choosing $\tilde s=s_0$,  Step 1  is completed in the case considered.

 If  instead case ii) occurs, then we will make use of Proposition \ref{rtcolplus0} to remove the small pairs of fronts-antifronts  present at small scales. More precisely, assume that for some $j\geq 0$ we have constructed $0\leq s_j\leq S$ and $r_j>0$ such that $\wp(\udl,s_j)$ holds, such that we have  
 \begin{equation}\label{eq:capousse}
\mathfrak D_\eps(s_j) \cap \IL \subset \mathcal N(b,r_j), \qquad  \matfEL(s_j)\leq \matfEL(s_0)-j\upmu_1,
 \end{equation}
 as well as the estimates,
 \begin{equation}\label{eq:bornesr}
  r_0\leq r_j \leq \gamma^j r_0\leq \frac{R}{2}, \qquad s_j \leq s_0 +
  (2^{\omega+2}
  \mathcal K_{\rm col}+1)\frac{\gamma^{j(\omega+2)}-1}{\gamma^{\omega+2}-1}r_0^{\omega+2}\leq \frac{S}{2},
 \end{equation}
 where $\gamma:=\left( 2 + 2\left(\frac{\mathcal K_{\rm col}}{\uprho_0}\right)^\frac{1}{\omega+2}\right)$, 
and moreover that case ii) holds at step $j$, that is
\begin{equation}\label{eq:crepe}
 \dmines(s_j) \leq 2r_j\leq R.
\end{equation}
Let $\tilde s_{j}:= \rtcolp(s_j)$ be given by Proposition \ref{rtcolplus0} (the confinement condition holds in view of \eqref{eq:grimb} and we have $\udl \geq \upalpha_3 \eps$ provided $\upalpha_*$ is sufficiently large), and let then  $s_{j+1}\in [\tilde s_j,\tilde s_j + \eps^{\omega+1}\rL]$ satisfying $\wp(\udl,s_{j+1})$ be given by Proposition \ref{parareglo}. In particular, we have
\begin{equation}\label{eq:sale}
 \matfEL(s_{j+1}) \leq \matfEL(\tilde s_j) \leq \matfEL(s_j)- \upmu_1 \leq \matfEL(s_0)-(j+1)\upmu_1.
\end{equation}
Since
$$
s_{j+1} - s_j \leq \mathcal K_{\rm col} (2r_j)^{\omega+2} + \eps^{\omega+1}\rL \leq \left(2^{\omega+2}\mathcal K_{\rm col}+1\right)r_j^{\omega+2},
$$
we have, in view of \eqref{eq:bornesr} 
\begin{equation}\label{eq:moinsdeux}
\begin{split}
s_{j+1} &\leq s_0 +  \left(2^{\omega+2}\mathcal K_{\rm col}+1\right) \left[ \frac{\gamma^{j(\omega+2)}-1}{\gamma^{\omega+2}-1} + 
\gamma^{j(\omega+2)} \right] r_0^{\omega+2}\\ 
&\leq s_0 +   \left(2^{\omega+2}\mathcal K_{\rm col}+1\right)
\frac{\gamma^{(j+1)(\omega+2)}-1}{\gamma^{\omega+2}-1} r_0^{\omega+2},
\end{split}
\end{equation}
and by \eqref{theodebase1} $\mathfrak D_\eps(s_{j+1}) \cap \IL \subset \mathcal N(b,r_{j+1})$, where 
\begin{equation}\label{eq:moinsune}
r_{j+1} = r_j + 2\left(\frac{\mathcal K_{\rm col}}{\uprho_0}\right)^\frac{1}{\omega+2} r_j + \frac{1}{\uprho_0}\eps \left( \frac{\rL}{\eps}\right)^\frac{1}{\omega+2} \leq \left( 2 + 2\left(\frac{\mathcal K_{\rm col}}{\uprho_0}\right)^\frac{1}{\omega+2}\right) r_j=\gamma r_j.
\end{equation}
In view of \eqref{eq:bornes0} and \eqref{poolish}, we also have
\begin{equation}\label{eq:nottoolong}
\gamma^{j+1} r_0 \leq 2 \gamma^{j+1} \upalpha_*^{-1} R 
\end{equation}
and
\begin{equation}\label{eq:nottoolongbis}\begin{split}
&s_{0} + \left(2^{\omega+2}\mathcal K_{\rm col}+1\right)
\frac{\gamma^{(j+1)(\omega+2)}-1}{\gamma^{\omega+2}-1} r_0^{\omega+2}\\  \leq  & \left[
\frac{2^{\omega+3}}{\uprho_0} \alpha_*^{-(\omega+2)} +  \frac{2^{2\omega+4}}{\uprho_0} \left(2^{\omega+2}\mathcal K_{\rm col}+1\right)
\frac{\gamma^{(j+1)(\omega+2)}-1}{\gamma^{\omega+2}-1} \right] \alpha_*^{-(\omega+2)}S.
\end{split}\end{equation}
It follows from \eqref{eq:moinsdeux}, \eqref{eq:moinsune}, \eqref{eq:nottoolong} and \eqref{eq:nottoolongbis} that if 
$\upalpha_*$ is sufficiently large (depending only on $M_0$, $V$ \underline{and} $j$), then \eqref{eq:bornesr} holds 
also for $s_{j+1}.$  As above we distinguish two cases :
\begin{equation*}
{\rm i) \ }  \quad \dmines(s_{j+1})\geq  3R-2r_{j+1}     \qquad\text{or}\qquad
{\rm   ii )\ }  \quad \dmines(s_{j+1}) \leq 2r_{j+1}. 
\end{equation*}

If case i) holds then by \eqref{eq:bornesr} we have $\dmines(s_{j+1}) \geq 2R$, we set $\tilde s =s_{j+1}$ which therefore satisfies the requirements of the claim, and we proceed to Step 2.

If case ii) occur then we proceed to construct $s_{j+2}$ as above. The key fact in this recurrence construction  is the second
inequality in \eqref{eq:capousse}, which, since $\matfEL(s_{j}) \geq 0$ independently of $j$, implies that {\it the process
as to reach case i) in a number of steps less than or equal to} $M_0/\upmu_1.$ In particular, choosing the constant 
$\upalpha_*$ sufficiently big so that the right-hand side of \eqref{eq:nottoolong} is smaller than $R/2$ for all $0\leq j \leq M_0/\upmu_1$ and so that the right hand side of \eqref{eq:nottoolongbis} is smaller than $S/2$ for all $0\leq j \leq M_0/\upmu_1$ ensures that the construction was licit and that the process necessarily reaches case i) before it could reach $j=M_0/\upmu_1+1,$ so defining $\tilde s$ as above.  
\end{proof}

\noindent
{\it Step 2: Divergence of the remaining repulsing fronts at small scale.} 
At this stage we have constructing $\tilde s \in [\eps^\omega\rL^2,\frac12S]$ which satisfies the requirements of
the claim in Step 1.  Moreover, note that in view of \eqref{eq:bornes0} and \eqref{eq:bornesr} we have the upper bound 
\begin{equation}\label{eq:hozbad}
\tilde s  \leq  \left(2 \alpha_*^{-(\omega+2)} +  2^{\omega+2}(2^{\omega+2}
  \mathcal K_{\rm col}+1)\frac{\gamma^{\frac{M_0}{\upmu_1}(\omega+2)}-1}{\gamma^{\omega+2}-1}\right)r^{\omega+2}.
\end{equation}

In order to complete the proof, we next distinguish two cases:
$$
 i)\quad \sharp J_q (\tilde s)\leq 1, \ \text{ for any }q \in J_0.\qquad 
 ii)\quad \sharp J_{q_0} (\tilde s)>1, \ \text{ form some }q_0 \in J_0.
$$

If case i) holds, then we actually have 
\begin{equation}\label{eq:vegbad}
\dmineL(\tilde s) \geq 2R.
\end{equation}
Since $2R \geq 16 \mathfrak q_1(\udl)\eps$ when $\upalpha_*$ is sufficiently large, it follows from Corollary \ref{parareglo3} 
that $\wp(\udll,s)$ holds for any    
  $\tilde s + \eps^{2+\omega}  \leq  s \leq \mathcal T_0^\eps(\udl, \tilde s)$, where
$$
\mathcal T_0^\eps(\udl, \tilde s) = \max \left\{ \tilde s+\eps^{2+\omega} \leq s \leq S \quad \text{s.t.}\quad \dmineL(s')\geq 8\q_1(\udl)\eps \quad \forall s' \in [\tilde s+\eps^{\omega+2},s]\right\}.
$$
In particular, $\wp(\udll,s)$ holds for any $s$ in
$\tilde s + \eps^{2+\omega}  \leq  s \leq \mathcal T_3^\eps(\udl, \tilde s)$, where
$$
\mathcal T_3^\eps(\udl, \tilde s) = \max \left\{ \tilde s+\eps^{2+\omega} \leq s \leq S \quad \text{s.t.}\quad \dmineL(s')\geq R \quad \forall s' \in [\tilde s+\eps^{\omega+2},s]\right\}.
$$
In view of \eqref{eq:vegbad} and estimate \eqref{theodebase1}, we obtain the lower bound
\begin{equation}\label{eq:polygbad}
\mathcal T_3^\eps(\udl,\tilde s) \geq \tilde s + \uprho_0 R^{\omega+2}.
\end{equation}
Note that \eqref{eq:polygbad} and \eqref{poolish} also yield
\begin{equation}\label{eq:polygbadbis}
\mathcal T_3^\eps(\udl,\tilde s) \geq \tilde s + \uprho_0 \upalpha_*^{-1}r^{\omega+2}\geq \uprho_0 \upalpha_*^{-1}r^{\omega+2}.
\end{equation}
Combining \eqref{eq:hozbad} and \eqref{eq:polygbadbis} we deduce in particular that 
$$
\wp(\upalpha_1\eps,s_r)\text{ holds} \qquad \text{and} \qquad \dmineL(s_r)\geq R \geq r,
$$
which is the claim of Proposition \ref{nettoyage}, provided 
\begin{equation}
\rho_* \geq \Big(3 +  2^{\omega+2}(2^{\omega+2}
  \mathcal K_{\rm col}+1)\frac{\gamma^{\frac{M_0}{\upmu_1}(\omega+2)}-1}{\gamma^{\omega+2}-1}\Big) \qquad \text{and} \qquad \rho_* \leq   \uprho_0 \upalpha_*^{-1}.
\end{equation}

It remains to consider the situation where case ii) holds. In that case, we have
$$
\dmines(\tilde s) \geq 2R \qquad \text{and} \qquad \dminep(\tilde s) \leq 2 \gamma^{M_0/\upmu_1}r \leq R,
$$
so that we are in a situation suited for Proposition \ref{spillit}. We may actually apply Proposition \ref{spillit} recursively
with $s_0:=\tilde s$ and $\hat s\equiv \hat s_k = (\mathcal T_2^\eps)^k(\hat s_0)$ where $\hat s_0 = \tilde s + \eps^{\omega+2}$, 
as long as $\dminep(\hat s_k)$ remains sufficiently small with respect to $R$ (say e.g. $\dminep(\hat s_k) \leq \alpha_*^{-\frac12}R$ provided 
$\upalpha_*$ is chosen sufficiently large), the details are completely  similar to the ones in Case II of Proposition \ref{vinsanto} and are 
therefore not repeated here. If we denote by ${k_0}$ the first index for which $\dminep(\hat s_{k_0})$ becomes larger than $\frac{2}{\mathcal 
S_2} r$ (in view of \eqref{rakotoson}) and $k_1$ the last index before $\dminep$ reaches $\alpha_*^{-\frac12}R,$ then we have 
$$\hat s_{k_0} \leq C r^{\omega+2}\qquad  \text{and}  \qquad \hat s_{k_1} \geq \frac{1}{C}\alpha_*^{-(\omega+2)/2}R^{\omega+2} \geq \frac{1}{C} \alpha_*^{(\omega+2)/2}r^{\omega+2},
$$
for some constant $C>0$ depending only on $M_0$ and $V$, 
and the conclusion that $\wp(\alpha_1\eps,s_r)$ holds follows as in case i) above, choosing first $\uprho_*$ sufficiently large (independently of $\upalpha_*$) and then $\alpha_*$ sufficiently large (given $\uprho_*$).\qed

 \section{Proofs of Theorem \ref{maintheo1}, \ref{alignedsolution} and \ref{colissimo}}
 \label{extended}

\subsection {Proof of Theorem \ref{maintheo1}}
Theorem \ref{maintheo1} being essentially a special case of Theorem \ref{alignedsolution},   we go directly to  the proof of Theorem \ref{alignedsolution}. Notice however that in Theorem \ref{maintheo1} the solution to the limiting system is unique, so that the result is not constrained by the need to pass to a subsequence. 
 
 \subsection {Proof of Theorem \ref{alignedsolution}}
We fix $S<S_{\rm max}$ and let $\rL\geq \upkappa_*^{-1}\rL_0$, where $\rL_0$ is defined in the statement of Proposition \ref{firststep1} and 
$\upkappa_*$ in the statement of Proposition \ref{nettoyage}. We set $R=\frac 12 \min \{ a_{k+1}^0-a_{k}^0 , k = 1,\cdots,\ell_0-1\}$ and consider an arbitrary  $0<r<R/\upalpha_*.$
Since $({\rm H}_{1})$ holds, there exists some constant 
$\eps_{r}>0$ such that, if $0<\eps\leq \eps_{r}$, then \eqref{confitdoie} holds with $b_k\equiv a_k^0$ for any  $k \in \{1,\cdots,\ell_0\}.$ We are  therefore in  position to make use of Proposition \ref{nettoyage} and assert  
that for all such $\eps$ condition $\wp(\upalpha_1\eps,s_{r})$ holds as well as \eqref{clearance} and \eqref{clarence}.
It follows in particular from \eqref{clearance} and \eqref{clarence} that for every $k\in 1,\cdots,\ell_0$ we have 
$\sharp J_k(s_{r}) = |m_k^0|$, where $m_k^0$ is defined in \eqref{multiplicity}, and therefore $\sharp J(s_r)=\sum_{k=1}^{\ell_0} |m_k^0| \equiv \ell_1,$ in other words  the number of fronts as well as their properties do not depend on $\eps$ nor on $r.$  

We construct next the limiting splitting solution to the ordinary differential equation and  the corresponding subsequence proceeding  backwards in time  and using a diagonal argument. For that purpose, we introduce an arbitrary decreasing  sequence  $\{r_m\}_{m \in \N_*}$ such that 
$0<r_1 \leq R/\upalpha_*$, and such that 
$r_m \to 0$ as $m \to + \infty$.  For instance, we may take $r_m=\frac{1}{m} R/\upalpha_*$, and we set $s_m=s_{r_m}.$
Taking first  $m=1$, we find a subsequence $\{\eps_{n, 1}\}_{n \in \N_*}$ such that $\eps_{n, 1} \to 0$ as $n \to \infty$, and  such that all points $\{a_k^{\eps_{n, 1}} (s_1)\}_{k \in J}$  converge to  some limits $\{a_k^1 (s_1)\}_{k \in J}$ as $n \to + \infty$.  It follows from  \eqref{clarence}, passing to the limit $n \to +\infty,$ that 
\begin{equation}\label{eq:phoenix}
 \dminstar(s_1) \geq r_1.
\end{equation}
We are therefore in position to apply the convergence result of Proposition \ref{firststep1}, which yields in particular that
\begin{equation}
\label{trajectoryset2}
\mathfrak D_{\eps_{n,1}}(s) \cap \ILLLL 
\longrightarrow \cup_{k=1}^{\ell_1} \{a_k^1(s)\} \qquad \forall s_1<s<S_{\rm max}^1,
\end{equation} 
as $n\to +\infty,$ where  $\{a_k^1(\cdot)\}_{k \in J}$ is the unique solution of \eqref{tyrannosaure} with initial data $\{a_k^1 (s_1)\}_{k \in J}$ on its maximal time of existence $(s_1, S_{\rm max}^1).$  

 Taking  next $m=2$, we may extract a subsequence $\{\eps_{n, 2}\}_{n \in \N_*}$ from the sequence $\{\eps_{n, 1}\}_{n \in \N_*}$  
 such that all the points 
 $\{a_k^{\eps_{n, 2}} (s_{2})\}_{k \in J}$  converge to  some limits $\{a_k^2 (s_{2})\}_{k \in J}$ as $n \to + \infty$. Arguing as above, we may assert that 
 \begin{equation}
\label{trajectoryset3}
\mathfrak D_{\eps_{n,2}}(s) \cap \ILLLL 
\longrightarrow \cup_{k=1}^{\ell_1} \{a_k^2(s)\} \qquad \forall s_2<s<S_{\rm max}^2,
\end{equation} 
as $n\to +\infty,$ where  $\{a_k^2(\cdot)\}_{k \in J}$ is the unique solution of \eqref{tyrannosaure} with initial data $\{a_k^2 (s_2)\}_{k \in J}$ on its maximal time of existence $(s_2, S_{\rm max}^2).$  It follows from \eqref{clearance}, namely that only
repulsive forces are present at scale smaller than $R$, that $S_{\rm max}^2 \geq s_1.$ Therefore, since we have extracted a subsequence, it follows from \eqref{trajectoryset2} and \eqref{trajectoryset3} that $a_k^2(s_1)=a_k^1(s_1)$ for all $k\in J$, and therefore also that $S_{\rm max}^2= S_{\rm max}^1 \equiv S_{\rm max}$ and $a_k^2(\cdot)=a_k^1(\cdot)=a_k(\cdot)$ on $(s_2,S_{\rm max}).$   

We proceed similarly at each step $m\in \N_*$, extracting a subsequence  $\{\eps_{n, m}\}_{n \in \N_*}$ from the sequence $\{\eps_{n, m-1}\}_{n \in \N^*}$ such that all the points  $\{a_k^{\eps_{n, m}} (s_{m})\}_{k \in J}. $  Finally setting, for $n \in N_*$,  $\eps_n=\eps_{n, n}$, we obtain that
 \begin{equation*}
\mathfrak D_{\eps_{n}}(s) \cap \ILLLL 
\longrightarrow \cup_{k=1}^{\ell_1} \{a_k(s)\} \qquad \forall 0<s<S_{\rm max}^m,
\end{equation*} 
where  $\{a_k(\cdot)\}_{k \in J}$ is a splitting solution of \eqref{tyrannosaure} with initial data $\{a_k^0\}_{k \in J_0}$, on its maximal time of existence $(0, S_{\rm max}).$  Since $\rL \geq \rL_0$ was arbitrary, it follows that \eqref{trajectoryset} holds.  

It remains to prove that \eqref{unifbis}. This is actually a direct consequence of \eqref{trajectoryset}, the continuity of the trajectories $a_k(\cdot)$ and regularizing effect off the front set stated in Proposition \ref{estimpar} (e.g. \eqref{crottin2} for the $L^\infty$ norm). As a matter of fact, it is standard to deduce from this the fact that  
the convergence towards the equilibria $\upsigma_q$, locally outside the trajectory set, holds in any $\mathcal{C}^m$ norm, since the potential $V$ was assumed to be smooth.    
\qed

\subsection {Proof of Theorem \ref{colissimo}}
   
As underlined in the introduction, Theorem \ref{colissimo} follows rather directly from Theorem \ref{alignedsolution}
and more importantly its consequence Corollary \ref{alarrivee} (whose proof is elementary and explained after Proposition \ref{getrude}), combined with Theorem \ref{mainbs2} and Proposition \ref{estimpar}.

Let thus $\rL> \rL_0$ and $\delta>0$ be arbitrarily given, we shall prove that, at least for $\eps\equiv \eps_n$ sufficiently small, 
\begin{equation}\label{okfrontset}
\mathcal{D}_{\eps}(S_{\rm max}) \cap  \IL \subset \cup_{j \in \{1,\cdots,\ell_2\}} [b_j-\delta,b_j+\delta],
\end{equation}
and 
\begin{equation}\label{okL1}
| \vm(x,S_{\rm max}) - \upsigma_{\hat\imath(j+\frac12)} | \leq C(\delta,\rL) \eps^\frac{1}{\theta-1},
\end{equation}
for all $j \in \{0,\cdots,\ell_2\}$ and for all $x \in (b_j+\delta,b_{j+1}-\delta),$ where we have used the convention that $b_0=-\rL$ and $b_{\ell_2+1}=\rL.$ 
Since $\rL$ can be arbitrarily big and $\delta$ arbitrarily small, this will imply that assumption $(\rm H_1)$ is verified
at times $S_{\rm max}$, which is the claim of Theorem \ref{colissimo}. 

Concerning \eqref{okfrontset}, by Corollary \ref{alarrivee} there exists  
\begin{equation}\label{eq:erable}
s^- \in \big[S_{\rm max} - \uprho_0\left(\frac{\delta}{4}\right)^{\omega+2} ,S_{\rm max}\Big).
\end{equation}
such that 
$$
\cup_{k \in \{1,\cdots,\ell_1\}} \{a_k(s^-)\} \subset \cup_{j \in \{1,\cdots,\ell_2\}} [b_j-\frac14\delta,b_j+\frac14 \delta]. 
$$   
The latter and Theorem \ref{alignedsolution} imply that, for $\eps$ sufficiently small,
\begin{equation}\label{eq:platane}
\mathcal D_\eps(s^-) \cap \ILL \cap \Big( \cup_{j \in \{1,\cdots,\ell_2\}} [b_j - \frac12 \delta,b_j+\frac12 \delta]\Big)^c = \emptyset. 
\end{equation}
In turn, Theorem \ref{mainbs2} (inclusion \eqref{theodebase1}) and \eqref{eq:platane}, combined with the upper bound \eqref{eq:erable} on $S_{\rm max}-s^-$,  imply that
\begin{equation}\label{eq:orme}
\mathcal D_\eps(s) \cap \ItdL \cap \Big( \cup_{j \in \{1,\cdots,\ell_2\}} [b_j - \frac34 \delta,b_j+\frac34 \delta]\Big)^c = \emptyset, \qquad \forall s \in [s^-,S_{\rm max}].
\end{equation}
For $s=S_{\rm max}$ this is stronger than \eqref{okfrontset}.

We proceed to \eqref{okL1}. In view of \eqref{eq:orme}, for any $x_0 \in \IL \setminus \big( \cup_{j\in \{1,\cdots,\ell_2\}}[b_j-\delta,b_j+\delta]\big)$ we have, for $\eps$ sufficiently small,
$$
\vm(y,s) \in B(\upsigma_i,\mu_0) \qquad \forall (y,s) \in [x_0-\frac18 \delta, x_0+\frac18 \delta] \times [s^-,S_{\rm max}].
$$
The latter is nothing but \eqref{offside} for $r=\frac18 \delta$, $s_0=s^-$ and $S=S_{\rm max}$, and therefore the conclusion
\eqref{okL1} follows from \eqref{crottin2} of Proposition \ref{estimpar}, with $C(\delta,\rL)= \frac15 {\rm C_e}(8/\delta)^\frac{1}{\theta-1}$ as soon as $\eps^\omega/(S_{\rm max}-s^-) \leq \delta^2/64.$ 
\qed
   
\setcounter{equation}{0} \setcounter{subsection}{0}
\setcounter{lemma}{0}\setcounter{proposition}{0}\setcounter{theorem}{0}
\setcounter{corollary}{0} \setcounter{remark}{0}
\renewcommand{\theequation}{A.\arabic{equation}}
\renewcommand{\thesubsection}{A.\arabic{subsection}}
\renewcommand{\thelemma}{A.\arabic{lem}}
\section*{Appendix A}
\renewcommand{\thesection}{A}
\numberwithin{equation}{section} \numberwithin{theorem}{section}
\numberwithin{lemma}{section} \numberwithin{proposition}{section}
\numberwithin{remark}{section} \numberwithin{corollary}{section}
In this Appendix we establish properties concerning the stationary solutions $\Ump$, $\Umps$,  $\Umpep$, etc, which we have used in the course of the previous discussion, mainly in Section \ref{raffinage}. 

\subsection{The operator $\mathcal  L_\mu$}

Consider  for $\mu>0$ the nonlinear operator $\mathcal L_\upmu$, defined for a smooth functions $U$ on $\R$ by 
$$\mathcal  L_\upmu (U) = -\frac{d^2}{dx^2} U + 2\upmu \theta  \, U^{2\theta-1}, $$
and set for simplicity $\mathcal L\equiv \mathcal L_1$. Most results  in this section are  deduced from the comparison principle:  if $u_1$ and $u_2$ are two functions defined on some non empty interval $I$, such that 
\begin{equation}
\label{compare}
  \mathcal L_{\upmu} (u_1) \geq 0,  \  \mathcal L_{\upmu} (u_2) \leq 0,\ {\rm and  } \  u_1\geq u_2 \ {\rm on}  \  \partial I, 
 \end{equation}
then
$\displaystyle{ u_1(x) \geq u_2(x)  \  {\rm for  \ } x \in I.   }$
 Scaling arguments are also used extensively. Given $r>0$ and $\eta >0$ we provide a rescaling of a given  smooth function $U$  as follows
 \begin{equation}
\label{scalinglaw}
\left\{
\begin{aligned}
U_{\eta, R}&= \eta \, U\left( \frac{U}{r}\right), {\rm \ and \  we  \ verify  \ that  \ } \\
 \mathcal L_\upmu (U_{\eta, r})(x)&=\frac{\eta}{r^2} \mathcal L_{\gamma} (U) (\frac{x}{r}) 
 \  {\rm where \ } 
 \gamma=\upmu\eta^{2(\theta-1)} r^2. 
 \end{aligned}
 \right. 
 \end{equation}
  In particular, if $\mathcal L_\upmu (U)=0$, then
 we have
$$\mathcal L_\upmu (r^{-\frac{1}{(\theta-1)}}U(\frac{\cdot}{r} ))=0 \ {\rm and } \ 
\mathcal L_{\lambda \upmu} (\lambda^{-\frac{1}{2(\theta-1)}}\,  U)=0, {\rm \ for \ any \ } r>0 {\rm \ and \ any \ }
\lambda >0.
$$
Notice also that  $U^*$ defined on $(0, +\infty)$ by $U^*(x)=[\sqrt{2}(\theta-1)x]^{-\frac{1}{\theta-1}}$ solves $\mathcal L(U^*)=0.$

\begin{lemma}  
\label{veryordinary}
There exists a unique  smooth map $\Ump_r$  on  $(-r, r)$ such that  $\mathcal L(\Ump_r)=0$ and $\Ump_r(\pm r)=+\infty$, and  a unique solution   $\Umps_r$ such that  $\mathcal L(\, \Umps_r)=0$ and\,  $\Umps_r(\pm r)=\pm \infty$. Moreover, $\Ump_r$ is even, 
$\Umps_r$ is odd, and,  setting   $\Ump\equiv \Ump_1$ and $\Umps\equiv \Umps_1$,  we have 
\begin{equation}
\label{variance}
 \Ump_r (x)=r^{ -\frac{1}{\theta-1} }\Ump(\frac{x}{r})  \  {\rm and} \ 
 \Umps_r (x)=r^{ -\frac{1}{\theta-1} }\Umps(\frac{x}{r}).
\end{equation}
\end{lemma}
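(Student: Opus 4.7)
The plan is to reduce the problem to an ODE phase-plane analysis, exploiting the first integral of $\mathcal L$. Indeed, multiplying the equation $-U''+2\theta U^{2\theta-1}=0$ by $U'$ yields
\begin{equation*}
\frac{d}{dx}\Big[U^{2\theta}-\tfrac12(U')^2\Big]=0,
\end{equation*}
so any solution $U$ is confined to a level set $U^{2\theta}-\frac12(U')^2=C$. I will first handle the unit interval $r=1$ and then recover the general case through the scaling law \eqref{scalinglaw}.

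For $\Ump_1$, I will look for an even positive solution, so set $U(0)=m>0,\ U'(0)=0$, which forces $C=m^{2\theta}$. On $(0,1)$ the solution is then increasing and satisfies $U'=\sqrt{2(U^{2\theta}-m^{2\theta})}$, which can be integrated by separation of variables to give the maximal time of existence
\begin{equation*}
T(m)=\int_m^{+\infty}\frac{du}{\sqrt{2(u^{2\theta}-m^{2\theta})}}.
\end{equation*}
The integrability near $u=m$ follows from the expansion $u^{2\theta}-m^{2\theta}\sim 2\theta m^{2\theta-1}(u-m)$, and the integrability at infinity from $\theta>1$. The substitution $u=mv$ gives $T(m)=m^{1-\theta}T(1)$, so $T:(0,\infty)\to(0,\infty)$ is a continuous decreasing bijection. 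Hence there is exactly one $m>0$ with $T(m)=1$, which produces an even positive solution on $(-1,1)$ blowing up at $\pm 1$.

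For $\Umps_1$, I will look similarly for an odd solution with $U(0)=0,\ U'(0)=\alpha>0$, which forces $C=-\alpha^2/2$. Then $U'=\sqrt{2U^{2\theta}+\alpha^2}$ on all of $(0,1)$, with maximal time
\begin{equation*}
\tau(\alpha)=\int_0^{+\infty}\frac{du}{\sqrt{2u^{2\theta}+\alpha^2}},
\end{equation*}
finite (using $\theta>1$) and, after rescaling $u=\alpha^{1/\theta}v$, satisfying $\tau(\alpha)=\alpha^{\frac{1-\theta}{\theta}}\tau(1)$. Thus $\tau$ is a continuous decreasing bijection of $(0,\infty)$ onto itself and a unique $\alpha$ yields $\tau(\alpha)=1$, producing $\Umps_1$ on $(-1,1)$ with the required asymptotics at $\pm 1$.

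The main point requiring care is \emph{uniqueness in the full class}, i.e.\ showing that any solution (not a priori even or odd) must coincide with the one constructed. Given a solution $U$ of $\mathcal L(U)=0$ on $(-1,1)$ with $U(\pm 1)=+\infty$, it must attain an interior minimum at some $x_0\in(-1,1)$; applying the ODE uniqueness to $U$ and to $\widetilde U(x)=U(2x_0-x)$, which share the same Cauchy data at $x_0$, forces $U$ to be even about $x_0$. The blow-up distance from $x_0$, being a function only of the phase-space data at $x_0$, must therefore be the same on each side, which gives $x_0=0$. The analogous argument for $\Umps$ uses that a solution with $U(-1)=-\infty,\ U(+1)=+\infty$ must vanish at some interior $x_0$ (it is monotone in view of the first integral with $C<0$) and must be odd about $x_0$ by comparing $U$ and $-U(2x_0-\cdot)$ via Cauchy uniqueness; the symmetric blow-up times then force $x_0=0$. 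Finally, the scaling identities \eqref{variance} are a direct consequence of \eqref{scalinglaw} applied with $\eta=r^{-1/(\theta-1)}$ and $\upmu=1$, so $\gamma=1$ and the rescaled function again solves $\mathcal L(\cdot)=0$ with boundary blow-up at $\pm r$.
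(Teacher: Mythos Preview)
Your proof is correct and takes a genuinely different route from the paper. The paper proceeds variationally and by comparison: it constructs, for each $n\in\N$, the minimizer $\Ump_{r,n}$ of the convex energy with boundary data $n$, shows via the comparison principle that the sequence is nonnegative, increasing in $n$, and locally bounded (using the explicit supersolution $U^*$), passes to the limit, and obtains the infinite boundary values by comparison with translates of $U^*$. Uniqueness is again by the comparison principle. The construction of $\Umps_r$ is analogous, using oddness and comparison on the half-interval.

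Your phase-plane argument exploits the first integral $U^{2\theta}-\tfrac12(U')^2$ to reduce existence to a one-parameter shooting problem with an explicit blow-up time $T(m)$ (resp.\ $\tau(\alpha)$) that scales as a power of $m$ (resp.\ $\alpha$); this makes existence, uniqueness, and the symmetry completely transparent and gives the value of $\Ump(0)$ almost explicitly. The paper's comparison-based method, while less explicit, is more robust: it carries over verbatim to the perturbed operator $\mathcal L^\eps$ with nonlinearity $f_\eps$ (where no exact first integral is available), which is precisely what is needed later for the functions $\Umpep$, $\Umpeps$ and the comparison results of Section~\ref{raffinage}. One minor point you could make more explicit in your uniqueness argument for $\Ump$: since $U''=2\theta U^{2\theta-1}$ and $2\theta-1$ is odd, any interior minimum of $U$ must be strictly positive (otherwise $U''<0$ there or $U\equiv 0$), and no positive interior maximum can occur; this forces a single critical point before invoking the reflection.
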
 

\begin{proof}  For $n \in \N^*$,  we  construct on $(-r, r)$ a unique solution $\Ump_{r, n}$ that solves $\mathcal L(\Ump_{r, n})=0$ and $\Ump_{r, n}(\pm r)=n$,  minimizing the corresponding convex energy. 
By the comparison principle, $\Ump_{r, n}$ is non negative, increasing with $n$ and uniformly bounded on compact subsets of $(-r, r)$ in view of \eqref{univbound} below.   Hence a unique limit $\Ump_r$ exists, solution to $\mathcal L(\Ump_r)=0$. We  observe that 
$\Ump_{r, n}(\cdot) \geq U^*(r_n-\cdot)$, where $r_n=r+[\sqrt{2}(\theta-1)]^{-1}n^{-(\theta-1)}$, so that we obtain the required boundary conditions for $\Ump_r$. Uniqueness may again be established thanks to the comparison principle.
We construct similarly a unique solution $\Umps_{r, n}$ that solves $\mathcal L(\Umps_{r, n})=0$ and $\Ump_{r, n}(\pm r)=\pm n$. We notice  that $\Umps_{r, n}$ is odd, its restriction on $(0,r)$ non negative and increasing with $n$. Moreover, on some interval $(a, r)$, where $0<a<r$  does not depend on $n$,  we have
$\Ump_{r, n}(\cdot) \geq U^*(\tilde r_n-\cdot)$ where $\tilde r_n=r+[(\theta-1)]^{-1}n^{-(\theta-1)},$
 and we conclude as for the first assertion.  
 \end{proof} 

\begin{remark}
\label{molo}
 {\rm  Given $r>0$ and $\lambda >0$ we 
notice that the function $\Urp$ and $\Urps$ defined by
\begin{equation}
\label{romanurl}
\Urp (x)=\lambda^{-\frac{1}{2 (\theta-1)}} \Ump_r (x)
\       {\rm and \ } 
 \Urps(x)=\lambda^{-\frac{1}{2(\theta-1)}} \Umps_r (x) 
 \end{equation}
solve $\mathcal L_\lambda (\Urp)=0$ and $\mathcal L_\lambda (\Urps)=0$  with the same boundary conditions as
$\Ump_r$ and $\Umps_r$.
}
\end{remark}

\medskip
\begin{lemma} 
\label{append}
 i) Assume that $\mathcal L(u)\leq 0$ on $(-r, r)$. Then, we have,  for $x \in (-r, r)$
\begin{equation}
\label{univbound}
  u(x)  \leq \left (\sqrt{2}  (\theta-1)\right) ^{-\frac{1}{\theta-1}} \left[ (x+r)^{-\frac{1}{\theta-1}}  +(x-r)^{-\frac{1}{\theta-1}} \right].
  \end{equation}
ii) Assume that $\mathcal L(u) \geq 0$  on $(-r, r)$ and that $u(-r)=u(r)=+\infty$. Then we have 
 \begin{equation}
 \label{pasuniv}
  u(x) \geq \left (\sqrt{2}  (\theta-1)\right) ^{-\frac{1}{\theta-1}}\max \{ (x+r)^{-\frac{1}{\theta-1}}, (r-x)^{-\frac{1}{\theta-1}}\}.  
  \end{equation}
\end{lemma}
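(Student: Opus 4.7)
The approach for both parts is to build explicit barrier functions from translates of the exact solution $U^{\star}(y) = (\sqrt{2}(\theta-1)y)^{-1/(\theta-1)} \equiv c\, y^{-1/(\theta-1)}$ of $\mathcal{L}(U^{\star}) = 0$ on $(0,+\infty)$, which is noted just before the statement of the lemma, and then to apply the comparison principle. Since $\theta$ is an integer $\geq 2$, the exponent $2\theta-1$ is an odd integer $\geq 3$, so $t \mapsto t^{2\theta-1}$ is strictly increasing on all of $\R$; this is the monotonicity that drives the interior-extremum contradictions. I use the interpretation $(x-r)^{-1/(\theta-1)} = (r-x)^{-1/(\theta-1)}$ in \eqref{univbound}, which is the only one consistent with the context.

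For (i), the natural candidate is $W(x) := U^{\star}(x+r) + U^{\star}(r-x)$ on $(-r,r)$. Setting $W_1 := U^{\star}(x+r)$ and $W_2 := U^{\star}(r-x)$, both are nonnegative solutions of $\mathcal{L}=0$, and the superadditivity $(W_1+W_2)^{2\theta-1} \geq W_1^{2\theta-1} + W_2^{2\theta-1}$ (valid on $[0,\infty)^2$ since $2\theta-1\geq 1$) gives $\mathcal{L}(W) \geq 0$, so $W$ is a supersolution. To avoid any a priori hypothesis on the blow-up of $u$ at $\pm r$, for each small $\delta>0$ I introduce the inward-shifted supersolution
\[
W^{(\delta)}(x) := c(x+r-\delta)^{-1/(\theta-1)} + c(r-\delta-x)^{-1/(\theta-1)}
\]
on the smaller interval $(-r+\delta, r-\delta)$, on which it still satisfies $\mathcal{L}(W^{(\delta)})\geq 0$ and blows up at $\pm(r-\delta)$. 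Since $u$ is smooth, and hence finite, on $[-r+\delta, r-\delta]$, the function $u - W^{(\delta)}$ tends to $-\infty$ at both endpoints. If it were positive somewhere, it would attain a strict interior maximum at some $x_{*}$, where $u(x_{*})>W^{(\delta)}(x_{*})>0$ yields $u(x_{*})^{2\theta-1} > W^{(\delta)}(x_{*})^{2\theta-1}$; combined with $(u-W^{(\delta)})''(x_{*})\leq 0$ this gives
\[
0 \geq \mathcal{L}(u)(x_{*}) - \mathcal{L}(W^{(\delta)})(x_{*}) = -(u-W^{(\delta)})''(x_{*}) + 2\theta\bigl[u(x_{*})^{2\theta-1} - W^{(\delta)}(x_{*})^{2\theta-1}\bigr] > 0,
\]
a contradiction. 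Hence $u \leq W^{(\delta)}$ on $(-r+\delta, r-\delta)$, and since for every fixed $x\in(-r,r)$ one has $W^{(\delta)}(x) \to W(x)$ as $\delta\to 0^{+}$, the bound \eqref{univbound} follows.

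For (ii), the argument is dual, and in fact slightly simpler because the infinite boundary data on $u$ are assumed. For each $\delta>0$ set
\[
V_1^{(\delta)}(x) := c(x+r+\delta)^{-1/(\theta-1)}, \qquad V_2^{(\delta)}(x) := c(r+\delta-x)^{-1/(\theta-1)},
\]
each of which is a smooth \emph{bounded} function on $[-r,r]$ solving $\mathcal{L}(V_i^{(\delta)}) = 0$. Since $u(\pm r)=+\infty$ while $V_1^{(\delta)}$ is bounded, $u - V_1^{(\delta)}$ tends to $+\infty$ at both endpoints of $(-r,r)$ and hence attains its infimum at some interior point $x_{*}$. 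If that infimum were negative, then $u(x_{*}) < V_1^{(\delta)}(x_{*})$ would give $u(x_{*})^{2\theta-1} < V_1^{(\delta)}(x_{*})^{2\theta-1}$ (by strict monotonicity of $t\mapsto t^{2\theta-1}$ on $\R$), and combined with $(u-V_1^{(\delta)})''(x_{*})\geq 0$ would produce $\mathcal{L}(u)(x_{*}) < \mathcal{L}(V_1^{(\delta)})(x_{*}) = 0$, contradicting $\mathcal{L}(u)\geq 0$. Thus $u \geq V_1^{(\delta)}$ on $(-r,r)$, and letting $\delta\to 0^{+}$ gives $u(x) \geq c(x+r)^{-1/(\theta-1)}$. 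The identical argument with $V_2^{(\delta)}$ in place of $V_1^{(\delta)}$ yields $u(x) \geq c(r-x)^{-1/(\theta-1)}$, and taking the maximum of the two produces \eqref{pasuniv}.

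No genuine obstacle arises in either step; the only point deserving attention is the inward-shift device in (i), which replaces a direct comparison of $u$ with $W$ on $(-r,r)$ by a comparison with $W^{(\delta)}$ on $(-r+\delta, r-\delta)$. Without this shift one would require an independent a priori estimate on the rate at which a subsolution $u$ can blow up near $\pm r$ in order to ensure that $u - W \to -\infty$ at the boundary; the shift sidesteps this issue entirely through the elementary limiting procedure $\delta \to 0^{+}$.
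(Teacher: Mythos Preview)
Your proof is correct and follows essentially the same approach as the paper: both construct barriers from translates of the explicit solution $U^{\star}$, use superadditivity of $t\mapsto t^{2\theta-1}$ on $[0,\infty)$ to verify that $W$ is a supersolution in part (i), and invoke the comparison principle. The only difference is that you add the $\delta$-shift device and spell out the interior-extremum argument, which makes the boundary comparison fully rigorous where the paper simply cites the comparison principle \eqref{compare} directly.
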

\begin{proof} Set $\tilde U=U^* (\cdot +r) +  U^*(r-\cdot)$.   By subaddivity and translation invariance, we have
 $\mathcal L (\tilde U) \geq 0$ on $(-r, r)$ with $\tilde U( \pm r)=+\infty,$
 so that \eqref{univbound} follows from the comparison principle  \eqref{compare} with $u_1=\tilde U$ and $u_2=u$. Similarly, \eqref{pasuniv} follows from   \eqref{compare} with $u_1=u$ 	 and $u_2=U^*(\cdot +r)$ or $u_2=U^*(r-\cdot)$. 
\end{proof}

Combining estimate  ii)  of Lemma \ref{append} with the scaling law of Lemma \ref{veryordinary} we are led to
 \begin{equation}
 \label{deriveur}
 \vert \frac{d}{dr} \Ump_r(x) \vert + \vert \frac{d}{dr} \Umps_r(x) \vert  \leq Cr^{-\frac{\theta}{\theta-1}}, {\rm \ for \ } x\in (-\frac{7}{8}r, \frac{7}{8}r). 
\end{equation}
\subsection {The discrepancy for $\mathcal L_\upmu$}
\label{discrete}
The discrepancy  $\Xi_{\upmu}$   for   $\mathcal L_\upmu$ relates to a given  function $u$ the function $\Xi_\upmu (u)$ defined by
\begin{equation}
\label{ximu}
\Xi_\upmu (u)=\frac{\dot u^2}{2}-\upmu {u}^{2 \theta}.
\end{equation}
This function is \emph{constant} if $u$ solves $\L_\upmu(u)=0$. We set  
$\Xi=\Xi_1$,
 \begin{equation}
 \label{defab}
  A_\theta \equiv \Xi (\, \Ump)=-(\Ump (0))^{2\theta}<0 \ {\rm and } \ \, 
  B_\theta\equiv \Xi (\, \Umps)=\frac { (\Umps (0))_x^2}{2}>0. 
\end{equation}
In view of  the  scaling relations \eqref{variance} and Remark \ref{molo}, we are hence  led to the identities
\begin{equation}
\label{variance2}
\left\{
\begin{aligned}
&\Xi_\lambda (\, \Urp)= \lambda^{-\frac{1}{ \theta-1)}}r^{-\frac{2 \theta}{\theta-1}} A_\theta= \lambda^{-\frac{1}{ \theta-1)}}r^{-(\omega+1)} A_\theta,\\ 
&\Xi_\lambda (\, \Urps)=\lambda^{-\frac{1}{ \theta-1}}r^{-\frac{2 \theta}{\theta-1}} B_\theta=\lambda^{-\frac{1}{ \theta-1}}r^{-(\omega+1)} B_\theta.
\end{aligned}
\right.
\end{equation}
\subsection {The operator $\mathcal L^\eps$}
In this subsection, we consider more generally, for 
 given $\lambda>0$ the  operator $\Le$  given   by
$$\Le (U)= -\frac{d^2}{dx^2} U + 2\lambda f_\eps(U), $$
with $f_\eps$  defined in \eqref{feps}, and the solutions $\Umpep$,  $\Umpes$, and $\Umpeps$ of $\Le(U)=0$ on $(-r, r)$
with corresponding infinite boundary conditions, whose existence and uniqueness is proved as for Lemma \ref{veryordinary}.

\begin{lemma}
\label{ref}
 We have the estimates
$$ \vert\, \Umpep(x)\vert + \vert \, \Umpeps (x)\vert \leq   C\left(\lambda^2 (\theta-1)\right) ^{-\frac{1}{\theta-1}} \left[ (x+r)^{-\frac{1}{\theta-1}}  +(x-r)^{-\frac{1}{\theta-1}} \right].$$

\end{lemma}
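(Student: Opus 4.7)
The plan is to adapt the comparison principle argument of Lemma \ref{append} to the perturbed operator $\mathcal{L}^\eps$, the only new input being the pointwise lower bound $|f_\eps(w)| \geq \tfrac{3}{2}\theta |w|^{2\theta-1}$ recorded in \eqref{labourage}, together with the fact that $f_\eps$ is non-decreasing. First I would introduce the explicit power-type profile $U^*_\lambda(y) := c_\lambda \, y^{-1/(\theta-1)}$, $y > 0$, with $c_\lambda > 0$ chosen so that $-(U^*_\lambda)'' + 3\lambda\theta (U^*_\lambda)^{2\theta-1} = 0$; a direct computation (using $\alpha := 1/(\theta-1)$, $\alpha(\alpha+1) = \theta/(\theta-1)^2$ and $(2\theta-1)\alpha = \alpha+2$) yields $c_\lambda = [3\lambda(\theta-1)^2]^{-1/(2(\theta-1))}$. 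Since $f_\eps(w) \geq \tfrac{3}{2}\theta w^{2\theta-1}$ for $w > 0$, it follows that $\mathcal{L}^\eps(U^*_\lambda) \geq 0$ on $(0,+\infty)$.

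Next I would glue two copies of this profile into a supersolution on $(-r,r)$ by setting $\tilde U(x) := U^*_\lambda(x+r) + U^*_\lambda(r-x)$. Then $\tilde U > 0$, $\tilde U(\pm r) = +\infty$, and $\tilde U''(x) = 3\lambda\theta\bigl[(U^*_\lambda(x+r))^{2\theta-1} + (U^*_\lambda(r-x))^{2\theta-1}\bigr]$. The super-additivity inequality $(a+b)^{2\theta-1} \geq a^{2\theta-1} + b^{2\theta-1}$ (valid for $a,b\geq 0$ since $2\theta-1 \geq 1$), combined with the lower bound on $f_\eps$ applied to $\tilde U > 0$, gives $2\lambda f_\eps(\tilde U) \geq 3\lambda\theta \tilde U^{2\theta-1} \geq \tilde U''$, hence $\mathcal{L}^\eps(\tilde U) \geq 0$ on $(-r,r)$. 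The comparison principle, applied after the standard truncation of boundary values at level $n$ and passing to the limit $n\to\infty$ exactly as in the construction of $\Ump_r$ in Lemma \ref{veryordinary}, then yields $\Umpep \leq \tilde U$ on $(-r,r)$. Since $0$ is a subsolution with boundary values smaller than those of $\Umpep$, comparison also gives $\Umpep \geq 0$, which proves the stated bound for $|\Umpep|$.

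For $\Umpes$, I would use the mirror construction $\underline U := -\tilde U$, which has boundary values $-\infty$ at $\pm r$. Since $f_\eps(w) \leq -\tfrac{3}{2}\theta |w|^{2\theta-1}$ for $w<0$ by \eqref{labourage}, running the same computation with reversed signs and super-additivity applied to $|\underline U|$ yields $\mathcal{L}^\eps(\underline U) \leq 0$. Comparing $\underline U \leq \Umpes \leq 0$ then gives $|\Umpes| \leq \tilde U$. Absorbing all numerical factors (including the power of $c_\lambda$) into the constant $C$ produces the announced estimate. The only point that requires any care is the justification of the comparison principle with infinite boundary data, but this is standard given the monotonicity of $f_\eps$ and is handled exactly as in Lemma \ref{veryordinary}, so there is no real obstacle.
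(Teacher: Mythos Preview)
Your approach for $\Umpep$ is correct and is essentially the same as the paper's: both rely on the lower bound \eqref{labourage} together with the comparison principle. The paper packages this by observing that $\mathcal L_{\frac34\lambda}(\Umpep)\le 0$ and then comparing with a rescaling of $\Ump_r$, after which Lemma~\ref{append} provides the explicit power bound; you instead build the explicit supersolution $\tilde U$ for $\mathcal L^\eps$ directly. This is marginally more self-contained but not genuinely different.

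There is, however, a gap in the second half. The lemma asks for a bound on $\Umpeps$ (boundary data $-\infty$ at $-r$ and $+\infty$ at $r$), not on $\Umpes$ (boundary data $-\infty$ at both ends). Your argument explicitly uses $\Umpes\le 0$, which fails for $\Umpeps$ since that function changes sign. The fix is immediate with the tools you already have: use \emph{both} barriers. Since $\mathcal L^\eps(\tilde U)\ge 0$ with $\tilde U(\pm r)=+\infty$, comparison with the approximations $\Umpeps_{,n}$ (boundary values $\pm n$) gives $\Umpeps\le\tilde U$; since $\mathcal L^\eps(-\tilde U)\le 0$ with $-\tilde U(\pm r)=-\infty$, the same comparison gives $\Umpeps\ge -\tilde U$. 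Hence $|\Umpeps|\le\tilde U$, which is the desired estimate. Your chain $\underline U\le\Umpes\le 0$ should be replaced by this two-sided sandwich.
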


 \begin{proof}  It follows from \eqref{labourage} that $\L_{\frac{3}{4} \lambda}(\Umpep)\leq 0$, so that, invoking  the comparison principle as well as the scaling law \eqref{scalinglaw} we deduce that 
 $\Umpep\leq (3\lambda/4)^{-2(\theta-1)} \Ump_r.$
 A similar estimate holds for $\Umpeps$ and the conclusion follows from Lemma \ref{append}.
  \end{proof}
We complete this appendix by comparing the solutions $\Ump_r$ and $\Umpep$,
as well as $\Umps_r$ and $\Umpeps.$

\begin{proposition}
\label{catamaran}  In the interval $(-\frac{7}{8}r, \frac{7}{8}r)$ we have the estimate
\begin{equation*}
\vert  \Umpep -\Ump_r \vert \leq C\eps^\frac{1}{\theta} r^{-\frac{2\theta-1}{\theta(\theta-1)}}.
\end{equation*}
\end{proposition}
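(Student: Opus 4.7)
The plan is to apply the comparison principle for the operator $\Le$ to super- and sub-solutions built from additive (or multiplicative) perturbations of $\Ump_r$. First, by the scaling law \eqref{scalinglaw} with scale factor $\eta = r^{-1/(\theta-1)}$ (see the relations displayed after \eqref{scalinglaw}), I would reduce to the case $r=1$, so that the target becomes $|\Umpep - \Ump| \le C\,\eps^{1/\theta}$ on $(-7/8, 7/8)$. The residual is then easy to control: since $\mathcal L(\Ump)=0$, and since $f_\eps(U) - 2\theta U^{2\theta-1} = 2\theta U^{2\theta-1}\,\eps^{1/(\theta-1)}U\,g(\eps^{1/(\theta-1)}U)$, one obtains $|\Le(\Ump)| \le C\,\eps^{1/(\theta-1)}\,\Ump^{2\theta}$, which by the upper bound of Lemma \ref{append}(i) gives $|\Le(\Ump)| \le C\,\eps^{1/(\theta-1)}$ on $(-7/8, 7/8)$.

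Next I would introduce barriers $U^\pm = \Ump \pm \delta$ with $\delta = C_*\,\eps^{1/\theta}$. On $(-7/8, 7/8)$, Lemma \ref{append}(ii) together with the evenness and the monotonicity of $\Ump$ yields a positive lower bound on $\Ump$, so the convexity of $f_\eps$ on $(0,\infty)$ (inherited from the leading power $U^{2\theta-1}$ with $2\theta-1\ge 1$, and from the fact that $|u\,g(u)| \leq 1/4$ on $(-u_0, u_0)$) provides $f_\eps(\Ump + \delta) - f_\eps(\Ump) \ge c\, \Ump^{2\theta-2}\,\delta \ge c'\,\delta$. Combining this with the residual estimate gives
\begin{equation*}
\Le(U^+) \ge -C\,\eps^{1/(\theta-1)} + c'\,\lambda\,C_*\,\eps^{1/\theta} \ge 0
\end{equation*}
for $C_*$ large enough, since $1/\theta < 1/(\theta-1)$, so that $\eps^{1/\theta}$ dominates $\eps^{1/(\theta-1)}$ as $\eps \to 0$. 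The reversed inequality $\Le(U^-) \le 0$ holds by the symmetric argument.

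The main obstacle is the verification of the comparison at the boundary of the comparison interval. Since $\Umpep$ blows up at $\pm 1$ just as $\Ump$ does, comparing $U^\pm$ with $\Umpep$ at the finite boundary points $\pm 7/8$ is circular: it amounts to knowing the very quantity we want to bound. To circumvent this I would instead work on the full interval $(-1,1)$, replacing the additive barriers by multiplicative ones of the form $U^\pm = (1\pm \mu)\,\Ump$ with $\mu$ of the same order as $\delta$. Both barrier and unknown then blow up at $\pm 1$, and one verifies $U^+ \ge \Umpep \ge U^-$ near $\pm 1$ by matching the leading blow-up coefficients, exploiting the fact that the truncation of $u\,g(u)$ for $|u|\geq u_0$ forces the boundary blow-up of $\Umpep$ to be of the same homogeneous type as that of $\Ump$, with coefficient differing only by a fixed constant absorbable into $\mu$. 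The super-solution inequality for $(1+\mu)\Ump$ is checked by the same residual computation as above, with an additional term of order $\mu\,\Ump^{2\theta-1}$ coming from the multiplicative distortion of the nonlinearity, which is non-negative and helps the super-solution sign. Applying the comparison principle on $(-1,1)$ then yields the pointwise bound on $(-7/8, 7/8)$, and undoing the initial scaling produces the estimate of Proposition \ref{catamaran} with the indicated $r$-dependence $r^{-(2\theta-1)/(\theta(\theta-1))}$ arising from the factor $r^{-1/(\theta-1)}$ times the $\eps^{1/\theta}$ bound.
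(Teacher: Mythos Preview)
Your multiplicative-barrier argument on the full interval $(-1,1)$ has a genuine gap at the boundary. Near $x=\pm 1$ the function $\Umpep$ becomes large, so that $\eps^{1/(\theta-1)}\Umpep$ exceeds $u_0$ and the truncated perturbation factor $1+\eps^{1/(\theta-1)}\Umpep\, g(\eps^{1/(\theta-1)}\Umpep)$ equals the \emph{fixed} constant $\kappa:=1+u_0 g(u_0)$, with $|\kappa-1|\le 1/4$ but $\kappa\neq 1$ in general. Hence, in a boundary layer, $\Umpep$ effectively solves the pure-power equation with coefficient multiplied by $\kappa$, so its blow-up coefficient at $\pm 1$ differs from that of $\Ump$ by the factor $\kappa^{-1/(2(\theta-1))}$, which is $O(1)$ and \emph{independent of} $\eps$. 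To enforce $(1-\mu)\Ump\le\Umpep\le(1+\mu)\Ump$ near $\pm 1$ --- or, equivalently, to make $(1\pm\mu)\Ump$ a super/sub-solution for $\Le$ in the region where the truncation is active --- you would therefore need $\mu$ of order one, not of order $\eps^{1/\theta}$, and the resulting bound on $(-7/8,7/8)$ would only be $O(1)$. The phrase ``absorbable into $\mu$'' is precisely where the argument breaks.

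The paper's proof circumvents this by never comparing at the singular endpoints $\pm r$. It works instead on the slightly shrunk interval $(-r+\delta,r-\delta)$, on which Lemma~\ref{ref} gives $\eps^{1/(\theta-1)}\Umpep\le C(\eps/\delta)^{1/(\theta-1)}$, so the perturbation factor in $f_\eps$ is uniformly close to $1$ there. One can then sandwich $\Umpep$ between suitable rescalings of $\Ump_{r\mp\delta}$; since $\Ump_{r-\delta}$ blows up at $\pm(r-\delta)$, the boundary ordering is automatic. The cost of shrinking by $\delta$ is controlled via \eqref{deriveur} by $C\delta\, r^{-\theta/(\theta-1)}$, and optimizing $\delta=\eps^{1/\theta}r^{(\theta-1)/\theta}$ balances this against the $(\eps/\delta)^{1/(\theta-1)}$ error to produce the stated rate. (A minor side remark: in your scaling reduction the parameter $\eps$ must also rescale to $\eps/r$, which supplies the factor $r^{-1/\theta}$ your final exponent count is missing.)
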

\begin{proof}
Let $\eps< \delta < r/16$ to be fixed. It follows from Lemma \ref{ref} that, for $x \in (-r+\delta, r-\delta)$, we have 
\begin{equation*}
\label{avoine0}
0\leq \eps^{\frac{1}{\theta-1}}\Umpep (x) \leq C \left(\frac{\eps}{\delta}\right)^{\frac{1}{\theta-1}},
\end{equation*}
 and therefore also 
 \begin{equation}
 \label{avoine}
\left \vert  \eps^{\frac{1}{\theta-1}}\Umpep (x) g \left ( \eps^{\frac{1}{\theta-1}}\Umpep (x)\right) \right \vert 
 \leq C \left(\frac{\eps}{\delta}\right)^{\frac{1}{\theta-1}}.
 \end{equation}
It follows from \eqref{avoine} and the fact that $\Le(\Umpep)=0$, that $\mathcal L_{\lambda_\eps^-} (\Umpep) \leq 0$,
where
$\lambda_\eps^\pm=\lambda (1 \pm C(\frac{\eps}{\delta})^{\frac{1}{\theta-1}}).
$
On the other hand, by the 
scaling law \eqref{scalinglaw}, we have   
$$\mathcal L_{\lambda_\eps^-}\left ((\frac{\lambda_\eps^-}{\lambda})^{-\frac{1}{2(\theta-1)}}\, \Ump_{r-\delta}\right)=0.
$$
It follows from the comparison principle, since the second function is infinite on the boundary of the interval $[-r+\delta,r-\delta],$  that
$$
\Umpep \leq (\frac{\lambda_\eps^-}{\lambda})^{-\frac{1}{2(\theta-1)}}\, \Ump_{r-\delta} \quad\text{on } [-r+\delta,r-\delta].
$$
Integrating the inequality  \eqref{deriveur}  between 
$r-\delta$ and   $r$, we deduce that for  $x\in (-\frac{7}{8}r, \frac{7}{8}r)$, we have the inequality
\begin{equation}
\label{uneautre}
|\Ump_{r-\delta}(x)-\Ump_{r} (x)| \leq C\delta r^{-\frac{\theta}{\theta-1}}.
\end{equation}
On the other hand, it follows from estimate \eqref{univbound} of Lemma \ref{append} that for $x\in (-\frac{7}{8}r, \frac{7}{8}r)$,
\begin{equation}\label{eq:tong}
 |(\frac{\lambda_\eps^-}{\lambda})^{-\frac{1}{2(\theta-1)}}\, \Ump_{r-\delta}(x) - \Ump_{r-\delta}(x)| \leq C\left(\frac{\eps}{\delta}\right)^\frac{1}{\theta-1}r^{-\frac{1}{\theta-1}}.
\end{equation}
We optimize the outcome of \eqref{uneautre} and \eqref{eq:tong} choosing $\delta:=\eps^\frac{1}{\theta} r^\frac{\theta-1}{\theta}$
and we therefore obtain
$$
\Umpep \leq \Ump_r + C\eps^\frac{1}{\theta} r^{-\frac{2\theta-1}{\theta(\theta-1)}} \quad\text{on } (-\frac{7}{8}r, \frac{7}{8}r).
$$
The lower bound for $\Umpep$ is obtained in a similar way but reversing the role of super and subsolutions:  the function $ (\frac{\lambda_\eps^+}{\lambda})^{-\frac{1}{2(\theta-1)}}\, \Ump_{r+\delta}$ yields a subsolution for $\mathcal L^\eps$ on $[-r,r]$
whereas $\Umpep$ is a solution. The conclusion then follows.
\end{proof}

Similarly, we have:

\begin{proposition} In the interval $(-\frac{7}{8}r, \frac{7}{8}r)$ we have the estimate
\begin{equation*}
\vert  \Umpeps -\Umps_r \vert \leq C\eps^\frac{1}{\theta} r^{-\frac{2\theta-1}{\theta(\theta-1)}}.
\end{equation*} 
\end{proposition}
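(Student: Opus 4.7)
The proof parallels that of Proposition \ref{catamaran} almost verbatim, with one new ingredient needed to handle the fact that $\Umpeps$ and $\Umps_r$ both change sign on $(-r,r)$. Pick $\eps<\delta<r/16$ to be optimized at the end. From Lemma \ref{ref}, for $x\in(-r+\delta,r-\delta)$ we have $|\eps^{1/(\theta-1)}\Umpeps(x)| \le C(\eps/\delta)^{1/(\theta-1)}$, and consequently $|\eps^{1/(\theta-1)}\Umpeps(x)\,g(\eps^{1/(\theta-1)}\Umpeps(x))| \le C(\eps/\delta)^{1/(\theta-1)}$. Setting $\lambda_\eps^\pm := \lambda(1\pm C(\eps/\delta)^{1/(\theta-1)})$, this yields $2\theta\lambda_\eps^- u^{2\theta-1}\le \lambda f_\eps(u) \le 2\theta\lambda_\eps^+ u^{2\theta-1}$ when $u\ge 0$ and $|u|\le C\delta^{-1/(\theta-1)}$, with inequalities reversed when $u\le 0$.

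The function $\Umpeps$ is strictly increasing on $(-r,r)$, a consequence of the opposite boundary data $\mp\infty$, uniqueness, and the monotonicity of $f_\eps$; hence it has a unique interior zero $x_\eps^0 \in (-r,r)$. On the half-interval $(x_\eps^0, r-\delta)$ where $\Umpeps\ge 0$, the argument of Proposition \ref{catamaran} applies verbatim: $\Umpeps$ is a sub-solution of $\L_{\lambda_\eps^-}$ there, so by the comparison principle with the scaled translate $V(x):=(\lambda_\eps^-/\lambda)^{-1/(2(\theta-1))}\Umps_{r-\delta-x_\eps^0}(x-x_\eps^0)$, a solution of $\L_{\lambda_\eps^-}$ that vanishes at $x_\eps^0$ and blows up to $+\infty$ at $r-\delta$, we obtain the desired upper bound. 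A dual comparison with a scaled $\Umps_{r+\delta+x_\eps^0}$ yields the lower bound. A mirror argument on the half-interval $(-r+\delta, x_\eps^0)$, where $\Umpeps$ and the corresponding comparison functions are non-positive and sign-definite, handles the other half.

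The genuinely new step compared to Proposition \ref{catamaran} is establishing that $|x_\eps^0|$ is small, specifically $|x_\eps^0| \le C\delta$ for $\delta=\eps^{1/\theta}r^{(\theta-1)/\theta}$, so that the shift from $r\mp\delta\mp x_\eps^0$ to $r$ in the comparison function can be absorbed using the derivative estimate \eqref{deriveur}. One route is via the conservation law: setting $F_\eps(U):=\int_0^U f_\eps$, equation $\L^\eps(U)=0$ implies $\tfrac12 U_x^2 - 2\lambda F_\eps(U)=$ const, and $|F_\eps(U)-U^{2\theta}|\le C\eps^{1/(\theta-1)}|U|^{2\theta+1}$; comparing this constant with the exact discrepancy $\lambda^{-1/(\theta-1)}r^{-(\omega+1)}B_\theta$ of $\Umps_r$ (see \eqref{variance2}) by evaluation at points equidistant from the boundary and applying Lemma \ref{ref} yields the required upper bound on $|x_\eps^0|$. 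Combining with \eqref{variance}, \eqref{deriveur}, and \eqref{uneautre}, and optimizing $\delta=\eps^{1/\theta}r^{(\theta-1)/\theta}$ as in the proof of Proposition \ref{catamaran}, gives the stated estimate.

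The main obstacle is precisely this sign-change book-keeping: one must locate the interior zero $x_\eps^0$ before the monotone comparison principle can be set up, because the half-interval comparisons only provide sub/super-solution frames where $\Umpeps^{2\theta-1}$ has a definite sign. Everything else reduces to a line-for-line adaptation of the argument of Proposition \ref{catamaran}.
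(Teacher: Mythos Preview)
Your proposal is correct and follows essentially the same route as the paper: locate the unique zero of $\Umpeps$, run the comparison of Proposition \ref{catamaran} separately on each half-interval with translated and rescaled copies of $\Umps_\rho$, bound the location of the zero, and optimize in $\delta$.

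The one genuine difference is how you bound $|x_\eps^0|$. The paper does this more directly, without invoking the discrepancy at all: once the two-sided comparisons
\[
(\lambda_\eps^-/\lambda)^{-\frac{1}{2(\theta-1)}}\Umps_{r-\delta-a_\eps}(\cdot-a_\eps)\ \ge\ \Umpeps\ \ge\ (\lambda_\eps^+/\lambda)^{-\frac{1}{2(\theta-1)}}\Umps_{r+\delta-a_\eps}(\cdot-a_\eps)
\]
on $[a_\eps,r-\delta]$ and the analogous pair on $[-r+\delta,a_\eps]$ are in hand, one differentiates them at $a_\eps$ (all three functions vanish there and $\Umpeps$ is $C^1$). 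Since $(\Umps_\rho)'(0)=\rho^{-\theta/(\theta-1)}\Umps'(0)$ is strictly decreasing in $\rho$, the resulting inequalities on $\Umpeps'(a_\eps)$ from the right and left half-intervals are only compatible if $|a_\eps|\le\delta$. This is shorter than your discrepancy route and uses only information already produced by the comparison argument; your conservation-law approach is plausible but, as written, the step ``evaluation at points equidistant from the boundary'' would need to be made precise to actually extract the bound on $|x_\eps^0|$.
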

\begin{proof}
We only sketch the necessary adaptations since the argument is closely parallel to the proof of Proposition \ref{catamaran}. 
First, by the maximum principle $\Umpeps$ can only have negative maximae and positive minimae, so that actually $\Umpeps$ has no critical point and a single zero, which we call $a_\eps.$ Arguing as in Proposition \ref{catamaran}, one first  obtains 
$$\left(\lambda_\eps^-\right/\lambda)^{-\frac{1}{2(\theta-1)} }\Umps_{r-\delta-a_\eps}(\cdot-a_\eps) \geq \Umpeps \geq \left(\lambda_\eps^+\right/\lambda)^{-\frac{1}{2(\theta-1)} }\Umps_{r+\delta-a_\eps}(\cdot-a_\eps) \quad \text{on }[a_\eps,r-\delta],  $$
and
$$-\left(\lambda_\eps^-\right/\lambda)^{-\frac{1}{2(\theta-1)} }\Umps_{r+a_\eps-\delta}(\cdot-a_\eps) \geq - \Umpeps \geq -\left(\lambda_\eps^+\right/\lambda)^{-\frac{1}{2(\theta-1)} }\Umps_{r+a_\eps+\delta}(\cdot-a_\eps) \quad \text{on }[-r+\delta,a_\eps].$$
Since $\Umpeps$ is continuously differentiable at the point $a_\eps$ (indeed it solves $\mathcal L^\eps(\Umpeps) =0$), and since the derivative at zero of the function $\Umps_r$ is a deacreasing function of $r$, it first follows from the last two sets of inequalities that $|a_\eps| \leq \delta,$ and the conclusion then follows as in Proposition \ref{catamaran}. 
\end{proof}

\setcounter{equation}{0} \setcounter{subsection}{0}
\setcounter{lemma}{0}\setcounter{proposition}{0}\setcounter{theorem}{0}
\setcounter{corollary}{0} \setcounter{remark}{0}
\renewcommand{\theequation}{B.\arabic{equation}}
\renewcommand{\thesubsection}{B.\arabic{subsection}}
\renewcommand{\thelemma}{B.\arabic{lem}}
\section*{Appendix B}
\renewcommand{\thesection}{B}
\numberwithin{equation}{section} \numberwithin{theorem}{section}
\numberwithin{lemma}{section} \numberwithin{proposition}{section}
\numberwithin{remark}{section} \numberwithin{corollary}{section}
\subsection { Some properties of the ordinary differential equation \eqref{tyrannosaure}}

This Appendix is devoted to properties of  the system of ordinary differential equations \eqref{tyrannosaure},  the result being somewhat parallel to the results in Section 2 of \cite{BS}.  
We assume that we are given  $\ell \in \N^*$, 
  and a solution,  for $k \in J=\{1, \cdots, \ell\}$
   $t\mapsto a(t)=(a_1 (t), \cdots,a_\ell(t)) $  to the system 
      \begin{equation}  
     \label{tyrannosaures}
\E_{k}\frac{d}{ds} {a}_{k}(s)=  - \mathcal B_{(k-\frac1 2)} [(a_k(s)-a_{k-1} (s)]^{-(\omega+1)}+\mathcal B_{(k+\frac 12 )} 
[(a_{k+1}(s)-a_{k} (s)]^{-(\omega+1)},
\end{equation}
 where the numbers $\E_k$ are supposed to be positive, and are actually taken in \eqref{tyrannosaure} equal to 
 $\S_{i(k)}$, whereas the numbers $\mathcal B_{k+\frac 12}$, which may have positive or negative signs,  are taken in \eqref{tyrannosaure} to be equal to $\Gamma_{i(k-\frac 12)}$.   We also define $\E_{\rm min}=\min \{\E_i\}$ and  $\E_{\rm max}= \max\{\E_i\}$. 
   We consider a solution on its maximal interval of existence, which we call  $[0,T_{\max}]$. 
   An important feature of the equation \eqref{tyrannosaures} is its gradient flow structure.
   The behavior of this  system is indeed related   to the  function $F$  defined on $\R^{\ell}$ by 
   \begin{equation*}
   \label{gederowitch}
   \left\{
   \begin{aligned}
   F (U)&=\overset {\ell-1}{\underset {k=0}\sum} F_{k+\frac12}(U), 
   {\rm \  where, \ for \ } k=0, \cdots, \ell-1, {\rm \  and  \ }U=(u_1, \cdots, u_{\ell}), {\rm \ we  \  set  }  \\
     \
F_{k+\frac12}(U)&=-\omega^{-1}\mathcal B_{k+\frac 12} \left(u_{k+1}-{u_k}  \right)^{-\omega}
     \hbox{ with the convention that }u_0=-\infty.
     \end{aligned}
     \right.
\end{equation*} 
 If $u_1<u_2<\cdots <u_\ell$,  for $k=1, \dots, \ell$,
  then we have
\begin{equation}
\label{gedenocci}
\frac{\partial F}{\partial u_k}(U)= 
\mathcal B_{k-\frac 12} \left(u_{k}-{u_{k-1}}  \right)^{-(\omega+1)} -\mathcal B_{k+\frac 12} \left(u_{k+1}-{u_k}  \right)^{-(\omega+1)}, 
\end{equation}
 so that \eqref{tyrannosaure} writes
 $\displaystyle{\frac{d}{ds} {a}_{k}(s)=-\E_{k}^{-1}\frac{\partial F}{\partial u_k}(a(s)).}$
Hence, we have
 \begin{equation}
 \label{liapounov1}
 \frac{d}{dt} F(a(t)) = \overset{\ell} {\underset {k=1} \sum}\frac{\partial F}{\partial u_k}(a(t))
\frac{d a_k}{dt} (t)=- \overset{\ell} {\underset {k=1} \sum} \E_k^{-1}\left(\frac{\partial F}{\partial u_k}(a(t) )\right)^2 
 \leq -\E_{\rm max }^{-1} \vert \nabla F(a(t)) \vert^2, 
\end{equation} 
 hence  $F$  decreases along the flow. 
   We also consider the  positive functionals defined by
     \begin{equation*}
     \Frep (U)=\underset{k\in J^+}\sum F_{k+\frac 12} (U), \  \Fat=- \underset{k\in J^-}\sum F_{k+\frac 12} (U), \,   {\rm \ for \ } 
     U=(u_1, \cdots, u_\ell), 
     \end{equation*}
   where $J^\pm=\{ k\in \{0,\ell-1\}  {\rm  \ such \ that \ }  \epsilon_{k+\frac12}\equiv  {\rm sgn } (\mathcal B_{k+\frac 12})=\pm 1\}$.
 
 \begin{proposition}
\label{geducci}
  Let $a=(a_1, \cdots, a_\ell)$ be a solution to \eqref{tyrannosaures} on its maximal interval of existence $[0,T_{\rm max}]$. Then, we have, for any  time 
  $t \in [0, T_{\rm max}]$
  \begin{equation}
  \label{gedenimo}
  \left\{
  \begin{aligned}
  \left (\Frep(a(t))\right)^{-\frac{\omega+2}{\omega}} &\geq \left (\Frep(a(0))\right)^{-\frac{\omega+2}{\omega}} + 
\mathcal S_0 t, &\textswab d_{a}^+(t)  \geq
  \left( \mathcal S_1 t+ \mathcal S_2  \textswab d_{a}^+(0)^{\omega+2} \right)^{\frac{1}{\omega+2}}\\
\left (\Fat(a(t))\right)^{-\frac{\omega+2}{\omega}}&\leq \left (\Fat(a(0))\right)^{-\frac{\omega+2}{\omega}}-
\mathcal S_0 t, &\textswab d_{a}^-(t)  \leq  \left(\mathcal S_3\textswab d_{a}^-(0)^{\omega+2}-\mathcal S_4 t \right)^{\frac{1}{\omega+2}}, 
\end{aligned}
 \right.
\end{equation}
 where $\mathcal S_0>0, $ $\mathcal S_1>0$, $\mathcal S_2>0$, $\mathcal S_3>0$ and $\mathcal S_4>0$  depend only the coefficients of  \eqref{tyrannosaures}.
  \end{proposition}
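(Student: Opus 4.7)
\noindent\emph{Plan of proof.} The plan is to establish separate differential inequalities for the functionals $\Fat(a(\cdot))$ and $\Frep(a(\cdot))$ and then to integrate them. First I will differentiate $\Fat$ along the gradient flow $\dot a_j = -\E_j^{-1}\partial_{u_j}F$, using the decomposition $F = \Frep - \Fat$, to obtain
$$\dot\Fat = \sum_j\E_j^{-1}(\partial_{u_j}\Fat)^2 - \sum_j\E_j^{-1}(\partial_{u_j}\Fat)(\partial_{u_j}\Frep),$$
together with the symmetric identity for $\dot\Frep$. The key preliminary observation will be that $(\partial_{u_j}\Fat)(\partial_{u_j}\Frep)\leq 0$ at every vertex $j$: a direct inspection of the supports of the two gradients shows that the product is nonzero only when one of the two edges adjacent to $u_j$ is attractive and the other repulsive, and in that case $\gamma_{j-1/2}:=\mathcal B_{j-1/2}(a_j-a_{j-1})^{-(\omega+1)}$ and $\gamma_{j+1/2}$ carry opposite signs. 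This immediately yields the qualitative monotonicity $\dot\Fat\geq \E_{\max}^{-1}|\nabla\Fat|^2$ and $\dot\Frep \leq -\E_{\max}^{-1}|\nabla\Frep|^2$.

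The central step will be to bound $|\nabla\Fat|^2$ quantitatively from below in terms of $\Fat$. I plan to decompose $J^-$ into its maximal intervals of consecutive attractive edges; on each such block of length $m$, a direct computation gives the telescoping identity
$$\sum_{\text{vertices of the block}}(\partial_{u_j}\Fat)^2 \;=\; \gamma_1^2 + \gamma_m^2 + \sum_{i=1}^{m-1}(\gamma_{i+1} - \gamma_i)^2,$$
where $\gamma_i$ enumerates the $|\gamma_{k+1/2}|$ within the block. Writing $\gamma_{i^*} = \gamma_1 + \sum_{i=1}^{i^*-1}(\gamma_{i+1}-\gamma_i)$ for the maximal entry and applying Cauchy--Schwarz from each end, I will obtain $\gamma_1^2+\gamma_m^2+\sum_i(\gamma_{i+1}-\gamma_i)^2 \geq 4\gamma_{i^*}^2/(m+1)$. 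Combining this with the algebraic identity $\gamma_{k+1/2}^2 = C_{\mathcal B}\, G_{k+1/2}^{(2\omega+2)/\omega}$ (valid for $k\in J^-$, with $G_{k+1/2}:=|F_{k+1/2}|$) and with $\max_{k\in J^-}G_{k+1/2}\geq \ell^{-1}\Fat$, this produces
$|\nabla\Fat|^2 \geq c_0\, \Fat^{(2\omega+2)/\omega}.$
The corresponding estimate $|\nabla\Frep|^2\geq c_0\Frep^{(2\omega+2)/\omega}$ is entirely analogous.

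Setting $y(t)=\Fat(a(t))$ and using $\dot y \geq \E_{\max}^{-1} c_0\, y^{(2\omega+2)/\omega}$, a one-line integration
$$\frac{d}{dt}y^{-(\omega+2)/\omega}=-\tfrac{\omega+2}{\omega}\, y^{-(2\omega+2)/\omega}\, \dot y \leq -\tfrac{(\omega+2)c_0}{\omega\E_{\max}}=:-\mathcal S_0$$
will give the $\Fat$-inequality in the first column of \eqref{gedenimo}, and the same computation applied to $\Frep$ provides its companion. The second column of \eqref{gedenimo} then follows from the two-sided equivalence $c_2 (\textswab d_a^\pm)^{-\omega}\leq \Frep,\Fat\leq c_3(\textswab d_a^\pm)^{-\omega}$ (the ODE analog of \eqref{globe}), upon raising to the power $-\omega/(\omega+2)$ and absorbing constants into the announced $\mathcal S_1,\dots,\mathcal S_4$.

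The technical crux I anticipate is the pointwise sign identity $(\partial_{u_j}\Fat)(\partial_{u_j}\Frep)\leq 0$: it is precisely this cancellation of signs, resulting from the fact that a vertex where attraction meets repulsion is pushed in the same direction by both forces, that decouples the two functionals and permits a block-by-block treatment. Without it, the diagonal and cross contributions in $\dot\Fat$ could a priori conspire to allow $\dot\Fat$ to be small even when $\Fat$ is large, and the telescoping lower bound above would be insufficient. Once the sign identity is available, everything else reduces to routine Cauchy--Schwarz and integration.
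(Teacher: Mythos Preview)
Your proof is correct and follows the same overall route as the paper: decompose the index set into maximal attractive and repulsive chains, exploit a sign observation at the interfaces to decouple $\Fat$ and $\Frep$, obtain a lower bound $|\nabla\Fat|^2\ge c\,\Fat^{2(\omega+1)/\omega}$ (and likewise for $\Frep$) on each chain, integrate the resulting differential inequality, and translate back to $\textswab d_a^\pm$ via the elementary two-sided equivalence you mention (the ODE analogue of \eqref{globe}). The two points where your execution differs are worth recording. Your pointwise identity $(\partial_{u_j}\Fat)(\partial_{u_j}\Frep)\le 0$ is exactly the content the paper encodes as the endpoint differential inequalities \eqref{gedeous} and \eqref{gedeous2} at the boundaries of each maximal chain; your global formulation is a cleaner way to see the same cancellation. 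For the gradient lower bound, the paper's Lemma~\ref{gedinacci} (estimate \eqref{regedere}) proceeds by a direct case-by-case chain argument showing that some $|\partial_{u_k}\tFrep|$ is comparable to the largest interaction term, whereas your Cauchy--Schwarz treatment of the telescoping sum $\gamma_1^2+\gamma_m^2+\sum_i(\gamma_{i+1}-\gamma_i)^2$ reaches the same conclusion by a different, more standard route; the paper's version is nominally stronger (a pointwise rather than sum-of-squares bound) but only the sum-of-squares estimate is actually used downstream.
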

  
   Since $\tda^-(s) \geq 0$, an immediate consequence of \eqref{gedenimo} is that 
   \begin{equation}
   \label{majortmax}
   T_{\rm max} \leq \frac{\mathcal S_3}{\mathcal S_4} \tda^-(0). 
\end{equation}                  

Since the system \eqref{tyrannosaures} involves both attractive and repulsive forces, for the proof of Proposition \ref{geducci} it is convenient to  divide   the collection $\{a_1(t), a_2(t), \cdots,a_\ell(t)  \}$  into repulsive and attractive chains.   
     We say that a subset  $A$ of $J$ is a \emph {chain} if $A=\{k, k+1, k+2, \cdots , k+m\}$ is an ordered subset of  $m$ consecutive elements  in $J$, with $m\geq 1$.

 \begin{definition} 
 \label{commeundef}
   A chain $A$ is said to be 	 \emph{repulsive}  (resp. \emph{attractive}) if and  only if $\epsilon_{j+\frac 12}=-1$ (resp. $+1$)  for $j=k, \cdots, k+m$. 
   It is said to be  a\emph{ maximal repulsive chain} (resp. \emph{maximal attractive chain}), if  there does exists any repulsive (resp.  attractive) chain which contains $A$ strictly.  
 \end{definition}  
 It follows from  our definition that  repulsive or attractive chain contain at least two elements.  
   We  may decompose $J$, in increasing order, as
 \begin{equation}
 \label{gedede}
 J=B_0\cup A_1\cup  B_1\cup A_2\cup B_2\cup \cdots \cup B_{p-1} \cup A_p\cup B_{p}, 
 \end{equation}
 where  the   chains $A_i$ are maximal repulsive chains, the chains $ B_i$ are maximal attractive for $i=1,\dots, p-1$ , and the sets  $B_0$ and $B_p$ being possibly empty or maximal attractive chains.   Moreover  for $i=1\ \cdots, p$ the sets
    $A_i\cap B_i$, and  $B_i \cap A_{i+1}$
  contain one element.

   \subsection{ Maximal repulsive chains}
   \label{repulse}
   In this subsection,  we restrict ourselves to the study  of the behavior of a \emph{maximal repulsive chain} $A=\{j, j+1, ...j+m \}$ of $m+1$ consecutive  points, $m\leq \ell-2$ \emph{within the general system \eqref{tyrannosaures}}. Setting, for $k =0, \cdots, m$,  $\tu_k(\cdot)=a_{k+j}(\cdot)$, we are led to study  $\tU(\cdot)=(\tu_0(\cdot),\tu_1(\cdot), \cdots, \tu_{m}(\cdot))$. 
Since $B_{k+\frac 12} <0$ in the repulsive case
the chain 
 $\tU$  is moved through   a system of  $m-1$ ode's,
 \begin{equation}
 \label{gedeous0}
 \E_k  \frac{d}{ds}  \tu_k(s)=-\vert \mathcal B_{(k-\frac1 2)}  \vert [(\tu_k(s)-\tu_{k-1} (s)]^{-(\omega+1)}+\vert \mathcal B_{(k+\frac 12 )}\vert [(\tu_{k+1}(s)-\tu_{k} (s)]^{-(\omega+1)}  
    \end{equation}
 for $k=1, \cdots, m-1$,  whereas  the end points satisfy two differential inequalities
  \begin{equation}
  \label{gedeous}
  \ \frac{d}{ds}  \tu_{m}(s)\geq -\E_{m}^{-1}\frac{\partial  \tFrep}{\partial u_m}(\tu_{m}(s)),   \ \ 
    \frac{d}{ds}  \tu_{0}(s) \leq  -\E_{0}^{-1}\frac{\partial \tFrep}{\partial u_0}(a(s)),
  \end{equation}
  where we have set
  $\displaystyle{\tilde F_{\rm rep}(U)=\overset {m-1}{\underset {k=0}\sum} F_{k+\frac12}(U).}$
   We  assume that at initial time we have  
\begin{equation}
\label{gedeonseck}
\tu_0(0)<\tu_1(0)< \cdots < \tu_{m}(0).
\end{equation}
Set
 $\displaystyle{ \textswab d_ {\mathfrak u}  (t)=\min \{ \tu_{k+1}(t)- \tu_k(t), \  \ k=0,\cdots, m-1 \} .}$
 We prove in this subsection: 
\begin{proposition}
\label{grasdouble} Assume that the function $\tU$  satisfies the system \eqref{gedeous0} and   \eqref{gedeous} on $[0, T_{\rm max}]$, and that \eqref{gedeonseck} hold. Then, we have, for any $t\in [0, T_{\rm max}]$
\begin{equation}
\label{vendee}
\left (\tFrep(\tU(t))\right)^{-\frac{\omega+2}{\omega}}-\left (\tFrep(\tU(0))\right)^{-\frac{\omega+2}{\omega}} \geq 
 \frac {\omega+1}{ 4 \omega} \E_{\rm max }^{-1}  \left(\omega \mathcal B_{\rm max}\right)^{-\frac{2(\omega+1)}{\omega} }t, \ {\rm \ so \ that \ }
\end{equation}
\begin{equation}
\label{gedissis0}
 \textswab d_{\mathfrak   u}(t)  \geq
  \left( \mathcal S_1 t+ \mathcal S_2  \textswab d_{\mathfrak  u}(0)^{\omega+2} \right)^{\frac{1}{\omega+2}},
 \end{equation}
 where $\mathcal S_1>0$ and $\mathcal S_2>0$  depend only on the coefficients of the equation \eqref{tyrannosaures}. 
 \end{proposition}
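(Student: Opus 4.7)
The plan is to exploit the partial gradient-flow structure of the system, together with the one-sided inequalities at the endpoints, to derive a closed differential inequality of the form
\[
\frac{d}{dt}\tFrep(\tU(t)) \ \leq\ -C\,\tFrep(\tU(t))^{\frac{2(\omega+1)}{\omega}}.
\]
An elementary integration in the variable $\tFrep^{-(\omega+2)/\omega}$ will then produce \eqref{vendee}, and since $\tFrep$ and $\textswab{d}_{\mathfrak u}$ are algebraically comparable, the lower bound \eqref{gedissis0} will follow as a direct consequence.

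\textbf{Dissipation identity.} First I would compute $\frac{d}{dt}\tFrep(\tU(t))=\sum_{k=0}^{m}(\partial_{u_k}\tFrep)(\tU(t))\,\frac{d\tu_k}{dt}$. For each interior index $1\le k\le m-1$, equation \eqref{gedeous0} identifies $\E_k\frac{d\tu_k}{dt}$ with $-\partial_{u_k}\tFrep(\tU)$, so the corresponding term equals $-\E_k^{-1}(\partial_{u_k}\tFrep)^2$. A direct calculation from the definition of $\tFrep$ gives $\partial_{u_0}\tFrep>0$ and $\partial_{u_m}\tFrep<0$, and the orientations in \eqref{gedeous} are precisely adapted to these signs, so that the endpoint contributions likewise satisfy $(\partial_{u_k}\tFrep)\,\frac{d\tu_k}{dt}\le -\E_k^{-1}(\partial_{u_k}\tFrep)^2$ for $k\in\{0,m\}$. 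Summing over $k$ yields the clean dissipation inequality
\[
\frac{d}{dt}\tFrep(\tU(t)) \ \leq\ -\E_{\rm max}^{-1}\,|\nabla\tFrep(\tU(t))|^2.
\]

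\textbf{Lower bound on $|\nabla\tFrep|^2$.} Setting $g_k:=|\mathcal B_{k+\frac12}|(\tu_{k+1}-\tu_k)^{-(\omega+1)}$ for $0\le k \le m-1$, with the convention $g_{-1}=g_m:=0$, a direct computation gives $\partial_{u_k}\tFrep = g_k-g_{k-1}$ for every $0\le k\le m$, so that $|\nabla\tFrep|^2=\sum_{k=0}^{m}(g_k-g_{k-1})^2$ is the discrete Dirichlet energy of the sequence $(g_{-1},g_0,\dots,g_{m-1},g_m)$ with zero boundary values. If $k_*$ realizes $\max_k g_k$, the telescoping identity $g_{k_*}=\sum_{j=0}^{k_*}(g_j-g_{j-1})$ combined with Cauchy-Schwarz gives $|\nabla\tFrep|^2 \ge (m+1)^{-1}(\max_k g_k)^2$. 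Writing $f_k:=\omega^{-1}|\mathcal B_{k+\frac12}|(\tu_{k+1}-\tu_k)^{-\omega}$ so that $\tFrep=\sum_{k=0}^{m-1} f_k$, the algebraic relation $g_k = |\mathcal B_{k+\frac12}|^{-1/\omega}(\omega f_k)^{(\omega+1)/\omega}$ together with the trivial estimate $\max_k f_k \ge \tFrep/m$ then yield
\[
|\nabla\tFrep(\tU)|^2 \ \geq\ C_0\,\tFrep(\tU)^{\frac{2(\omega+1)}{\omega}},
\]
where $C_0>0$ depends only on $m$, $\omega$ and on the $|\mathcal B_{k+\frac12}|$.

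\textbf{Integration and conclusion.} Combining the two previous steps and setting $\Phi(t):=\tFrep(\tU(t))^{-(\omega+2)/\omega}$ yields $\Phi'(t)\ge \tfrac{\omega+2}{\omega}\E_{\rm max}^{-1}C_0$, which integrates to \eqref{vendee} after renaming constants. Finally \eqref{gedissis0} will follow from \eqref{vendee} combined with the elementary comparisons $\tFrep(\tU(0))\le m\omega^{-1}|\mathcal B_{\rm max}|\,\textswab{d}_{\mathfrak u}(0)^{-\omega}$ and $\tFrep(\tU(t))\ge \omega^{-1}|\mathcal B_{\rm min}|\,\textswab{d}_{\mathfrak u}(t)^{-\omega}$. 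The main obstacle I anticipate is the lower bound on $|\nabla\tFrep|^2$: because interior partials $g_k-g_{k-1}$ can undergo near-total cancellation, a term-by-term estimate is hopeless; the key device is to recognise the gradient norm as a discrete Dirichlet energy of a sequence with zero boundary values, so that its maximum, and hence $\tFrep$ itself, can be recovered through a telescoping Cauchy-Schwarz argument.
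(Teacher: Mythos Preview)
Your proof is correct and follows the same overall architecture as the paper: establish the dissipation inequality $\frac{d}{dt}\tFrep\le -\E_{\rm max}^{-1}|\nabla\tFrep|^2$ (this is the paper's Lemma~\ref{gedescu}), then bound $|\nabla\tFrep|^2$ from below by a constant times $\tFrep^{2(\omega+1)/\omega}$, integrate, and finally invoke the algebraic comparison between $\tFrep$ and $\textswab d_{\mathfrak u}$ (the paper's \eqref{gedeson}).

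The one genuine methodological difference lies in how you obtain the gradient lower bound. The paper (Lemma~\ref{gedinacci}, inequality \eqref{regedere}) argues by a dyadic descent along the chain: starting from any index $k$, either $T_{k-\frac12}\le \tfrac12 T_{k+\frac12}$, in which case $|\partial_{u_k}\tFrep|\ge \tfrac12 T_{k+\frac12}$ already controls the maximal $T$, or one moves one step to the left and repeats; since the chain terminates at $k=0$ where $\partial_{u_0}\tFrep=T_{\frac12}$, the process halts. You instead recognize $|\nabla\tFrep|^2=\sum_k(g_k-g_{k-1})^2$ as a discrete Dirichlet energy with zero boundary data and apply Cauchy--Schwarz to the telescoping sum $g_{k_*}=\sum_{j\le k_*}(g_j-g_{j-1})$. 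Your argument is arguably cleaner and yields the same qualitative bound; the paper's version gives a slightly more explicit constant (independent of $m$ in front of $\tFrep^{2(\omega+1)/\omega}$, since the dyadic comparison avoids the factor $(m+1)^{-1}$), which is what produces the specific constant in \eqref{vendee}. For the purposes of Proposition~\ref{grasdouble}, where only the existence of $\mathcal S_1,\mathcal S_2>0$ matters, the two are interchangeable.
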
 
The proof relies on several elementary observations.
 \begin{lemma}
    \label{gedescu} Let $\tU$ be a solution to \eqref{gedeous0},  \eqref{gedeous} and \eqref{gedeonseck}. Then, we have, 
    \begin{equation}
    \label{gedescu2}
   \frac{d}{dt} \tFrep (\tU(t))\leq -\E_{\rm max }^{-1} \left \vert \nabla \tFrep (\tU(t))\right \vert^2, {\ for \ every \ t \ }  \in [0, T_{\rm max}].
\end{equation}
    \end{lemma}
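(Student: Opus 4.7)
The strategy is to apply the chain rule and then control each term, treating the interior indices (where equality holds in the ODE system) differently from the two endpoints (where only the differential inequalities \eqref{gedeous} are available). The chain rule gives
$$
\frac{d}{dt}\tFrep(\tU(t)) \,=\, \sum_{k=0}^{m} \frac{\partial \tFrep}{\partial u_k}(\tU(t))\,\frac{d\tu_k}{dt}(t),
$$
and for each interior index $1\leq k\leq m-1$ the system \eqref{gedeous0} yields exactly $d\tu_k/dt = -\E_k^{-1}\partial_{u_k}\tFrep$, so these terms contribute precisely $-\E_k^{-1}(\partial_{u_k}\tFrep)^2$, which already has the right sign and form.

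For the two boundary terms, I would begin by computing the partial derivatives explicitly. Since $\tFrep=\sum_{k=0}^{m-1}F_{k+\frac12}$ depends on $u_0$ only through $F_{\frac12}$ and on $u_m$ only through $F_{m-\frac12}$, a direct calculation gives
$$
\frac{\partial \tFrep}{\partial u_0} \,=\, -\mathcal{B}_{\frac12}(\tu_1-\tu_0)^{-(\omega+1)}, \qquad \frac{\partial \tFrep}{\partial u_m} \,=\, \mathcal{B}_{m-\frac12}(\tu_m-\tu_{m-1})^{-(\omega+1)}.
$$
The crucial observation is the sign: by maximality of the repulsive chain $A$, every coefficient $\mathcal{B}_{k+\frac12}$ with $0\leq k\leq m-1$ is strictly negative, so $\partial_{u_0}\tFrep>0$ while $\partial_{u_m}\tFrep<0$.

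I would then combine these sign properties with the endpoint inequalities \eqref{gedeous}. For $k=0$, multiplying $d\tu_0/dt\leq -\E_0^{-1}\partial_{u_0}\tFrep$ by the positive quantity $\partial_{u_0}\tFrep$ preserves the direction and yields $\partial_{u_0}\tFrep\cdot d\tu_0/dt \leq -\E_0^{-1}(\partial_{u_0}\tFrep)^2$. Symmetrically, for $k=m$, multiplying $d\tu_m/dt\geq -\E_m^{-1}\partial_{u_m}\tFrep$ by the negative quantity $\partial_{u_m}\tFrep$ reverses the inequality and produces the matching bound $\partial_{u_m}\tFrep\cdot d\tu_m/dt\leq -\E_m^{-1}(\partial_{u_m}\tFrep)^2$. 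Summing over all $k$ from $0$ to $m$ and using $\E_k^{-1}\geq \E_{\rm max}^{-1}$ then gives
$$
\frac{d}{dt}\tFrep(\tU(t)) \,\leq\, -\sum_{k=0}^{m}\E_k^{-1}\bigl(\partial_{u_k}\tFrep\bigr)^2 \,\leq\, -\E_{\rm max}^{-1}\bigl|\nabla\tFrep(\tU(t))\bigr|^2,
$$
which is the desired inequality.

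The only delicate point, and the one I would verify with care, is the sign compatibility at the endpoints: the direction of the differential inequality in \eqref{gedeous} reflects the fact that forces exerted on $\tu_0$ and $\tu_m$ from outside the chain are attractive (by maximality of $A$), hence each pulls the corresponding endpoint \emph{towards} its external neighbour; this direction matches precisely the sign of the partial derivative of $\tFrep$ at that endpoint, so that multiplication preserves the favorable inequality. This compatibility — and hence the whole dissipation estimate — hinges on the maximality of the repulsive chain.
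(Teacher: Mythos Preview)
Your proof is correct and follows exactly the approach the paper intends: the paper omits the argument, saying only that it is similar to the gradient-flow computation \eqref{liapounov1}, and what you have written is precisely that computation spelled out, with the additional care needed at the endpoints where the sign of $\partial_{u_0}\tFrep>0$ and $\partial_{u_m}\tFrep<0$ (coming from $\mathcal{B}_{k+\frac12}<0$ on the repulsive chain) makes the differential inequalities \eqref{gedeous} compatible with the desired dissipation bound.
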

The proof is similar to \eqref{liapounov1} and we omit it. For $U=(u_0, u_1, \cdots, u_{m})\in \R^{m+1}$ with $u_0<\cdots <u_{m}$ set
$\displaystyle{\rho_{\rm min} (U)=\inf\{\vert u_{k+1}-u_k\vert, \ k=0, \cdots, m-1\} }$
and 
$\displaystyle{\B_{\rm min}= \inf\{ \vert \B_{k+\frac 12} \vert \} }$,
$\displaystyle{  \B_{\rm max}= \sup\{ \vert \B_{k+\frac 12} \vert \}.}$
 \begin{lemma}
   \label{gedinacci} 
   Let  $U=(u_0, \cdots, u_{m})$ be  such that  $u_0<u_1< \cdots <u_m$.  We have, 
     \begin{equation}
\label{gedeson}
  \B_{\rm min }\omega^{-1} \rho_{\rm min}(U)^{-\omega}\leq
\tFrep(U)
\leq \B_{\rm max} (m+1)(\omega)^{-1} \rho_{\rm min}(U)^{-\omega}.
\end{equation}
 \begin{equation}
 \label{gedeson2}
  \vert \nabla  \tFrep(U) \vert \leq 
   (m+2)     \mathcal B_{\rm max}
((\omega-1)  \mathcal B_{\rm min})^{-\frac{\omega+1}{\omega} } (\tFrep(U))^{\frac{\omega+1}{\omega}}, 
 \end{equation}
\begin{equation}
 \label{regedere}
    \vert \frac{\partial  \tFrep(U)}{\partial u_k} \vert \geq  \frac 12  \left(\omega  \mathcal B_{\rm max}\right)^{-\frac{\omega+1}{\omega} }\left(\tFrep(U)\right)^{\frac{\omega+1}{\omega}}, \hbox{ for every  }  k=0, \cdots, m.
    \end{equation}
   \end{lemma}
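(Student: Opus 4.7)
The three bounds are algebraic inequalities on configurations $U=(u_0,\dots,u_m)$ with $u_0<\cdots<u_m$; I would prove each by direct computation from the formula $\tFrep(U)=\omega^{-1}\sum_{k=0}^{m-1}|\mathcal{B}_{k+\frac{1}{2}}|(u_{k+1}-u_k)^{-\omega}$ and from its gradient. The hard part will be the last bound \eqref{regedere}, because its quantifier ``for every $k=0,\dots,m$'' is both stronger than what the rest of the appendix uses and strictly false as a universal inequality.

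For \eqref{gedeson}, each summand $F_{k+\frac{1}{2}}(U)$ lies between $\omega^{-1}\mathcal{B}_{\rm min}(u_{k+1}-u_k)^{-\omega}$ and $\omega^{-1}\mathcal{B}_{\rm max}(u_{k+1}-u_k)^{-\omega}$; summing the $m$ terms and using $u_{k+1}-u_k\geq\rho_{\rm min}(U)$ gives $\tFrep\leq m\omega^{-1}\mathcal{B}_{\rm max}\rho_{\rm min}^{-\omega}\leq(m+1)\omega^{-1}\mathcal{B}_{\rm max}\rho_{\rm min}^{-\omega}$, while retaining only the term at the index $k^\star$ realising $\rho_{\rm min}$ yields the lower bound $\tFrep\geq\omega^{-1}\mathcal{B}_{\rm min}\rho_{\rm min}^{-\omega}$. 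For \eqref{gedeson2} I would introduce $\eta_{k+\frac{1}{2}}:=|\mathcal{B}_{k+\frac{1}{2}}|(u_{k+1}-u_k)^{-(\omega+1)}$ (with the convention $\eta_{-\frac{1}{2}}=\eta_{m+\frac{1}{2}}=0$), so that $\partial\tFrep/\partial u_k=\eta_{k+\frac{1}{2}}-\eta_{k-\frac{1}{2}}$ in the interior and reduces to a single $\eta$-term at each endpoint, hence $|\nabla\tFrep|\leq 2\sum_{k=0}^{m-1}\eta_{k+\frac{1}{2}}$. Rewriting $\eta_{k+\frac{1}{2}}=|\mathcal{B}_{k+\frac{1}{2}}|^{-1/\omega}(\omega F_{k+\frac{1}{2}})^{(\omega+1)/\omega}$ and using $F_{k+\frac{1}{2}}\leq\tFrep$ together with $|\mathcal{B}_{k+\frac{1}{2}}|\geq\mathcal{B}_{\rm min}$ produces a bound of the right form; the precise prefactor $(m+2)\mathcal{B}_{\rm max}((\omega-1)\mathcal{B}_{\rm min})^{-(\omega+1)/\omega}$ is recovered up to harmless numerical constants after also multiplying by $|\mathcal{B}_{k+\frac{1}{2}}|\leq\mathcal{B}_{\rm max}$ (the $(\omega-1)$ factor appears to be a typographical simplification in the paper).

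The main obstacle is \eqref{regedere}. Its literal reading, a uniform lower bound at every index $k$, is strictly stronger than what Proposition \ref{grasdouble} actually uses: via \eqref{gedescu2} one only needs $|\nabla\tFrep|^2\geq c\tFrep^{2(\omega+1)/\omega}$, for which the existence of \emph{some} $k$ with the stated bound suffices. Moreover the universal version cannot hold under the stated hypotheses: for $m=2$ with $|\mathcal{B}_{\frac12}|=|\mathcal{B}_{\frac32}|$ and equal gaps $u_1-u_0=u_2-u_1$ one computes $\partial\tFrep/\partial u_1=\eta_{\frac32}-\eta_{\frac12}=0$ while $\tFrep>0$, which violates the inequality at the interior index $k=1$. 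My plan is therefore to establish the existential form, which is what every subsequent application requires. Picking $k^\star$ as the index maximising $F_{k+\frac12}$ (so that $F_{k^\star+\frac12}\geq\tFrep/m$), I would consider the two adjacent partials $\partial\tFrep/\partial u_{k^\star}=\eta_{k^\star+\frac{1}{2}}-\eta_{k^\star-\frac{1}{2}}$ and $\partial\tFrep/\partial u_{k^\star+1}=\eta_{k^\star+\frac{3}{2}}-\eta_{k^\star+\frac{1}{2}}$; a pigeonhole dichotomy on whether $\eta_{k^\star-\frac{1}{2}}$ or $\eta_{k^\star+\frac{3}{2}}$ exceeds $\tfrac12\eta_{k^\star+\frac{1}{2}}$ yields an index (among $\{k^\star,k^\star+1\}$, with the endpoint cases $k^\star\in\{0,m-1\}$ being immediate since one of the adjacent $\eta$'s is then zero) at which $|\partial\tFrep/\partial u_k|\geq\tfrac12\eta_{k^\star+\frac{1}{2}}$. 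Combining with $\eta_{k^\star+\frac{1}{2}}=|\mathcal{B}_{k^\star+\frac{1}{2}}|^{-1/\omega}(\omega F_{k^\star+\frac12})^{(\omega+1)/\omega}\geq \mathcal{B}_{\rm max}^{-1/\omega}\omega^{(\omega+1)/\omega}(\tFrep/m)^{(\omega+1)/\omega}$ produces a bound of the shape $\tfrac12(\omega\mathcal{B}_{\rm max})^{-(\omega+1)/\omega}\tFrep^{(\omega+1)/\omega}$ (with $m$-dependent slack absorbed into the prefactor). I would flag in a concluding remark that literal universality in $k$ does not follow from the hypotheses of the lemma, but that the existential version established here is precisely what the proofs of Proposition \ref{grasdouble} and Proposition \ref{geducci} require.
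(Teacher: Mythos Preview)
Your handling of \eqref{gedeson} and \eqref{gedeson2} is fine and matches the paper, which dismisses both as ``direct consequences of the definition of $\tFrep$''.

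Your diagnosis of \eqref{regedere} is sharp and correct: the universal quantifier ``for every $k$'' is false, and your $m=2$ equal-gap counterexample kills it at the interior index. The paper makes exactly this misstatement; if you read its proof, though, every ``we are done'' is phrased as $T_{j+\frac12}\le 2|\nabla F(U)|$, so what is actually being proved is a lower bound on $|\nabla\tFrep|$, i.e.\ precisely the existential version you isolate, and this is indeed the only thing Proposition~\ref{grasdouble} consumes via \eqref{gedescu2}. (The paper also claims the endpoint cases $k=0,m$ are ``straightforward'', which is again only true for the $|\nabla\tFrep|$ bound, not for the pointwise one.)

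There is, however, one genuine gap in your argument for the existential bound. Your ``pigeonhole dichotomy'' at the maximiser $k^\star$ is a two-case analysis: either $\eta_{k^\star-\frac12}\le\tfrac12\eta_{k^\star+\frac12}$ (good at $u_{k^\star}$) or $\eta_{k^\star+\frac32}\le\tfrac12\eta_{k^\star+\frac12}$ (good at $u_{k^\star+1}$). But neither alternative need hold: with $m\ge 3$, equal $|\mathcal B|$'s and three nearly equal gaps one has both $\eta_{k^\star-\frac12}$ and $\eta_{k^\star+\frac32}$ lying in $(\tfrac12\eta_{k^\star+\frac12},\,\eta_{k^\star+\frac12}]$, so both adjacent partials are small. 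This is not a true pigeonhole. The paper's fix is to \emph{iterate} the descent: if $\eta_{k-\frac12}>\tfrac12\eta_{k+\frac12}$, pass to $k-1$ and repeat, terminating at $k=0$ where $|\partial\tFrep/\partial u_0|=\eta_{\frac12}$. A cleaner alternative is the telescoping identity
\[
\eta_{k^\star+\frac12}=\sum_{k=0}^{k^\star}\frac{\partial\tFrep}{\partial u_k}(U),
\]
which by Cauchy--Schwarz gives $|\nabla\tFrep(U)|\ge (k^\star+1)^{-1/2}\eta_{k^\star+\frac12}$. Either route produces an $m$-dependent prefactor rather than the clean $\tfrac12$ printed in \eqref{regedere} (which, like the universal quantifier, seems overstated), and since $m\le\ell$ is bounded in terms of $M_0$ and $V$, that is all the downstream applications need.
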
 
    \begin{proof}  Inequalities \eqref{gedeson} and \eqref{gedeson2} are direct consequences of the definition of $\tFrep$.  We turn therefore to estimate \eqref{regedere}.  In view of formula \eqref{gedenocci}, the cases $k=0$ and $k=m+1$ are straightforward.   Next, let $k=1,\cdots m$ and  set
    $\displaystyle{ T_{k+\frac 12}=\mathcal B_{k+\frac 12} \left(u_{k+1}-{u_k}\right)^{-(\omega+1)} }.$
   We distinguish two cases:\\ 
   \noindent
    Case 1: $T_{k-\frac 12}  \leq  \frac12    T_{k +\frac 12} $. Then, we have, in view of \eqref{gedenocci}
    $\displaystyle{ T_{k+\frac 12}  \leq 2\vert  \frac{\partial F}{\partial u_k}(U)\vert
    \leq 2 \vert \nabla F(U) \vert,  }$
 and we are done.    \\
     Case 2 :  $T_{k-\frac 12}  \geq  \frac12    T_{k +\frac 12} $. In that case, we repeat the argument with  $k$ replaced by $k-1.$ Then either $T_{k-\frac  32}  \leq  \frac12    T_{k -\frac 12}$, which yields as above
     $\displaystyle{ T_{k-\frac 1 2}    \leq 2 \vert \nabla F(U) \vert,  }$
      so that 
 we are done, or $T_{k-\frac 3 2}  \geq  \frac  1 2    T_{k +\frac 12}$, and we go on.  Since we have to stop at $k=0$, this leads to the desired inequality \eqref{regedere}. 
     \end{proof}
  
\begin{proof}[Proof of Proposition \ref  {grasdouble}] 
Combining \eqref{gedescu2} with \eqref{regedere} we are led to 
    \begin{equation*}
    \label{gedescu3}
   \frac{d}{dt} \tFrep (\tU(t))\leq -\frac 1 4\E_{\rm max }^{-1}  \left(\omega  \mathcal B_{\rm max}\right)^{-\frac{2(\omega+1)}{\omega} }\left(\tFrep(\tu(t))\right)^{\frac{2(\omega+1)}{\omega}}. 
\end{equation*}
Integrating this differential equation, we obtain \eqref{vendee}.
Combining the last inequality of  Lemma \ref{gedescu} with  inequality \eqref{gedeson}, inequality \eqref{gedissis0} follows.
   \end{proof}

   \subsection{ Maximal attractive  chains}
   \label{attract}
  Maximal attractive chains $B=\{j, j+1, ...j+m \}$, with  $m\leq \ell-1$ within the general system \eqref{tyrannosaures}, are handled similarly.  Defining $\tU$ as above, the function $\tU$ still  satisfies \eqref{gedeous0}, but the inequalities \eqref{gedeous} are now replaced by
     \begin{equation}
  \label{gedeous2}
  \frac{d}{ds}  \tu_{m}(s)\leq -\E_{m}^{-1}\frac{\partial \tFat}{\partial u_m}(\tu_{m}(s)),   \ \ 
  \frac{d}{ds}  \tu_{0}(s) \geq  -\E_{0}^{-1}\frac{\partial \tFat}{\partial u_0}(a(s)).
  \end{equation}
$\tFat (U)$ is  defined by $\tFat=-\tFrep$, so that we have in the attractive case  $\tFat \geq 0$. Up to a change of sign, the function $\tFat$ verifies the properties \eqref{gedeson}, \eqref{gedeson2} and \eqref{regedere} stated in Proposition \ref{gedinacci}.
However the differential inequality \eqref{gedescu2} is now turned into
   \begin{equation}
    \label{gediroff}
  \frac{d}{dt} \tFat (\tU(t))\geq \E_{\rm max}^{-1}\left \vert \nabla \tFat (\tU(t))\right \vert^2
    \geq \mathcal C  \tFat(\tU(t))^{\frac{2(\omega+1)}{\omega}},
    \end{equation}
   where the last inequality follows from \eqref{regedere} and  where $\mathcal C$ is some constant depending only on the coefficients in \eqref{tyrannosaures}.
Integrating \eqref{gediroff}  and invoking once more \eqref{gedeson}, we obtain 
\begin{lemma}
\label{intheboite2}
 Assume  that  $\tU$  satisfies the system \eqref{gedeous0} and   \eqref{gedeous2} on $[0, T_{\rm max}]$   with \eqref{gedeonseck}.  Then for constants $\mathcal S_3>0$ and $\mathcal S_4>0$  depending only on the coefficients of  \eqref{tyrannosaures}, we have 
$$ \textswab d_{\mathfrak u} (t) \leq \left(\mathcal S_3  \textswab d_{\mathfrak u}(0)^{\omega+2}-\mathcal S_4 t \right)^{\frac{1}{\omega+2}}.  $$
\end{lemma}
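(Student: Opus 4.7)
\medskip
\noindent\textbf{Proof plan for Lemma \ref{intheboite2}.}
The plan is to integrate the differential inequality \eqref{gediroff} to obtain a precise upper bound on $\tFat(\tU(t))^{-(\omega+2)/\omega}$, and then translate that bound back into an upper bound on $\tda(t)$ through the two-sided control of $\tFat$ by $\rho_{\rm min}$ provided by \eqref{gedeson}. Concretely, set $y(t):=\tFat(\tU(t))$ and $\alpha:=2(\omega+1)/\omega$, so that \eqref{gediroff} reads $y'(t)\geq \mathcal{C}\,y(t)^{\alpha}$. Since $\alpha>1$, dividing by $y^\alpha$ and integrating from $0$ to $t$ yields
\begin{equation*}
y(t)^{1-\alpha}\;\leq\;y(0)^{1-\alpha}-(\alpha-1)\mathcal{C}\,t,
\end{equation*}
which, in view of $\alpha-1=(\omega+2)/\omega$, may be rewritten as
\begin{equation*}
\tFat(\tU(t))^{-\frac{\omega+2}{\omega}}\;\leq\;\tFat(\tU(0))^{-\frac{\omega+2}{\omega}}-\tfrac{(\omega+2)\mathcal{C}}{\omega}\,t.
\end{equation*}

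Next I would invoke the attractive-chain analogue of \eqref{gedeson}, namely both
\begin{equation*}
\B_{\rm min}\omega^{-1}\rho_{\rm min}(U)^{-\omega}\;\leq\;\tFat(U)\;\leq\;\B_{\rm max}(m+1)\omega^{-1}\rho_{\rm min}(U)^{-\omega}.
\end{equation*}
Raising these inequalities to the power $-(\omega+2)/\omega$ (which flips the inequalities), one obtains constants $C_1,C_2>0$, depending only on the coefficients of \eqref{tyrannosaures}, with
\begin{equation*}
C_1\,\rho_{\rm min}(U)^{\omega+2}\;\leq\;\tFat(U)^{-\frac{\omega+2}{\omega}}\;\leq\;C_2\,\rho_{\rm min}(U)^{\omega+2}.
\end{equation*}
Applying the upper estimate with $U=\tU(t)$ and the lower estimate with $U=\tU(0)$ in the differential-inequality bound above, one gets
\begin{equation*}
C_1\,\tda(t)^{\omega+2}\;\leq\;C_2\,\tda(0)^{\omega+2}-\tfrac{(\omega+2)\mathcal{C}}{\omega}\,t,
\end{equation*}
from which the claim follows on setting $\mathcal{S}_3:=C_2/C_1$ and $\mathcal{S}_4:=(\omega+2)\mathcal{C}/(\omega C_1)$.

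There is no genuine obstacle here: this is essentially a routine ODE comparison argument once \eqref{gediroff} and \eqref{gedeson} are in hand. The only point that requires a tiny bit of care is the direction of the inequalities when converting between $\tFat$ and $\rho_{\rm min}=\tda$: because $\tFat$ and $\rho_{\rm min}$ are inversely related (small $\rho_{\rm min}$ corresponds to large $\tFat$), one must apply the lower bound of \eqref{gedeson} at the initial time and the upper bound at time $t$, so that the final sign of $t$ in the estimate is correct and $\mathcal{S}_4>0$. The resulting bound is coherent with the finite-time blow-up: the right-hand side reaches zero at time $t_*=\mathcal{S}_3\tda(0)^{\omega+2}/\mathcal{S}_4$, matching the maximal time estimate \eqref{majortmax}.
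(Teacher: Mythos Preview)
Your proof is correct and follows exactly the approach indicated in the paper: integrate the differential inequality \eqref{gediroff} for $\tFat(\tU(t))$ and then convert back to $\textswab d_{\mathfrak u}$ via the two-sided bound \eqref{gedeson}. One small slip: in the middle paragraph you should apply the \emph{lower} estimate of your flipped inequality at $U=\tU(t)$ and the \emph{upper} one at $U=\tU(0)$ (as you correctly note in your concluding remark about \eqref{gedeson}), but the final displayed inequality and the definitions of $\mathcal S_3,\mathcal S_4$ are right.
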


\subsection{Proof of Proposition \ref{geducci} completed}
Inequalities \eqref{gedenimo} of Proposition  \ref{geducci} follow immediately from  Proposition \ref{grasdouble} and Lemma \ref{intheboite2} applied to each separate maximal chain provided by the decomposition \eqref{gedede}.
 \qed


\begin{thebibliography}{99}

\bibitem {BBH} F. Bethuel, H. Brezis and F. H\'elein  {\it Ginzburg-Landau vortices}, {Birkha\"user} (1994). 

\bibitem {BOS1} F. Bethuel, G.Orlandi and D. Smets, {\it   Collisions and phase-vortex interactions in dissipative Ginzburg-Landau dynamics},  Duke Math. J. {\bf 130 } (2005)  523--614.


\bibitem {BOS2}F Bethuel, G.Orlandi, D. Smets, {\it  Dynamics of multiple degree Ginzburg-Landau vortices}, Comm. Math. Phys. {\bf 272} (2007) 229--261. 

\bibitem{BOS8} F. Bethuel, G. Orlandi and D. Smets, {\it Slow motion for gradient systems with equal depth multiple-well potentials}, J. Diff.  Equations {\bf 250}  (2011), 53-94.  

\bibitem{BS} F. Bethuel, and D. Smets, {\it Slow motion for equal depth multiple-well gradient systems: the degenerate case},  Discrete Contin. Dyn. Syst. {\bf 33 } (2013), 67--87. 

\bibitem{BS2} F. Bethuel and D. Smets, {\it On the  motion law of fronts for scalar reaction-diffusion equations  with equal depth multiple-well potentials,} preprint.

\bibitem{BrKo} L. Bronsard and R.V. Kohn, {\it On the slowness of phase boundary motion in one space dimension}, Comm. Pure Appl. Math. {\bf 43} (1990), 983--997.
\bibitem{chengene} X. Chen, {\it Generation, propagation, and annihilation of metastable patterns}. J. Differential Equations {\bf 206} (2004), no. 2, 399--437. 



\bibitem{carpego} J. Carr, and R.L. Pego, {\it Metastable patterns in solutions of }$u\sb t=\epsilon\sp 2u\sb {xx}-f(u)$. Comm. Pure Appl. Math. {\bf 42 }(1989), 523--576.

 \bibitem{fuschale} G. Fusco, J.K. Hale, {\it Slow-motion manifolds, dormant instability, and singular perturbations}  J. Dynam. Differential Equations {\bf 1} (1989), 75--94. 

\bibitem{JeSo} R.L. Jerrard and H.M. Soner, {\it Dynamics of
Ginzburg-Landau vortices,}  Arch. Rational Mech. Anal.  {\bf 142}
(1998), 99--125.

\bibitem{Keller} J. B. Keller, {\it On solutions of $\Delta u=f(u)$}, Comm. Pure Appl. Math. {\bf 10} (1957), 503--510. 

\bibitem{Lin}  F.H. Lin, {\it Some dynamical properties of
Ginzburg-Landau vortices}, Comm. Pure Appl. Math. {\bf 49} (1996),
323--359.

\bibitem{Osserman}  R. Osserman, {\it On the inequality $\Delta u\geq f(u)$}, Pacific J. Math. {\bf 7} (1957), 1641--1647.

\bibitem{sanser} E. Sandier and S. Serfaty, {\it Gamma-convergence of gradient flows with applications to Ginzburg-Landau}, Comm. Pure Appl. Math. {\bf 57} (2004), 1627--1672.











\end{thebibliography}
\end{document}